\newtheorem{theorem}{Theorem}[section]
\newtheorem{proposition}[theorem]{Proposition}
\newtheorem{corollary}[theorem]{Corollary}
\newtheorem{lemma}[theorem]{Lemma}
\newtheorem{example}[theorem]{Example}
\newtheorem{definition}[theorem]{Definition}
\newtheorem{remark}[theorem]{Remark}
\numberwithin{theorem}{section} \numberwithin{equation}{section}
\newcommand\scalemath[2]{\scalebox{#1}{\mbox{\ensuremath{\displaystyle #2}}}}
\begin{document}
\title[Normal forms for Kummer surfaces]{Normal forms for Kummer surfaces}
\author{Adrian Clingher, Andreas Malmendier}
\address{Dept.~of Mathematics and Computer Science, University of Missouri -- St.~Louis, St.~Louis, MO 63121}
\email{clinghera@umsl.edu}
\address{Dept.~of Mathematics and Statistics, Utah State University, Logan, UT 84322}
\email{andreas.malmendier@usu.edu}
\begin{abstract}
We determine normal forms for the Kummer surfaces associated with abelian surfaces of polarization of type  $(1,1)$, $(1,2)$, $(2,2)$, $(2,4)$, and $(1,4)$. Explicit formulas for coordinates and moduli parameters in terms of Theta functions of genus two are also given.  The normal forms in question are closely connected to the generalized Riemann identities for Theta functions of Mumford's.
\end{abstract}
\subjclass[2010]{14J28}
\maketitle
\section{Introduction}
Let $\mathbf{A}$ denote an abelian surface defined over the field of complex numbers and let $-\mathbb{I} $ be its minus identity involution. The quotient $\mathbf{A}/\langle -\mathbb{I} \rangle$ has sixteen ordinary double points and its minimum resolution, denoted $\operatorname{Kum}(\mathbf{A})$, is known as the \emph{Kummer surface} of  $\mathbf{A}$.  The rich geometry of these surfaces, as well as their strong connection with Theta functions have been the subject of multiple studies \cite{MR1097176,MR0419459,MR0296076,MR3366121,MR1182682,MR3712162,MR2369941} over the last century and a half.  
\par Polarizations on an abelian surface $\mathbf{A}\cong \mathbb{C}^2/\Lambda$ are known to correspond to positive definite hermitian forms $H$ on $\mathbb{C}^2$,  satisfying $E = \operatorname{Im} H(\Lambda,\Lambda) \subset \mathbb{Z}$.  In turn, such a hermitian form determines a line bundle $\mathcal{L}$ in the N\'eron-Severi group $\mathrm{NS}(\mathbf{A})$. One may always then choose a basis of $\Lambda$ such that $E$ is given  by a matrix $\bigl(\begin{smallmatrix} 0&D\\ -D&0 \end{smallmatrix} \bigr)$ with $D=\bigl(\begin{smallmatrix}d_1&0\\ 0&d_2 \end{smallmatrix} \bigr)$ where $d_1, d_2 \in \mathbb{N}$, $d_1, d_2 \ge 0 $, and $d_1$ divides $d_2$. The pair $(d_1, d_2)$ gives the {\it type} of the polarization. 
\par If $\mathbf{A}=\operatorname{Jac}(\mathcal{C})$ is the Jacobian of a smooth curve $\mathcal{C}$ of genus two, the hermitian  form associated to the divisor class $[\mathcal{C}]$ is a polarization of type $(1, 1)$ - a \emph{principal polarization}.  Conversely, a principally polarized abelian surface is either the Jacobian of a smooth curve of genus two or the product of two complex elliptic curves, with the product polarization. 
\par The present work focuses on Kummer surfaces $\operatorname{Kum}(\mathbf{A})$ associated with abelian surfaces $\mathbf{A}$ of principal polarization, as well as of polarizations of type $(1,2)$, $(2,2)$, $(2,4)$, and $(1,4)$. We present a detailed description of moduli parameters for these surfaces, as well as several {\it normal forms}, i.e., explicit projective equations describing Kummer surfaces $\operatorname{Kum}(\mathbf{A})$. The crucial ingredient in our considerations is given by the theory of classical Theta functions of genus two, as well as their associated generalized Riemann identities, as derived by Mumford in \cite{MR2352717}.
\section{Theta functions and Mumford identities}
An effective way to understand the geometry of Kummer surfaces is to use the Siegel modular forms and Theta functions of genus two. Let us enumerate here the main such forms that will be relevant to the present paper. For detailed references, we refer the reader to the classical papers of Igusa \cite{MR0141643, MR0168805}; for some further applications see also \cite{MR2935386,2018arXiv180607460C}.
\subsection{Theta functions and abelian surfaces}
\label{GeneralRemarks}
The Siegel three-fold is a quasi-projective variety of dimension $3$ obtained from the Siegel upper half-plane of degree two which by definition is the set of two-by-two symmetric matrices over $\mathbb{C}$ whose imaginary part is positive definite, i.e.,
\begin{equation}
\label{Siegel_tau}
 \mathbb{H}_2 = \left. \left\lbrace \tau = \left( \begin{array}{cc} \tau_{11} & \tau_{12}  \\ \tau_{12}  & \tau_{22} \end{array} \right) \right|
 \tau_{11}, \tau_{22}, \tau_{12} \in \mathbb{C}, \, \operatorname{Im}\!{(\tau_{11})}  \operatorname{Im}\!{(\tau_{22}}) > \operatorname{Im}\!{(\tau_{21})}^2, \, \operatorname{Im}\!{(\tau_{22})} > 0 \right\rbrace ,
\end{equation}
divided by the action of the modular transformations $\Gamma_2:=
\operatorname{Sp}_4(\mathbb{Z})$, i.e., 
\begin{equation}
 \mathcal{A}_2 =  \mathbb{H}_2 / \Gamma_2 \;.
\end{equation} 
Each $\tau \in \mathbb{H}_2$ determines the principally polarized complex abelian surface $\mathbf{A}_{\tau} = \mathbb{C}^2 / \langle \mathbb{Z}^2 \oplus \tau \, \mathbb{Z}^2\rangle$ with period matrix $(\mathbb{I}_2,\tau) \in \mathrm{Mat}(2, 4;\mathbb{C})$. The canonical principal polarization  $\mathcal{L}$ of $\mathbf{A}_{\tau}$ is determined by the Riemann form  $E( x_1 + x_2 \tau, y_1  + y_2 \tau)=x_1^t\cdot y_2 - y_1^t\cdot x_2$ on  $\mathbb{Z}^2 \oplus \tau \, \mathbb{Z}^2$. Two abelian surfaces $\mathbf{A}_{\tau}$  and $\mathbf{A}_{\tau'}$ are isomorphic if and only if there is a symplectic matrix 
\begin{equation}
M=  \left(\begin{array}{cc} A & B \\ C & D \end{array} \right) \in \Gamma_2
\end{equation}
such that $\tau' = M (\tau):=(A\tau+B)(C\tau+D)^{-1}$. Since $M$ preserves the Riemann form, it follows that the Siegel three-fold $\mathcal{A}_2$ is the set of isomorphism classes of principally polarized abelian surfaces. Similarly, we define the subgroup $\Gamma_2(2n) = \lbrace M \in \Gamma_2 | \, M \equiv \mathbb{I} \mod{2n}\rbrace$ and Igusa's congruence subgroups $\Gamma_2(2n, 4n) = \lbrace M \in \Gamma_2(2n) | \, \operatorname{diag}(B) =  \operatorname{diag}(C) \equiv \mathbb{I} \mod{4n}\rbrace$ with corresponding Siegel modular threefolds $\mathcal{A}_2(2)$, $\mathcal{A}_2(2,4)$, and $\mathcal{A}_2(4,8)$ such that
\begin{equation}
 \Gamma_2/\Gamma_2(2)\cong S_6, \quad  \Gamma_2(2)/\Gamma_2(2,4)\cong (\mathbb{Z}/2\mathbb{Z})^4, \quad \Gamma_2(2,4)/\Gamma_2(4,8)\cong (\mathbb{Z}/2\mathbb{Z})^9,
\end{equation}
where $S_6$ is the permutation group of six elements. $\mathcal{A}_2(2)$ is the three-dimensional moduli space of principally polarized abelian surfaces with level-two structure. The meaning of $\mathcal{A}_2(2,4)$ and $\mathcal{A}_2(4,8)$ will be discussed below.
\par For an elliptic variable $z\in \mathbb{C}^2$ and modular variable $\tau\in \mathbb{H}_2$, Riemann's Theta function is defined by setting
\[ 
 \vartheta \big( z, \tau \big) = \sum_{u\in \mathbb{Z}^2} e^{\pi i\, ( u^t \cdot \tau \cdot  u+ 2 u^t \cdot z)  } . 
 \]
The Theta function is holomorphic on $\mathbb{C}^2\times \mathbb{H}_2$. For rational-valued vector $\vec{a}, \vec{b} \in \mathbb{Q}^2$, a Theta function is defined by setting
\[ 
 \vartheta\!\begin{bmatrix} \vec{a} \\ \vec{b} \end{bmatrix}\!\!(z, \tau)
 =  \sum_{u\in \mathbb{Z}^2} e^{\pi i \, \big((u+\vec{a})^t \cdot \tau  \cdot  (u+\vec{a}) + 2 \, (u+\vec{a})^t \cdot (z+\vec{b})\big)  }.
 \]
 For a rational matrix $\left( \begin{smallmatrix} a_1&a_2\\ b_1&b_2 \end{smallmatrix} \right)$, which we call a \emph{theta characteristic}, we set $\vec{a}=\langle a_1, a_2\rangle/2$ and $\vec{b}=\langle b_1, b_2\rangle/2$ and define 
 \[ 
 \theta\!\begin{bmatrix} a_1 & a_2 \\ b_1 & b_2 \end{bmatrix}\!\!(z, \tau)
 = \vartheta\!\begin{bmatrix} \vec{a} \\ \vec{b} \end{bmatrix}\!\!(z, \tau).
 \]
In this way, all standard Theta functions have characteristics with coefficients in $\mathbb{F}_2$. This will make it easier to relate them to the description of the $16_6$ configuration in finite geometry in Section~\ref{ss:FiniteGeometry}. For $\left( \begin{smallmatrix} a_1&a_2\\ b_1&b_2 \end{smallmatrix} \right) \in \mathbb{F}_2^4$ -- where $\mathbb{F}_2$ denotes the finite field with two elements -- there are sixteen corresponding Theta functions; 10 are even and 6 are odd functions according to
\begin{equation}
 \theta\!\begin{bmatrix} a_1 & a_2 \\ b_1 & b_2 \end{bmatrix}\!\!(-z, \tau)
  = (-1)^{a^t\cdot b} \;   \theta\!\begin{bmatrix} a_1 & a_2 \\ b_1 & b_2 \end{bmatrix}\!\!(z, \tau).
\end{equation}
We denote the even Theta functions by 
\begin{small}
\[
\begin{split}
\theta_1 	= \theta\!\begin{bmatrix} 0 &  0 \\ 0 & 0 \end{bmatrix}\!\!, \,  
\theta_2	= \theta\!\begin{bmatrix} 0 &  0 \\ 1 & 1 \end{bmatrix}\!\!, \,  
\theta_3 &	= \theta\!\begin{bmatrix} 0 &  0 \\ 1 & 0 \end{bmatrix}\!\!, \,  
\theta_4 	= \theta\!\begin{bmatrix} 0 &  0 \\ 0 & 1 \end{bmatrix}\!\!, \,   \;
\theta_5 	= \theta\!\begin{bmatrix} 1 &  0 \\ 0 & 0 \end{bmatrix}\!\!, \\
\theta_6	= \theta\!\begin{bmatrix} 1 &  0 \\ 0 & 1 \end{bmatrix}\!\!,  \, 
\theta_7	= \theta\!\begin{bmatrix} 0 &  1 \\ 0 & 0 \end{bmatrix}\!\!, \,  
\theta_8 &	= \theta\!\begin{bmatrix} 1 &  1 \\ 0 & 0 \end{bmatrix}\!\!, \,  
\theta_9 	= \theta\!\begin{bmatrix} 0 &  1 \\ 1 & 0 \end{bmatrix}\!\!, \,  
\theta_{10}=\theta\!\begin{bmatrix} 1 &  1 \\ 1 & 1 \end{bmatrix}\!\!.
\end{split}
\]
\end{small}
We denote the odd Theta functions by 
\begin{small}
\[
\begin{split}
    \theta_{11}	=\theta\!\begin{bmatrix} 0 &  1 \\ 0 & 1 \end{bmatrix}\!\!, \,  
    \theta_{12}&	=\theta\!\begin{bmatrix} 0 &  1 \\ 1 & 1 \end{bmatrix}\!\!, \,  
    \theta_{13}	=\theta\!\begin{bmatrix} 1 &  1 \\ 0 & 1 \end{bmatrix}\!\!, \\
    \theta_{14}	=\theta\!\begin{bmatrix} 1 &  0 \\ 1 & 0 \end{bmatrix}\!\!, \,  
    \theta_{15}&	=\theta\!\begin{bmatrix} 1 &  0 \\ 1 & 1 \end{bmatrix}\!\!, \,  
    \theta_{16}	=\theta\!\begin{bmatrix} 1 &  1 \\ 1 & 0 \end{bmatrix}\!\!.
\end{split}
\]
\end{small}
A scalar obtained by evaluating a Theta function at $z=0$ is called a Theta constant.  The six odd Theta functions give trivial Theta constants.  We write
\begin{equation}
\label{Eqn:theta_short}
 \theta_i(z) \quad \text{instead of} \quad 
  \theta\!\begin{bmatrix} a^{(i)}_1 & a^{(i)}_2 \\ b^{(i)}_1 & b^{(i)}_2 \end{bmatrix}\!\!(z, \tau)
 \quad \text{where $i=1,\dots ,16$,}
\end{equation}
and $\theta_i =\theta_i(0)$ such that $\theta_i=0$ for $i=11,\dots,16$. 
\par According to \cite[Sec.~3]{MR2367218} we have the following \emph{Frobenius identities} relating Theta constants:
\begin{equation}
\label{Eq:FrobeniusIdentities}
\begin{array}{lllclll}
 \theta_5^2 \theta_6^2 & = & \theta_1^2 \theta_4^2 - \theta_2^2 \theta_3^2 \,, &\qquad
 \theta_5^4 + \theta_6^4 & =& \theta_1^4 - \theta_2^4 - \theta_3^4 + \theta_4^4 \,, \\[0.2em]
 \theta_7^2 \theta_9^2 & = & \theta_1^2 \theta_3^2 - \theta_2^2 \theta_4^2 \,, &\qquad
 \theta_7^4 + \theta_9^4 &= & \theta_1^4 - \theta_2^4 + \theta_3^4 - \theta_4^4 \, , \\[0.2em]
 \theta_8^2 \theta_{10}^2 & = & \theta_1^2 \theta_2^2 - \theta_3^2 \theta_4^2 \, , &\qquad
 \theta_8^4 + \theta_{10}^4 & = & \theta_1^4 + \theta_2^4 - \theta_3^4 - \theta_4^4 \,.
\end{array}
\end{equation}
\subsubsection{Doubling formulas for Theta constants}
We will also introduce the Theta functions that are evaluated at $2 \tau$. Under duplication of the modular variable, the Theta functions $\theta_1, \theta_5, \theta_7, \theta_8$ play a role dual to $\theta_1, \theta_2, \theta_3, \theta_4$. We renumber the former and use the symbol $\Theta$ to mark the fact that they are evaluated at the isogenous abelian variety.
That is, we will denote Theta functions  with doubled modular variable by 
\begin{equation}
\label{Eqn:Theta_short}
 \Theta_i(z)  \quad \text{instead of} \quad 
  \theta\!\begin{bmatrix} b^{(i)}_1 & b^{(i)}_2 \\ a^{(i)}_1 & a^{(i)}_2 \end{bmatrix}\!\!(z, 2 \tau)
 \quad \text{where $i=1,\dots ,16$,}
\end{equation}
and $\Theta_i =\Theta_i(0)$. The following identities are called the \emph{second principal transformations of degree two}~\cite{MR0141643, MR0168805} for Theta constants:
\begin{equation}
\label{Eq:degree2doubling}
\begin{array}{lllclll}
 \theta_1^2 & = & \Theta_1^2 + \Theta_2^2 + \Theta_3^2 + \Theta_4^2 \,, &\qquad
 \theta_2^2 & =&  \Theta_1^2 + \Theta_2^2 - \Theta_3^2 - \Theta_4^2 \,, \\[0.2em]
 \theta_3^2 & = &  \Theta_1^2 - \Theta_2^2 - \Theta_3^2 + \Theta_4^2 \,, &\qquad
 \theta_4^2 &= &  \Theta_1^2 - \Theta_2^2 + \Theta_3^2 - \Theta_4^2 \,.
\end{array}
\end{equation}
%
%
%
%
%
We also have the following identities:
\begin{equation}
\label{Eq:degree2doublingR}
\begin{array}{lllclll}
 \theta_5^2 & = & 2 \, \big( \Theta_1 \Theta_3 + \Theta_2  \Theta_4 \big) \,, &\qquad
 \theta_6^2 & =&  2 \, \big( \Theta_1 \Theta_3 - \Theta_2  \Theta_4 \big)\,, \\[0.4em]
 \theta_7^2 & = & 2 \, \big( \Theta_1 \Theta_4 + \Theta_2  \Theta_3 \big) \,, &\qquad
 \theta_8^2 &= &  2 \, \big( \Theta_1 \Theta_2 + \Theta_3  \Theta_4 \big)\,, \\[0.4em]
 \theta_9^2 & = & 2 \, \big( \Theta_1 \Theta_4 - \Theta_2  \Theta_3 \big)\,, &\qquad
 \theta_{10}^2 &= & 2 \, \big( \Theta_1 \Theta_2 - \Theta_3  \Theta_4 \big) \,.
\end{array}
\end{equation}
Similarly, we have identities for Theta functions with non-vanishing elliptic argument:
\begin{equation}
\label{Eq:degree2doubling_z}
\begin{split}
 \theta_1 \, \theta_1(z) & = \Theta_1(z)^2 + \Theta_2(z)^2 + \Theta_3(z)^2 
 +\Theta_4(z)^2 \,,\\[0.2em]
 \theta_2 \, \theta_2(z) & = \Theta_1(z)^2 + \Theta_2(z)^2 - \Theta_3(z)^2 
 -\Theta_4(z)^2 \,,\\[0.2em]
  \theta_3 \, \theta_3(z) & = \Theta_1(z)^2 - \Theta_2(z)^2 - \Theta_3(z)^2 
 +\Theta_4(z)^2 \,,\\[0.2em]
  \theta_4 \, \theta_4(z) & = \Theta_1(z)^2 - \Theta_2(z)^2 + \Theta_3(z)^2 
 -\Theta_4(z)^2 \,,
\end{split}
\end{equation}
and
\begin{equation}
\label{Eq:Degree2doubling_z}
\begin{split}
 4 \, \Theta_1 \, \Theta_1(2z) & = \theta_1(z)^2 + \theta_2(z)^2 + \theta_3(z)^2 
 +\theta_4(z)^2 \,,\\[0.2em]
 4 \, \Theta_2 \, \Theta_2(2z) & = \theta_1(z)^2 + \theta_2(z)^2 - \theta_3(z)^2 
 -\theta_4(z)^2 \,,\\[0.2em]
 4\, \Theta_3 \, \Theta_3(2z) & = \theta_1(z)^2 - \theta_2(z)^2 - \theta_3(z)^2 
 +\theta_4(z)^2 \,,\\[0.2em]
 4\, \Theta_4 \, \Theta_4(2z) & = \theta_1(z)^2 - \theta_2(z)^2 + \theta_3(z)^2 
 -\theta_4(z)^2 \,.
\end{split}
\end{equation}
The following is a well-known fact:
\begin{remark}
\label{fact:sections1}
For the principally polarized abelian surface $(\mathbf{A}_{\tau},\mathcal{L})$ defined above, a basis of sections for $\mathcal{L}^2$, called Theta functions of level $2$, is given by $\Theta_i(2z)$ or, alternatively, $\theta^2_i(z)$ for $1 \le i \le 4$,  and the point $[\Theta_1:\Theta_2:\Theta_3:\Theta_4] \in \mathbb{P}^3$ is called the Theta null  point of level 2 of $\mathbf{A}_{\tau}$. Similarly, a basis of sections for $\mathcal{L}^4$, called the Theta functions of level $(2,2)$,  is given by $\theta_i(z)$ for $11 \le i \le 16$, and $[\theta_1:\dots :\theta_{10}]  \in \mathbb{P}^9$ is called the Theta null point of level  $(2,2)$ of $\mathbf{A}_{\tau}$.
\end{remark}
\subsection{Mumford identities for Theta functions}
To obtain all quadratic relations between Theta functions, we apply the generalized Riemann identity for Theta functions derived by Mumford in \cite[p.214]{MR2352717}. His master equation~$(R_{CH})$ generating all Theta relations -- which we adjusted to reflect our convention for Theta functions -- is given by
\begin{gather}
 4 \prod_{\epsilon, \epsilon' \in \{ \pm 1\}}
 \theta\!\begin{bmatrix}  \frac{a+\epsilon b+\epsilon' c+\epsilon \epsilon' d}{2} \\[0.2em]  \frac{e+\epsilon f+\epsilon' g+\epsilon \epsilon' h}{2}  \end{bmatrix}\!\!\left(\frac{x+\epsilon y+\epsilon' u+\epsilon \epsilon' v}{2}\right)\\
 \nonumber
 =  \sum_{\alpha, \beta \in  \mathbb{Z}/2\mathbb{Z}} e^{-\frac{\pi i}{2} \beta^t (a+b+c+d)} \,
 \theta\!\begin{bmatrix} a+\alpha  \\[0.2em] e + \beta\end{bmatrix}\!\!(x) \;
 \theta\!\begin{bmatrix} b+\alpha  \\[0.2em] f + \beta\end{bmatrix}\!\!(y) \, 
 \theta\!\begin{bmatrix} c+\alpha  \\[0.2em] g + \beta\end{bmatrix}\!\!(u) \;
 \theta\!\begin{bmatrix} d+\alpha  \\[0.2em] h+ \beta\end{bmatrix}\!\!(v) .
\end{gather}
We first consider all four-term relations between squares of Theta functions, we set $\xi_i = \theta^2_i(z)$ for $1\le i \le 16$. We have the following:
\begin{proposition}
\label{prop:MF}
The ideal of linear equations relating squares of Theta functions are generated by 12 equations: three equations relating odd Theta functions
\begin{equation}
\label{eqn:Mumford_odd}
\begin{array}{rrrrr}
 \theta_6^2 \, \xi_{11}		& \, - \, \theta_4^2 \, \xi_{13}	& \, - \, \theta_9^2 \, \xi_{14}	& \, + \, \theta_{3}^2 \, \xi_{16}	& = 0,\\[0.2em]
 \theta_6^2 \, \xi_{12}	& \, - \, \theta_2^2 \, \xi_{13}	& \, - \, \theta_7^2 \, \xi_{14}	& \, + \, \theta_{1}^2 \, \xi_{16}	& = 0,\\[0.2em]
 \theta_{10}^2 \, \xi_{13}	& \,+ \, \theta_5^2 \, \xi_{14}	& \, - \, \theta_6^2 \, \xi_{15}	& \, - \, \theta_{8}^2 \, \xi_{16}	& = 0,\\[0.2em]
\end{array}
\end{equation}
and nine equations relating even and odd Theta functions
\begin{equation}
\label{eqn:Mumford_mix}
\begin{array}{rrrrr}
 \theta_6^2 \, \xi_1 		& \, - \, \theta_1^2 \, \xi_6 		& \, + \, \theta_7^2 \, \xi_{13}	& \, - \, \theta_2^2 \, \xi_{14}	& = 0,\\[0.2em]
 \theta_6^2 \, \xi_2 		& \, - \, \theta_2^2 \, \xi_6 		& \, - \, \theta_1^2 \, \xi_{14}	& \, + \, \theta_7^2 \, \xi_{16}	& = 0,\\[0.2em]
 \theta_6^2 \, \xi_3 		& \, - \, \theta_3^2 \, \xi_6 		& \, + \, \theta_9^2 \, \xi_{13}	& \, - \, \theta_{4}^2 \, \xi_{14}	& = 0,\\[0.2em]
 \theta_6^2 \, \xi_4 		& \, - \, \theta_4^2 \, \xi_6 		& \, - \, \theta_3^2 \, \xi_{14}	& \, + \, \theta_9^2 \, \xi_{16}	& = 0,\\[0.2em]
 \theta_6^2 \, \xi_5 		& \, - \, \theta_5^2 \, \xi_6 		& \, + \, \theta_8^2 \, \xi_{13}	& \, - \, \theta_{10}^2 \, \xi_{16}	& = 0,\\[0.2em]
 \theta_7^2 \, \xi_6 		& \, - \, \theta_6^2 \, \xi_7 		& \, - \, \theta_1^2 \, \xi_{13}	& \, + \, \theta_2^2 \, \xi_{16}	& = 0,\\[0.2em]
 \theta_8^2 \, \xi_6 		& \, - \, \theta_6^2 \, \xi_8 		& \, - \, \theta_5^2 \, \xi_{13}	& \, + \, \theta_{10}^2 \, \xi_{14}	& = 0,\\[0.2em]
 \theta_9^2 \, \xi_6 		& \, - \, \theta_6^2 \, \xi_9 		& \, - \, \theta_3^2 \, \xi_{13}	& \, + \, \theta_{4}^2 \, \xi_{16}	& = 0,\\[0.2em]
 \theta_{10}^2 \, \xi_6 	& \, - \, \theta_6^2 \, \xi_{10} 	& \, + \, \theta_8^2 \, \xi_{14}	& \, - \, \theta_{5}^2 \, \xi_{16}	& = 0.
\end{array}
\end{equation}
\end{proposition}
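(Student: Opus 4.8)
My plan is to prove the statement in two logically independent parts: first to establish that twelve independent relations suffice by a dimension count, and then to \emph{derive} the specific relations \eqref{eqn:Mumford_odd}--\eqref{eqn:Mumford_mix} from Mumford's master identity $(R_{CH})$. The completeness part is the conceptually clean one, so I would carry it out first. After squaring, each function $\xi_i=\theta_i^2(z)$ acquires a quasi-periodicity factor in which the dependence on the half-integer characteristic disappears (the characteristic contributes a factor $e^{4\pi i(\cdots)}=1$), so all sixteen functions $\xi_1,\dots,\xi_{16}$ lie in the four-dimensional space $H^0(\mathbf{A}_\tau,\mathcal{L}^2)$ of second-order Theta functions. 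By Remark~\ref{fact:sections1} the four functions $\xi_1,\dots,\xi_4$ already form a basis of this space, so for generic $\tau$ the span of the $\xi_i$ is exactly four-dimensional and the space of linear relations among them has dimension exactly $16-4=12$. It therefore suffices to produce twelve honest relations and to check that they are linearly independent.

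For the derivation I would specialize the four elliptic arguments in $(R_{CH})$ to the two values $z$ and $0$ so that the fourfold products on both sides collapse into squares. Concretely, setting $x=u=z$ and $y=v=0$ forces the argument $\tfrac{x+\epsilon y+\epsilon'u+\epsilon\epsilon'v}{2}$ to equal $z$ when $\epsilon'=+1$ and $0$ when $\epsilon'=-1$; imposing $a=c$ and $e=g$ on the characteristic data then makes each surviving summand on the right-hand side a square $\theta_j(z)^2=\xi_j$ multiplied by a product of two Theta constants. I would then exploit the vanishing of the six odd Theta constants (recall $\theta_i=0$ for $11\le i\le 16$) twice: once to kill one of the two constant factors on the left-hand side, forcing the entire left-hand side to vanish, and once to annihilate all but four of the sixteen summands over $\alpha,\beta\in(\mathbb{Z}/2\mathbb{Z})^2$ on the right. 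What remains is exactly a four-term relation of the shape in \eqref{eqn:Mumford_odd} or \eqref{eqn:Mumford_mix}, with coefficients given by squared even Theta constants and with the signs produced by the phase $e^{-\frac{\pi i}{2}\beta^t(a+b+c+d)}$. Running through the admissible choices of $(a,b,c,d;e,f,g,h)\in\mathbb{F}_2^{8}$ should yield the three relations among odd squares and the nine mixed relations.

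The main obstacle is precisely this bookkeeping: one must organize the $\mathbb{F}_2^{8}$ parameter choices so that each of the twelve stated relations is produced, correctly track the quarter-period phase to confirm the stated signs, and translate Mumford's raw characteristic labels into the numbering $\theta_1,\dots,\theta_{16}$ fixed in Section~\ref{GeneralRemarks}. I expect many instances of $(R_{CH})$ to return relations already in the span of the twelve, so recognizing and discarding this redundancy is part of the work.

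Finally, I would settle independence directly from the displayed form of the relations. Restricting the $12\times 16$ coefficient matrix of \eqref{eqn:Mumford_odd}--\eqref{eqn:Mumford_mix} to the twelve columns indexed by $\xi_1,\dots,\xi_5,\xi_7,\dots,\xi_{12},\xi_{15}$, one checks that each of these variables occurs in exactly one of the twelve relations; the resulting matching shows that this $12\times 12$ minor is, after a permutation, diagonal with every diagonal entry equal to $\pm\theta_6^2$. Hence for generic $\tau$ (where $\theta_6\neq 0$) the twelve relations have rank $12$, and since the full relation space is $12$-dimensional they generate the entire ideal of linear relations among the $\xi_i$.
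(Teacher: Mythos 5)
Your framing is sound, and two of your three steps are correct and genuinely sharpen what the paper leaves implicit. The dimension count is valid: squaring kills the characteristic-dependence of the automorphy factor, so all sixteen $\xi_i$ lie in the four-dimensional space $H^0(\mathbf{A}_\tau,\mathcal{L}^2)$ with basis $\xi_1,\dots,\xi_4$ (Remark~\ref{fact:sections1}), whence the relation space has dimension exactly $12$. Your independence check is also correct: each of $\xi_1,\dots,\xi_5,\xi_7,\dots,\xi_{12},\xi_{15}$ occurs in exactly one of the twelve displayed relations, with coefficient $\pm\theta_6^2$, so the corresponding $12\times 12$ minor is diagonal up to permutation and the rank is $12$ whenever $\theta_6\neq 0$. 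Together these reduce the proposition to verifying that the twelve displayed identities actually hold; the paper, by contrast, simply reports a computer search through the rank-four quadrics generated by $(R_{CH})$ and a normalization via the doubling formulas.

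The gap is in that remaining verification. Your specialization $x=u=z$, $y=v=0$ with $a=c$, $e=g$ turns each summand on the right of $(R_{CH})$ into $\pm\,\xi_{[a+\alpha;\,e+\beta]}\cdot\theta[b+\alpha;f+\beta](0)\cdot\theta[d+\alpha;h+\beta](0)$, and a summand survives precisely when \emph{both} constant characteristics are even. Those two characteristics differ by the fixed element $t=[b+d;f+h]$, and for every $t\neq 0$ exactly six of the sixteen characteristics $m$ have $m$ and $m+t$ both even (summing $\#\{m \text{ even}: m+t \text{ even}\}$ over the fifteen nonzero $t$ counts the $10\cdot 9=90$ ordered pairs of distinct even characteristics, i.e.\ six per $t$), while for $t=0$ ten survive. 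So this specialization annihilates all but six (or ten) of the sixteen summands, never all but four, and the surviving coefficients are products of two \emph{distinct} even Theta constants rather than the squares $\theta_j^2$ appearing in \eqref{eqn:Mumford_odd}--\eqref{eqn:Mumford_mix}. The twelve stated relations are therefore not direct instances of this specialization, and the bookkeeping you describe cannot terminate as planned. To close the gap you would need either a further linear-algebra step extracting the rank-four relations from the six-term ones, or — closer to what the paper actually does — an expansion of each $\xi_i$ in the basis $\Theta_1(2z),\dots,\Theta_4(2z)$ via the doubling formulas \eqref{Eq:degree2doubling_z} and their analogues, followed by a coefficient-by-coefficient check of the twelve identities.
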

\begin{proof}
We follow the strategy outlined in \cite[Sec.~3.2]{MR3238326} where fifteen quadrics of rank four involving only odd Theta functions were derived.  Using \cite[p.214, Eq.~$(R_{CH})$]{MR2352717}, one generates all rank-four quadrics relating squares of Theta function. We obtain 142 equations. One then uses Equations~(\ref{Eq:degree2doubling}) and Equations~(\ref{Eq:degree2doublingR}) to write all coefficients in terms of  $\lbrace \Theta_1 , \Theta_2, \Theta_3, \Theta_4\rbrace$ and determines a generating set of quadrics.
\end{proof}
Next, we consider all three-term relations between bi-monomial combinations of Theta functions. We set $\xi_{i,j} = \theta_i(z)\theta_j(z)$ for $1\le i < j \le 16$. We have the following:
\begin{proposition}
\label{prop:MMF}
The ideal of linear equations relating $\xi_{i,j}$ for $1\le i < j \le 16$ is generated by the following 60 equations:
\begin{equation}
\label{Eqn:MumfordBimonomial_eg}
\begin{split}
 \theta_1 \theta_2 \xi_{1,2} - \theta_3\theta_4 \xi_{3,4} - \theta_8\theta_{10} \xi_{8,10} =0, \;
 \theta_3 \theta_4 \xi_{1,2} - \theta_1\theta_2 \xi_{3,4} - \theta_8\theta_{10} \xi_{13,16} =0, \; \dots\\
  \text{(A complete generating set of 60 equations is given in Equation~(\ref{Eqn:MumfordBimonomial}).)}
\end{split}
\end{equation}
\end{proposition}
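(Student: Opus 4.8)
The plan is to mirror the strategy used in the proof of Proposition~\ref{prop:MF}, replacing squares by bi-monomials. The first step is to specialize Mumford's master equation \cite[p.~214, Eq.~$(R_{CH})$]{MR2352717} so that both sides reduce to linear combinations of the $\xi_{i,j}=\theta_i(z)\theta_j(z)$. Setting the four elliptic arguments to $x=y=z$ and $u=v=0$, the arguments $\tfrac{x+\epsilon y+\epsilon' u+\epsilon\epsilon' v}{2}$ on the left-hand side take only the values $z$ and $0$, so the left-hand side collapses to a single product $\theta[\cdot](z)\,\theta[\cdot](z)$ times two theta constants, while the right-hand side becomes a sum of products $\theta[\cdot](z)\,\theta[\cdot](z)$ each multiplied by theta constants. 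Letting the eight characteristic two-vectors $a,b,c,d,e,f,g,h$ range over $\mathbb{F}_2$ then generates the complete list of candidate identities among the bi-monomials, exactly as the analogous specialization produced the rank-four quadrics of Proposition~\ref{prop:MF}.

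The second step is to put all of these relations over a common coefficient ring. Using the second principal transformations \eqref{Eq:degree2doubling} and \eqref{Eq:degree2doublingR}, I would rewrite every theta-constant coefficient as a polynomial in $\{\Theta_1,\Theta_2,\Theta_3,\Theta_4\}$, invoking the companion identities \eqref{Eq:degree2doubling_z} wherever a $z$-dependent factor arising from the argument $0$ must itself be re-expressed. Once all coefficients are written in the four variables $\Theta_i$, two candidate relations can be compared directly and recognized as proportional or linearly dependent.

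The third step is the linear algebra. I would assemble the coefficient vectors of all candidate relations into a matrix over the fraction field of the ring of theta constants --- a ring in which the Frobenius identities \eqref{Eq:FrobeniusIdentities} already impose nontrivial dependencies --- and row-reduce to a minimal generating set. The expectation is that the row space is spanned by the $60$ displayed three-term relations; checking that each listed relation lies in the span and that the $60$ are independent then certifies that they form a generating set. Completeness --- that no linear relation among the $\xi_{i,j}$ has been overlooked --- follows because $(R_{CH})$ generates \emph{all} quadratic theta relations, so the specialization above captures every linear relation among bi-monomials.

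The main obstacle is the completeness-and-minimality bookkeeping rather than any single identity. One must control the combinatorial proliferation of characteristic choices, track signs and the phase factors $e^{-\frac{\pi i}{2}\beta^t(a+b+c+d)}$ correctly when collapsing the right-hand side, and --- most delicately --- carry out the rank computation over a coefficient ring in which the Frobenius relations identify seemingly distinct coefficient polynomials. The individual identities are routine to verify; the genuine content is proving that precisely $60$ independent relations survive, so that the three-term relations of \eqref{Eqn:MumfordBimonomial} generate the entire ideal of linear relations among the $120$ bi-monomials.
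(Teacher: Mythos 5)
Your proposal is correct and follows essentially the same route as the paper, whose proof is simply the one-line statement that all three-term bi-monomial relations are generated from Mumford's master equation $(R_{CH})$; your specialization $x=y=z$, $u=v=0$ is precisely the substitution that makes both sides of $(R_{CH})$ linear in the $\xi_{i,j}$ with theta-constant coefficients, and the subsequent rewriting of coefficients via \eqref{Eq:degree2doubling}--\eqref{Eq:degree2doublingR} and row reduction mirrors the procedure the authors describe for Proposition~\ref{prop:MF}. The only slip is the phrase about a ``$z$-dependent factor arising from the argument $0$'' (those factors are theta constants, so \eqref{Eq:degree2doubling_z} is not needed there), but this does not affect the argument.
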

\begin{proof}
Using \cite[p.214, Eq.~$(R_{CH})$]{MR2352717}, one generates all three-term bi-monomial combinations of Theta function. 
\end{proof}
\begin{remark}
\label{rem:embedding}
The map $z \mapsto [\theta_1(z): \dots: \theta_{16}(z)]$ given by a all Theta functions provides a high-dimensional embedding of the the abelian variety $\mathbf{A}_{\tau}$ into $\mathbb{P}^{15}$ \cite[Sec.~3]{MR2514037} whose defining equations were determined explicitly in \cite{MR1041476}. The image in $\mathbb{P}^{15}$ is given as the intersection of the 72 conics given by Equations~(\ref{eqn:Mumford_odd}),~(\ref{eqn:Mumford_mix}), and~(\ref{Eqn:MumfordBimonomial}).
\end{remark}
\subsection{Theta functions and genus-two curves}
Let $\mathcal{C}$ be an irreducible, smooth, projective curve of genus two, defined over the complex field $\mathbb{C}$. Let $\mathcal{M}_2$, be the coarse moduli space of smooth curves of genus two. We denote by $[\mathcal{C}]$ the isomorphism class of $\mathcal{C}$, i.e.,  the corresponding point in $\mathcal{M}_2$. For a genus-two curve $\mathcal{C}$ given as sextic $Y^2 = f_6(X,Z)$ in weighted projective space $\mathbb{P}(1,3,1)$, we send three roots $\lambda_4, \lambda_5, \lambda_6$ to $0, 1, \infty$ to get an isomorphic curve in Rosenhain normal form, i.e., 
\begin{equation}
\label{Eq:Rosenhain}
 \mathcal{C}: \quad Y^2 = X\,Z \, \big(X-Z\big) \, \big( X- \lambda_1 Z\big) \,  \big( X- \lambda_2 Z\big) \,  \big( X- \lambda_3 Z\big) \;.
\end{equation} 
The ordered tuple $(\lambda_1, \lambda_2, \lambda_3)$ where the $\lambda_i$ are all distinct and different from $0, 1, \infty$ determines a point in $\mathcal{M}_2(2)$, the moduli space of genus-two curves with level-two structure. 
\par Torelli's theorem states that the map sending a curve $\mathcal{C}$ to its Jacobian  $\mathrm{Jac}(\mathcal{C})$ is injective and defines a birational map $\mathcal{M}_2 \dashrightarrow \mathcal{A}_2$. In fact, if the point $\tau$ is not equivalent with respect to $\Gamma_2$ to a point with $\tau_{12}=0$, the $\mathsf{\Theta}$-divisor is a non-singular curve $\mathcal{C}$ of genus-two and $\mathbf{A}_{\tau}  = \mathrm{Jac}(\mathcal{C})$ is its Jacobian.  
\par Thomae's formula is a formula introduced by Thomae relating Theta constants to the branch points. The three $\lambda$ parameters appearing in the Rosenhain normal form of a genus-two curve $\mathcal{C}$ in Equation~(\ref{Eq:Rosenhain})  are ratios of even Theta constants. There are 720 choices for such expressions since the forgetful map $\mathcal{M}_2(2) \to \mathcal{M}_2$ is a Galois covering of degree $720 = |S_6|$ where $S_6$ acts on the  roots of $\mathcal{C}$ by permutations. Any of the 720 choices may be used, we choose the one also used in \cite{MR2367218,MR2372155}:
\begin{lemma} \label{ThomaeLemma}
For any period point $\tau \in\mathcal{A}_2(2)$, there is a genus-two curve $\mathcal{C} \in \mathcal{M}_2(2)$ with level-two structure and Rosenhain roots $\lambda_1, \lambda_2, \lambda_3$  such that
\begin{equation}\label{Picard}
\lambda_1 = \frac{\theta_1^2\theta_3^2}{\theta_2^2\theta_4^2} \,, \quad \lambda_2 = \frac{\theta_3^2\theta_8^2}{\theta_4^2\theta_{10}^2}\,, \quad \lambda_3 =
\frac{\theta_1^2\theta_8^2}{\theta_2^2\theta_{10}^2}\,.
\end{equation}
Similarly, the following expressions are perfect squares of Theta constants:
\begin{equation}
\label{Picard2}
\begin{split}
\lambda_1 -1 = \frac{\theta_7^2 \theta_9^2}{\theta_2^2 \theta_4^2} , \qquad \lambda_2 -1 = \frac{\theta_5^2 \theta_9^2}{\theta_4^2 \theta_{10}^2} , \qquad 
\lambda_3 -1 = \frac{\theta_5^2 \theta_7^2}{\theta_2^2 \theta_{10}^2} ,\qquad \qquad\\
\lambda_2 -\lambda_1 = \frac{\theta_3^2 \theta_6^2 \theta_9^2}{\theta_2^2 \theta_4^2\theta_{10}^2} , \qquad \lambda_3 -\lambda_1 = \frac{\theta_1^2 \theta_6^2 \theta_7^2}{\theta_2^2 \theta_4^2\theta_{10}^2} , \qquad
 \lambda_3 -\lambda_2 = \frac{\theta_5^2 \theta_6^2 \theta_8^2}{\theta_2^2 \theta_4^2\theta_{10}^2} .
\end{split}
\end{equation} 
Conversely, given a smooth genus-two curve $\mathcal{C} \in \mathcal{M}_2(2)$ with three distinct complex numbers $(\lambda_1, \lambda_2, \lambda_3)$ different from $0, 1, \infty$, there is complex abelian surface $\mathbf{A}_{\tau}$ with period matrix $(\mathbb{I}_2,\tau)$ and $\tau \in\mathcal{A}_2(2)$ such that $\mathbf{A}_{\tau}=\mathrm{Jac} \, \mathcal{C}$  and the fourth powers of the even Theta constants are given by
\begin{equation}\label{Thomaeg=2}
\begin{array}{ll}
\theta_1^4 = R \, \lambda_3 \lambda_1 (\lambda_2 -1) (\lambda_3 - \lambda_1) \,,  &
\theta_2^4  = R \, \lambda_2 (\lambda_2 -1) ( \lambda_3 - \lambda_1) \,,  \\[0.2em]
\theta_3^4  = R\, \lambda_2  \lambda_1 (\lambda_2 - \lambda_1) (\lambda_3 - \lambda_1) \,,  &
\theta_4^4  = R\, \lambda_3 (\lambda_3 - 1) (\lambda_2 - \lambda_1) \,,  \\[0.2em]
\theta_5^4  = R\, \lambda_1 (\lambda_2 -1) (\lambda_3 - 1) ( \lambda_3 - \lambda_2) \,, &
\theta_6^4  = R\, (\lambda_3 - \lambda_2) (\lambda_3 -\lambda_1) ( \lambda_2 -\lambda_1) \,, \\[0.2em]
\theta_7^4  =  R \, \lambda_2 (\lambda_3 -1) ( \lambda_1 -1) (\lambda_3 - \lambda_1)  \,, &
\theta_8^4  = R \, \lambda_2 \lambda_3 (\lambda_3 - \lambda_2) (\lambda_1 -1)  \,, \\[0.2em]
\theta_9^4  = R\, \lambda_3 ( \lambda_2 -1) (\lambda_1 - 1) (\lambda_2 - \lambda_1)  \,, &
\theta_{10}^4  = R \, \lambda_1 ( \lambda_1 - 1) (\lambda_3 - \lambda_2) \,, 
\end{array}
\end{equation}
where $R\in \mathbb{C}^{*}$ is a non-zero constant.
\end{lemma}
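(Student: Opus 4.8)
The plan is to derive all three groups of formulas from the classical Thomae formula applied to the Rosenhain model (\ref{Eq:Rosenhain}), obtaining the fourth powers (\ref{Thomaeg=2}) directly, the Rosenhain roots (\ref{Picard}) by extracting square roots, and the auxiliary expressions (\ref{Picard2}) from (\ref{Picard}) together with the Frobenius identities (\ref{Eq:FrobeniusIdentities}); the converse statement is then an application of Torelli's theorem.

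First I would record the six branch points of $\mathcal{C}$ as $e_1=\lambda_1$, $e_2=\lambda_2$, $e_3=\lambda_3$, $e_4=0$, $e_5=1$, $e_6=\infty$, and recall that for a genus-two curve the ten even theta characteristics are in bijection with the ten unordered partitions of $\{e_1,\dots,e_6\}$ into two triples. The essential bookkeeping step is to make this bijection explicit, matching the labels $\theta_1,\dots,\theta_{10}$ to the ten partitions in the normalization of \cite{MR2367218,MR2372155}: the theta constant assigned to the partition $\{e_i,e_j,e_6\}\,|\,\{e_k,e_l,e_m\}$ is the one whose Thomae product is $\theta^4=c\,(e_i-e_j)(e_k-e_l)(e_k-e_m)(e_l-e_m)$, with a single constant $c$ fixed by the period normalization. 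Because $e_6=\infty$, the curve is presented in odd degree and the two factors $(e_\bullet-e_6)$ coming from the triple containing $\infty$ are removed by the standard regularization; absorbing these, together with $c$ and with any constant factor such as $(e_4-e_5)=-1$, into a common $R\in\mathbb{C}^{*}$ yields exactly the ten identities (\ref{Thomaeg=2}). For instance the partition with $\infty$-triple $\{e_4,e_5,e_6\}=\{0,1,\infty\}$ has constant pair factor $(e_4-e_5)=-1$ and produces the symmetric expression $\theta_6^4=R(\lambda_3-\lambda_2)(\lambda_3-\lambda_1)(\lambda_2-\lambda_1)$.

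Passing to squares, the three ratios in (\ref{Picard}) satisfy, after squaring and substituting (\ref{Thomaeg=2}), the relations $\big(\theta_1^2\theta_3^2/(\theta_2^2\theta_4^2)\big)^2=\lambda_1^2$ and its analogues for $\lambda_2,\lambda_3$; the sign of each square root is fixed to $+1$ by specializing to a totally real chamber, where the even theta constants are real and the branch points are ordered so that all three ratios are manifestly positive, and this gives (\ref{Picard}). The auxiliary identities (\ref{Picard2}) then follow from (\ref{Picard}) by a one-line substitution using (\ref{Eq:FrobeniusIdentities}): for example,
\[
 \lambda_1-1=\frac{\theta_1^2\theta_3^2-\theta_2^2\theta_4^2}{\theta_2^2\theta_4^2}=\frac{\theta_7^2\theta_9^2}{\theta_2^2\theta_4^2},
 \qquad
 \lambda_3-\lambda_2=\frac{\theta_8^2}{\theta_{10}^2}\cdot\frac{\theta_1^2\theta_4^2-\theta_2^2\theta_3^2}{\theta_2^2\theta_4^2}=\frac{\theta_5^2\theta_6^2\theta_8^2}{\theta_2^2\theta_4^2\theta_{10}^2},
\]
where the first uses $\theta_7^2\theta_9^2=\theta_1^2\theta_3^2-\theta_2^2\theta_4^2$ and the second uses $\theta_5^2\theta_6^2=\theta_1^2\theta_4^2-\theta_2^2\theta_3^2$; the remaining four cases are entirely analogous. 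For the converse, Torelli's theorem and the birational correspondence $\mathcal{M}_2\dashrightarrow\mathcal{A}_2$, refined to level-two structures, supply for any smooth genus-two curve $\mathcal{C}$ with distinct Rosenhain roots a period point $\tau\in\mathcal{A}_2(2)$ with $\mathbf{A}_\tau=\mathrm{Jac}(\mathcal{C})$, and (\ref{Thomaeg=2}) holds for this $\tau$ by the computation above.

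The main obstacle is the sign-and-normalization bookkeeping underlying the first two steps: one must check that a single constant $R$ and a single globally consistent choice of signs for the ten even theta constants make (\ref{Thomaeg=2}), (\ref{Picard}) and (\ref{Picard2}) hold simultaneously, rather than each only up to an independent sign. This is delicate precisely because the four partitions in which $e_4=0$ and $e_5=1$ lie in the same triple contribute a constant factor $(e_4-e_5)=-1$, so the orderings of the remaining differences must be chosen compatibly across all ten characteristics and compatibly with the quadratic relations (\ref{Eq:FrobeniusIdentities}). Once the partition dictionary and this common normalization are fixed, every remaining identity in the lemma is forced.
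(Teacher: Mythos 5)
The paper does not actually prove this lemma: it is quoted as the classical Thomae formula, with the particular one of the $720$ normalizations taken from the cited references, so your derivation via the partition dictionary between even characteristics and splittings of the six branch points is exactly the argument the paper delegates to the literature. Your overall route --- (\ref{Thomaeg=2}) from Thomae's formula, (\ref{Picard}) by extracting square roots of ratios, (\ref{Picard2}) by quadratic theta relations, and the converse by Torelli --- is the intended one.

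Two concrete caveats. First, only the two identities in (\ref{Picard2}) that you actually compute ($\lambda_1-1$ and $\lambda_3-\lambda_2$) follow from the three product identities displayed in (\ref{Eq:FrobeniusIdentities}); the remaining four need further quadratic relations of the same family, e.g.\ $\theta_5^2\theta_9^2=\theta_3^2\theta_8^2-\theta_4^2\theta_{10}^2$ for $\lambda_2-1$ and $\theta_6^2\theta_7^2=\theta_4^2\theta_8^2-\theta_3^2\theta_{10}^2$ for $\lambda_3-\lambda_1$. These are standard, but they are not among the three listed, so ``entirely analogous'' silently invokes the larger set of Frobenius/Riemann relations. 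Second, had you carried out the consistency check you yourself propose between (\ref{Picard}) and (\ref{Thomaeg=2}), you would find that as printed they are compatible only for $\lambda_3$: the ratios for $\lambda_1$ and $\lambda_2$ each acquire a spurious factor $(\lambda_3-\lambda_1)/(\lambda_3-1)$, which disappears if $\theta_4^4$ is corrected to $R\,\lambda_3(\lambda_3-\lambda_1)(\lambda_2-\lambda_1)$. That is a typo in the paper rather than a flaw in your argument, but it is precisely the sign-and-normalization bookkeeping you flag as the main obstacle, and a complete writeup should resolve it.
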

The characterization of the Siegel modular threefolds $\mathcal{A}_2(2,4)$, and $\mathcal{A}_2(4,8)$ as projective varieties and their Satake compactifications was given in \cite[Prop.~2.2]{MR3238326}:
\begin{lemma}
\label{compactifications}
\begin{enumerate}
\item[]
\item The holomorphic map $\Xi_{2,4}: \mathbb{H}_2 \to \mathbb{P}^3$ given by $\tau \mapsto [\Theta_1 : \Theta_2: \Theta_3 : \Theta_4]$ induces an isomorphism
between the Satake compactification $\overline{\mathcal{A}_2(2,4)}$ and $\mathbb{P}^3$.
\item The holomorphic map $\Xi_{4,8}: \mathbb{H}_2 \to \mathbb{P}^9$ given by $\tau \mapsto [\theta_1 : \dots : \theta_{10}]$ induces an isomorphism
between the Satake compactification $\overline{\mathcal{A}_2(4,8)}$ and the closure of $\Xi_{4,8}$ in $\mathbb{P}^9$.
\item We have the following commutative diagram:

\centerline{
\xymatrix{
\overline{\mathcal{A}_2(4,8)} \ar[rr]^{\Xi_{4,8}} \ar[d]_{\pi} 							&  									& \mathbb{P}^9 \ar[d]^{\operatorname{Sq}} \\
\overline{\mathcal{A}_2(2,4)} \ar[r]^{\phantom{aa} \Xi_{2,4}}  \ar[d]_{\operatorname{Ros}} 	&  \mathbb{P}^3 \ar[r]^{\operatorname{Ver}} 	& \mathbb{P}^9\\
\overline{\mathcal{A}_2(2)}
}}
Here, $\pi$ is the covering map with deck transformations $\Gamma_2(2,4)/\Gamma_2(4,8)\cong (\mathbb{Z}/2\mathbb{Z})^9$, the map $\operatorname{Sq}$ is the  square map $[\theta_1: \cdots : \theta_{10}] \mapsto [\theta_1^2: \cdots : \theta_{10}^2]$, the map $\operatorname{Ver}$ is the Veronese type map defined by the quadratic relations~(\ref{Eq:degree2doubling}) and~(\ref{Eq:degree2doublingR}), and the map $\operatorname{Ros}$ is the covering map with the deck transformations $\Gamma_2(2)/\Gamma_2(2,4)\cong (\mathbb{Z}/2\mathbb{Z})^3$ given by plugging the quadratic relations~(\ref{Eq:degree2doubling}) and~(\ref{Eq:degree2doublingR}) into Equations~(\ref{Picard}).
\end{enumerate}
\end{lemma}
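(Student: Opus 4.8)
The plan is to reduce Parts (1) and (2) to Igusa's structure theory for the graded rings of genus-two modular forms, and then to obtain the commutativity in Part (3) by direct substitution of the doubling and Thomae formulas recorded above.

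For Part (1), the four functions $\Theta_1,\dots,\Theta_4$ are Igusa's theta constants of second order, that is, weight-$\tfrac12$ modular forms for $\Gamma_2(2,4)$ that transform into one another only up to a common automorphy factor; hence the point $[\Theta_1:\Theta_2:\Theta_3:\Theta_4]$ depends only on the $\Gamma_2(2,4)$-orbit of $\tau$ and $\Xi_{2,4}$ descends to $\mathcal{A}_2(2,4)$. The decisive input is Igusa's theorem \cite{MR0141643,MR0168805} that these four forms are algebraically independent and generate the full graded ring of $\Gamma_2(2,4)$-modular forms. Since the Satake compactification is by definition $\operatorname{Proj}$ of this ring, this identifies $\overline{\mathcal{A}_2(2,4)}$ with $\operatorname{Proj}\mathbb{C}[\Theta_1,\Theta_2,\Theta_3,\Theta_4]\cong\mathbb{P}^3$ and exhibits $\Xi_{2,4}$ as the resulting isomorphism. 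Part (2) is the same argument applied to the ten even theta constants $\theta_1,\dots,\theta_{10}$, which are weight-$\tfrac12$ forms for $\Gamma_2(4,8)$; since these are no longer algebraically independent, $\overline{\mathcal{A}_2(4,8)}$ is identified with the closure of the theta image in $\mathbb{P}^9$ rather than with all of $\mathbb{P}^9$.

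I would then treat the vertical maps as the forgetful coverings attached to the chain of normal inclusions $\Gamma_2(4,8)\triangleleft\Gamma_2(2,4)\triangleleft\Gamma_2(2)$: on the compactifications, $\pi$ and $\operatorname{Ros}$ are the quotient morphisms by the finite deck groups $\Gamma_2(2,4)/\Gamma_2(4,8)\cong(\mathbb{Z}/2\mathbb{Z})^9$ and $\Gamma_2(2)/\Gamma_2(2,4)\cong(\mathbb{Z}/2\mathbb{Z})^3$ displayed above. Commutativity of the upper square is then a computation: one has $\operatorname{Sq}\circ\Xi_{4,8}(\tau)=[\theta_1^2:\cdots:\theta_{10}^2]$ and $\Xi_{2,4}\circ\pi(\tau)=[\Theta_1:\Theta_2:\Theta_3:\Theta_4]$, and substituting the second principal transformation formulas (\ref{Eq:degree2doubling}) and (\ref{Eq:degree2doublingR}) shows that these agree precisely when $\operatorname{Ver}$ is taken to be the morphism whose ten homogeneous components are the right-hand sides of those identities. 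Likewise, feeding the same doubling formulas into the Rosenhain expressions (\ref{Picard}) of Lemma \ref{ThomaeLemma} yields the coordinates on $\overline{\mathcal{A}_2(2)}$, which is exactly what $\operatorname{Ros}$ is defined to do; this gives the lower triangle.

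I expect the genuine obstacle to lie entirely in Parts (1) and (2), i.e. in identifying the Satake compactifications with the theta images. This rests on Igusa's delicate determination of the rings of modular forms -- algebraic independence and generation in the $(2,4)$ case, and control of the ideal of relations among the ten even constants in the $(4,8)$ case -- together with the fact that the theta-null maps extend holomorphically over the boundary of the Baily--Borel compactification. By contrast, once those rings and the formulas (\ref{Eq:degree2doubling}), (\ref{Eq:degree2doublingR}), and (\ref{Picard}) are available, Part (3) is a mechanical verification, and the whole statement is the content of \cite[Prop.~2.2]{MR3238326}.
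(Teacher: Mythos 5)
The paper gives no internal proof of this lemma---it is quoted from \cite[Prop.~2.2]{MR3238326}---and your outline reconstructs exactly the argument on which that reference rests: Igusa's determination of the graded rings of modular forms for $\Gamma_2(2,4)$ and $\Gamma_2(4,8)$ (together with the identification of the Satake compactification with $\operatorname{Proj}$ of the theta ring), followed by the purely formal verification of the diagram via the doubling identities~(\ref{Eq:degree2doubling}),~(\ref{Eq:degree2doublingR}) and Thomae's formula~(\ref{Picard}). Your proposal is correct and takes essentially the same approach as the cited source, including the correct assessment that the only genuine content lies in parts (1) and (2).
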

\begin{remark}
\label{fact:sections2}
Lemma~\ref{ThomaeLemma} shows that $\Gamma_2(2)$ is the group of isomorphisms which fix the 4th power of the Theta constants $\theta_i$  for $1\le i \le 10$,  $\Gamma_2(2,4)$ fixes their 2nd power, and $\Gamma_2(4,8)$ the Theta constants of level $(2,2)$ themselves.
\end{remark}
 \section{Jacobians and two-isogenies}
\subsection{$16_6$ configuration on the Jacobian}
\label{sec:16_6}
 On the Jacobian $\mathbf{A}=\operatorname{Jac}(\mathcal{C})$ the sixteen Theta divisors together with the sixteen order-two points form a $16_6$ configuration in the following way; see \cite{MR2457735}. For the genus-two curve  $\mathcal{C}$ in Equation~(\ref{Eq:Rosenhain}), we denote the Weierstrass points by $\mathfrak{p}_i: [X:Y:Z]=[\lambda_i:0:1]$ for $1 \le i \le 3$, $\mathfrak{p}_4: [X:Y:Z]=[0:0:1]$, $\mathfrak{p}_5: [X:Y:Z]=[1:0:1]$, and $\mathfrak{p}_6:  [X:Y:Z]=[1:0:0]$; we also denote the space of two torsion points on an abelian variety $\mathbf{A}$ by $\mathbf{A}[2]$. In the case of the Jacobian of a genus-two curve, every nontrivial two-torsion point can be expressed using differences of Weierstrass points of $\mathcal{C}$. Concretely, the sixteen order-two points of $\mathbf{A}[2]$ are obtained using the embedding of the curve into the connected component of the identity in the Picard group, i.e., $\mathcal{C} \hookrightarrow \mathbf{A} \cong \operatorname{Pic}^0(\mathcal{C})$ with $\mathfrak{p} \mapsto [\mathfrak{p} -\mathfrak{p}_6]$. In particular, we obtain all elements of $\mathbf{A}[2]$ as
 \begin{equation}
 \label{oder2points}
  \mathfrak{p}_{i6} = [ \mathfrak{p}_i - \mathfrak{p}_6] \; \text{for $1 \le i < 6$}, \qquad 
  \mathfrak{p}_{ij}=[ \mathfrak{p}_i + \mathfrak{p}_j - 2 \, \mathfrak{p}_6]  \; \text{for $1 \le i < j \le 5$}, 
 \end{equation}
where we set $\mathfrak{p}_{0} = \mathfrak{p}_{66} =[0]$. For $\{i,j,k,l,m,n\}=\{1, \dots6\}$,  the group law on $\mathbf{A}[2]$ is given by the relations
 \begin{equation}
 \label{group_law}
    \mathfrak{p}_0 +  \mathfrak{p}_{ij} =  \mathfrak{p}_{ij}, \quad  \mathfrak{p}_{ij} +  \mathfrak{p}_{ij} =  \mathfrak{p}_{0}, \quad 
    \mathfrak{p}_{ij} +  \mathfrak{p}_{kl} =  \mathfrak{p}_{mn}, \quad  \mathfrak{p}_{ij} +
      \mathfrak{p}_{jk} =  \mathfrak{p}_{ik}.
 \end{equation}
\par The  space $\mathbf{A}[2]$ of two torsion points on an abelian variety $\mathbf{A}$ admits a symplectic bilinear form, called the \emph{Weil pairing}. The Weil pairing is induced by the pairing
\[
 \langle [ \mathfrak{p}_i - \mathfrak{p}_j  ] ,[ \mathfrak{p}_k - \mathfrak{p}_l] \rangle =\#\{  \mathfrak{p}_{i}, \mathfrak{p}_{j}\}\cap \{ \mathfrak{p}_{k}, \mathfrak{p}_{l}\} \mod{2},
\]
such that the following table gives all possible pairings:
\begin{equation}
\begin{array}{c|ccccccc}
\langle \bullet, \bullet \rangle 	& \mathfrak{p}_{0}	& \mathfrak{p}_{i6}	& \mathfrak{p}_{j6}	&  \mathfrak{p}_{ij}	& \mathfrak{p}_{il}	& \mathfrak{p}_{kl} \\
\hline
\mathfrak{p}_{0}			& 0				& 0				& 0				& 0				& 0				& 0\\
\mathfrak{p}_{i6}			& 0				& 0				& 1				& 1				& 1				& 0\\
\mathfrak{p}_{j6}			& 0				& 1				& 0				& 1				& 0				& 0\\
\mathfrak{p}_{ij}			& 0				& 1				& 1				& 0				& 1				& 0\\
\mathfrak{p}_{il}			& 0				& 1				& 0				& 1				& 0				& 1\\
\mathfrak{p}_{kl}			& 0				& 0				& 0				& 0				& 1				& 0
\end{array}
\end{equation}
\par We call a two-dimensional, maximal isotropic subspace of $\mathbf{A}[2]$ with respect to the Weil pairing, i.e., a subspace such that the symplectic form vanishes on it, a \emph{G\"opel group} in $\mathbf{A}[2]$. Such a maximal subgroup is isomorphic to $( \mathbb{Z}/2\mathbb{Z})^2$. We have the following:
 \begin{lemma}
 \label{lem:Goepel}
 There are 15 distinct G\"opel groups in $\mathbf{A}[2]$ given by
 \begin{small}
 \begin{equation*}
 \begin{split}
 \{ \mathfrak{p}_0,  \mathfrak{p}_{12}, \mathfrak{p}_{34}, \mathfrak{p}_{56} \}, \,  \{ \mathfrak{p}_0,  \mathfrak{p}_{12}, \mathfrak{p}_{35}, \mathfrak{p}_{46} \}, \, 
 \{ \mathfrak{p}_0,  \mathfrak{p}_{12}, \mathfrak{p}_{36}, \mathfrak{p}_{45} \}, \,  \{ \mathfrak{p}_0,  \mathfrak{p}_{13}, \mathfrak{p}_{24}, \mathfrak{p}_{56} \}, \,
 \{ \mathfrak{p}_0,  \mathfrak{p}_{13}, \mathfrak{p}_{25}, \mathfrak{p}_{46} \}, \\  \{ \mathfrak{p}_0,  \mathfrak{p}_{13}, \mathfrak{p}_{26}, \mathfrak{p}_{45} \}, \,
 \{ \mathfrak{p}_0,  \mathfrak{p}_{14}, \mathfrak{p}_{23}, \mathfrak{p}_{56} \}, \,  \{ \mathfrak{p}_0,  \mathfrak{p}_{14}, \mathfrak{p}_{25}, \mathfrak{p}_{36} \}, \,
 \{ \mathfrak{p}_0,  \mathfrak{p}_{14}, \mathfrak{p}_{26}, \mathfrak{p}_{35} \}, \,  \{ \mathfrak{p}_0,  \mathfrak{p}_{15}, \mathfrak{p}_{23}, \mathfrak{p}_{46} \}, \\
 \{ \mathfrak{p}_0,  \mathfrak{p}_{15}, \mathfrak{p}_{24}, \mathfrak{p}_{36} \}, \,  \{ \mathfrak{p}_0,  \mathfrak{p}_{15}, \mathfrak{p}_{26}, \mathfrak{p}_{34} \}, \,
 \{ \mathfrak{p}_0,  \mathfrak{p}_{16}, \mathfrak{p}_{23}, \mathfrak{p}_{45} \}, \,  \{ \mathfrak{p}_0,  \mathfrak{p}_{16}, \mathfrak{p}_{24}, \mathfrak{p}_{35} \}, \,
 \{ \mathfrak{p}_0,  \mathfrak{p}_{16}, \mathfrak{p}_{25}, \mathfrak{p}_{34} \}.
 \end{split}
 \end{equation*}
 \end{small}
 Moreover, there are 60 distinct affine spaces in $\mathbf{A}[2]$ obtained from the four translates of each G\"opel group.
 \end{lemma}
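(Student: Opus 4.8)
The plan is to translate the statement entirely into the finite symplectic geometry of $\mathbf{A}[2] \cong (\mathbb{Z}/2\mathbb{Z})^4$ and to reduce the classification of G\"opel groups to a purely combinatorial count of pair-partitions of $\{1,\dots,6\}$. First I would record the concrete model already set up in Section~\ref{sec:16_6}: every nonzero element of $\mathbf{A}[2]$ is one of the fifteen classes $\mathfrak{p}_{ij}$ with $1 \le i < j \le 6$ (using $\mathfrak{p}_{i6} = [\mathfrak{p}_i - \mathfrak{p}_6]$ and $\mathfrak{p}_{ij} = [\mathfrak{p}_i + \mathfrak{p}_j - 2\mathfrak{p}_6]$), which together with $\mathfrak{p}_0$ exhausts the sixteen points. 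The Weil pairing satisfies $\langle \mathfrak{p}_{ij}, \mathfrak{p}_{kl}\rangle \equiv \#(\{i,j\} \cap \{k,l\}) \pmod 2$, so it is alternating (the diagonal of the pairing table vanishes) and, being the Weil pairing, non-degenerate. In particular every nonzero vector spans an isotropic line, and a maximal isotropic subspace of this four-dimensional symplectic space is exactly a two-dimensional one, matching the definition of a G\"opel group.

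Next I would characterize the G\"opel groups explicitly. Since $\mathfrak{p}_0$ lies in every subgroup, a two-dimensional isotropic subgroup has the form $\{\mathfrak{p}_0, a, b, a+b\}$ with $a,b$ distinct nonzero elements satisfying $\langle a,b\rangle = 0$ (the conditions $\langle a,a\rangle = \langle b,b\rangle = \langle a+b,a+b\rangle = 0$ being automatic from the alternating property). Writing $a = \mathfrak{p}_{ij}$ and $b = \mathfrak{p}_{kl}$, the pairing formula shows that $\langle a,b\rangle = 0$ holds precisely when the two-element sets $\{i,j\}$ and $\{k,l\}$ are disjoint; the group law in Equation~(\ref{group_law}) then gives $a+b = \mathfrak{p}_{mn}$, where $\{m,n\}$ is the complementary pair with $\{i,j,k,l,m,n\} = \{1,\dots,6\}$. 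Hence a G\"opel group is the same datum as an unordered partition of $\{1,\dots,6\}$ into three disjoint pairs, and conversely each such partition manifestly yields an isotropic subgroup. The step I expect to carry the real content, and which makes the classification exhaustive rather than merely a supply of examples, is the observation that there are no other nonzero elements to consider: every nonzero class is a two-subset $\mathfrak{p}_{ij}$, so no isotropic subgroup can escape the pair-partition description.

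Finally I would count. The number of partitions of a six-element set into three unordered pairs is $6!/(2^3 \cdot 3!) = 15$, which matches the displayed list exactly and certifies that it is complete; as an independent check, the general count $\prod_{i=1}^{2}(2^i+1) = 3\cdot 5 = 15$ of Lagrangian subspaces of a four-dimensional symplectic space over $\mathbb{F}_2$ gives the same total. For the affine spaces, each G\"opel group $G$ has order $4$ and index $4$ in $\mathbf{A}[2]$, hence exactly four cosets. A coset $v+G$ recovers $G$ as its set of pairwise differences, so two cosets coincide only if they belong to the same group and are equal there; therefore the $15 \times 4 = 60$ translates are pairwise distinct affine subspaces, establishing the second assertion.
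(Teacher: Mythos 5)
Your proof is correct, and it supplies an argument that the paper itself omits entirely: Lemma~\ref{lem:Goepel} is stated with a bare \verb|\qed| and no justification. Your reduction to pair-partitions of $\{1,\dots,6\}$ via the Weil-pairing formula, the count $6!/(2^3\cdot 3!)=15$ (cross-checked against the Lagrangian count $(2+1)(2^2+1)$), and the observation that a coset determines its group as its difference set are exactly the routine verifications the authors are implicitly relying on, so nothing further is needed.
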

 \qed
\par We define a \emph{Rosenhain group} to be a subgroup in $\mathbf{A}[2]$ isomorphic to $( \mathbb{Z}/2\mathbb{Z})^2$ of the from $ \{ \mathfrak{p}_0,  \mathfrak{p}_{ij}, \mathfrak{p}_{ik}, \mathfrak{p}_{jk} \}$ with $1\le i < j, k \le 6$. Note that a Rosenhain group is not an isotropic subspace of $\mathbf{A}[2]$. We have the following:
 \begin{lemma}
 \label{lem:Rosenhain}
 There are 20 different Rosenhain groups in $\mathbf{A}[2]$ given by
 \begin{small}
 \begin{equation*}
 \begin{split}
 \{ \mathfrak{p}_0,  \mathfrak{p}_{12}, \mathfrak{p}_{13}, \mathfrak{p}_{23} \}, \,  \{ \mathfrak{p}_0,  \mathfrak{p}_{12}, \mathfrak{p}_{14}, \mathfrak{p}_{24} \}, \, 
 \{ \mathfrak{p}_0,  \mathfrak{p}_{12}, \mathfrak{p}_{15}, \mathfrak{p}_{25} \}, \,  \{ \mathfrak{p}_0,  \mathfrak{p}_{12}, \mathfrak{p}_{16}, \mathfrak{p}_{26} \}, \,
 \{ \mathfrak{p}_0,  \mathfrak{p}_{13}, \mathfrak{p}_{14}, \mathfrak{p}_{34} \}, \\  \{ \mathfrak{p}_0,  \mathfrak{p}_{13}, \mathfrak{p}_{15}, \mathfrak{p}_{35} \}, \,
 \{ \mathfrak{p}_0,  \mathfrak{p}_{13}, \mathfrak{p}_{16}, \mathfrak{p}_{36} \}, \,  \{ \mathfrak{p}_0,  \mathfrak{p}_{14}, \mathfrak{p}_{15}, \mathfrak{p}_{45} \}, \,
 \{ \mathfrak{p}_0,  \mathfrak{p}_{14}, \mathfrak{p}_{16}, \mathfrak{p}_{46} \}, \,  \{ \mathfrak{p}_0,  \mathfrak{p}_{15}, \mathfrak{p}_{16}, \mathfrak{p}_{56} \}, \\
 \{ \mathfrak{p}_0,  \mathfrak{p}_{23}, \mathfrak{p}_{24}, \mathfrak{p}_{34} \}, \,  \{ \mathfrak{p}_0,  \mathfrak{p}_{23}, \mathfrak{p}_{25}, \mathfrak{p}_{35} \}, \,
 \{ \mathfrak{p}_0,  \mathfrak{p}_{23}, \mathfrak{p}_{26}, \mathfrak{p}_{36} \}, \,  \{ \mathfrak{p}_0,  \mathfrak{p}_{24}, \mathfrak{p}_{25}, \mathfrak{p}_{45} \}, \,
 \{ \mathfrak{p}_0,  \mathfrak{p}_{24}, \mathfrak{p}_{26}, \mathfrak{p}_{46} \}, \\  \{ \mathfrak{p}_0,  \mathfrak{p}_{25}, \mathfrak{p}_{26}, \mathfrak{p}_{56} \}, \,
 \{ \mathfrak{p}_0,  \mathfrak{p}_{34}, \mathfrak{p}_{35}, \mathfrak{p}_{45} \}, \,  \{ \mathfrak{p}_0,  \mathfrak{p}_{34}, \mathfrak{p}_{36}, \mathfrak{p}_{46} \}, \,
 \{ \mathfrak{p}_0,  \mathfrak{p}_{35}, \mathfrak{p}_{36}, \mathfrak{p}_{56} \}, \,  \{ \mathfrak{p}_0,  \mathfrak{p}_{45}, \mathfrak{p}_{46}, \mathfrak{p}_{56} \}.
 \end{split}
 \end{equation*}
 \end{small}
 Moreover, there are 80 distinct affine spaces in $\mathbf{A}[2]$ comprised of four translates of each Rosenhain group.
 \end{lemma}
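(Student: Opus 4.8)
The plan is to treat this as a purely combinatorial statement about the group $\mathbf{A}[2]\cong(\mathbb{Z}/2\mathbb{Z})^4$, using the dictionary supplied by~\eqref{oder2points} between the fifteen nonzero two-torsion points $\mathfrak{p}_{ij}$ and the $\binom{6}{2}=15$ duads $\{i,j\}\subseteq\{1,\dots,6\}$. Under this dictionary the group law~\eqref{group_law} reads: two duads sharing exactly one index add to the duad on their symmetric difference ($\mathfrak{p}_{ij}+\mathfrak{p}_{jk}=\mathfrak{p}_{ik}$), while two disjoint duads add to the complementary duad. First I would fix three distinct indices $i,j,k$ and note that the duads $\{i,j\},\{i,k\},\{j,k\}$ are pairwise overlapping, so by~\eqref{group_law} any two of $\mathfrak{p}_{ij},\mathfrak{p}_{ik},\mathfrak{p}_{jk}$ sum to the third; together with $\mathfrak{p}_0$ and $\mathfrak{p}_{ij}+\mathfrak{p}_{ij}=\mathfrak{p}_0$ this shows that $\{\mathfrak{p}_0,\mathfrak{p}_{ij},\mathfrak{p}_{ik},\mathfrak{p}_{jk}\}$ is closed under addition, hence a subgroup isomorphic to $(\mathbb{Z}/2\mathbb{Z})^2$.

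Next comes the enumeration. Since the three nonzero elements of such a group are exactly the three duads contained in the triple $\{i,j,k\}$, the union of their index sets recovers $\{i,j,k\}$; thus $\{i,j,k\}\mapsto\{\mathfrak{p}_0,\mathfrak{p}_{ij},\mathfrak{p}_{ik},\mathfrak{p}_{jk}\}$ is a bijection from the three-element subsets of $\{1,\dots,6\}$ onto the Rosenhain groups. Writing each triple with its smallest index first (the convention $i<j,k$) lists every such group exactly once, giving precisely $\binom{6}{3}=20$ of them, matching the stated enumeration. As a consistency check I would evaluate the Weil pairing: the table gives $\langle\mathfrak{p}_{ij},\mathfrak{p}_{ik}\rangle=|\{i,j\}\cap\{i,k\}|\bmod 2=1$, confirming that Rosenhain groups are non-isotropic, in contrast with the Göpel groups of Lemma~\ref{lem:Goepel}.

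For the final assertion I would argue at the level of cosets. Each Rosenhain group $H$ has index $16/4=4$ in $\mathbf{A}[2]$, so it has exactly four translates, which partition the sixteen two-torsion points. The key observation is that a coset determines its group: for $C=g+H$ one has $C-C=H$, so two cosets coincide only if their underlying groups do. As the twenty Rosenhain groups are pairwise distinct, their families of four translates are pairwise disjoint, yielding $20\times4=80$ distinct affine subspaces. I expect no serious obstacle here; the statement is entirely combinatorial, and the only points demanding care are the verification of closure under~\eqref{group_law}, the avoidance of double-counting in the indexing convention, and the coset-determines-group argument, all of which are routine once the duad dictionary is in place.
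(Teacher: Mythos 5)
Your proposal is correct, and it fills in exactly the routine verification the paper leaves implicit (the lemma is stated with no written proof, only a \qed): closure of $\{\mathfrak{p}_0,\mathfrak{p}_{ij},\mathfrak{p}_{ik},\mathfrak{p}_{jk}\}$ under the group law~(\ref{group_law}), the bijection with the $\binom{6}{3}=20$ triples, and the coset count $20\times 4=80$. The one step worth keeping explicit is your observation that $C-C=H$ for any coset $C=g+H$, since that is what guarantees cosets of distinct Rosenhain groups never coincide and hence that the $80$ affine subspaces are genuinely distinct.
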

 \qed
 \par  In general, for a principally polarized abelian variety $\mathbf{A}$ the line bundle $\mathcal{L}$ defining its  principal polarization is ample and satisfies $h^0(\mathcal{L}) = 1$. Then, there exists an effective divisor $\mathsf{\Theta}$ such that $\mathcal{L}= \mathcal{O}_{\mathbf{A}}(\mathsf{\Theta})$, uniquely defined only up to a translation. The divisor $\mathsf{\Theta}$ is called a \emph{Theta divisor} associated with the polarization.  It is known that the abelian surface $\mathbf{A}$ is not the product of two elliptic curves if and only if $\mathsf{\Theta}$ is an irreducible divisor. In this case, $\mathsf{\Theta}$ is a smooth curve of genus two and $\mathbf{A}=\operatorname{Jac}(\mathcal{C})$. The standard Theta divisor $\mathsf{\Theta} = \mathsf{\Theta}_6 = \{ [\mathfrak{p} - \mathfrak{p}_0] \, \mid \mathfrak{p} \in \mathcal{C}\}$  contains the six order-two points $\mathfrak{p}_0, \mathfrak{p}_{i6}$ for $1 \le i \le 5$.  Likewise for $1 \le i \le 5$, the five translates $\mathsf{\Theta}_i = \mathfrak{p}_{i6} + \mathsf{\Theta} $ contain $\mathfrak{p}_0, \mathfrak{p}_{i6}, \mathfrak{p}_{ij}$ with $j \not = i, 6$, and the ten translates $\mathsf{\Theta}_{ij6} = \mathfrak{p}_{ij} + \mathsf{\Theta}$ for $1 \le i < j \le 5$ contain $\mathfrak{p}_{ij}, \mathfrak{p}_{i6}, \mathfrak{p}_{j6}, \mathfrak{p}_{kl}$ with $k,l \not = i,j,6$ and $k<l$. Conversely, each order-two point lies on exactly six of the divisors, namely
 \begin{align}
   \mathfrak{p}_{0} & \in \mathsf{\Theta}_i \phantom{ ,  \mathsf{\Theta}_6,  \mathsf{\Theta}_{ij6} } \quad  \text{for $i=1, \dots, 6$,}\\
   \mathfrak{p}_{i6} & \in \mathsf{\Theta}_i,  \mathsf{\Theta}_6,  \mathsf{\Theta}_{ij6} \quad  \text{for $i=1, \dots, 5$ with $j \not = i,6$,}\\
   \mathfrak{p}_{ij} & \in  \mathsf{\Theta}_i,  \mathsf{\Theta}_j,  \mathsf{\Theta}_{kl6} \quad  \text{for $1 \le i < j \le 5$ with $k,l \not = i,j,6$ and $k<l$.}
 \end{align}
We call the divisors $\{ \mathsf{\Theta}_i \}$ and $\{ \mathsf{\Theta}_{jk6} \}$ with $1\le i \le6$ and $1\le j < k<6$, the six \emph{odd} and the ten \emph{even} Theta divisors, respectively. The odd Theta divisors can be identified with the six translates of the curve $\mathcal{C}$ by $\mathfrak{p}_{i6}$ with $1 \le i \le 6$, and thus with the six Weierstrass points $\mathfrak{p}_i$ on the curve $\mathcal{C}$. Furthermore, there is a one-to-one correspondence between pairs of odd symmetric Theta divisors and two-torsion points on $\mathbf{A}[2]$ since $\mathsf{\Theta}_i \cap \mathsf{\Theta}_j =\{ \mathfrak{p}_0, \mathfrak{p}_{ij} \}$, and, in turn, unordered pairs $\{\mathfrak{p}_i, \mathfrak{p}_j\}$ of Weierstrass points since $\mathfrak{p}_{ij} = \mathfrak{p}_{i6}  +  \mathfrak{p}_{j6}$.
 \subsubsection{Relation to finite geometry}
 \label{ss:FiniteGeometry}
 The $16_6$ configuration on $\mathbf{A}[2]$ is effectively described in the context of finite geometry; see~\cite{MR2112585}.  We denote elements of the vector space $\mathbb{F}_2^4$ as matrices $A=\left( \begin{smallmatrix} a_1&a_2\\ a_3& a_4 \end{smallmatrix} \right)$.   It is easy to show that these matrices form a group with $16=2^{2g}$ elements for $g=2$, and a group structure defined by addition modulo two. A symplectic form  on $\mathbb{F}_2^4$ is defined by $(A,A')\mapsto \operatorname{Tr}(A^t \cdot \left( \begin{smallmatrix} 0&1\\ -1&0 \end{smallmatrix} \right)  \cdot A')$.  We say that two matrices $A, A'$ are \emph{syzygetic} if $\operatorname{Tr}(A^t \cdot \left( \begin{smallmatrix} 0&1\\ -1&0 \end{smallmatrix} \right)  \cdot A') \equiv 0 \mod 2$. A \emph{G\"opel group} in $\mathbb{F}_2^4$ is a subgroup of four elements such that every two elements are syzygetic.  It is well known~\cite{MR2367218} that (i) each G\"opel group  in $\mathbb{F}_2^4$ is maximal and isomorphic to $( \mathbb{Z}/2\mathbb{Z})^2$, (ii) the number of different G\"opel groups  in $\mathbb{F}_2^4$ is $15$, (iii) each G\"opel group  in $\mathbb{F}_2^4$ has $2^{2g-2}=4$ cosets which are called G\"opel systems.  Moreover, singular planes in finite geometry are indexed by points in $\mathbb{F}_2^4$ as follows: a plane indexed by $\left( \begin{smallmatrix} b_1&b_2\\ b_3& b_4 \end{smallmatrix} \right) \in\mathbb{F}_2^4$ contains the points $\left( \begin{smallmatrix} a_1&a_2\\ a_3& a_4 \end{smallmatrix} \right)\in\mathbb{F}_2^4$ that satisfy either $\left( \begin{smallmatrix} a_1\\ a_3 \end{smallmatrix} \right)=\left( \begin{smallmatrix} b_1\\ b_3 \end{smallmatrix} \right) \; \text{and} \; \left( \begin{smallmatrix} a_2\\ a_4 \end{smallmatrix} \right)\not =\left( \begin{smallmatrix} b_2\\ b_4 \end{smallmatrix} \right)$ or $\left( \begin{smallmatrix} a_1\\ a_3 \end{smallmatrix} \right)\not =\left( \begin{smallmatrix} b_1\\ b_3 \end{smallmatrix} \right) \; \text{and} \; \left( \begin{smallmatrix} a_2\\ a_4 \end{smallmatrix} \right) =\left( \begin{smallmatrix} b_2\\ b_4 \end{smallmatrix} \right)$. The following was proved in \cite{MR2062673}:
 \begin{lemma}
 \label{lem:16_6}
 The points $\left( \begin{smallmatrix} a_1&a_2\\ a_3& a_4 \end{smallmatrix} \right)\in\mathbb{F}_2^4$ and the singular planes indexed by $\left( \begin{smallmatrix} b_1&b_2\\ b_3& b_4 \end{smallmatrix} \right)\in\mathbb{F}_2^4$ form a $16_6$ configuration on $\mathbb{F}_2^4$: 
 \begin{enumerate}
 \item Any point is contained in exactly six singular planes.
 \item Any singular plane indexed contains exactly six points.
 \end{enumerate}
 The automorphism group of the $16_6$ configuration is $\mathbb{F}_2^4 \rtimes \operatorname{Sp}(4, \mathbb{F}_2)$ and is given by translations by order-two points and rotations preserving the symplectic form. 
 \end{lemma}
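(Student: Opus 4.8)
The plan is to dispatch the two incidence counts by direct enumeration and then to determine $\operatorname{Aut}$ by exhibiting an explicit subgroup of the right order and matching it with an upper bound coming from the action on points. First I would reformulate the incidence in column form. For $A=\left(\begin{smallmatrix} a_1&a_2\\ a_3&a_4\end{smallmatrix}\right)$ write $A_L=\left(\begin{smallmatrix} a_1\\ a_3\end{smallmatrix}\right)$ and $A_R=\left(\begin{smallmatrix} a_2\\ a_4\end{smallmatrix}\right)$ for its two columns. By definition $A$ lies on the singular plane indexed by $B$ exactly when precisely one of $A_L=B_L$, $A_R=B_R$ holds, i.e. when $A+B\in S$, where $S\subset\mathbb{F}_2^4$ is the set of matrices with exactly one zero column. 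To prove (2), fix $B$ and count the $A$ with $A+B\in S$: either $A_L=B_L$ and $A_R\neq B_R$, giving $1\cdot 3$ choices, or $A_L\neq B_L$ and $A_R=B_R$, giving $3\cdot 1$ choices, for a total of six. Since the condition $A+B\in S$ is symmetric in $A$ and $B$, the same count with the roles interchanged proves (1). This already exhibits the symmetric $16_6$ configuration, with $S=(V_L\cup V_R)\setminus\{0\}$ the six nonzero vectors of the two column subspaces $V_L=\{A_R=0\}$ and $V_R=\{A_L=0\}$.

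For the automorphism group I would first produce $\mathbb{F}_2^4\rtimes\operatorname{Sp}(4,\mathbb{F}_2)$ inside $\operatorname{Aut}$. The diagonal translations $A\mapsto A+t$, $B\mapsto B+t$ fix the difference $A+B$ and hence preserve incidence; they form a normal subgroup isomorphic to $\mathbb{F}_2^4$ which acts simply transitively on the sixteen points. For the symplectic factor it is cleanest to pass through the identification of this configuration with the $16_6$ configuration on $\mathbf{A}[2]$ of Section~\ref{sec:16_6}: the group $S_6\cong\operatorname{Sp}(4,\mathbb{F}_2)$ permuting the six Weierstrass points of $\mathcal{C}$ permutes the two-torsion points $\mathfrak{p}_0,\mathfrak{p}_{ij}$ and the Theta divisors $\mathsf{\Theta}_i,\mathsf{\Theta}_{jk6}$ compatibly with the incidence $\mathfrak{p}\in\mathsf{\Theta}$, and it fixes $\mathfrak{p}_0$. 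This copy of $\operatorname{Sp}(4,\mathbb{F}_2)$ normalizes the translations and meets them trivially, so together they generate a subgroup of order $16\cdot 720=11520$.

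It then remains to show $|\operatorname{Aut}|\le 11520$. Because the translations are already simply transitive on points, every automorphism factors as a translation times an element of the stabilizer of $\mathfrak{p}_0\cong 0$, so it suffices to bound $|\operatorname{Stab}(0)|\le 720$. For this I would recover the symplectic data from the bare incidence: an automorphism fixing $0$ permutes the fifteen nonzero points, and the syzygetic (Weil-pairing-zero) relation among them — equivalently the generalized quadrangle $W(2)$ whose lines are the G\"opel groups through $0$ — should be definable purely from which triples of points are coplanar in the configuration. Granting this, $\operatorname{Stab}(0)$ embeds into $\operatorname{Aut}(W(2))=\operatorname{Sp}(4,\mathbb{F}_2)$, which has order $720$; combined with the lower bound this yields $\operatorname{Aut}=\mathbb{F}_2^4\rtimes\operatorname{Sp}(4,\mathbb{F}_2)$, and the final clause of the lemma is just the resulting description of these maps as translations by order-two points and symplectic rotations.

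The step I expect to be the main obstacle is precisely this intrinsic recovery of the symplectic structure from the incidence relation, i.e. showing that an incidence-preserving permutation fixing $0$ must respect $\langle\cdot,\cdot\rangle$ and is thus symplectic-linear rather than some more exotic permutation. The subtlety is that the naive linear action of $\operatorname{Sp}(4,\mathbb{F}_2)$ does not preserve $S$ (only the subgroup stabilizing the pair $\{V_L,V_R\}$ does), so the symplectic symmetries act on the planes with an accompanying affine shift and must be detected combinatorially rather than read off from $S$ directly. By contrast, the enumeration in (1)--(2) and the construction of the explicit subgroup are routine.
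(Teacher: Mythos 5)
The paper offers no argument for this lemma at all --- it is quoted from \cite{MR2062673} and left unproved --- so there is no internal proof to match, and your argument has to stand on its own. Parts (1) and (2) are done correctly and completely: the reformulation ``$A$ lies on the plane indexed by $B$ iff $A+B\in S$'', with $S$ the six matrices having exactly one zero column, is right, the $1\cdot 3+3\cdot 1=6$ count is right, and the symmetry of $A+B$ in $A$ and $B$ gives the dual count. The diagonal translations and the $S_6\cong\operatorname{Sp}(4,\mathbb{F}_2)$ obtained by permuting the six Weierstrass points (equivalently the six odd Theta divisors, compatibly with the group law and the incidences of Section~\ref{sec:16_6}) do give the lower bound $16\cdot 720\le|\operatorname{Aut}|$, and your closing remark --- that a symplectic matrix acts on the plane labels only up to an affine shift because the linear $\operatorname{Sp}(4,\mathbb{F}_2)$-action does not stabilize $S$ --- is correct and is precisely why the naive ``linear maps preserve everything'' argument fails.

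The genuine gap is exactly where you flagged it: the upper bound $|\operatorname{Stab}(0)|\le 720$. As written you \emph{grant} that the syzygetic relation (equivalently the generalized quadrangle $W(2)$) is definable from the bare incidence data; that claim is true but it is the entire content of the step, and recovering it from coplanar triples requires a case analysis you have not carried out. There is a shortcut that stays inside your own set-up and avoids reconstructing the symplectic form: show that $\operatorname{Stab}(0)$ acts faithfully on the six planes through $0$. Indeed, for $A\ne 0$ there are exactly two ordered pairs $(s,s')\in S\times S$ with $s+s'=A$ (two within $V_L\setminus\{0\}$ or $V_R\setminus\{0\}$ when $A$ has a zero column, and $(A_L,A_R)$, $(A_R,A_L)$ otherwise), so each nonzero point lies on exactly two of the six planes through $0$ and is recovered from that unordered pair as $A=s+s'$; since $15=\binom{6}{2}$, the map sending $A$ to the pair of planes through $0$ and $A$ is a bijection, hence an automorphism fixing $0$ and each plane through $0$ is the identity, and $\operatorname{Stab}(0)$ embeds into the $S_6$ permuting those six planes. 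Combined with your lower bound this gives $|\operatorname{Aut}|=11520$ and $\operatorname{Stab}(0)\cong S_6$; normality of the translation subgroup then follows because the stabilizer you constructed acts linearly, so conjugating a translation by it yields a translation. With that replacement the proof is complete.
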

 \qed
\par We construct an isomorphism between $\mathbf{A}[2]$ and $\mathbb{F}_2^4$ such that the point $\mathfrak{p}_0$ is mapped to $\left( \begin{smallmatrix} 0&0\\ 0&0 \end{smallmatrix} \right)$. Following Lemma~\ref{lem:16_6}, each Theta divisor $\mathsf{\Theta}_i$ or $\mathsf{\Theta}_{ijk}$, respectively, can also be identified with a singular plane given by points $\mathbb{F}_2^4$. We have the following:
\begin{proposition}
\label{prop:16_6config}
Table~\ref{tab:16_6_configuration} provides an isomorphism between points and planes of the $16_6$ configurations on $\mathbf{A}[2]$ and $\mathbb{F}_2^4$ such that G\"opel groups and their translates in $\mathbf{A}[2]$ -- given in Lemma~\ref{lem:Goepel} --  are mapped bijectively to G\"opel systems in $\mathbb{F}_2^4$.
\begin{table}
\scalebox{0.85}{
\begin{tabular}{c|c||r|c|}
$\mathfrak{p}_{\bullet} \in \mathbf{A}[2]$ & $A\in\mathbb{F}_2^4$ & $\mathsf{\Theta}_{\bullet} \in \operatorname{NS}(\mathbf{A})$  & $\mathfrak{p}_{\bullet}  \in \mathsf{\Theta}_{\bullet}$\\
\hline
$\mathfrak{p}_0$ 	& $\begin{bmatrix}  0 & 0 \\[0.2em] 0 & 0 \end{bmatrix}$  & $\mathsf{\Theta}_{236} = \mathfrak{p}_{23} + \mathsf{\Theta}$ 
				& $\mathfrak{p}_{14}, \mathfrak{p}_{15}, \mathfrak{p}_{23}, \mathfrak{p}_{26}, \mathfrak{p}_{36}, \mathfrak{p}_{45}$\\
$\mathfrak{p}_{45}$ 	& $\begin{bmatrix}  0 & 1 \\[0.2em] 0 & 0 \end{bmatrix}$  & $\mathsf{\Theta}_{1} =  \mathfrak{p}_{16} + \mathsf{\Theta}$ 
				& $\mathfrak{p}_{0\phantom{6}}, \mathfrak{p}_{12}, \mathfrak{p}_{13}, \mathfrak{p}_{14}, \mathfrak{p}_{15}, \mathfrak{p}_{16}$\\
$\mathfrak{p}_{36}$ 	& $\begin{bmatrix}  1 & 0 \\[0.2em] 0 & 0 \end{bmatrix}$  & $\mathsf{\Theta}_{2} =  \mathfrak{p}_{26} + \mathsf{\Theta}$ 
				& $\mathfrak{p}_{0\phantom{6}}, \mathfrak{p}_{12}, \mathfrak{p}_{23}, \mathfrak{p}_{24}, \mathfrak{p}_{25}, \mathfrak{p}_{26}$\\
$\mathfrak{p}_{26}$ 	& $\begin{bmatrix}  1 & 0 \\[0.2em] 1 & 0 \end{bmatrix}$  & $\mathsf{\Theta}_{3} =  \mathfrak{p}_{36} + \mathsf{\Theta}$ 
				& $\mathfrak{p}_{0\phantom{6}}, \mathfrak{p}_{13}, \mathfrak{p}_{23}, \mathfrak{p}_{34}, \mathfrak{p}_{35}, \mathfrak{p}_{36}$\\
$\mathfrak{p}_{15}$ 	& $\begin{bmatrix}  0 & 0 \\[0.2em] 0 & 1 \end{bmatrix}$  & $\mathsf{\Theta}_{4} = \mathfrak{p}_{46} + \mathsf{\Theta}$ 	
				& $\mathfrak{p}_{0\phantom{6}}, \mathfrak{p}_{14}, \mathfrak{p}_{24}, \mathfrak{p}_{34}, \mathfrak{p}_{45}, \mathfrak{p}_{46}$\\
$\mathfrak{p}_{14}$ 	& $\begin{bmatrix}  0 & 1 \\[0.2em] 0 & 1 \end{bmatrix}$  & $\mathsf{\Theta}_{5} = \mathfrak{p}_{56} + \mathsf{\Theta}$ 	
				& $\mathfrak{p}_{0\phantom{6}}, \mathfrak{p}_{15}, \mathfrak{p}_{25}, \mathfrak{p}_{35}, \mathfrak{p}_{45}, \mathfrak{p}_{56}$\\
$\mathfrak{p}_{23}$ 	& $\begin{bmatrix}  0 & 0 \\[0.2em] 1 & 0 \end{bmatrix}$  & $\mathsf{\Theta}_{6} =  \mathfrak{p}_{0\phantom{6}} + \mathsf{\Theta}$ 
				& $\mathfrak{p}_{0\phantom{6}}, \mathfrak{p}_{16}, \mathfrak{p}_{26}, \mathfrak{p}_{36}, \mathfrak{p}_{46}, \mathfrak{p}_{56}$\\
$\mathfrak{p}_{16}$ 	& $\begin{bmatrix}  0 & 1 \\[0.2em] 1 & 0 \end{bmatrix}$  & $\mathsf{\Theta}_{456}= \mathfrak{p}_{45} + \mathsf{\Theta}$ 	
				& $\mathfrak{p}_{12}, \mathfrak{p}_{13}, \mathfrak{p}_{23}, \mathfrak{p}_{45}, \mathfrak{p}_{46}, \mathfrak{p}_{56}$\\
$\mathfrak{p}_{13}$ 	& $\begin{bmatrix}  1 & 1 \\[0.2em] 1 & 0 \end{bmatrix}$  & $\mathsf{\Theta}_{126} = \mathfrak{p}_{12} + \mathsf{\Theta}$ 	
				& $\mathfrak{p}_{12}, \mathfrak{p}_{16}, \mathfrak{p}_{26}, \mathfrak{p}_{34}, \mathfrak{p}_{35}, \mathfrak{p}_{45}$\\
$\mathfrak{p}_{12}$ 	& $\begin{bmatrix}  1 & 1 \\[0.2em] 0 & 0 \end{bmatrix}$  & $\mathsf{\Theta}_{136} = \mathfrak{p}_{13} + \mathsf{\Theta}$ 
				& $\mathfrak{p}_{13}, \mathfrak{p}_{16}, \mathfrak{p}_{24}, \mathfrak{p}_{25}, \mathfrak{p}_{36}, \mathfrak{p}_{45}$\\
$\mathfrak{p}_{24}$ 	& $\begin{bmatrix}  1 & 0 \\[0.2em] 0 & 1 \end{bmatrix}$  & $\mathsf{\Theta}_{346} = \mathfrak{p}_{34} + \mathsf{\Theta}$ 
				& $\mathfrak{p}_{12}, \mathfrak{p}_{15}, \mathfrak{p}_{25}, \mathfrak{p}_{34}, \mathfrak{p}_{36}, \mathfrak{p}_{46}$\\
$\mathfrak{p}_{34}$ 	& $\begin{bmatrix}  1 & 0 \\[0.2em] 1 & 1 \end{bmatrix}$  & $\mathsf{\Theta}_{246} = \mathfrak{p}_{24} + \mathsf{\Theta}$ 
				& $\mathfrak{p}_{13}, \mathfrak{p}_{15}, \mathfrak{p}_{24}, \mathfrak{p}_{26}, \mathfrak{p}_{35}, \mathfrak{p}_{46}$\\
$\mathfrak{p}_{56}$ 	& $\begin{bmatrix}  0 & 1 \\[0.2em] 1 & 1 \end{bmatrix}$  &  $\mathsf{\Theta}_{146} = \mathfrak{p}_{14} + \mathsf{\Theta}$ 
				& $\mathfrak{p}_{14}, \mathfrak{p}_{16}, \mathfrak{p}_{23}, \mathfrak{p}_{25}, \mathfrak{p}_{35}, \mathfrak{p}_{46}$\\
$\mathfrak{p}_{25}$ 	& $\begin{bmatrix}  1 & 1 \\[0.2em] 0 & 1 \end{bmatrix}$  & $\mathsf{\Theta}_{356} = \mathfrak{p}_{35} + \mathsf{\Theta}$ 
				& $\mathfrak{p}_{12}, \mathfrak{p}_{14}, \mathfrak{p}_{24}, \mathfrak{p}_{35}, \mathfrak{p}_{36}, \mathfrak{p}_{56}$\\
$\mathfrak{p}_{35}$ 	& $\begin{bmatrix}  1 & 1 \\[0.2em] 1 & 1 \end{bmatrix}$  & $\mathsf{\Theta}_{256} = \mathfrak{p}_{25} + \mathsf{\Theta}$ 
				& $\mathfrak{p}_{13}, \mathfrak{p}_{14}, \mathfrak{p}_{25}, \mathfrak{p}_{26}, \mathfrak{p}_{34}, \mathfrak{p}_{56}$\\
$\mathfrak{p}_{46}$ 	& $\begin{bmatrix}  0 & 0 \\[0.2em] 1 & 1 \end{bmatrix}$  & $\mathsf{\Theta}_{156} = \mathfrak{p}_{15} + \mathsf{\Theta}$ 
				& $\mathfrak{p}_{15}, \mathfrak{p}_{16}, \mathfrak{p}_{23}, \mathfrak{p}_{24}, \mathfrak{p}_{34}, \mathfrak{p}_{56}$
\end{tabular}}
\bigskip
\caption{Isomorphism between $16_6$ configurations}
\label{tab:16_6_configuration}
\end{table}
\end{proposition}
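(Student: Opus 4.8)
The plan is to read the first two columns of Table~\ref{tab:16_6_configuration} as the definition of a map $\phi\colon \mathbf{A}[2]\to\mathbb{F}_2^4$, and the second and third columns as a labelling of the sixteen Theta divisors by elements $B\in\mathbb{F}_2^4$, each divisor inheriting the matrix printed in the column-two entry of its own row. One then checks that this pair of assignments is an isomorphism of the two $16_6$ configurations that additionally matches the G\"opel data. Since $\phi(\mathfrak{p}_0)=\left(\begin{smallmatrix}0&0\\0&0\end{smallmatrix}\right)$ and the sixteen matrices in column two are pairwise distinct, $\phi$ is a bijection sending the neutral element to $0$, and the statement reduces to three verifications.

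First I would show that $\phi$ is $\mathbb{F}_2$-linear, i.e.\ a homomorphism for the group law~(\ref{group_law}). It suffices to fix a basis of $\mathbf{A}[2]$ among the $\mathfrak{p}_{i6}$, read off its $\phi$-images from the table, and check on the relations $\mathfrak{p}_{ij}+\mathfrak{p}_{kl}=\mathfrak{p}_{mn}$ and $\mathfrak{p}_{ij}+\mathfrak{p}_{jk}=\mathfrak{p}_{ik}$ that the corresponding matrix sums agree. Next I would verify that $\phi$ intertwines the Weil pairing with the symplectic form $\operatorname{Tr}\!\big(A^t\left(\begin{smallmatrix}0&1\\-1&0\end{smallmatrix}\right)A'\big)$, by comparing the pairing table for $\langle\bullet,\bullet\rangle$ on $\mathbf{A}[2]$ entry-by-entry with the trace form evaluated on the images. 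Together these two facts say that $\phi$ is a symplectic isomorphism $(\mathbf{A}[2],\langle\,,\rangle)\xrightarrow{\sim}\big(\mathbb{F}_2^4,\operatorname{Tr}(A^tJA')\big)$.

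The third, and most laborious, verification is the incidence. For each of the sixteen rows I would take the divisor $\mathsf{\Theta}_\bullet$ in column three, assign it the label $B\in\mathbb{F}_2^4$ printed in column two of that row, and check that the six two-torsion points listed in column four are carried by $\phi$ exactly onto the six matrices $A$ incident to $B$ under the rule of Lemma~\ref{lem:16_6} (one of the two columns of $A$ equal to the corresponding column of $B$, the other different). Concretely this amounts to confirming that $\phi^{-1}$ of the singular plane of $B$ is the set in column four; carrying this out for the row of $\mathfrak{p}_0$ (where $B=0$ and the plane is the six matrices with exactly one zero column) already fixes the pattern, and the transitivity of the automorphism group $\mathbb{F}_2^4\rtimes\operatorname{Sp}(4,\mathbb{F}_2)$ from Lemma~\ref{lem:16_6}, together with the symplectic isomorphism just established, propagates it to the remaining fifteen rows. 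This is where care is needed: one must ensure the column-two/column-three labelling is globally consistent, so that the same matrix simultaneously indexes a point and the correct plane in every row; the finite check is unavoidable but is organized by the group action.

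Finally, the G\"opel statement is formal once $\phi$ is known to be symplectic. A G\"opel group in $\mathbf{A}[2]$ is a maximal isotropic subspace for the Weil pairing, and a G\"opel group in $\mathbb{F}_2^4$ is a four-element subgroup on which the trace form vanishes; since $\phi$ preserves the form it carries the fifteen G\"opel groups of Lemma~\ref{lem:Goepel} bijectively onto the fifteen G\"opel groups of $\mathbb{F}_2^4$, and being linear it sends the four translates of each onto the four cosets, i.e.\ the G\"opel systems, matching the count $60=15\cdot 4$ on both sides. I expect the incidence bookkeeping of the previous paragraph to be the only genuine obstacle; everything else is either a one-line structural deduction or a transparent finite comparison against the table.
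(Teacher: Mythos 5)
Your proposal is correct, but it runs in the opposite direction from the paper's own argument. The paper \emph{derives} the table rather than verifying it: starting from $\mathfrak{p}_0\mapsto\left(\begin{smallmatrix}0&0\\0&0\end{smallmatrix}\right)$, the six odd divisors $\mathsf{\Theta}_i$ are forced onto the six matrices with exactly one zero column; after one choice of this assignment (the choice is pinned down in Remark~\ref{rem:isos}), the images of all points $\mathfrak{p}_{ij}$ are read off from $\mathsf{\Theta}_i\cap\mathsf{\Theta}_j=\{\mathfrak{p}_0,\mathfrak{p}_{ij}\}$ as the unique second point common to the two singular planes, the labels of the even divisors $\mathsf{\Theta}_{ij6}$ then follow from the incidence axioms, and the G\"opel claim is settled by a direct computation over the $15\cdot 4$ cosets. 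You instead take the table as given and check three things --- linearity of $\phi$, compatibility of the Weil pairing with the trace form, and row-by-row incidence --- and then obtain the G\"opel statement for free, since a symplectic linear isomorphism necessarily carries maximal isotropic subspaces to maximal isotropic subspaces and their translates to cosets. Each approach buys something: the paper's construction explains where the table comes from and why it is essentially unique (which is what Remark~\ref{rem:isos} and later Lemma~\ref{lem:bijection_tropes_thetas} rely on), whereas your symplectic-equivariance argument replaces the paper's brute-force G\"opel check with a one-line structural deduction, at the cost of having to verify linearity and the pairing explicitly, which the paper never needs to state. One small caution on your use of transitivity to propagate the incidence check from the $\mathfrak{p}_0$ row: what actually does the work there is only the translation part of the automorphism group, and it applies because the plane indexed by $B+C$ is the $C$-translate of the plane indexed by $B$ while the divisor labelling in the table is exactly the affine map $\mathfrak{p}+\mathsf{\Theta}\mapsto\phi(\mathfrak{p})+\left(\begin{smallmatrix}0&0\\1&0\end{smallmatrix}\right)$; you should say this explicitly rather than invoking the full group $\mathbb{F}_2^4\rtimes\operatorname{Sp}(4,\mathbb{F}_2)$, but with that precision the argument closes.
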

\begin{proof}
Since $\mathfrak{p}_0$ is mapped to the matrix $\left( \begin{smallmatrix} 0&0\\ 0&0 \end{smallmatrix} \right)$, the divisors $\mathsf{\Theta}_i$ for $i=1, \dots, 6$ must be mapped to the six matrices $\left( \begin{smallmatrix} 0& b_2\\ 0 & b_4 \end{smallmatrix} \right)$ or $\left( \begin{smallmatrix} b_1& 0 \\ b_3 & 0 \end{smallmatrix} \right)$. Making a choice (cf.~Remark~\ref{rem:isos}) for these, we obtain the images of all points $\mathfrak{p}_{ij}$ with $1 \le i < j \le 6$, since we have $\mathsf{\Theta}_i \cap \mathsf{\Theta}_j = \{ \mathfrak{p}_0, \mathfrak{p}_{ij} \}$. Using the properties of the $16_6$ configuration, we obtain the matrices indexing the meaning divisors $\mathsf{\Theta}_{ij6}$. Finally, one checks by an explicit computation that the G\"opel groups and their translates in $\mathbf{A}[2]$ given in Lemma~\ref{lem:Goepel} coincide with the G\"opel systems in $\mathbb{F}_2^4$.
\end{proof}
\begin{remark}
\label{rem:isos}
Following the proof of Proposition~\ref{prop:16_6config} we can say even more. Table~\ref{tab:16_6_configuration} is the unique isomorphism such that the odd Theta divisors are mapped to translates -- namely translates by the fixed element $\left( \begin{smallmatrix} 1&1\\ 1&1 \end{smallmatrix} \right)$ -- of the characteristics of the odd Theta functions introduced in Section~\ref{GeneralRemarks}. We will prove in Lemma~\ref{lem:bijection_tropes_thetas} that this is precisely the property required to make the identification compatible with the Mumford identities for Theta functions while at the same time also mapping odd  Theta divisor to odd Theta functions. All Theta divisors and Theta characteristics are then paired up according to the following table:
\begin{center}
\begin{tabular}{c|c|c||c|c|c}
$\mathsf{\Theta}_{\bullet}$ & 
$\left( \begin{smallmatrix} a_1&a_2\\ a_3&a_4 \end{smallmatrix} \right) + \left( \begin{smallmatrix} 1&1\\ 1&1 \end{smallmatrix} \right)$ & 
$\theta\left[ \begin{smallmatrix} a_1&a_2\\ a_3&a_4 \end{smallmatrix} \right]$ &
$\mathsf{\Theta}_{\bullet}$ & 
$\left( \begin{smallmatrix} a_1&a_2\\ a_3&a_4 \end{smallmatrix} \right) + \left( \begin{smallmatrix} 1&1\\ 1&1 \end{smallmatrix} \right)$ & 
$\theta\left[ \begin{smallmatrix} a_1&a_2\\ a_3&a_4 \end{smallmatrix} \right]$\\
\hline
$\mathsf{\Theta}_{1}$ & $\left( \begin{smallmatrix} 1&0\\ 1&1 \end{smallmatrix} \right) + \left( \begin{smallmatrix} 1&1\\ 1&1 \end{smallmatrix} \right)$ & $\theta_{15}(z)$&
$\mathsf{\Theta}_{2}$ & $\left( \begin{smallmatrix} 0&1\\ 1&1 \end{smallmatrix} \right) + \left( \begin{smallmatrix} 1&1\\ 1&1 \end{smallmatrix} \right)$ & $\theta_{12}(z)$\\
$\mathsf{\Theta}_{3}$ & $\left( \begin{smallmatrix} 0&1\\ 0&1 \end{smallmatrix} \right) + \left( \begin{smallmatrix} 1&1\\ 1&1 \end{smallmatrix} \right)$ & $\theta_{11}(z)$&
$\mathsf{\Theta}_{4}$ & $\left( \begin{smallmatrix} 1&1\\ 1&0 \end{smallmatrix} \right) + \left( \begin{smallmatrix} 1&1\\ 1&1 \end{smallmatrix} \right)$ & $\theta_{16}(z)$\\
$\mathsf{\Theta}_{5}$ & $\left( \begin{smallmatrix} 1&0\\ 1&0 \end{smallmatrix} \right) + \left( \begin{smallmatrix} 1&1\\ 1&1 \end{smallmatrix} \right)$ & $\theta_{14}(z)$&
$\mathsf{\Theta}_{6}$ & $\left( \begin{smallmatrix} 1&1\\ 0&1 \end{smallmatrix} \right) + \left( \begin{smallmatrix} 1&1\\ 1&1 \end{smallmatrix} \right)$ & $\theta_{13}(z)$\\
$\mathsf{\Theta}_{126}$ & $\left( \begin{smallmatrix} 0&0\\ 0&1 \end{smallmatrix} \right) + \left( \begin{smallmatrix} 1&1\\ 1&1 \end{smallmatrix} \right)$ & $\theta_{4}(z)$&
$\mathsf{\Theta}_{136}$ & $\left( \begin{smallmatrix} 0&0\\ 1&1 \end{smallmatrix} \right) + \left( \begin{smallmatrix} 1&1\\ 1&1 \end{smallmatrix} \right)$ & $\theta_{2}(z)$\\
$\mathsf{\Theta}_{146}$ & $\left( \begin{smallmatrix} 1&0\\ 0&0 \end{smallmatrix} \right) + \left( \begin{smallmatrix} 1&1\\ 1&1 \end{smallmatrix} \right)$ & $\theta_{5}(z)$&
$\mathsf{\Theta}_{156}$ & $\left( \begin{smallmatrix} 1&1\\ 0&0 \end{smallmatrix} \right) + \left( \begin{smallmatrix} 1&1\\ 1&1 \end{smallmatrix} \right)$ & $\theta_{8}(z)$\\
$\mathsf{\Theta}_{236}$ & $\left( \begin{smallmatrix} 1&1\\ 1&1 \end{smallmatrix} \right) + \left( \begin{smallmatrix} 1&1\\ 1&1 \end{smallmatrix} \right)$ & $\theta_{10}(z)$&
$\mathsf{\Theta}_{246}$ & $\left( \begin{smallmatrix} 0&1\\ 0&0 \end{smallmatrix} \right) + \left( \begin{smallmatrix} 1&1\\ 1&1 \end{smallmatrix} \right)$ & $\theta_{7}(z)$\\
$\mathsf{\Theta}_{256}$ & $\left( \begin{smallmatrix} 0&0\\ 0&0 \end{smallmatrix} \right) + \left( \begin{smallmatrix} 1&1\\ 1&1 \end{smallmatrix} \right)$ & $\theta_{1}(z)$&
$\mathsf{\Theta}_{346}$ & $\left( \begin{smallmatrix} 0&1\\ 1&0 \end{smallmatrix} \right) + \left( \begin{smallmatrix} 1&1\\ 1&1 \end{smallmatrix} \right)$ & $\theta_{9}(z)$\\
$\mathsf{\Theta}_{356}$ & $\left( \begin{smallmatrix} 0&0\\ 1&0 \end{smallmatrix} \right) + \left( \begin{smallmatrix} 1&1\\ 1&1 \end{smallmatrix} \right)$ & $\theta_{3}(z)$&
$\mathsf{\Theta}_{456}$ & $\left( \begin{smallmatrix} 1&0\\ 0&1 \end{smallmatrix} \right) + \left( \begin{smallmatrix} 1&1\\ 1&1 \end{smallmatrix} \right)$ & $\theta_{6}(z)$
\end{tabular}
\end{center}
\end{remark}
\subsection{$(2,2)$-isogenies of abelian surfaces}
\label{ssec:2isog}
The translation of the Jacobian $\mathbf{A}=\operatorname{Jac}(\mathcal{C})$ by a two-torsion point is an isomorphism of the Jacobian and maps the set of two-torsion points to itself. For any isotropic two-dimensional subspace $\mathsf{K}$ of $\mathbf{A}[2]$, i.e., a G\"opel group in $\mathbf{A}[2]$, it is well-known that $\hat{\mathbf{A}}=\mathbf{A}/\mathsf{K}$ is again a principally polarized abelian surface~\cite[Sec.~23]{MR2514037}. Therefore,  the isogeny $\psi: \mathbf{A} \to \hat{\mathbf{A}}$ between principally polarized abelian surfaces has as its kernel the two-dimensional isotropic subspace $\mathsf{K}$ of $\mathbf{A}[2]$. We call such an isogeny $\psi$ a \emph{$(2,2)$-isogeny}. Concretely, given any choice of $\mathsf{K}$ the $(2,2)$-isogeny is analytically given by the map
\begin{equation}
\begin{split}
\psi: \; \mathbf{A} = \mathbb{C}^2 / \langle \mathbb{Z}^2 \oplus \tau\mathbb{Z}^2\rangle & \to  \hat{\mathbf{A}} =\mathbb{C}^2 / \langle \mathbb{Z}^2 \oplus 2\tau \mathbb{Z}^2\rangle\\
(z,\tau) \mapsto (z,2\tau).
\end{split}
\end{equation}
We have the following lemma:
\begin{lemma}
\label{lem:decomp}
Let $\mathsf{K}$ and $\mathsf{K}'$ be two maximal isotropic subgroup of $\mathbf{A}[2]$ such that  $\mathsf{K}+\mathsf{K}'=\mathbf{A}[2]$, $\mathsf{K} \cap \mathsf{K}'=\{ \mathfrak{p}_0\}$.  Set $\hat{\mathbf{A}}=\mathbf{A}/\mathsf{K}$, and denote the image of $\mathsf{K}'$ in $\hat{\mathbf{A}}$ by $\hat{\mathsf{K}}$. Then it follows that $\hat{\mathbf{A}} /\hat{\mathsf{K}} \cong \mathbf{A}$,  and the composition of $(2,2)$-isogenies $\hat{\psi} \circ\psi$ is multiplication by two on $\mathbf{A}$, i.e., $(z, \tau) \mapsto (2 z, \tau)$. 
\end{lemma}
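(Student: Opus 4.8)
The plan is to run the whole argument at the level of lattices, where the statement reduces to a transparent computation. Write $\Lambda = \mathbb{Z}^2 \oplus \tau\mathbb{Z}^2$, so that $\mathbf{A} = \mathbb{C}^2/\Lambda$ and $\mathbf{A}[2] = \tfrac{1}{2}\Lambda/\Lambda$. For any subgroup $\mathsf{G} \subseteq \mathbf{A}[2]$ let $\Lambda_{\mathsf{G}} \subseteq \mathbb{C}^2$ denote the preimage of $\mathsf{G}$ under the surjection $\tfrac{1}{2}\Lambda \twoheadrightarrow \tfrac{1}{2}\Lambda/\Lambda$; then $\Lambda \subseteq \Lambda_{\mathsf{G}} \subseteq \tfrac{1}{2}\Lambda$, the quotient $\mathbf{A}/\mathsf{G}$ equals $\mathbb{C}^2/\Lambda_{\mathsf{G}}$, and the quotient map is induced by $\mathrm{id}_{\mathbb{C}^2}$. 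In particular $\hat{\mathbf{A}} = \mathbf{A}/\mathsf{K} = \mathbb{C}^2/\Lambda_{\mathsf{K}}$, and $\psi$ is induced by $\mathrm{id}_{\mathbb{C}^2}$.

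First I would check that $\hat{\mathsf{K}}$ is an order-four subgroup of $\hat{\mathbf{A}}[2]$: since $\psi$ is a homomorphism it sends two-torsion to two-torsion, and since $\ker\psi = \mathsf{K}$ meets $\mathsf{K}'$ only in $\mathfrak{p}_0$, the restriction $\psi|_{\mathsf{K}'}$ is injective, whence $\hat{\mathsf{K}} = \psi(\mathsf{K}') \cong (\mathbb{Z}/2\mathbb{Z})^2$. Because $\psi$ is induced by the identity, the preimage in $\mathbb{C}^2$ of $\hat{\mathsf{K}} \subseteq \hat{\mathbf{A}} = \mathbb{C}^2/\Lambda_{\mathsf{K}}$ is $\Lambda_{\mathsf{K}} + \Lambda_{\mathsf{K}'}$, so $\hat{\mathbf{A}}/\hat{\mathsf{K}} = \mathbb{C}^2/(\Lambda_{\mathsf{K}} + \Lambda_{\mathsf{K}'})$. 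The key point is that taking preimages under $\tfrac{1}{2}\Lambda \twoheadrightarrow \tfrac{1}{2}\Lambda/\Lambda$ converts a sum of subgroups into the corresponding sum of lattices (both summands contain the kernel $\Lambda$), so $\Lambda_{\mathsf{K}} + \Lambda_{\mathsf{K}'}$ is exactly the preimage of $\mathsf{K} + \mathsf{K}' = \mathbf{A}[2]$, namely $\tfrac{1}{2}\Lambda$. Hence $\hat{\mathbf{A}}/\hat{\mathsf{K}} = \mathbb{C}^2/\tfrac{1}{2}\Lambda$. The scaling $\mu\colon z \mapsto 2z$ carries $\tfrac{1}{2}\Lambda$ onto $\Lambda$ and so induces $\mathbb{C}^2/\tfrac{1}{2}\Lambda \xrightarrow{\sim} \mathbb{C}^2/\Lambda = \mathbf{A}$. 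Tracing the composite through $\mathbb{C}^2/\Lambda \xrightarrow{\mathrm{id}} \mathbb{C}^2/\Lambda_{\mathsf{K}} \xrightarrow{\mathrm{id}} \mathbb{C}^2/\tfrac{1}{2}\Lambda \xrightarrow{\mu} \mathbb{C}^2/\Lambda$, the underlying map of $\mathbb{C}^2$ is $z \mapsto 2z$, so $\hat{\psi}\circ\psi = [2]$, as claimed.

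It remains to confirm that $\hat{\psi}$ is genuinely a $(2,2)$-isogeny and that the identification $\hat{\mathbf{A}}/\hat{\mathsf{K}}\cong\mathbf{A}$ respects the principal polarizations; this is the only delicate point, since the lattice computation alone sees only the underlying abelian varieties, and I expect this polarization bookkeeping to be the main obstacle. The plan is to track the polarization through the descent. Quotienting by the Göpel group $\mathsf{K}$ equips $\hat{\mathbf{A}}$ with the principal polarization $\hat{\mathcal{L}}$ characterized by $\psi^*\hat{\mathcal{L}} \cong \mathcal{L}^{\otimes 2}$. Since $\ker\phi_{\mathcal{L}^{\otimes 2}} = \mathbf{A}[2]$ and the symplectic form attached to $\mathcal{L}^{\otimes 2}$ on $\mathbf{A}[2]$ is precisely the Weil pairing $\langle\,\cdot\,,\cdot\,\rangle$ used throughout, the compatibility of the pairing with $\psi$ gives $e^{\hat{\mathcal{L}}}(\psi a, \psi b) = \langle a, b\rangle$ for $a,b \in \mathbf{A}[2]$. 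Restricting to $a,b \in \mathsf{K}'$ and using the isotropy of $\mathsf{K}'$ shows $e^{\hat{\mathcal{L}}}$ vanishes on $\hat{\mathsf{K}}$, so $\hat{\mathsf{K}}$ is maximal isotropic and $\hat{\psi}$ is a bona fide $(2,2)$-isogeny. Finally, letting $\tilde{\mathcal{L}}$ be the induced principal polarization on $\hat{\mathbf{A}}/\hat{\mathsf{K}}$, I would compare $(\hat{\psi}\circ\psi)^*\tilde{\mathcal{L}} \cong (\psi^*\hat{\mathcal{L}})^{\otimes 2} \cong \mathcal{L}^{\otimes 4}$ with $[2]^*\mathcal{L} \cong \mathcal{L}^{\otimes 4}$ (for symmetric $\mathcal{L}$); since $(\hat{\psi}\circ\psi)^* = [2]^*$ is injective on the relevant Picard classes, this forces $\tilde{\mathcal{L}}$ to correspond to $\mathcal{L}$ under $\mu$, so the isomorphism $\hat{\mathbf{A}}/\hat{\mathsf{K}}\cong\mathbf{A}$ is one of principally polarized abelian surfaces.
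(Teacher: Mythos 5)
Your lattice computation is correct and is essentially the paper's argument made explicit: the paper simply notes that $\hat{\psi}\circ\psi$ is an isogeny with kernel $\psi^{-1}(\hat{\mathsf{K}})=\mathsf{K}+\mathsf{K}'=\mathbf{A}[2]$ and concludes it must be multiplication by two, which is exactly what your identification $\Lambda_{\mathsf{K}}+\Lambda_{\mathsf{K}'}=\tfrac12\Lambda$ followed by the rescaling $\mu$ establishes. The polarization bookkeeping you add is not in the paper's proof (the paper relies on the cited general fact that the quotient of a principally polarized surface by a G\"opel group is again principally polarized), and it is a reasonable supplement; however, the intermediate formula $e^{\hat{\mathcal{L}}}(\psi a,\psi b)=\langle a,b\rangle$ is not what functoriality gives. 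The correct statement is $e^{\hat{\mathcal{L}}^{2}}(\psi a,\psi b)=e^{\psi^{*}\hat{\mathcal{L}}^{2}}(a,b)=e^{\mathcal{L}^{4}}(a,b)$, and $e^{\mathcal{L}^{4}}$ restricted to $\mathbf{A}[2]$ is identically trivial (in lattice terms, $4E(v,w)\in\mathbb{Z}$ for $v,w\in\tfrac12\Lambda$), so the entire image $\psi(\mathbf{A}[2])$ is isotropic in $\hat{\mathbf{A}}[2]$ — your desired conclusion that $\hat{\mathsf{K}}$ is maximal isotropic therefore holds, but for a reason independent of the isotropy of $\mathsf{K}'$, and the equation as you wrote it is false. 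With that formula corrected the argument goes through.
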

\begin{proof}
 By construction $\mathsf{K}$ is a finite subgroup of $\mathbf{A}$, $\hat{\mathbf{A}}=\mathbf{A}/\mathsf{K}$ a complex torus,  and the natural projection $\psi: \mathbf{A} \to \hat{\mathbf{A}}\cong \mathbf{A}/\mathsf{K}$ and isogeny. The order of the kernel is two, hence it is a degree-four  isogeny.  The same applies to the map $\hat{\psi}: \hat{\mathbf{A}} \to \hat{\mathbf{A}}/\hat{\mathsf{K}}$. Therefore, the composition $\hat{\psi}\circ \psi$ is an isogeny with kernel $\mathsf{K}+\mathsf{K}'=\mathbf{A}[2]$.Thus, $\hat{\mathbf{A}}/\hat{\mathsf{K}}\cong \mathbf{A}$ and the map $\hat{\psi}\circ \psi$ is the group homomorphism $z\mapsto 2z$ whose kernel are the two-torsion points.
 \end{proof} 
In the case $\mathbf{A}=\operatorname{Jac}(\mathcal{C})$ of the Jacobian of a smooth genus-two curve, one may ask whether $\hat{\mathbf{A}} =\operatorname{Jac}(\hat{\mathcal{C}})$ for some other curve $\hat{\mathcal{C}}$ of genus two, and what the precise relationship between the moduli of $\mathcal{C}$ and $\hat{\mathcal{C}}$ is. The geometric moduli relationship between the two curves of genus two was found by Richelot \cite{MR1578135}; see \cite{MR970659}.
\par Because of the isomorphism $S_6 \cong \operatorname{Sp}(4,\mathbb{F}_2)$, the $(2,2)$-isogenies induce an action of the permutation group of the set of six Theta divisors containing a fixed two-torsion point.  There is a classical way, called \emph{Richelot isogeny}, to describe the 15 inequivalent $(2,2)$-isogenies on the Jacobian $\operatorname{Jac}(\mathcal{C})$ of a generic curve $\mathcal{C}$ of genus-two. If we choose for $\mathcal{C}$ a sextic equation $Y^2 = f_6(X,Z)$, then any factorization $f_6 = A\cdot B\cdot C$ into three degree-two polynomials $A, B, C$ defines a genus-two  curve $\hat{\mathcal{C}}$ given by
\begin{equation}
\label{Richelot}
 \Delta_{ABC} \cdot Y^2 = [A,B] \, [A,C] \, [B,C] 
\end{equation}
where we have set $[A,B] = A'B - AB'$ with $A'$ the derivative of $A$ with respect to $X$ and $\Delta_{ABC}$ is the determinant of $A, B, C$ with respect to the basis $X^2, XZ, Z^2$. It was proved in~\cite{MR970659} that $\operatorname{Jac}(\mathcal{C})$ and $\operatorname{Jac}(\hat{\mathcal{C}})$ are $(2,2)$-isogenous, and that there are exactly  $15$ different curves $\hat{\mathcal{C}}$ that are obtained this way. It follows that this construction yields all principally polarized abelian surfaces $(2,2)$-isogenous to $\mathbf{A} = \operatorname{Jac}(\mathcal{C})$.
\subsubsection{An explicit model using Theta functions}
\label{computation}
We provide an explicit model for a pair of $(2,2)$-isogenies in terms of Theta functions: For the G\"opel groups $\mathsf{K}=\{ \mathfrak{p}_0,  \mathfrak{p}_{15}, \mathfrak{p}_{23}, \mathfrak{p}_{46} \}$ and $\mathsf{K}'=\{ \mathfrak{p}_0,  \mathfrak{p}_{12}, \mathfrak{p}_{34}, \mathfrak{p}_{56} \}$, we have $\mathsf{K}+\mathsf{K}'=\mathbf{A}[2]$, $\mathsf{K} \cap \mathsf{K}'=\{ \mathfrak{p}_0\}$. We set $\hat{\mathbf{A}}=\mathbf{A}/\mathsf{K}$. We will use Theta functions to determine explicit formulas relating the Rosenhain roots of $\mathcal{C}$ in Equation~(\ref{Eq:Rosenhain}) -- given by $\lambda_1, \lambda_2, \lambda_3$ and $\lambda_4=0$, $\lambda_5=1$, and $\lambda_6=\infty$ -- to the roots of a curve $\hat{\mathcal{C}}$ given by the sextic curve 
\begin{equation}
\label{Eq:Rosenhain2}
 \hat{\mathcal{C}}: \quad y^2 = x\,z\,\big(x-z\big) \, \big( x- \Lambda_1z\big) \,  \big( x- \Lambda_2 z\big) \,  \big( x- \Lambda_3 z\big) \;.
\end{equation} 
Since the Theta functions $\Theta_i$ play a role dual to $\theta_i$ for the isogenous abelian variety, the Rosenhain roots of $\hat{\mathcal{C}}$ are given by
\begin{equation}\label{Picard_sq}
\Lambda_1 = \frac{\Theta_1^2\Theta_3^2}{\Theta_2^2\Theta_4^2} \,, \quad \Lambda_2 = \frac{\Theta_3^2\Theta_8^2}{\Theta_4^2\Theta_{10}^2}\,, \quad \Lambda_3 =
\frac{\Theta_1^2\Theta_8^2}{\Theta_2^2\Theta_{10}^2}\,.
\end{equation}
Rosenhain roots can be expressed in terms of just four Theta constants whose characteristics form a G\"opel group in $\mathbb{F}_2^4$. We will write the roots $\lambda_1, \lambda_2, \lambda_3$ and $\Lambda_1, \Lambda_2, \Lambda_3$ in terms of  the Theta constants $\lbrace \Theta_1^2 , \Theta_2^2, \Theta_3^2, \Theta_4^2\rbrace$ and $\lbrace \theta_1^2 , \theta_2^2, \theta_3^2, \theta_4^2\rbrace$, respectively. Their Theta characteristics are form G\"opel groups, namely
\begin{equation*} 
\left( \begin{smallmatrix} 0&0\\ 0&0 \end{smallmatrix} \right), \left( \begin{smallmatrix} 1&0\\ 0&0 \end{smallmatrix} \right), 
\left( \begin{smallmatrix} 1&1\\ 0&0 \end{smallmatrix} \right), \left( \begin{smallmatrix} 0&1\\ 0&0 \end{smallmatrix} \right), \; \text{and} \;
\left( \begin{smallmatrix} 0&0\\ 0&0 \end{smallmatrix} \right), \left( \begin{smallmatrix} 0&0\\ 1&1 \end{smallmatrix} \right), 
\left( \begin{smallmatrix} 0&0\\ 1&0 \end{smallmatrix} \right), \left( \begin{smallmatrix} 0&0\\ 0&1 \end{smallmatrix} \right).
\end{equation*}
We have the following:
\begin{lemma}
The Rosenhain roots $\lambda_1, \lambda_2, \lambda_3$ and $\Lambda_1, \Lambda_2, \Lambda_3$ are rational functions of the Theta functions $\lbrace \Theta_1^2 , \Theta_2^2, \Theta_3^2, \Theta_4^2\rbrace$ and $\lbrace \theta_1^2 , \theta_2^2, \theta_3^2, \theta_4^2\rbrace$, respectively.
Over $\mathcal{A}_2(2,4)$ the rational function
\begin{equation}
\label{def_l}
 l = \dfrac{(\Theta_1 \Theta_2 - \Theta_3 \Theta_4) (\Theta_1^2 + \Theta_2^2 + \Theta_3^2 + \Theta_4^2)(\Theta_1^2 - \Theta_2^2 - \Theta_3^2 + \Theta_4^2)}
 {(\Theta_1 \Theta_2 + \Theta_2 \Theta_4) (\Theta_1^2 - \Theta_2^2 + \Theta_3^2 - \Theta_4^2)(\Theta_1^2 + \Theta_2^2 - \Theta_3^2 - \Theta_4^2)},
\end{equation}
satisfies $l^2= \lambda_1 \lambda_2 \lambda_3$. A similar statement applies to $L$ such that $L^2= \Lambda_1 \Lambda_2 \Lambda_3$.
\end{lemma}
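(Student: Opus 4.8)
The plan is to read off both claims from the Thomae parametrizations (\ref{Picard}) and (\ref{Picard_sq}) after feeding in the second principal transformations (\ref{Eq:degree2doubling}) and (\ref{Eq:degree2doublingR}); apart from these and the duality of Lemma~\ref{lem:decomp}, no further input is needed. For the first claim I would substitute directly: by (\ref{Eq:degree2doubling}) and (\ref{Eq:degree2doublingR}) each even Theta-constant square $\theta_i^2$ with $1\le i\le 10$ is a homogeneous quadratic in $\Theta_1,\dots,\Theta_4$, so plugging these into (\ref{Picard}) writes every $\lambda_k$ as a quotient of two homogeneous quartics in $\Theta_1,\dots,\Theta_4$, i.e. as a rational function on $\overline{\mathcal{A}_2(2,4)}\cong\mathbb{P}^3$ (Lemma~\ref{compactifications}). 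For the roots $\Lambda_k$ of $\hat{\mathcal{C}}$ I would run the substitution in reverse, using the Hadamard inverse $4\Theta_i^2=\sum_j(\pm)\theta_j^2$ of (\ref{Eq:degree2doubling}) for $1\le i\le 4$ together with the dual of (\ref{Eq:degree2doublingR}) (the one attached to the return isogeny $\hat\psi$ of Lemma~\ref{lem:decomp}) to express the remaining constants through $\theta_1\theta_2,\theta_3\theta_4$; substituting into (\ref{Picard_sq}) makes each $\Lambda_k$ rational in $\theta_1,\dots,\theta_4$.

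For the second claim I would compute the product straight from (\ref{Picard}), noting that $\theta_8^2$ and $\theta_{10}^2$ enter $\lambda_2,\lambda_3$ once each and are absent from $\lambda_1$:
\[
 \lambda_1\lambda_2\lambda_3=\frac{\theta_1^2\theta_3^2}{\theta_2^2\theta_4^2}\cdot\frac{\theta_3^2\theta_8^2}{\theta_4^2\theta_{10}^2}\cdot\frac{\theta_1^2\theta_8^2}{\theta_2^2\theta_{10}^2}=\left(\frac{\theta_1^2\theta_3^2\theta_8^2}{\theta_2^2\theta_4^2\theta_{10}^2}\right)^{2}.
\]
Thus $l^2=\lambda_1\lambda_2\lambda_3$ with $l=\theta_1^2\theta_3^2\theta_8^2/(\theta_2^2\theta_4^2\theta_{10}^2)$, and the identity itself is immediate; the content of the statement is that this square root is single-valued already on $\mathcal{A}_2(2,4)$. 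The point I would emphasize is that each separate factor $\sqrt{\lambda_k}$ is a ratio of products $\theta_i\theta_j$, hence only descends to the finer cover $\mathcal{A}_2(4,8)$ on which the $\theta_i$ themselves live (Remark~\ref{fact:sections2}); but in the product every $\theta_i$ occurs to an even power, so $l$ is a ratio of the level-$(2,4)$ sections $\theta_i^2$, is invariant under the sign-change deck group $\Gamma_2(2,4)/\Gamma_2(4,8)$, and therefore descends to $\mathcal{A}_2(2,4)$.

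It remains to put $l$ into the coordinates $[\Theta_1:\Theta_2:\Theta_3:\Theta_4]$. Substituting $\theta_1^2,\theta_2^2,\theta_3^2,\theta_4^2$ from (\ref{Eq:degree2doubling}) and $\theta_8^2=2(\Theta_1\Theta_2+\Theta_3\Theta_4)$, $\theta_{10}^2=2(\Theta_1\Theta_2-\Theta_3\Theta_4)$ from (\ref{Eq:degree2doublingR}), the factors of $2$ cancel and $l$ turns into the quotient of homogeneous sextics in (\ref{def_l}). The statement for $L$ is then the mirror image under the symmetry of Lemma~\ref{lem:decomp}, which makes $\mathbf{A}$ and $\hat{\mathbf{A}}$ mutually $(2,2)$-isogenous and interchanges $\theta_i\leftrightarrow\Theta_i$: the same product computation applied to (\ref{Picard_sq}) gives $L=\Theta_1^2\Theta_3^2\Theta_8^2/(\Theta_2^2\Theta_4^2\Theta_{10}^2)$ with $L^2=\Lambda_1\Lambda_2\Lambda_3$, which the dual transformations of the first paragraph rewrite as the corresponding rational function of $\theta_1,\dots,\theta_4$.

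The one place I would be careful is the sign bookkeeping in the last substitution, which is where the precise pairing of characteristics matters. Interchanging the assignments $\theta_8^2=2(\Theta_1\Theta_2+\Theta_3\Theta_4)$ and $\theta_{10}^2=2(\Theta_1\Theta_2-\Theta_3\Theta_4)$ replaces $l$ by $\theta_1^2\theta_3^2\theta_{10}^2/(\theta_2^2\theta_4^2\theta_8^2)$, whose square is no longer $\lambda_1\lambda_2\lambda_3$; so one must verify, against (\ref{Eq:degree2doublingR}), exactly which of $\Theta_1\Theta_2\pm\Theta_3\Theta_4$ appears in the numerator of $l$. Once that is fixed, the identity $l^2=\lambda_1\lambda_2\lambda_3$ and its analogue for $L$ are a direct check.
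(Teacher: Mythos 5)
Your argument is correct and is exactly the direct computation the paper leaves implicit (the lemma is stated without a proof): Thomae's formula~(\ref{Picard}) gives $\lambda_1\lambda_2\lambda_3=\bigl(\theta_1^2\theta_3^2\theta_8^2/(\theta_2^2\theta_4^2\theta_{10}^2)\bigr)^2$, the square root is a ratio of level-$(2,4)$ sections and hence descends to $\mathcal{A}_2(2,4)$, and Equations~(\ref{Eq:degree2doubling}) and~(\ref{Eq:degree2doublingR}) rewrite it in the coordinates $\Theta_1,\dots,\Theta_4$. Your closing caution is well placed: with the paper's normalization $\theta_8^2=2(\Theta_1\Theta_2+\Theta_3\Theta_4)$ and $\theta_{10}^2=2(\Theta_1\Theta_2-\Theta_3\Theta_4)$, the factor $\Theta_1\Theta_2+\Theta_3\Theta_4$ must sit in the numerator of $l$, so the printed Equation~(\ref{def_l}) has the two bilinear factors interchanged (in addition to the evident misprint $\Theta_1\Theta_2+\Theta_2\Theta_4$ for $\Theta_1\Theta_2+\Theta_3\Theta_4$), and your version is the one consistent with the paper's own conventions.
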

\par We claim that there is a Richelot isogeny realizing the $(2,2)$-isogeny $\psi: \mathbf{A}=\operatorname{Jac}(\mathcal{C}) \to \hat{\mathbf{A}}=\operatorname{Jac}(\hat{\mathcal{C}})$,  for the maximal isotropic subgroup $\mathsf{K}$.  We have the following:
\begin{lemma}
\label{lem:R2isog}
Taking the quotient by the G\"opel group $\mathsf{K}=\{ \mathfrak{p}_0,  \mathfrak{p}_{15}, \mathfrak{p}_{23}, \mathfrak{p}_{46} \}$ corresponds to the Richelot isogeny acting on the genus-two curve $\mathcal{C}$ in Equation~(\ref{Eq:Rosenhain}) by pairing the linear factors according $A=(x-\lambda_1)(x-\lambda_5)$, $B=(x-\lambda_2)(x-\lambda_3)$, $C=(x-\lambda_4)(x-\lambda_6)$
in Equation~(\ref{Richelot}).
\end{lemma}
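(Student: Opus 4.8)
The plan is to reduce the assertion to an identification of kernels. By the result of Bost--Mestre cited in the text (\cite{MR970659}), the fifteen Richelot isogenies exhaust the $(2,2)$-isogenies out of $\mathbf{A}=\operatorname{Jac}(\mathcal{C})$, each being determined by a factorization $f_6 = A\cdot B\cdot C$ into quadratics, i.e. by a partition of the six Weierstrass points into three pairs. Since the fifteen Göpel groups of Lemma~\ref{lem:Goepel} are likewise in bijection with these partitions, it suffices to show that the Richelot isogeny attached to the factorization $A=(X-\lambda_1 Z)(X-Z)$, $B=(X-\lambda_2 Z)(X-\lambda_3 Z)$, $C=XZ$ has kernel exactly $\mathsf{K}=\{\mathfrak{p}_0,\mathfrak{p}_{15},\mathfrak{p}_{23},\mathfrak{p}_{46}\}$. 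The stated pairing then follows because $\lambda_4=0$, $\lambda_5=1$, $\lambda_6=\infty$ identify $A,B,C$ with the pairs $\{1,5\}$, $\{2,3\}$, $\{4,6\}$.

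For the kernel identification I would use the partition--Göpel dictionary. On a genus-two curve the canonical class satisfies $K\sim 2\mathfrak{p}_6$, so by (\ref{oder2points}) every nontrivial two-torsion point can be written uniformly as $\mathfrak{p}_{ij}=[\mathfrak{p}_i+\mathfrak{p}_j-K]$ for a pair $\{i,j\}$. The three classes attached to a partition into pairs sum to zero because $\sum_{i=1}^{6}\mathfrak{p}_i\sim 3K$, and by the group law (\ref{group_law}) they form, together with $\mathfrak{p}_0$, precisely one of the Göpel groups of Lemma~\ref{lem:Goepel}; for our partition this group is $\{\mathfrak{p}_0,\mathfrak{p}_{15},\mathfrak{p}_{23},\mathfrak{p}_{46}\}=\mathsf{K}$. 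It then remains only to check that these partition classes lie in the kernel of $\psi$ itself rather than in the kernel of its dual, and here I would invoke the classical description of the Richelot kernel (\cite{MR970659}): for a factorization $f_6=A\cdot B\cdot C$, the kernel of the associated $(2,2)$-isogeny is exactly the group generated by the two-torsion classes supported on the root-pairs of the individual factors $A,B,C$.

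To make the identification $\hat{\mathbf{A}}=\mathbf{A}/\mathsf{K}\cong\operatorname{Jac}(\hat{\mathcal{C}})$ completely explicit, and to rule out that the construction produces a differently labeled member of the same partition family, I would then carry out the matching of Rosenhain roots. On one side one forms the Richelot curve $\Delta_{ABC}\,y^2=[A,B]\,[A,C]\,[B,C]$ from (\ref{Richelot}), clears the branch point at infinity coming from the factor $C=XZ$ by a preliminary fractional-linear change of variable, brings the result to Rosenhain form (\ref{Eq:Rosenhain2}), and reads off its three roots as rational functions of $\lambda_1,\lambda_2,\lambda_3$. On the other side one expands the roots $\Lambda_i$ of (\ref{Picard_sq}) by substituting the doubling formulas (\ref{Eq:degree2doubling}) and (\ref{Eq:degree2doublingR}) to pass from the $\Theta_j$ to the $\theta_j$, and then Thomae's formula (\ref{Thomaeg=2}) together with the Frobenius relations (\ref{Eq:FrobeniusIdentities}) to express $\Lambda_1,\Lambda_2,\Lambda_3$ as rational functions of $\lambda_1,\lambda_2,\lambda_3$. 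Verifying that the two triples define the same point of $\mathcal{M}_2$ closes the argument.

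I expect the main obstacle to be this last comparison: the two triples need only agree up to a simultaneous Möbius transformation and a permutation in $S_6$, so the computation must be organized so as to fix this residual ambiguity. The cleanest way is to track how the six Weierstrass points, and hence the labeling of $\mathbf{A}[2]$ recorded in Table~\ref{tab:16_6_configuration}, are permuted by the isogeny, which pins down the correct ordering of the roots; the bookkeeping around the branch point at infinity in the factor $C$ also requires care. Everything else is the routine but lengthy algebra of pushing the $\Theta_j$-expressions through the doubling and Thomae relations.
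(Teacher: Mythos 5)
Your proposal is correct in substance but proceeds along a genuinely different route from the paper. The paper's proof is a single direct computation: it forms the Richelot curve for the pairing $(\lambda_1,\lambda_5),(\lambda_2,\lambda_3),(\lambda_4,\lambda_6)$, computes its Igusa invariants as rational functions of $[\theta_1:\theta_2:\theta_3:\theta_4]$, computes the Igusa invariants of a suitable quadratic twist $\hat{\mathcal{C}}^{(\mu)}$ of the curve~(\ref{Eq:Rosenhain2}) with roots~(\ref{Picard_sq_b}), and checks that the two sets agree. Note that this choice of invariants is precisely what disposes of the ``residual ambiguity'' you flag at the end: Igusa invariants are by construction insensitive to simultaneous M\"obius transformations and $S_6$-permutations of the Rosenhain roots, so no tracking of the Weierstrass-point labelling is needed; your plan to pin down the ordering of the roots by following the permutation through Table~\ref{tab:16_6_configuration} is correct but substantially more laborious than necessary. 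Conversely, your first two paragraphs give a cleaner and more conceptual argument that the paper does not use at all: the bijection between partitions of the six Weierstrass points into pairs and G\"opel groups, combined with the classical fact that the kernel of the Richelot isogeny attached to $f_6=A\cdot B\cdot C$ is generated by the differences of the roots within each factor, identifies the kernel as $\{\mathfrak{p}_0,\mathfrak{p}_{15},\mathfrak{p}_{23},\mathfrak{p}_{46}\}=\mathsf{K}$ outright, which already proves the lemma as stated and makes your third paragraph a consistency check rather than a necessary step. The trade-off is that your route imports an external result (the kernel description from \cite{MR970659}) that the paper never invokes, while the paper's route implicitly relies on the fifteen Richelot curves being generically pairwise non-isomorphic in order for the Igusa-invariant match to single out the correct pairing; your kernel argument avoids that genericity assumption. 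One small point of care in your explicit verification: the expressions~(\ref{Picard_sq_b}) involve the odd-degree product $\theta_1\theta_2\theta_3\theta_4$, so passing through Thomae's formula~(\ref{Thomaeg=2}), which only determines fourth powers, introduces sign choices (the same two-valuedness governing $l$ with $l^2=\lambda_1\lambda_2\lambda_3$) that must be fixed consistently; this is part of the ``routine algebra'' you defer but deserves explicit mention.
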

\begin{proof}
We compute the Richelot-isogeny in Equation~(\ref{Richelot}) obtained from pairing the roots according to $(\lambda_1,\lambda_5=1)$, $(\lambda_2,\lambda_3)$, $(\lambda_4=0,\lambda_6=\infty)$. For this new curve we compute its Igusa-invariants which are in fact rational functions of the Theta functions $[\theta_1: \theta_2: \theta_3: \theta_4]$. We then compute the Igusa invariants for the quadratic twist $\hat{\mathcal{C}}^{(\mu)}$  of the curve in Equation~(\ref{Eq:Rosenhain2}) with
\[
\mu = \frac{(\theta_1\theta_2-\theta_3\theta_4)^2(\theta_1^2 + \theta_2^2 - \theta_3^2 - \theta_4^2)(\theta_1^2 - \theta_2^2 + \theta_3^2 - \theta_4^2)}{4 \, \theta_1\theta_2\theta_3\theta_4(\theta_1^2 + \theta_2^2 + \theta_3^2 + \theta_4^2)(\theta_1^2 - \theta_2^2 - \theta_3^2 + \theta_4^2)} \;.
\]
They again are  rational functions of the Theta functions $[\theta_1: \theta_2: \theta_3: \theta_4]$ since the Rosenhain roots of $\hat{\mathcal{C}}$ are determined by the equations
\begin{equation}\label{Picard_sq_b}
\begin{split}
\Lambda_1 & = \frac{(\theta_1^2 + \theta_2^2 + \theta_3^2 + \theta_4^2)(\theta_1^2 - \theta_2^2 - \theta_3^2 + \theta_4^2)}{(\theta_1^2 + \theta_2^2 - \theta_3^2 - \theta_4^2)(\theta_1^2 - \theta_2^2 + \theta_3^2 - \theta_4^2)}, \\
\Lambda_2 & =  \frac{(\theta_1^2 - \theta_2^2 - \theta_3^2 + \theta_4^2)(\theta_1^2\theta_2^2+\theta_3^2\theta_4^2+2 \theta_1\theta_2\theta_3\theta_4)}{(\theta_1^2 - \theta_2^2 + \theta_3^2 - \theta_4^2)(\theta_1^2\theta_2^2-\theta_3^2\theta_4^2)} ,\\
\Lambda_3 & =  \frac{(\theta_1^2 + \theta_2^2 + \theta_3^2 + \theta_4^2)(\theta_1^2\theta_2^2+\theta_3^2\theta_4^2+2 \theta_1\theta_2\theta_3\theta_4)}{(\theta_1^2 + \theta_2^2 - \theta_3^2 - \theta_4^2)(\theta_1^2\theta_2^2-\theta_3^2\theta_4^2)} \;.
\end{split}
\end{equation}
The two sets of Igusa invariants are identical.
\end{proof}
Lemma~\ref{lem:R2isog} proves that the genus-two curve $\hat{\mathcal{C}}$ is isomorphic to the curve obtained by Richelot isogeny using Equation~(\ref{Richelot}) with
\begin{gather*}
[B,C]=x^2- \lambda_1,  [A,C]=x^2- \lambda_2\lambda_3, \\
[A,B]=(1+\lambda_1-\lambda_2-\lambda_3)x^2-2(\lambda_1-\lambda_2\lambda_3)x +\lambda_1\lambda_2+\lambda_1\lambda_3-\lambda_2\lambda_3-\lambda_1\lambda_2\lambda_3.
\end{gather*}
Recall that for the G\"opel group $\mathsf{K}'=\{ \mathfrak{p}_0,  \mathfrak{p}_{12}, \mathfrak{p}_{34}, \mathfrak{p}_{56} \}$ we have $\mathsf{K}+\mathsf{K}'=\mathbf{A}[2]$, $\mathsf{K} \cap \mathsf{K}'=\{ \mathfrak{p}_0\}$. We denote the image of $\mathsf{K}'$ in $\hat{\mathbf{A}}$ by $\hat{\mathsf{K}}$. We have the following:
\begin{lemma}
\label{lem:R2isog_dual}
Taking the quotient by the G\"opel group $\hat{\mathsf{K}}=\{ \hat{\mathfrak{p}}_0,  \hat{\mathfrak{p}}_{12}, \hat{\mathfrak{p}}_{34}, \hat{\mathfrak{p}}_{56} \}$ corresponds to the Richelot isogeny acting on the curve $\Delta_{ABC} Y^2 = \hat{A}\cdot \hat{B}\cdot \hat{C}$ isomorphic to $\hat{\mathcal{C}}$ by pairing linear factors according to $\hat{A}=[B,C]$, $\hat{B}=[A,C]$, $\hat{C}=[A,B]$.
\end{lemma}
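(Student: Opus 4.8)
The plan is to exploit the self-duality of the Richelot construction: applying the bracket twice returns the original quadratics, so the dual isogeny is again a Richelot isogeny whose kernel can be pinned down. Recall from the computation following Lemma~\ref{lem:R2isog} that $\hat{\mathcal{C}}$ is the Richelot curve~(\ref{Richelot}) built from $\hat{A}=[B,C]=x^2-\lambda_1$, $\hat{B}=[A,C]=x^2-\lambda_2\lambda_3$, and $\hat{C}=[A,B]=(1+\lambda_1-\lambda_2-\lambda_3)x^2-2(\lambda_1-\lambda_2\lambda_3)x+\lambda_1\lambda_2+\lambda_1\lambda_3-\lambda_2\lambda_3-\lambda_1\lambda_2\lambda_3$. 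The first step is to evaluate the three brackets of $\hat{A},\hat{B},\hat{C}$ that enter the Richelot formula~(\ref{Richelot}) for the quotient by $\hat{\mathsf{K}}$. A direct computation with $[\,\cdot\,,\cdot\,]$ yields the reciprocity relations
\[
 [\hat{A},\hat{B}] = 2(\lambda_1-\lambda_2\lambda_3)\,x, \qquad
 [\hat{A},\hat{C}] = -2(\lambda_1-\lambda_2\lambda_3)\,(x-\lambda_1)(x-1),
\]
\[
 [\hat{B},\hat{C}] = -2(\lambda_1-\lambda_2\lambda_3)\,(x-\lambda_2)(x-\lambda_3).
\]
Since $C=(x-\lambda_4)(x-\lambda_6)$ has its roots at $0$ and $\infty$ (so, as a binary form, $[\hat{A},\hat{B}]\propto XZ\propto C$), while $A=(x-\lambda_1)(x-\lambda_5)=(x-\lambda_1)(x-1)$ and $B=(x-\lambda_2)(x-\lambda_3)$, these three quadratics are proportional to $C$, $A$, and $B$ respectively. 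Hence the Richelot sextic of $\hat{\mathcal{C}}$ for the pairing $\hat{A},\hat{B},\hat{C}$ is proportional to $A\cdot B\cdot C=f_6$, i.e.\ the resulting curve is isomorphic to the original $\mathcal{C}$ of Equation~(\ref{Eq:Rosenhain}), the common nonzero factor $2(\lambda_1-\lambda_2\lambda_3)$ together with the $\Delta$-factors being absorbed into a quadratic twist.

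With the dual curve identified as $\mathcal{C}$, the second step is to confirm that the associated Richelot isogeny $\psi'\colon\hat{\mathbf{A}}=\operatorname{Jac}(\hat{\mathcal{C}})\to\operatorname{Jac}(\mathcal{C})=\mathbf{A}$ is exactly the quotient by $\hat{\mathsf{K}}$. Composing with the isogeny $\psi$ of Lemma~\ref{lem:R2isog}, the endomorphism $\psi'\circ\psi$ of $\mathbf{A}$ has degree $\deg\psi'\cdot\deg\psi=16$. For $\mathcal{C}$ with $\operatorname{End}(\mathbf{A})=\mathbb{Z}$ the only endomorphisms of degree $16=2^4$ are $\pm[2]$, both with kernel $\mathbf{A}[2]$; thus $\psi'\circ\psi=[2]$ on a dense set of the family, hence identically (alternatively one cites the classical self-duality of the Richelot isogeny in~\cite{MR970659}). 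It follows that $\ker\psi'=\psi(\ker[2])=\psi(\mathbf{A}[2])$, and since $\ker\psi=\mathsf{K}$ with $\mathsf{K}+\mathsf{K}'=\mathbf{A}[2]$ and $\mathsf{K}\cap\mathsf{K}'=\{\mathfrak{p}_0\}$, we obtain $\ker\psi'=\psi(\mathsf{K}')=\hat{\mathsf{K}}$, precisely the group named in the statement. Matching this against Lemma~\ref{lem:decomp}, where $\hat{\psi}$ is characterized as the quotient by $\hat{\mathsf{K}}$ satisfying $\hat{\psi}\circ\psi=[2]$, identifies $\psi'$ with $\hat{\psi}$ up to an isomorphism of the target, which is the assertion.

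I expect the main obstacle to lie in the kernel identification rather than the reciprocity computation, which is mechanical. The delicate point is to be certain that the Richelot isogeny attached to the factorization $\hat{A},\hat{B},\hat{C}$ has kernel equal to $\hat{\mathsf{K}}$ and not one of the other fourteen G\"opel groups of $\hat{\mathbf{A}}[2]$; the degree argument settles this cleanly when $\operatorname{End}(\mathbf{A})=\mathbb{Z}$, but one must then invoke a specialization argument (or the explicit Weierstrass-point labelling used to derive the roots $\Lambda_1,\Lambda_2,\Lambda_3$ in Equation~(\ref{Picard_sq_b})) to cover the non-generic members of the family. As a purely computational fallback paralleling the proof of Lemma~\ref{lem:R2isog}, one can instead compute the Igusa invariants of the Richelot curve of $\hat{\mathcal{C}}$ for the pairing $\hat{A},\hat{B},\hat{C}$, express them as rational functions of $[\theta_1:\theta_2:\theta_3:\theta_4]$ through Equations~(\ref{Picard_sq_b}), and verify that they agree with those of $\mathcal{C}$; this route avoids the endomorphism-ring hypothesis entirely.
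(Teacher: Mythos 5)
Your proposal is correct, but it argues along a genuinely different line than the paper. The paper's proof is the one-line remark that the argument is ``analogous to the proof of Lemma~\ref{lem:R2isog}'', i.e.\ one computes the Igusa invariants of the Richelot curve attached to the pairing $\hat{A},\hat{B},\hat{C}$, expresses them as rational functions of $[\theta_1:\theta_2:\theta_3:\theta_4]$ via Equations~(\ref{Picard_sq_b}), and checks that they agree with those of (a quadratic twist of) the expected curve --- exactly the computational fallback you describe in your last paragraph. You instead (i) verify the Richelot reciprocity directly, showing $[\hat A,\hat B]\propto C$, $[\hat A,\hat C]\propto A$, $[\hat B,\hat C]\propto B$ (your three identities check out, e.g.\ $[\hat A,\hat C]=-2(\lambda_1-\lambda_2\lambda_3)\bigl(x^2-(1+\lambda_1)x+\lambda_1\bigr)$), so the double Richelot literally reproduces the sextic $f_6$ up to a nonzero scalar absorbed into a twist; and (ii) pin down the kernel by the degree-$16$ endomorphism argument, $\deg[n]=n^4$ forcing $\psi'\circ\psi=\pm[2]$ when $\operatorname{End}(\mathbf{A})=\mathbb{Z}$, whence $\ker\psi'=\psi(\mathbf{A}[2])=\psi(\mathsf{K}')=\hat{\mathsf{K}}$, extended to the whole family by specialization. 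What your route buys is twofold: the reciprocity computation identifies the target curve on the nose rather than only up to an invariant match, and --- more importantly --- the degree argument actually addresses the kernel identification, which the invariant-matching proof of Lemma~\ref{lem:R2isog} (and hence of this lemma) leaves implicit, since matching Igusa invariants only determines the isomorphism class of the quotient, not which of the fifteen G\"opel groups was divided out. The cost is the genericity hypothesis $\operatorname{End}(\mathbf{A})=\mathbb{Z}$ and the attendant specialization step, which the paper's purely computational approach avoids; you correctly flag this as the delicate point and supply both remedies.
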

\begin{proof} 
The proof is analogous to the proof of Lemma~\ref{lem:R2isog}.
\end{proof}
\par To see the symmetric relation between the moduli of the isogenous curves $\mathcal{C}$ and $\hat{\mathcal{C}}$ directly, we introduce new moduli $\lambda_1' = ( \lambda_1 + \lambda_2 \lambda_3)/l$,  $\lambda_2' = (\lambda_2 + \lambda_1 \lambda_3)/l$,  and  $\lambda_3' = (\lambda_3 + \lambda_1 \lambda_2)/l$, and similarly $\Lambda_1', \Lambda_2', \Lambda_3'$ with $l^2 = \lambda_1 \lambda_2 \lambda_3$ and $L^2=  \Lambda_1 \Lambda_2 \Lambda_3$. One checks by explicit computation the following:
\begin{lemma}
The parameters $\{\lambda_i'\}$ and $\{\Lambda_i'\}$ are rational functions 
in the squares of Theta constants $\lbrace \theta_1^2 , \theta_2^2, \theta_3^2, \theta_4^2\rbrace$ 
and $\lbrace \Theta_1^2 , \Theta_2^2, \Theta_3^2, \Theta_4^2\rbrace$, respectively.
\end{lemma}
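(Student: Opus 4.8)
The plan is to reduce the statement to a direct substitution of Thomae's formula followed by a single application of the Frobenius identities, so that the only conceptual point becomes the observation that the square root $l$ is in fact rational. I will treat $\{\lambda_i'\}$ in detail, since $\{\Lambda_i'\}$ follows by the identical argument applied to the isogenous surface $\hat{\mathbf{A}}$, whose even theta-squares are the $\Theta_i^2$ and among which the analogous Frobenius relations hold.

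First I would substitute the Thomae expressions of Lemma~\ref{ThomaeLemma}, Equation~(\ref{Picard}), namely $\lambda_1 = \theta_1^2\theta_3^2/(\theta_2^2\theta_4^2)$, $\lambda_2 = \theta_3^2\theta_8^2/(\theta_4^2\theta_{10}^2)$, and $\lambda_3 = \theta_1^2\theta_8^2/(\theta_2^2\theta_{10}^2)$. The crucial preliminary observation is that the product of the three roots,
\[
\lambda_1\lambda_2\lambda_3 = \frac{\theta_1^4\theta_3^4\theta_8^4}{\theta_2^4\theta_4^4\theta_{10}^4},
\]
is already a perfect square in the field generated by the squares of the even Theta constants, so that $l = \theta_1^2\theta_3^2\theta_8^2/(\theta_2^2\theta_4^2\theta_{10}^2)$ is itself a rational function of those Theta-squares (the choice of sign being irrelevant to the rationality claim). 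This is the one genuinely nontrivial point: the quantity $l$, which a priori is only a square root, turns out to be rational precisely because each Rosenhain root is a \emph{ratio of squares} of Theta constants.

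Next I would carry out the divisions $\lambda_i' = (\lambda_i + \lambda_j\lambda_k)/l$ and simplify. A short computation gives the clean forms
\[
\lambda_1' = \frac{\theta_8^4+\theta_{10}^4}{\theta_8^2\theta_{10}^2}, \qquad \lambda_2' = \frac{\theta_1^4+\theta_2^4}{\theta_1^2\theta_2^2}, \qquad \lambda_3' = \frac{\theta_3^4+\theta_4^4}{\theta_3^2\theta_4^2},
\]
where in each case the common factors supplied by $l$ cancel against those coming from $\lambda_i+\lambda_j\lambda_k$. The expressions for $\lambda_2'$ and $\lambda_3'$ already lie in $\mathbb{C}(\theta_1^2,\theta_2^2,\theta_3^2,\theta_4^2)$, so only $\lambda_1'$ remains to be treated.

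Finally, I would eliminate the auxiliary constants $\theta_8,\theta_{10}$ from $\lambda_1'$ by invoking the Frobenius identities~(\ref{Eq:FrobeniusIdentities}), $\theta_8^2\theta_{10}^2 = \theta_1^2\theta_2^2-\theta_3^2\theta_4^2$ and $\theta_8^4+\theta_{10}^4 = \theta_1^4+\theta_2^4-\theta_3^4-\theta_4^4$, which give
\[
\lambda_1' = \frac{\theta_1^4+\theta_2^4-\theta_3^4-\theta_4^4}{\theta_1^2\theta_2^2-\theta_3^2\theta_4^2}\in\mathbb{C}(\theta_1^2,\theta_2^2,\theta_3^2,\theta_4^2).
\]
The statement for $\{\Lambda_i'\}$ is verbatim the same with $\theta$ replaced by $\Theta$, starting from the Thomae expressions~(\ref{Picard_sq}) for $\hat{\mathbf{A}}$ and from the Frobenius relations among the $\Theta_i$. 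I expect the only obstacle to be bookkeeping, namely keeping the index assignment straight so that precisely the same three Frobenius relations apply on the isogenous side; but since the Thomae and Frobenius relations are valid for the Theta constants of any principally polarized abelian surface, the argument transfers without change.
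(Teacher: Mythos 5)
Your computation is correct and is exactly the ``direct computation'' the paper leaves implicit: the observation that $\lambda_1\lambda_2\lambda_3$ is a perfect square of a ratio of theta-squares makes $l$ rational (up to an irrelevant sign), the divisions yield $\lambda_1'=(\theta_8^4+\theta_{10}^4)/(\theta_8^2\theta_{10}^2)$, $\lambda_2'=(\theta_1^4+\theta_2^4)/(\theta_1^2\theta_2^2)$, $\lambda_3'=(\theta_3^4+\theta_4^4)/(\theta_3^2\theta_4^2)$, and the Frobenius identities~(\ref{Eq:FrobeniusIdentities}) eliminate $\theta_8,\theta_{10}$ — note that your $\lambda_1'$ is precisely the $\beta$ of Equation~(\ref{KummerParameter_b}), a useful consistency check. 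The one point you rightly flag, that the same Frobenius relations hold in the $\Theta$-numbering, is indeed only bookkeeping, since Equation~(\ref{Picard_sq}) asserts Thomae verbatim for the $\Theta_i$ and the Frobenius relations are formal consequences of the Thomae expressions.
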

\begin{proof}
The proof follows by direct computation.
\end{proof}
Moreover, we have the following relations:
\begin{proposition}
\label{lem:2isog_curve}
The moduli of the genus-two curve $\mathcal{C}$ in Equation~(\ref{Eq:Rosenhain}) and the $(2,2)$-isogenous genus-two curve $\hat{\mathcal{C}}$ in Equation~(\ref{Eq:Rosenhain2})
are related by
\begin{equation}
\label{relations_RosRoots}
 \begin{split}
 \begin{array}{rl}
   \Lambda_1' & = 2 \, \frac{2 \lambda_1' - \lambda_2'-\lambda_3'}{\lambda_2'-\lambda_3'} \,,\\[0.6em]
   \Lambda_2' - \Lambda_1' & = - \frac{4(\lambda_1'-\lambda_2')(\lambda_1'-\lambda_3')}{(\lambda_1'+2)(\lambda_2'-\lambda_3')} \,,\\[0.6em]
   \Lambda_3' - \Lambda_1' & = - \frac{4(\lambda_1'-\lambda_2')(\lambda_1'-\lambda_3')}{(\lambda_1'-2)(\lambda_2'-\lambda_3')} \,,
  \end{array}
  & \qquad
 \begin{array}{rl}
   \lambda_1' & = 2 \, \frac{2 \Lambda_1' - \Lambda_2'-\Lambda_3'}{\Lambda_2'-\Lambda_3'} \,,\\[0.6em]
   \lambda_2' - \lambda_1' & = - \frac{4(\Lambda_1'-\Lambda_2')(\Lambda_1'-\Lambda_3')}{(\Lambda_1'+2)(\Lambda_2'-\Lambda_3')} \,,\\[0.6em]
   \lambda_3' - \lambda_1' & = - \frac{4(\Lambda_1'-\Lambda_2')(\Lambda_1'-\Lambda_3')}{(\Lambda_1'-2)(\Lambda_2'-\Lambda_3')} \,.
  \end{array}   
  \end{split}
\end{equation}
\end{proposition}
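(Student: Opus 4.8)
The assertion consists of six rational identities among the primed moduli, so the plan is to push everything into one coordinate ring in which the $\lambda_i'$ and the $\Lambda_i'$ both live and then verify the identities there. The lemma immediately preceding this proposition already records the crucial structural fact: the $\lambda_i'$ are rational functions of $\theta_1^2,\dots,\theta_4^2$ and the $\Lambda_i'$ are rational functions of $\Theta_1^2,\dots,\Theta_4^2$. Since the second principal transformation~(\ref{Eq:degree2doubling}) expresses each $\theta_i^2$ (for $1\le i\le4$) as a linear form in $\Theta_1^2,\Theta_2^2,\Theta_3^2,\Theta_4^2$, substituting it into the $\lambda_i'$ turns \emph{both} families into explicit rational functions of the four quantities $\Theta_1^2,\dots,\Theta_4^2$. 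This is the common ground on which the two sides of each relation in~(\ref{relations_RosRoots}) can be compared.

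Once $\lambda_i'$ and $\Lambda_i'$ are written as rational functions of $\Theta_1^2,\dots,\Theta_4^2$, each of the six relations in~(\ref{relations_RosRoots}) becomes an identity of rational functions in these four variables. I would clear denominators and check that the resulting numerator vanishes identically. This is legitimate because, by Lemma~\ref{compactifications}, the map $\Xi_{2,4}\colon\tau\mapsto[\Theta_1:\Theta_2:\Theta_3:\Theta_4]$ identifies $\overline{\mathcal{A}_2(2,4)}$ with $\mathbb{P}^3$; hence $\Theta_1^2,\dots,\Theta_4^2$ are algebraically independent as functions on the moduli space, and an identity of rational functions that holds on $\mathcal{A}_2(2,4)$ is a formal polynomial identity that can be confirmed by expansion. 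As a guide and consistency check, the first relation states exactly that $\Lambda_1'$ is the image of $\lambda_1'$ under the affine transformation carrying $(\lambda_2',\lambda_3')$ to $(2,-2)$, and the remaining two then locate $\Lambda_2',\Lambda_3'$ relative to $\Lambda_1'$; this predicts the shape of all three formulas before any expansion is done.

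For the right-hand block of~(\ref{relations_RosRoots}) I would avoid repeating the computation. Those relations are obtained from the left-hand ones by the exchange $\lambda_i'\leftrightarrow\Lambda_i'$, and this is precisely the symmetry forced by Lemma~\ref{lem:decomp}: the composition $\hat{\psi}\circ\psi$ of the two $(2,2)$-isogenies is multiplication by two on $\mathbf{A}$, so iterating the Richelot correspondence returns to $\mathcal{C}$ and the moduli map must be an involution. Thus it suffices either to substitute the left-hand block into the right and observe an identity, or to rerun the verification with $\theta_i$ and $\Theta_i$ interchanged, using that the doubling formulas~(\ref{Eq:degree2doubling}) and~(\ref{Eq:degree2doublingR}) are symmetric under this duality.

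The main obstacle is bookkeeping rather than idea. Making the $\lambda_i'$ and $\Lambda_i'$ rational in the \emph{squares} of the Theta constants --- rather than in the Theta constants themselves --- rests on nontrivial cancellations: the definitions $\lambda_i'=(\lambda_i+\lambda_j\lambda_k)/l$ and $\Lambda_i'=(\Lambda_i+\Lambda_j\Lambda_k)/L$ involve the square roots $l$ and $L$ with $l^2=\lambda_1\lambda_2\lambda_3$ and $L^2=\Lambda_1\Lambda_2\Lambda_3$, which are only determined up to sign, while the identities in~(\ref{relations_RosRoots}) are sign-sensitive. Pinning down compatible branches is exactly what the rational expression~(\ref{def_l}) for $l$ accomplishes on one side, and on the other the product $\Lambda_1\Lambda_2\Lambda_3$ formed from~(\ref{Picard_sq_b}) is a manifest perfect square, since each of its factors occurs to an even power, which fixes $L$ rationally. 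With these branches fixed, confirming that the odd-degree bilinear terms cancel so that $\lambda_i'$ and $\Lambda_i'$ are genuinely rational in $\Theta_1^2,\dots,\Theta_4^2$ with the stated signs, and then carrying out the final polynomial expansion, is the bulk of the work.
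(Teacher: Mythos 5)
Your proposal is correct and is, in substance, the paper's own argument: the paper proves this proposition by ``direct computation,'' and your plan --- push both $\{\lambda_i'\}$ and $\{\Lambda_i'\}$ into rational functions of $\Theta_1^2,\dots,\Theta_4^2$ via the doubling formulas~(\ref{Eq:degree2doubling}), fix the branches of $l$ and $L$ rationally as in~(\ref{def_l}) and the perfect-square structure of $\Lambda_1\Lambda_2\Lambda_3$ from~(\ref{Picard_sq_b}), verify the left-hand block as a formal identity of homogeneous degree-zero rational functions on $\mathbb{P}^3\cong\overline{\mathcal{A}_2(2,4)}$, and obtain the right-hand block from the involution forced by Lemma~\ref{lem:decomp} --- is exactly how that computation is organized. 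The only nitpick is that $\Theta_1^2,\dots,\Theta_4^2$ are projective coordinates rather than literally algebraically independent functions; since every relation in~(\ref{relations_RosRoots}) is homogeneous of degree zero in them, this does not affect the reduction to a polynomial identity.
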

\begin{proof}
The proof follows by direct computation.
\end{proof}
In summary, we proved that there is a Richelot isogeny realizing the $(2,2)$-isogeny $\psi: \mathbf{A}=\operatorname{Jac}(\mathcal{C}) \to \hat{\mathbf{A}}=\operatorname{Jac}(\hat{\mathcal{C}})=\mathbf{A}/\mathsf{K}$,  for the maximal isotropic subgroup $\mathsf{K}$. For the isogenous abelian $\hat{\mathbf{A}}$ variety with period matrix $(\mathbb{I}_2,2\tau)$, the moduli are rational functions in the roots of the $(2,2)$-isogenous curve $\hat{\mathcal{C}}$ in Equation~(\ref{Eq:Rosenhain2}). We consider these Rosenhain roots $\Lambda_1, \Lambda_2, \Lambda_3$ the coordinates on the moduli space of $(2,2)$-Isogenous abelian varieties which we denote by $\hat{\mathcal{A}}_2(2)$. Following Lemma~\ref{compactifications},  the holomorphic map $\xi_{2,4}: \mathbb{H}_2 \to \mathbb{P}^3$ given by $\tau \mapsto [\theta_1 : \theta_2: \theta_3 : \theta_4]$ induces an isomorphism between the Satake compactification of $\hat{\mathcal{A}}_2(2,4)$ and $\mathbb{P}^3$.
\section{Principally polarized Kummer surfaces}
In this section we discuss various normal forms for Kummer surfaces with principal polarization and their rich geometry. One can always choose such a Theta divisor to satisfy $(-\mathbb{I})^* \mathsf{\Theta}=\mathsf{\Theta}$, that is, to be a \emph{symmetric Theta divisor}. The abelian surface $\mathbf{A}$ then maps to the complete linear system $|2\mathsf{\Theta}|$, and the rational map $\varphi_{\mathcal{L}^2}: \mathbf{A} \to \mathbb{P}^3$ associated with the line bundle $\mathcal{L}^2$ factors via an embedding through the projection $\mathbf{A} \to \mathbf{A}/\langle -\mathbb{I} \rangle$; see \cite{MR2062673}. In this way, we can identify $\mathbf{A}/\langle -\mathbb{I} \rangle$ with its image in $\mathbb{P}^3$, a singular quartic surface with sixteen ordinary double points, called a \emph{singular Kummer variety}.  
\subsection{The Shioda normal form}
We start with two copies of a smooth genus-two curve $\mathcal{C}$ in Rosenhain normal form in Equation~(\ref{Eq:Rosenhain}).  The ordered tuple $(\lambda_1, \lambda_2, \lambda_3)$ -- where the $\lambda_i$ are pairwise distinct and different from $(\lambda_4,\lambda_5,\lambda_6)=(0, 1, \infty)$ -- determines a point in $\mathcal{M}_2(2)$, the moduli space of genus-two curves with level-two structure.  The symmetric product of the curve $\mathcal{C}$ is given by $\mathcal{C}^{(2)} = (\mathcal{C}\times\mathcal{C})/\langle \sigma_{\mathcal{C}^{(2)} } \rangle$ where $\sigma_{\mathcal{C}^{(2)}}$ interchanges the two copies of $\mathcal{C}$. We denote the hyperelliptic involution on $\mathcal{C}$ by $\imath_\mathcal{C}$. The variety $\mathcal{C}^{(2)}/\langle \imath_\mathcal{C} \times  \imath_\mathcal{C} \rangle$ is given in terms of the variables $z_1=Z^{(1)}Z^{(2)}$, $z_2=X^{(1)}Z^{(2)}+X^{(2)}Z^{(1)}$, $z_3=X^{(1)}X^{(2)}$, and $\tilde{z}_4=Y^{(1)}Y^{(2)}$  with $[z_1:z_2:z_3:\tilde{z}_4] \in \mathbb{P}(1,1,1,3)$ by the equation
\begin{equation}
\label{kummer_middle}
  \tilde{z}_4^2 = z_1 z_3 \big(  z_1  - z_2 +  z_3 \big)  \prod_{i=1}^3 \big( \lambda_i^2 \, z_1  -  \lambda_i \, z_2 +  z_3 \big) \;.
\end{equation}
\begin{definition}
The hypersurface in $\mathbb{P}(1,1,1,3)$ given by Equation~(\ref{kummer_middle}) is called Shioda sextic and was described in \cite{MR2296439}.
\end{definition}
\begin{lemma}
\label{lem:Shioda}
The Shioda sextic in Equation~(\ref{kummer_middle}) is birational to the Kummer surface $\operatorname{Kum}(\operatorname{Jac}\mathcal{C})$ associated with the Jacobian $\operatorname{Jac}(\mathcal{C})$ of a genus-two curve~$\mathcal{C}$ in Rosenhain normal form~(\ref{Eq:Rosenhain}).
\end{lemma}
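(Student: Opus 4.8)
The plan is to factor everything through the symmetric square and then identify the resulting quotient with the singular Kummer variety. Recall that the symmetric product $\mathcal{C}^{(2)}=(\mathcal{C}\times\mathcal{C})/\langle\sigma_{\mathcal{C}^{(2)}}\rangle$ carries the Abel--Jacobi morphism $\{\mathfrak{p},\mathfrak{q}\}\mapsto[\mathfrak{p}+\mathfrak{q}]$ onto $\operatorname{Pic}^2(\mathcal{C})\cong\operatorname{Jac}(\mathcal{C})$, which is birational (it contracts only the rational curve of pairs $\{\mathfrak{p},\imath_{\mathcal{C}}\mathfrak{p}\}$ cut out by the $g^1_2$ to the point corresponding to the canonical class $K_{\mathcal{C}}$). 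First I would record, via the translation $\operatorname{Pic}^2(\mathcal{C})\cong\operatorname{Pic}^0(\mathcal{C})$ by $D\mapsto D-K_{\mathcal{C}}$, that $\imath_{\mathcal{C}}\times\imath_{\mathcal{C}}$ sends $[\mathfrak{p}+\mathfrak{q}]$ to $[\imath_{\mathcal{C}}\mathfrak{p}+\imath_{\mathcal{C}}\mathfrak{q}]=[2K_{\mathcal{C}}-\mathfrak{p}-\mathfrak{q}]$, and that $[2K_{\mathcal{C}}-\mathfrak{p}-\mathfrak{q}]-K_{\mathcal{C}}=-([\mathfrak{p}+\mathfrak{q}]-K_{\mathcal{C}})$, so this involution corresponds to $-\mathbb{I}$ on $\operatorname{Jac}(\mathcal{C})$. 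Consequently $\mathcal{C}^{(2)}/\langle\imath_{\mathcal{C}}\times\imath_{\mathcal{C}}\rangle$ is birational to $\operatorname{Jac}(\mathcal{C})/\langle-\mathbb{I}\rangle$, which is birational to its minimal resolution $\operatorname{Kum}(\operatorname{Jac}\mathcal{C})$.

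It then remains to match this double quotient with the Shioda sextic. The functions $z_1,z_2,z_3,\tilde{z}_4$ are invariant under $\sigma_{\mathcal{C}^{(2)}}$ (they are symmetric in the two factors) and under $\imath_{\mathcal{C}}\times\imath_{\mathcal{C}}$ (which negates each $Y^{(k)}$, while $\tilde{z}_4=Y^{(1)}Y^{(2)}$ is a product of two such), so they descend to a rational map from $(\mathcal{C}\times\mathcal{C})/\langle\sigma_{\mathcal{C}^{(2)}},\imath_{\mathcal{C}}\times\imath_{\mathcal{C}}\rangle$ to $\mathbb{P}(1,1,1,3)$. Squaring $\tilde{z}_4$ gives $(Y^{(1)})^2(Y^{(2)})^2$, and pairing the twelve linear factors of the two Rosenhain sextics into the six symmetric products $(X^{(1)}-\mu Z^{(1)})(X^{(2)}-\mu Z^{(2)})=\mu^2 z_1-\mu z_2+z_3$ (for $\mu=0,1,\lambda_1,\lambda_2,\lambda_3$, together with $Z^{(1)}Z^{(2)}=z_1$) reproduces exactly the right-hand side of Equation~(\ref{kummer_middle}); this verifies the relation essentially by inspection.

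Finally I would prove the map has degree one. On the affine chart $Z^{(1)}=Z^{(2)}=1$, the pair $(z_2,z_3)$ is the elementary symmetric data of $\{X^{(1)},X^{(2)}\}$, so a generic point of the sextic determines the unordered pair of $X$-coordinates, while $\tilde{z}_4$ fixes the relative sign of the two $Y$-values (their squares being prescribed by the curve). The fiber of the map from $\mathcal{C}\times\mathcal{C}$ over such a point consists of the four points obtained from the two orderings of $\{X^{(1)},X^{(2)}\}$ and the two signs of $Y^{(1)}$, and these four points form precisely one free orbit of the order-four group $\langle\sigma_{\mathcal{C}^{(2)}},\imath_{\mathcal{C}}\times\imath_{\mathcal{C}}\rangle$. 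Hence the induced map on the quotient is generically injective, and since the sextic is irreducible for generic $\lambda_i$ (its right-hand side is a product of six distinct linear forms, hence not a perfect square) the image is dense, so the map is birational.

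The equation check and the Abel--Jacobi step are classical and routine; the genuine crux is the last step, namely confirming that the invariants $z_1,z_2,z_3,\tilde{z}_4$ separate a \emph{generic} orbit rather than identifying several of them. The hard part will be the bookkeeping of the fiber: one must show the generic orbit is free (so that its cardinality is exactly four) and that the sign of $\tilde{z}_4$ together with $(z_2,z_3)$ recovers the ordered data up to exactly this orbit, which is what forces the degree of the quotient map to equal one.
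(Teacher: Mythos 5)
Your proposal is correct and follows essentially the same route as the paper: identify $\operatorname{Jac}(\mathcal{C})$ with $\operatorname{Pic}^2(\mathcal{C})$, use the birational Abel--Jacobi map $\mathcal{C}^{(2)}\to\operatorname{Pic}^2(\mathcal{C})$, and observe that $-\mathbb{I}$ corresponds to $\imath_{\mathcal{C}}\times\imath_{\mathcal{C}}$. The only difference is that you additionally verify, via the factor-pairing and the generic free orbit of the order-four group, that $\mathcal{C}^{(2)}/\langle\imath_{\mathcal{C}}\times\imath_{\mathcal{C}}\rangle$ really is cut out by Equation~(\ref{kummer_middle}) with the quotient map of degree one --- a step the paper asserts without proof in the paragraph preceding the lemma.
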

\begin{proof}
$\operatorname{Kum}(\operatorname{Jac}\mathcal{C})$ is birational to the quotient  of the Jacobian by the involution $-\mathbb{I}$. For a smooth genus-two curve $\mathcal{C}$, we identify the Jacobian $\operatorname{Jac}(\mathcal{C})$ with $\operatorname{Pic}^2(\mathcal{C})$ under the map $x \mapsto x + K_{\mathcal{C}}$. Since $\mathcal{C}$ is a hyperelliptic curve with involution $\imath_{\mathcal{C}}$ a map from the symmetric product $\mathcal{C}^{(2)}$ to $\operatorname{Pic}^2(\mathcal{C})$ given by $(p,q) \mapsto p+q$ is the blow down of the graph of the hyperelliptic involution to the canonical divisor class. Thus, the Jacobian $\operatorname{Jac}(\mathcal{C})$ is birational to the symmetric square $\mathcal{C}^{(2)}$. The involution $-\mathbb{I}$ restricts to the hyperelliptic involution on each factor of $\mathcal{C}$ in $\mathcal{C}^{(2)}$.
\end{proof}
\begin{remark}
\label{rem:6lines}
Equation~(\ref{kummer_middle}) defines a double cover of $\mathbb{P}^2 \ni [z_1:z_2:z_3]$ branched along six lines given by
\begin{equation}
\label{eqn:6lines}
 z_1=0, \  z_3=0,  \   z_1  - z_2 +  z_3=0, \ \lambda_i^2 \, z_1  -  \lambda_i \, z_2 +  z_3 =0 \; \text{with $1\le i \le 3$}.
 \end{equation}
The six lines are tangent to the common conic $z_2^2 - 4 \, z_1 z_3=0$. Conversely, any six lines tangent to a common conic can always be brought into the form of Equations~\ref{eqn:6lines}. A picture is provided in Figure~\ref{fig:6Lines}.
\end{remark}
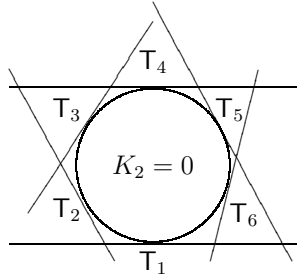
\begin{figure}[ht]
\scalemath{0.85}{
$$
  \begin{xy}
   <0cm,0cm>;<1.5cm,0cm>:
    (2,-.3)*++!D\hbox{$\mathsf{T}_1$},
    (1.1,0.25)*++!D\hbox{$\mathsf{T}_2$},
    (1.1,1.3)*++!D\hbox{$\mathsf{T}_3$},
    (2,1.8)*++!D\hbox{$\mathsf{T}_4$},
    (2.8,1.3)*++!D\hbox{$\mathsf{T}_5$},
    (2.95,0.2)*++!D\hbox{$\mathsf{T}_6$},
    (2,0.7)*++!D\hbox{$K_2=0$},
    (.5,0.18);(3.5,0.18)**@{-},
    (0.5,2);(1.6,0)**@{-},
    (3.1,2);(2.6,0)**@{-},
    (0.7,.5);(2,2.5)**@{-},
    (0.5,1.82);(3.5,1.82)**@{-},
    (2,2.7);(3.45,0)**@{-},
    (2,1)*\xycircle(.8,.8){},
  \end{xy}
  $$}
\caption{Double cover branched along reducible sextic}
\label{fig:6Lines}
\end{figure}
\subsection{The Cassels-Flynn normal form}
We call a surface in complex projective space a \emph{nodal surface} if its only singularities are nodes. For a quartic surface in $\mathbb{P}^3$, it is known that the maximal number of simple nodes is sixteen. 
\par In $\mathbb{P}^3$ we use the projective coordinates $[z_1:z_2:z_3:z_4] \in \mathbb{P}^3$. We consider the morphism $\pi: \mathbb{P}^3 \to \mathbb{P}(1,1,1,3)$ defined by $z_4 \mapsto \tilde{z}_4=(2K_2 \, z_4 +K_1)/4$ with coefficients $K_j=K_j(z_1,z_2,z_3)$ homogeneous of degree $4-j$ with $j=0,1,2$ and given by
\begin{align*}
  K_2 & = z_2^2 - 4 \, z_1 \, z_3 \;,
\end{align*}
\begin{align*}
  K_1 & =\left(  \left( 4\,\lambda_1+4\,\lambda_2+4\,\lambda_3+4 \right) z_1-2\,z_2 \right) z_3^2\\
  & + \Big( \left( 4\,\lambda_1\lambda_2\lambda_3+4\,\lambda_1
\lambda_2+4\,\lambda_1\lambda_3+4\,\lambda_2\lambda_3 \right) z_1^2\\
& + \left( -2\,\lambda_1\lambda_2-2\,
\lambda_1\lambda_3-2\,\lambda_2\lambda_3-2\,\lambda_1-2\,\lambda_2-2\,\lambda_3 \right) z_2z_1 \Big) \, z_3\\
&-2\,\lambda_1\lambda_2\lambda_3z_1^2z_2 \;,
\stepcounter{equation}\tag{\theequation}\label{coeffs_Kummer}
\end{align*}
\begin{align*}
K_0 & =  z_3^4 -2\, \left( \lambda_1\lambda_2+\lambda_1 \lambda_3+\lambda_2\lambda_3+\lambda_1
+\lambda_2+\lambda_3 \right) z_1z_3^3\\
& +\Big( \big(\lambda_1^2 \lambda_2^2 -2 \lambda_1^2 \lambda_2 \lambda_3 + \lambda_1^2 \lambda_3^2 - 2 \lambda_1 \lambda_2^2 \lambda_3-2 \lambda_1 \lambda_2 \lambda_3^2 +\lambda_2^2 \lambda_3^2 
-2 \lambda_1^2 \lambda_2 -2 \lambda_1^2 \lambda_3\\
&-2 \lambda_1 \lambda_2^2-8\lambda_1\lambda_2\lambda_3-2\lambda_1\lambda_3^2-2\lambda_2^2\lambda_3-2\lambda_2\lambda_3^2+\lambda_1^2-2\lambda_1\lambda_2-2\lambda_1\lambda_3+\lambda_2^2\\
&-2\lambda_2\lambda_3+\lambda_3^2\big) \, z_1^2+\left(4\lambda_1 \lambda_2 \lambda_3+4 \lambda_1 \lambda_2+4 \lambda_1 \lambda_3+4 \lambda_2 \lambda_3\Big) \, z_1 z_2\right) \, z_3^2\\
& +\Big(-4 \lambda_1 \lambda_2 \lambda_3 z_1 z_2^2+4 \lambda_1 \lambda_2 \lambda_3 \left(\lambda_1+\lambda_2+\lambda_3+1\right) z_1^2 z_2\\
& -2 \lambda_1 \lambda_2 \lambda_3 \left(\lambda_1 \lambda_2+\lambda_1 \lambda_3+\lambda_2 \lambda_3+\lambda_1+\lambda_2+\lambda_3\right) z_1^3\Big) \, z_3+\lambda_1^2 \lambda_2^2 \lambda_3^2 z_1^4\;.
\end{align*}
\par We have the following result:
\begin{lemma}
\label{lem:CFquartic}
The map $\pi: \mathbb{P}^3 \to \mathbb{P}(1,1,1,3)$ blows down the double cover of the special conic $K_2=0$, and is an isomorphism elsewhere. In particular, the proper transform of $\pi^{-1}\mathcal{C}^{(2)}/\langle \imath_\mathcal{C} \times  \imath_\mathcal{C} \rangle$  is a nodal quartic surface in $\mathbb{P}^3 \ni [z_1:z_2:z_3:z_4]$ with sixteen nodes given by
\begin{equation}
\label{kummer}
 K_2(z_1,z_2,z_3) \; z_4^2 \; + \; K_1(z_1,z_2,z_3)\; z_4 \; + \; K_0(z_1,z_2,z_3) = 0 \;.
\end{equation}
\end{lemma}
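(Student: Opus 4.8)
The whole statement reduces to one polynomial identity, after which the geometry of $\pi$ can be read off directly from the formula $\tilde z_4 = (2K_2 z_4 + K_1)/4$. Set $Q := K_2 z_4^2 + K_1 z_4 + K_0$ and let $F_6(z_1,z_2,z_3)$ denote the degree-six form on the right-hand side of Equation~(\ref{kummer_middle}), so that the Shioda sextic is $\tilde z_4^2 = F_6$. The key claim, obtained by viewing $Q$ as a quadratic in $z_4$, is the discriminant identity
\[ K_1^2 - 4\,K_0\,K_2 \;=\; 16\,F_6 . \]
Everything else is formal once this is known.

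To prove the identity I would first record the factorizations in the Shioda variables $z_1 = Z^{(1)}Z^{(2)}$, $z_2 = X^{(1)}Z^{(2)} + X^{(2)}Z^{(1)}$, $z_3 = X^{(1)}X^{(2)}$: the conic factors as $K_2 = (X^{(1)}Z^{(2)} - X^{(2)}Z^{(1)})^2$, while each of the six branch lines of Equation~(\ref{eqn:6lines}) factors into two linear forms, so that $F_6 = f_6(X^{(1)},Z^{(1)})\,f_6(X^{(2)},Z^{(2)})$ with $f_6$ the Rosenhain sextic of Equation~(\ref{Eq:Rosenhain}). Conceptually the identity reflects the fact, recorded in Remark~\ref{rem:6lines}, that the six branch lines are all tangent to the conic $K_2 = 0$: restricting to $K_2 = 0 \cong \mathbb{P}^1$ each tangent line becomes a perfect square, so $F_6|_{K_2=0}$ is a perfect square and $16F_6 \equiv K_1^2 \pmod{K_2}$ for a suitable cubic $K_1$; the quotient $(K_1^2 - 16F_6)/K_2$ is then a quartic equal to $4K_0$. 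In practice the cleanest verification is to expand $K_1^2 - 4K_0K_2$ from the explicit formulas in Equation~(\ref{coeffs_Kummer}) and match it against $16F_6$ coefficient by coefficient; this computation is the genuine content of the lemma and is what pins down the displayed coefficients of $K_0$ and $K_1$.

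Granting the identity, I analyze $\pi$. Over the open locus $K_2 \neq 0$ the assignment $z_4 \mapsto \tilde z_4 = (2K_2 z_4 + K_1)/4$ is an affine isomorphism with inverse $z_4 = (4\tilde z_4 - K_1)/(2K_2)$, so $\pi$ is an isomorphism there; over the conic $K_2 = 0$ one has $\tilde z_4 = K_1/4$ independently of $z_4$, so the quadric cone $\{K_2 = 0\} \subset \mathbb{P}^3$ is contracted onto a curve, which is the asserted blow-down. Substituting $\tilde z_4 = (2K_2 z_4 + K_1)/4$ into $\tilde z_4^2 = F_6$ gives $(2K_2 z_4 + K_1)^2 = 16F_6$, and replacing $16F_6$ by $K_1^2 - 4K_0K_2$ collapses this to $4K_2\,(K_2 z_4^2 + K_1 z_4 + K_0) = 0$. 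Thus $\pi^{-1}$ of the Shioda sextic is the union of the contracted cone $\{K_2=0\}$ and the surface $\{Q = 0\}$, the latter being the proper transform. Since $K_2, K_1, K_0$ have degrees $2, 3, 4$, every term of $Q$ has total degree four, so $\{Q = 0\}$ is a quartic; and because $\pi$ restricts to an isomorphism over $\{K_2 \neq 0\}$, the restriction of $\pi$ to $\{Q=0\}$ is birational, so by Lemma~\ref{lem:Shioda} the quartic is birational to $\operatorname{Kum}(\operatorname{Jac}\mathcal{C})$.

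It remains to see that $\{Q = 0\}$ has exactly sixteen nodes. Since it is an irreducible quartic birational to the K3 surface $\operatorname{Kum}(\operatorname{Jac}\mathcal{C})$, it is a singular model of the Kummer surface — the image of $\mathbf{A}/\langle -\mathbb{I}\rangle$ under $\varphi_{\mathcal{L}^2}$ described at the start of the section — which is classically a quartic with sixteen ordinary double points at the images of the sixteen two-torsion points of $\mathbf{A}[2]$. I would make this concrete by computing the singular locus directly from $\nabla Q = 0$: for instance the cone vertex $[0:0:0:1]$ visibly lies on $\{Q=0\}$ with vanishing gradient and is one node, and the remaining fifteen are located similarly as the images of $\mathbf{A}[2]$ and checked to be ordinary double points. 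The main obstacle is twofold: establishing the discriminant identity with the precise coefficients of $K_0$ and $K_1$, and then confirming that the singular locus of $\{Q=0\}$ consists of exactly sixteen points, each an ordinary double point rather than a worse singularity.
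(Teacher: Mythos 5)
Your proof follows the same route as the paper's: the entire lemma rests on the discriminant identity $K_1^2-4K_0K_2=16\,z_1z_3(z_1-z_2+z_3)\prod_i(\lambda_i^2z_1-\lambda_iz_2+z_3)$, after which substituting $\tilde z_4=(2K_2z_4+K_1)/4$ factors the pullback of the Shioda sextic as $K_2\cdot(K_2z_4^2+K_1z_4+K_0)=0$, identifying the proper transform and the sixteen nodes (including the extra node at $[0:0:0:1]$, listed with the others in Table~\ref{tab:nodes}). Your added observations — the tangency explanation for why the discriminant is a product of the six lines, and the explicit check that $\pi$ is an isomorphism off $K_2=0$ — are correct elaborations of steps the paper leaves implicit.
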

\begin{proof}
We observe that
\[
 \frac{1}{16} \big(2\, K_2 \, z_4 +K_1\big)^2 =
 \frac{1}{16} \left(K_1^2 - 4 \, K_0 K_2\right) = z_1  z_3 \,  \prod_{i=1}^4 \big( \lambda_i^2 \, z_1  -  \lambda_i \, z_2 +  z_3 \big) \;,
 \]
and Equation~(\ref{kummer_middle}) is equivalent to
\[
  0= K_2  \left( K_2 z_4^2  + K_1 z_4  +  K_0 \right) \;.
\]
The preimage contains the additional point $[z_1:z_2:z_3:z_4]=[0:0:0:1]$ for which $K_2=K_1=K_0=0$. We then obtain sixteen nodes on the quartic surface given by Equation~(\ref{kummer}). All singular points are listed in Table~\ref{tab:nodes}.
\end{proof}
\begin{definition}
The quartic surface in $\mathbb{P}^3$ given by Equation~(\ref{kummer}) appeared in Cassels and Flynn \cite[Sec.~3]{MR1406090} and is called the Cassels-Flynn quartic.
\end{definition}
\par We have the immediate:
\begin{corollary}
The Cassels-Flynn quartic in Equation~(\ref{kummer}) is isomorphic to the singular Kummer variety $\operatorname{Jac}(\mathcal{C})/\langle -\mathbb{I} \rangle$ associated with the Jacobian $\operatorname{Jac}(\mathcal{C})$ of the genus-two curve~$\mathcal{C}$ in Rosenhain form~(\ref{Eq:Rosenhain}).
\end{corollary}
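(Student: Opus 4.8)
The plan is to chain together the birational identifications furnished by Lemma~\ref{lem:Shioda} and Lemma~\ref{lem:CFquartic} and then to upgrade the resulting birational equivalence to a biregular isomorphism. First I would recall from the introduction to this section that the singular Kummer variety $\operatorname{Jac}(\mathcal{C})/\langle -\mathbb{I}\rangle$ is realized, via the complete linear system $|2\mathsf{\Theta}|$, as a quartic surface in $\mathbb{P}^3$ with sixteen ordinary double points, whose minimal resolution is the K3 surface $\operatorname{Kum}(\operatorname{Jac}\mathcal{C})$. The proof of Lemma~\ref{lem:Shioda} shows moreover that this quotient is birational to $\mathcal{C}^{(2)}/\langle \imath_\mathcal{C}\times\imath_\mathcal{C}\rangle$, since $-\mathbb{I}$ restricts to the hyperelliptic involution on each factor of $\mathcal{C}$; that is, the singular Kummer variety is birational to the Shioda sextic of Equation~(\ref{kummer_middle}).

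Next I would invoke Lemma~\ref{lem:CFquartic}: the morphism $\pi\colon\mathbb{P}^3\to\mathbb{P}(1,1,1,3)$ is an isomorphism away from the conic $K_2=0$ and exhibits the Cassels-Flynn quartic of Equation~(\ref{kummer}) as the proper transform of the preimage of the Shioda sextic. Composing the two identifications, the Cassels-Flynn quartic is birational to the singular Kummer variety; and by the same lemma it is itself a normal quartic surface in $\mathbb{P}^3$ carrying exactly sixteen nodes, listed in Table~\ref{tab:nodes}.

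To finish, I would promote this birational map to a biregular isomorphism. Both the Cassels-Flynn quartic and the singular Kummer variety are normal quartic surfaces with sixteen ordinary double points, so each admits a minimal resolution that is a K3 surface; since birational K3 surfaces are isomorphic, both resolutions coincide with $\operatorname{Kum}(\operatorname{Jac}\mathcal{C})$, and the birational map lifts to a biregular isomorphism $f$ of these resolutions. Because the two quartics are cut out in the common coordinates $[z_1:z_2:z_3]$, with their sixteen nodes labeled by the same sixteen two-torsion points of $\mathbf{A}[2]$, the map $f$ should carry the sixteen exceptional node-curves of one resolution onto those of the other, and hence descend to an isomorphism of the two nodal quartics matching their degree-four (hyperplane) polarizations.

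I expect this last step to be the main obstacle: the map $\pi$ is genuinely non-biregular along the conic $K_2=0$, so the crux is to verify that this indeterminacy is absorbed after passing to the quotient by $-\mathbb{I}$, and that the sixteen nodes are matched correctly --- equivalently, that the degree-four polarizations of the two quartics correspond under the isomorphism of resolutions. Once the two node configurations are checked to agree, the two nodal quartic models of the same polarized K3 surface are necessarily projectively equivalent, which yields the asserted isomorphism.
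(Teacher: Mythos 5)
Your chain of identifications through Lemma~\ref{lem:Shioda} and Lemma~\ref{lem:CFquartic} is exactly the route the paper has in mind (it offers no further argument for this corollary), so the birationality of the Cassels--Flynn quartic with $\operatorname{Jac}(\mathcal{C})/\langle -\mathbb{I}\rangle$ is established. The gap is in your final step. From ``birational K3 surfaces are isomorphic'' you obtain a biregular isomorphism $f$ of the two minimal resolutions, but to descend $f$ to an isomorphism of the two \emph{nodal} surfaces you must know that $f$ carries the sixteen exceptional $(-2)$-curves contracted to the nodes of the Cassels--Flynn quartic onto the sixteen exceptional curves contracted to the singular points of $\operatorname{Jac}(\mathcal{C})/\langle -\mathbb{I}\rangle$. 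This is not automatic: a Kummer K3 surface carries many distinct configurations of sixteen disjoint $(-2)$-curves, and abstract uniqueness of the minimal model says nothing about which configuration a given isomorphism preserves. Your supporting remark that the two quartics are ``cut out in the common coordinates $[z_1:z_2:z_3]$'' is not available either, since the singular Kummer variety is by definition the image of $\varphi_{\mathcal{L}^2}$ and is not a priori presented in the coordinates of Equation~(\ref{kummer}). You correctly identify this matching of node configurations and polarizations as ``the crux,'' but you do not carry it out, so as written the proof is incomplete precisely where its content lies.

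The verification can be done (it amounts to checking that the chain of birational maps sends the sixteen two-torsion points to the sixteen nodes listed in Table~\ref{tab:nodes} and contracts nothing else), but a more economical argument avoids the K3 machinery altogether. The composite $\operatorname{Jac}(\mathcal{C})\dashrightarrow \mathcal{C}^{(2)}\to\mathcal{C}^{(2)}/\langle\imath_{\mathcal{C}}\times\imath_{\mathcal{C}}\rangle\dashrightarrow\mathbb{P}^3$ extends to a morphism on all of $\operatorname{Jac}(\mathcal{C})$: the exceptional curve of $\mathcal{C}^{(2)}\to\operatorname{Jac}(\mathcal{C})$ lies over the conic $K_2=0$ in Equation~(\ref{kummer_middle}), and Lemma~\ref{lem:CFquartic} shows that this locus is resolved into the node $p_0=[0:0:0:1]$. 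The resulting morphism is proper with finite fibers, generically two-to-one onto the Cassels--Flynn quartic, and invariant under $-\mathbb{I}$; it therefore factors through a finite birational morphism $\operatorname{Jac}(\mathcal{C})/\langle -\mathbb{I}\rangle\to\{K_2z_4^2+K_1z_4+K_0=0\}$. Since the target is a hypersurface with only isolated singularities, hence normal, Zariski's main theorem forces this map to be an isomorphism. This gives the corollary directly and produces the node--two-torsion correspondence of Table~\ref{tab:nodes} as a byproduct, rather than requiring it as an unproven input.
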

\qed
\par   The map from the Jacobian $\operatorname{Jac}(\mathcal{C})$ onto its image in $\mathbb{P}^3$ is two-to-one, except on sixteen points of order two where it is injective. This can be seen as follows:  We label the sixteen singular points $p_0, \, p_{ij}$ with $1\le i < j \le 6$ where $p_0$ is located at $[0:0:0:1]$.  Fifteen nodes for Equation~(\ref{kummer_middle}), namely $p_{ij}$ with $1\le i < j \le 6$, are given by
$$
 [z_1: z_2 : z_3 : \tilde{z}_4] = [1: \lambda_i+\lambda_j : \lambda_i \lambda_j: 0] \;,
$$
where we have used $\lambda_4 =0$, $\lambda_5=1$. These points are obtained by combining the Weierstrass points $\mathfrak{p}_{i}$ and $\mathfrak{p}_{j}$  on $\mathcal{C}$ given by
\begin{equation}
\label{pencilC_DP}
[X^{(1)}:Y^{(1)}:Z^{(1)}]=[\lambda_i: 0 : 1] \; \text{and} \; [X^{(2)}:Y^{(2)}:Z^{(2)}]=[\lambda_j: 0 : 1] .
\end{equation}
Similarly, five nodes  $p_{i6}$ for $1\le i \le 5$ are given by
$$
 [z_1: z_2 : z_3 : \tilde{z}_4] = [0: 1 : \lambda_i : 0]
$$
and obtained by combining Weierstrass points $\mathfrak{p}_{i}$ and $\mathfrak{p}_{6}$ on $\mathcal{C}$ given by
\begin{equation}
\label{pencilC_DP_b}
[X^{(1)}:Y^{(1)}:Z^{(1)}]=[\lambda_i: 0 : 1] \; \text{and} \; [X^{(2)}:Y^{(2)}:Z^{(2)}]=[1: 0 : 0].
\end{equation}
Therefore, the singular points $p_{ij}$ are the images of the two-torsion points $\mathfrak{p}_{ij} \in \mathbf{A}[2]$ in Equation~(\ref{oder2points}).   Table~\ref{tab:nodes} lists all points $p_{ij}$ corresponding to points $\mathfrak{p}_{ij} \in \mathbf{A}[2]$ and relate our notation to the notation $n_\bullet$ used by Kumar in~\cite{MR3263663} and $e_\bullet$ used by Mehran in ~\cite{MR2804549}.
\par Conversely, we can start with a singular Kummer surface in $\mathbb{P}^3$ and reconstruct the configuration of six lines; see \cite{MR2964027}.  For a fixed singular point $p_0$, we identify the lines in $\mathbb{P}^3$ through the point $p_0$ with $\mathbb{P}^2$  and map any line in the tangent cone of $p_0$ to itself. Any projective line through $p_0$ meets the quartic surface generically in two other points and  with multiplicity two at the other nodes. In this way we obtain a double cover of $\mathbb{P}^2$ branched along  a plane curve of degree six  where all nodes of the quartic surface different from $p_0$ map to nodes of the sextic. By the genus-degree formula, the maximal number of nodes on a sextic curve is attained when the curve is a union of six lines, in which case we obtain the fifteen remaining nodes apart from $p_0$. Since $p_0$ is a node, the tangent cone to this point is mapped to a conic, and this conic is tangent to the six lines. In summary, the branch locus of the double cover to $\mathbb{P}^2$ is a reducible plane sextic curve, namely the union of six lines tangent to a special conic. By Remark~\ref{rem:6lines} this proves the following:
\begin{corollary}
Every nodal quartic surface with sixteen nodes is isomorphic to the singular Kummer variety of the Jacobian of a genus-two curve.
\end{corollary}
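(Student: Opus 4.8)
The plan is to recover the six-line configuration of Remark~\ref{rem:6lines} intrinsically from the quartic by projecting away from one of its nodes. Let $X \subset \mathbb{P}^3$ be a nodal quartic with sixteen nodes and fix one of them, $p_0$; after a projective change of coordinates I may assume $p_0 = [0:0:0:1]$. Writing the defining form as $\sum_{k=0}^{4} z_4^{\,k}\, G_{4-k}(z_1,z_2,z_3)$ with $G_j$ homogeneous of degree $j$, the condition $p_0 \in X$ kills the $z_4^4$ term and the condition that $p_0$ be singular kills the $z_4^3$ term. Hence $X$ automatically has the Cassels--Flynn shape $K_2 z_4^2 + K_1 z_4 + K_0 = 0$ of Lemma~\ref{lem:CFquartic}, with $\deg K_j = 4-j$, and the tangent cone of $X$ at $p_0$ is the quadric cone $\{K_2 = 0\}$. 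Since $p_0$ is an \emph{ordinary} node, this cone is nondegenerate, so $K_2$ is a smooth conic; I denote by $Q = \{K_2=0\} \subset \mathbb{P}^2$ its image under the projection $[z_1:z_2:z_3:z_4]\mapsto [z_1:z_2:z_3]$.

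Projection from $p_0$ exhibits $X$ as a double cover of $\mathbb{P}^2$: the equation is quadratic in $z_4$, so a general line through $p_0$ meets $X$ at $p_0$ with multiplicity two and in two further points, the two sheets being interchanged by $z_4 \mapsto -z_4 - K_1/K_2$. The branch locus is the discriminant sextic $B = \{K_1^2 - 4 K_0 K_2 = 0\}$. The remaining fifteen nodes of $X$ are disjoint from $p_0$, and each descends to an ordinary node of $B$, so $B$ is a reduced sextic carrying (at least) fifteen nodes. A reduced irreducible plane sextic has arithmetic genus $\binom{5}{2}=10$ and hence at most ten nodes, so fifteen nodes forces $B$ to be reducible. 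Running through the partitions $6 = d_1 + \dots + d_k$, the transverse intersection nodes number $\tfrac12\big(36 - \sum_i d_i^2\big)$, to which each component contributes at most $\binom{d_i-1}{2}$ further nodes; this total equals $15 = \binom{6}{2}$ only for six lines, and any three of them concurrent would replace three nodes by a triple point and lower the count. Hence $B$ is a union of six lines in general position.

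It remains to show that these six lines are all tangent to the conic $Q$, which I expect to be the main obstacle: this is precisely the condition distinguishing Kummer quartics among all double planes branched on six lines, and it is the geometric shadow of the sixteenth node $p_0$ (equivalently, of the degeneracy locus $K_2=0$ of the conic bundle). Each line $L \subset B$ divides the discriminant, so the identity $K_1^2 \equiv 4 K_0 K_2$ holds after restriction to $L \cong \mathbb{P}^1$; at a point of $L \cap Q$ the vanishing order of $K_1^2|_L$ is even while that of $4K_0K_2|_L$ is $\operatorname{ord}(K_0|_L)+\operatorname{ord}(K_2|_L)$, which forces $L$ to have even contact with the smooth conic $Q$ rather than two transverse intersections, i.e.\ to be tangent to it. Making this parity argument airtight requires ruling out the degenerate alternative in which $K_0,K_1,K_2$ share a common zero on $B$ (which would put a line of $X$ through $p_0$); this can be handled either by excluding such lines on a sixteen-nodal quartic directly, or by the equivalent classical description in which the six tropes through $p_0$ project to the six branch lines inscribed in the image $Q$ of the tangent cone. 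Once tangency is established, Remark~\ref{rem:6lines} brings the six lines to the normal form of Equation~(\ref{eqn:6lines}) for suitable Rosenhain parameters $\lambda_1,\lambda_2,\lambda_3$; the double cover is then the Shioda sextic of Equation~(\ref{kummer_middle}), and Lemma~\ref{lem:Shioda} together with the preceding Corollary identifies $X$ with the singular Kummer variety $\operatorname{Jac}(\mathcal{C})/\langle -\mathbb{I}\rangle$ of the genus-two curve $\mathcal{C}$ in Rosenhain form~(\ref{Eq:Rosenhain}).
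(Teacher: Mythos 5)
Your proposal follows the same route as the paper: project from a node $p_0$, observe that the fifteen remaining nodes force the branch sextic to be a union of six lines, note that these lines are tangent to the conic onto which the tangent cone at $p_0$ maps, and conclude via Remark~\ref{rem:6lines} and Lemma~\ref{lem:Shioda}. You in fact supply more detail than the paper does --- the partition count showing that six lines maximize the number of nodes of a plane sextic, and the parity argument for tangency to $K_2=0$, which the paper merely asserts --- and the one residual point you flag (excluding a line of the quartic through $p_0$) is a gap the paper's own proof does not address either.
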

\qed
\begin{table}
\scalebox{0.85}{
\begin{tabular}{c|c|c|l}
$p$ & $n$ & $e$  & $[z_1: z_2 : z_3 : z_4]$\\
\hline
$p_0$ 	& $n_0$ 		& $e_0$ 	& $[0:0:0:1]$\\
$p_{16}$	& $n_3$ 		& $e_{14}$ &  $[0: 1 : \lambda_1 : \lambda_1^2]$\\
$p_{26}$	& $n_4$ 		& $e_{15}$ &  $[0: 1 : \lambda_2 : \lambda_2^2]$\\
$p_{36}$	& $n_5$ 		& $e_{16}$ &  $[0: 1 : \lambda_3 : \lambda_3^2]$\\
$p_{46}$	& $n_1$ 		& $e_{12}$ &  $[0: 1 : 0 : 0]$\\
$p_{56}$	& $n_2$ 		& $e_{13}$ &  $[0: 1 : 1 : 1]$\\
$p_{14}$	& $n_{13}$ 	& $e_{24}$ &  $[1: \lambda_1 : 0 : \lambda_2\lambda_3]$\\
$p_{24}$	& $n_{14}$ 	& $e_{25}$ &  $[1: \lambda_2 : 0 : \lambda_1\lambda_3]$\\
$p_{34}$	& $n_{15}$ 	& $e_{26}$ &  $[1: \lambda_3 : 0 : \lambda_1\lambda_2]$\\
$p_{45}$	& $n_{12}$ 	& $e_{23}$ &  $[1: 1 : 0 :  \lambda_1\lambda_2 \lambda_3]$\\
$p_{15}$	& $n_{23}$ 	& $e_{34}$ &  $[1: \lambda_1+1 : \lambda_1 : \lambda_1(\lambda_2+\lambda_3)]$\\
$p_{25}$	& $n_{24}$ 	& $e_{35}$ &  $[1: \lambda_2+1 : \lambda_2 : \lambda_2(\lambda_1+\lambda_3)]$\\
$p_{35}$	& $n_{25}$ 	& $e_{36}$ &  $[1: \lambda_3+1 : \lambda_3 : \lambda_3(\lambda_1+\lambda_2)]$\\
$p_{13}$	& $n_{35}$ 	& $e_{46}$ &  $[1: \lambda_1+\lambda_3 : \lambda_1\lambda_3 : (\lambda_2+1) \, \lambda_1  \lambda_3]$\\
$p_{23}$	& $n_{45}$ 	& $e_{56}$ &  $[1: \lambda_2+\lambda_3 : \lambda_2\lambda_3 : (\lambda_1+1) \, \lambda_1  \lambda_2]$\\
$p_{12}$	& $n_{34}$ 	& $e_{45}$ &  $[1: \lambda_1+\lambda_2 : \lambda_1\lambda_2 : (\lambda_3+1) \, \lambda_1  \lambda_2]$
\end{tabular}}
\bigskip
\caption{Nodes on a generic Jacobian Kummer surface}
\label{tab:nodes}
\end{table}
\subsection{Intersection model for a Kummer surface}
\label{ss:TropesAsThetas}
We consider the principally polarized abelian surface $\mathbf{A}=\operatorname{Jac}(\mathcal{C})$ with the standard Theta divisor $\mathsf{\Theta} \cong [\mathcal{C}]$. The image of each two-torsion point is a singular point on the Kummer surface, called a \emph{node}. The nodes $p_{ij}$ are the images of the two-torsion points $\mathfrak{p}_{ij} \in \mathbf{A}[2]$ in Equation~(\ref{oder2points}). Any Theta divisor is mapped to the intersection of the Kummer quartic with a plane in $\mathbb{P}^3$.  We call such a singular plane a \emph{trope}. Hence, we have a configuration of sixteen nodes and sixteen tropes in $\mathbb{P}^3$, where each contains six nodes, and such that the intersection of each two is along two nodes. This configuration is called the $16_{6}$ configuration on the Kummer surface.
\par In the complete linear system $|2\mathsf{\Theta}|$ on $\operatorname{Jac}(\mathcal{C})$, the odd symmetric Theta divisors $\mathsf{\Theta}_i$ on $\operatorname{Jac}(\mathcal{C})$ introduced in Section~\ref{sec:16_6} are mapped to six tropes $\mathsf{T}_i$. Equations for the six tropes are now easily found by inspection: for a given integer  $i$ with $1 \le i \le 5$, the nodes $p_0$ and $p_{ij}$ or $p_{ji}$ all lie on the plane
\begin{equation}
 \mathsf{T}_i: \quad \lambda_i^2 z_1 - \lambda_i z_2 + z_3 = 0 \;,
\end{equation}
where we have set $\mathsf{T}_6: z_1=0$. Thus, we obtain
\[
 \tilde{z}_4^2 = \mathsf{T}_1 \mathsf{T}_2 \mathsf{T}_3 \mathsf{T}_4 \mathsf{T}_5 \mathsf{T}_6 \;,
\]
and the Kummer surface $\operatorname{Kum}(\operatorname{Jac} \mathcal{C})$ is the minimal resolution of the double cover of $\mathbb{P}^2$ branched along a reducible plane sextic curve -- the union of six lines all tangent to a conic. In fact, the trope $\mathsf{T}_i$ is tangent to the conic $K_2=0$ at $[1: 2\lambda_i : \lambda_i^2 :0]$ for $i=1,\dots,5$, and $\mathsf{T}_6$ is tangent to $K_2=0$ at $[0:0:1:0]$. 
\par The remaining $10$ tropes $\mathsf{T}_{ijk}$ with $1\le i < j < k \le 6$  correspond to partitions of $\{1,\dots,6\}$ into two unordered sets of three, say $\lbrace i, j, k\rbrace$ $\lbrace l,m, n\rbrace$. We use the formulas for  $\mathsf{T}_{ijk}$ from  \cite[Sec.~3.7]{MR1406090} paying careful attention to the fact that  we have moved the root $\lambda_6$ to infinity. 
\begin{table}
\scalebox{0.85}{
\begin{tabular}{c|c|l|l}
$\mathsf{T}$ & $T$  & $[z_1: z_2 : z_3 : z_4]$ & contained nodes\\
\hline
$\mathsf{T}_1$ 	& $T_3$ 	& $[\lambda_1^2:-\lambda_1:1:0]$ & $p_{0\phantom{6}}, p_{12}, p_{13}, p_{14}, p_{15}, p_{16}$\\
$\mathsf{T}_2$ 	& $T_4$ 	& $[\lambda_2^2:-\lambda_2:1:0]$ & $p_{0\phantom{6}}, p_{12}, p_{23}, p_{24}, p_{25}, p_{26}$\\
$\mathsf{T}_3$ 	& $T_5$ 	& $[\lambda_3^2:-\lambda_3:1:0]$ & $p_{0\phantom{6}}, p_{13}, p_{23}, p_{34}, p_{35}, p_{36}$\\
$\mathsf{T}_4$ 	& $T_1$ 	& $[0:0:1:0]$ & $p_{0\phantom{6}}, p_{14}, p_{24}, p_{34}, p_{45}, p_{46}$\\
$\mathsf{T}_5$ 	& $T_2$ 	& $[1:-1:1:0]$ &$p_{0\phantom{6}}, p_{15}, p_{25}, p_{35}, p_{45}, p_{56}$\\
$\mathsf{T}_6$ 	& $T_0$ 	& $[1:0:0:0]$ & $p_{0\phantom{6}}, p_{16}, p_{26}, p_{36}, p_{46}, p_{56}$\\
$\mathsf{T}_{146}$ 	& $T_{13}$ & $[-\lambda_2\lambda_3:0:-\lambda_1:1]$ & $p_{14}, p_{16}, p_{23}, p_{25}, p_{35}, p_{46}$\\
$\mathsf{T}_{246}$ 	& $T_{14}$ & $[-\lambda_1\lambda_3:0:-\lambda_2:1]$ & $p_{13}, p_{15}, p_{24}, p_{26}, p_{35}, p_{46}$\\
$\mathsf{T}_{346}$ 	& $T_{15}$ & $[-\lambda_1\lambda_2:0:-\lambda_3:1]$ & $p_{12}, p_{15}, p_{25}, p_{34}, p_{36}, p_{46}$\\
$\mathsf{T}_{456}$ 	& $T_{12}$ & $[-\lambda_1\lambda_2\lambda_3:0:-1:1]$&$p_{12}, p_{13}, p_{23}, p_{46}, p_{45}, p_{56}$\\
$\mathsf{T}_{156}$ 	& $T_{23}$ & $[-\lambda_1(\lambda_2+\lambda_3):\lambda_1:-(\lambda_1+1):1]$ & $p_{15}, p_{16}, p_{23}, p_{24}, p_{34}, p_{56}$\\
$\mathsf{T}_{256}$ 	& $T_{24}$ & $[-\lambda_2(\lambda_1+\lambda_3):\lambda_2:-(\lambda_2+1):1]$ & $p_{13}, p_{14}, p_{25}, p_{26}, p_{34}, p_{56}$\\
$\mathsf{T}_{356}$ 	& $T_{25}$ & $[-\lambda_3(\lambda_1+\lambda_2):\lambda_3:-(\lambda_3+1):1]$ & $p_{12}, p_{14}, p_{24}, p_{35}, p_{36}, p_{56}$\\
$\mathsf{T}_{136}$ 	& $T_{35}$ & $[- (\lambda_2+1) \, \lambda_1  \lambda_3: \lambda_1  \lambda_3: - (\lambda_1 + \lambda_3) :1]$ 
	& $p_{13}, p_{16}, p_{24}, p_{25}, p_{36}, p_{45}$\\
$\mathsf{T}_{236}$ 	& $T_{45}$ & $[- (\lambda_1+1) \, \lambda_2  \lambda_3: \lambda_2  \lambda_3: - (\lambda_2 + \lambda_3) :1]$ 
	& $p_{14}, p_{15}, p_{23}, p_{26}, p_{36}, p_{45}$\\
$\mathsf{T}_{126}$ 	& $T_{34}$ & $[- (\lambda_3+1) \, \lambda_1  \lambda_2: \lambda_1  \lambda_2: - (\lambda_1 + \lambda_2) :1]$ 
	& $p_{12}, p_{16}, p_{26}, p_{34}, p_{35}, p_{45}$\\
\end{tabular}}
\bigskip
\caption{Tropes on a generic Jacobian Kummer surface}
\label{tab:tropes}
\end{table}
For example, we have
\begin{equation}
 \begin{split}
  \mathsf{T}_{246}: \quad & - \lambda_1 \lambda_3 \, z_1 - \lambda_2 \, z_3 + z_4 = 0\;,\\
  \mathsf{T}_{346}: \quad & - \lambda_1 \lambda_2 \, z_1 - \lambda_3 \, z_3 + z_4 = 0\;,\\
  \mathsf{T}_{236}: \quad & - (\lambda_1+1) \, \lambda_2  \lambda_3 \, z_1 + \lambda_2  \lambda_3 \, z_2 - (\lambda_2 + \lambda_3) \, z_3 + z_4 = 0\;.
 \end{split}
\end{equation}
We also have the following:
\begin{lemma}
\label{lem:LinearRelation}
The ideal of linear relations between the 16 tropes is generated by 12 equations given by
\begin{equation}
\label{Kummer:topesLRa}
\begin{array}{rrrrr}
 \mathsf{T}_1=		&(1-\lambda_1) \mathsf{T}_4 				&+\lambda_1 \mathsf{T}_5			&+ \lambda_1 (\lambda_1-1) \mathsf{T}_6
 				&,\\[0.5em]
 \mathsf{T}_2 =		&(1-\lambda_2) \mathsf{T}_4 				&+\lambda_2 \mathsf{T}_5 			&+ \lambda_2 (\lambda_2-1) \mathsf{T}_6
 				&,\\[0.5em]
 \mathsf{T}_3 =		&(1-\lambda_3) \mathsf{T}_4 				&+\lambda_3 \mathsf{T}_4 			&+ \lambda_3 (\lambda_3-1) \mathsf{T}_6
 				&,\\[0.5em]
\end{array}
\end{equation}
and 
\begin{equation}
\label{Kummer:topesLRb}
\begin{array}{rrrrr}
 \mathsf{T}_{126} =	&(\lambda_1-1)(\lambda_2-1)\mathsf{T}_4	&-\lambda_1\lambda_2 \mathsf{T}_5		&
 				&+\mathsf{T}_{456},\\[0.5em]
 \mathsf{T}_{136} =	&(\lambda_1-1)(\lambda_3-1)\mathsf{T}_4	&-\lambda_1 \lambda_3 \mathsf{T}_5	&
 				&+\mathsf{T}_{456},\\[0.5em]
 \mathsf{T}_{146} =	&(1-\lambda_1)\mathsf{T}_4				&								&+\lambda_2 \lambda_3 (\lambda_1-1)\mathsf{T}_6
 				&+\mathsf{T}_{456},\\[0.5em]
 \mathsf{T}_{156} =	&									&-\lambda_1 \mathsf{T}_5				&+\lambda_1 (\lambda_2-1) (\lambda_-1)\mathsf{T}_6
 				&+\mathsf{T}_{456},\\[0.5em]
 \mathsf{T}_{236} =	&(1-\lambda_2)(1-\lambda_3)\mathsf{T}_4	&-\lambda_2\lambda_3 \mathsf{T}_5		&
 				&+\mathsf{T}_{456},\\[0.5em]
 \mathsf{T}_{246} =	&(1-\lambda_2)\mathsf{T}_4				&								&+\lambda_1\lambda_3(\lambda_2-1)\mathsf{T}_6
 				&+\mathsf{T}_{456},\\[0.5em]
 \mathsf{T}_{256} =	&									&-\lambda_2 \mathsf{T}_5				&+\lambda_2(\lambda_1-1)(\lambda_3-1)\mathsf{T}_6
 				&+\mathsf{T}_{456},\\[0.5em]
 \mathsf{T}_{346} =	&(1-\lambda_3) \mathsf{T}_4				&								&+\lambda_1\lambda_2(\lambda_3-1)\mathsf{T}_6
 				&+\mathsf{T}_{456},\\[0.5em]
 \mathsf{T}_{356} =	&									&-\lambda_3\mathsf{T}_5				&+\lambda_3(\lambda_1-1)(\lambda_2-1)\mathsf{T}_6
 				&+\mathsf{T}_{456}.
\end{array}
\end{equation}
\end{lemma}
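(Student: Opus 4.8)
The plan is to regard each of the sixteen tropes not as a plane but as the linear form in $z_1,z_2,z_3,z_4$ whose coefficient vector is recorded in the column of Table~\ref{tab:tropes} headed $[z_1:z_2:z_3:z_4]$. A linear relation among the sixteen planes is then literally a linear relation among these sixteen vectors inside the four-dimensional space of linear forms on $\mathbb{P}^3$. Assembling the coefficient vectors into a $16\times 4$ matrix $M$ over the field $\mathbb{C}(\lambda_1,\lambda_2,\lambda_3)$, the sought space of linear relations is exactly the left kernel of $M$.

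First I would pin down the dimension. The four tropes $\mathsf{T}_4,\mathsf{T}_5,\mathsf{T}_6,\mathsf{T}_{456}$ correspond to the forms $z_3$, $z_1-z_2+z_3$, $z_1$, and $-\lambda_1\lambda_2\lambda_3\,z_1-z_3+z_4$; these are manifestly independent over $\mathbb{C}(\lambda_1,\lambda_2,\lambda_3)$, since from them one recovers $z_1,z_2,z_3,z_4$ by a triangular change of basis. Hence $M$ has rank four and the space of linear relations has dimension $16-4=12$, which already matches the number of equations asserted.

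Next I would show that Equations~(\ref{Kummer:topesLRa}) and~(\ref{Kummer:topesLRb}) constitute a basis of this twelve-dimensional space. They are arranged so that each expresses exactly one of the twelve remaining tropes $\mathsf{T}_1,\mathsf{T}_2,\mathsf{T}_3$ and $\mathsf{T}_{126},\dots,\mathsf{T}_{356}$ as a combination of the basis tropes $\mathsf{T}_4,\mathsf{T}_5,\mathsf{T}_6,\mathsf{T}_{456}$. Because each relation carries on its left-hand side a distinct trope occurring in none of the others, the twelve relations are automatically linearly independent; twelve independent elements of a twelve-dimensional space form a basis, and therefore generate the full ideal of linear relations. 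What remains is to confirm that each displayed equation genuinely holds, i.e.\ that the printed rational coefficients in the $\lambda_i$ are correct, which one does by substituting the coefficient vectors from Table~\ref{tab:tropes} and comparing the four scalar coordinates.

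I expect the only real work to be this last verification, and it is bookkeeping rather than conceptual. The basis was chosen precisely so that the $4\times 4$ system expressing a given trope in $\{\mathsf{T}_4,\mathsf{T}_5,\mathsf{T}_6,\mathsf{T}_{456}\}$ is triangular and solvable by inspection: among the basis forms only $\mathsf{T}_{456}$ involves $z_4$ and only $\mathsf{T}_5$ involves $z_2$, so the $z_4$-coordinate of the trope fixes the coefficient of $\mathsf{T}_{456}$ and its $z_2$-coordinate fixes the coefficient of $\mathsf{T}_5$; subtracting these two contributions, the residual $z_1$- and $z_3$-coordinates then fix the coefficients of $\mathsf{T}_6$ and $\mathsf{T}_4$. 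Carrying this out for the twelve non-basis tropes reproduces the coefficients in Equations~(\ref{Kummer:topesLRa})--(\ref{Kummer:topesLRb}) and completes the proof.
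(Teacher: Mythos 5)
Your proposal is correct and follows essentially the same route as the paper: the paper's proof likewise works directly from the explicit coefficient vectors in Table~\ref{tab:tropes} and solves for the remaining tropes in terms of $\{\mathsf{T}_4,\mathsf{T}_5,\mathsf{T}_6,\mathsf{T}_{456}\}$. Your rank computation (the four chosen forms span the linear forms on $\mathbb{P}^3$, so the relation space has dimension exactly $16-4=12$) makes explicit a counting step the paper leaves implicit behind its enumeration of four-term relations, which is a small improvement in clarity.
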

\begin{proof}
Using the explicit form of the tropes given in Table~\ref{tab:tropes}, one checks that there are 240 tuples of four tropes such that the four tropes satisfy a linear relation. Solving these equations in terms of $\{\mathsf{T}_4, \mathsf{T}_5, \mathsf{T}_6, \mathsf{T}_{456}\}$ yields the result.
\end{proof}
There are fifteen linear four-term relations that involve only the tropes $\mathsf{T}_1, \dots, \mathsf{T}_6$, i.e., the images of the odd symmetric Theta divisors $\mathsf{\Theta}_i$ on $\operatorname{Jac}(\mathcal{C})$:
\begin{corollary}
There are fifteen linear four-terms relations that involve only the tropes $\mathsf{T}_1, \dots, \mathsf{T}_6$. They are given by
\begin{equation}
\begin{split}
 -&(\lambda_j-\lambda_k)(\lambda_j-\lambda_l)(\lambda_k-\lambda_l)\mathsf{T}_i 
+ (\lambda_i-\lambda_k)(\lambda_i-\lambda_l)(\lambda_k-\lambda_l)\mathsf{T}_j\\
-&  (\lambda_i-\lambda_j)(\lambda_i-\lambda_l)(\lambda_j-\lambda_l)\mathsf{T}_k
+ (\lambda_i-\lambda_j)(\lambda_i-\lambda_k)(\lambda_j-\lambda_k)\mathsf{T}_l=0,
\end{split}
\end{equation}
with $1\le i < j < k <l \le 5$ and
\begin{equation}
\begin{split}
 -&(\lambda_j-\lambda_k)\mathsf{T}_i + (\lambda_i-\lambda_k)\mathsf{T}_j-  (\lambda_i-\lambda_j)\mathsf{T}_k
+ (\lambda_i-\lambda_j)(\lambda_i-\lambda_k)(\lambda_j-\lambda_k)\mathsf{T}_6=0.
\end{split}
\end{equation}
Moreover, these fifteen equations have rank three, and a generating set is given by
\begin{equation}
\label{eqn:tropes_QR}
\begin{split}
 \mathsf{T}_1 & = (1-\lambda_1) \mathsf{T}_4 +\lambda_1 \mathsf{T}_5 + \lambda_1 (\lambda_1-1) \mathsf{T}_6,\\
 \mathsf{T}_2 & = (1-\lambda_2) \mathsf{T}_4 +\lambda_2 \mathsf{T}_5 +  \lambda_2 (\lambda_2-1) \mathsf{T}_6,\\
 \mathsf{T}_3 & = (1-\lambda_3) \mathsf{T}_4 +\lambda_3 \mathsf{T}_5 +  \lambda_3 (\lambda_3-1) \mathsf{T}_6.
\end{split}
\end{equation}
\end{corollary}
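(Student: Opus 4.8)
The plan is to exploit the single structural fact visible in Table~\ref{tab:tropes}: each of the six tropes $\mathsf{T}_1,\dots,\mathsf{T}_6$ has vanishing $z_4$-coefficient, so it is a linear form in $z_1,z_2,z_3$ alone. Reading off the coefficient vectors and using $\lambda_4=0$, $\lambda_5=1$, $\lambda_6=\infty$, these are $v_i=(\lambda_i^2,-\lambda_i,1)$ for $1\le i\le 5$ and $v_6=(1,0,0)$. All six therefore lie in the three-dimensional space of linear forms in $z_1,z_2,z_3$. First I would check that this space is actually spanned, i.e.\ that the $3\times 6$ coefficient matrix has rank $3$: for instance $v_4=(0,0,1)$, $v_5=(1,-1,1)$, $v_6=(1,0,0)$ have determinant $1$, so $\{\mathsf{T}_4,\mathsf{T}_5,\mathsf{T}_6\}$ is a basis. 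It follows immediately that the module of linear relations among the six tropes has dimension $6-3=3$; this is exactly the asserted \emph{rank three}.

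Next I would enumerate the four-term relations. Any four of the six vectors span at most a three-dimensional space, hence admit a relation, unique up to scale, whose coefficients are the signed maximal minors (Cramer's rule): the coefficient of the trope in position $p$ is $(-1)^{p}\det$ of the $3\times 3$ matrix obtained by deleting the $p$-th column. Since there are $\binom{6}{4}=15$ choices of four tropes, this produces exactly fifteen four-term relations and no others. The minors are then evaluated as Vandermonde determinants. For four indices $i<j<k<l$ in $\{1,\dots,5\}$ one gets $\det(v_a,v_b,v_c)=(\lambda_b-\lambda_a)(\lambda_c-\lambda_a)(\lambda_c-\lambda_b)$ after factoring $-1$ from the middle row and reversing the row order; this reproduces the first displayed family. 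For three indices $i<j<k$ in $\{1,\dots,5\}$ together with the index $6$, I would expand the relevant minors along the column $v_6=(1,0,0)^t$: the three coefficients of $\mathsf{T}_i,\mathsf{T}_j,\mathsf{T}_k$ collapse to single differences $\pm(\lambda_b-\lambda_a)$, while the coefficient of $\mathsf{T}_6$ remains the full Vandermonde $(\lambda_i-\lambda_j)(\lambda_i-\lambda_k)(\lambda_j-\lambda_k)$, giving the second displayed family.

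For the generating set I would observe that the three relations in~(\ref{eqn:tropes_QR}) are precisely the expansions of $\mathsf{T}_1,\mathsf{T}_2,\mathsf{T}_3$ in the basis $\{\mathsf{T}_4,\mathsf{T}_5,\mathsf{T}_6\}$. Their coefficients are found by solving $\mathsf{T}_i=a\,\mathsf{T}_4+b\,\mathsf{T}_5+c\,\mathsf{T}_6$, i.e.\ matching coefficients of $z_1,z_2,z_3$ in $a\,z_3+b(z_1-z_2+z_3)+c\,z_1=\lambda_i^2 z_1-\lambda_i z_2+z_3$, which forces $b=\lambda_i$, $a=1-\lambda_i$, $c=\lambda_i(\lambda_i-1)$. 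These three relations are manifestly linearly independent (each has a distinct trope $\mathsf{T}_i$ on the left), so they form a basis of the three-dimensional relation module and hence generate all fifteen; in fact each is itself the member of the second family obtained by taking $\{j,k\}=\{4,5\}$.

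The only genuinely delicate point—the main obstacle—is sign bookkeeping: the alternating signs from Cramer's rule must be reconciled with the factored forms displayed in the statement. I would handle this cleanly by fixing the convention that the coefficient of the trope in a given position is the signed minor omitting that column, and then noting that the displayed relations agree with the computed ones up to a single global sign, which does not alter a linear relation. Beyond this, no obstacle arises: everything reduces to the evaluation of $3\times 3$ Vandermonde determinants and a dimension count, so the proof is entirely linear-algebraic and can be carried out by direct computation as in the quoted argument.
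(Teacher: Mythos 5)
Your proof is correct and follows essentially the same route as the paper, which establishes these relations by direct computation from the explicit trope equations in Table~\ref{tab:tropes} (the corollary is stated there without a separate proof, as a consequence of Lemma~\ref{lem:LinearRelation}). Your organization of that computation — observing that $\mathsf{T}_1,\dots,\mathsf{T}_6$ have coefficient vectors $(\lambda_i^2,-\lambda_i,1)$ and $(1,0,0)$ spanning a rank-three space, extracting the fifteen relations as signed Vandermonde minors via Cramer's rule, and identifying the generating set as the expansion of $\mathsf{T}_1,\mathsf{T}_2,\mathsf{T}_3$ in the basis $\{\mathsf{T}_4,\mathsf{T}_5,\mathsf{T}_6\}$ — is a clean and complete way to carry out exactly the verification the authors leave implicit.
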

We consider the blow up $p: \tilde{\mathbf{A}} \to \mathbf{A}$ of the sixteen two-torsion points with the exceptional curves $E_1, \dots, E_{16}$. The linear system $|4 p^* \mathsf{\Theta} - \sum E_i|$ determines a morphism of degree two from $\tilde{\mathbf{A}}$ to a complete intersection of three quadrics in $\mathbb{P}^5$. In particular, the image is the Kummer surface $\operatorname{Kum}(\mathbf{A})=\tilde{\mathbf{A}}/\langle -\mathbb{I} \rangle$. The net spanned by these quadrics is isomorphic to $\mathbb{P}^2$ with a discriminant locus corresponding to the union of six lines.  We have the following:
\begin{proposition}
The Kummer surface $\operatorname{Kum}(\operatorname{Jac}\mathcal{C})$ associated with the Jacobian $\operatorname{Jac}(\mathcal{C})$ of a genus-two curve~$\mathcal{C}$ in Rosenhain normal form~(\ref{Eq:Rosenhain}) is the complete intersection of three quadrics in $\mathbb{P}^5$. For $[\mathsf{t}_1:\mathsf{t}_2:\mathsf{t}_3:\mathsf{t}_4:\mathsf{t}_5:\mathsf{t}_6] \in \mathbb{P}^5$ with $\mathsf{t}_i^2=\mathsf{T}_i$ and $1 \le i \le 6$, the Kummer surface $\operatorname{Kum}(\operatorname{Jac}\mathcal{C})$ is given by the intersection of the three quadrics
\begin{equation}
\label{Kum:quadrics}
\begin{split}
 \mathsf{t}^2_1 & = (1-\lambda_1) \mathsf{t}^2_4 +\lambda_1 \mathsf{t}^2_5 +  \lambda_1 (\lambda_1-1) \mathsf{t}^2_6,\\
 \mathsf{t}^2_2 & = (1-\lambda_2) \mathsf{t}^2_4 +\lambda_2 \mathsf{t}^2_5 + \lambda_2 (\lambda_2-1) \mathsf{t}^2_6,\\
 \mathsf{t}^2_3 & = (1-\lambda_3) \mathsf{t}^2_4 +\lambda_3 \mathsf{t}^2_5 +  \lambda_3 (\lambda_3-1) \mathsf{t}^2_6.
\end{split}
\end{equation}
\end{proposition}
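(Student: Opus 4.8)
The plan is to realize the six homogeneous coordinates $\mathsf{t}_1,\dots,\mathsf{t}_6$ on $\mathbb{P}^5$ as coherent square roots of the six tropes, and then simply to square the three linear trope relations of Equation~(\ref{eqn:tropes_QR}).

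First I would fix the meaning of the $\mathsf{t}_i$. Each trope $\mathsf{T}_i$ is a plane in $\mathbb{P}^3$ tangent to the Kummer quartic along the conic that is the image of the odd Theta divisor $\mathsf{\Theta}_i$; under the dictionary of Remark~\ref{rem:isos} the linear form $\mathsf{T}_i$ is precisely the square $\theta_{j(i)}(z)^2$ of the odd Theta function attached to $\mathsf{\Theta}_i$, expanded in the even basis $\theta_1^2,\dots,\theta_4^2$ of $H^0(\mathcal{L}^2)$ recorded in Remark~\ref{fact:sections1}. Since the restriction of $\mathsf{T}_i$ to the surface is therefore a perfect square, I set $\mathsf{t}_i:=\theta_{j(i)}(z)$, so that $\mathsf{t}_i^2=\mathsf{T}_i$ with a globally coherent choice of sign. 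Because the odd Theta functions are anti-invariant under $-\mathbb{I}$, the tuple $[\mathsf{t}_1:\dots:\mathsf{t}_6]$ descends through the quotient and realizes exactly the degree-two morphism attached to $|4p^*\mathsf{\Theta}-\sum E_i|$ discussed above.

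Next I would produce the three quadrics. The three linear relations~(\ref{eqn:tropes_QR}) hold identically among the tropes as linear forms on $\mathbb{P}^3$; substituting $\mathsf{T}_i=\mathsf{t}_i^2$ turns each of them into the corresponding quadric of Equation~(\ref{Kum:quadrics}), so the image of $\operatorname{Kum}(\operatorname{Jac}\mathcal{C})$ in $\mathbb{P}^5$ lies on the common zero locus $V(Q_1,Q_2,Q_3)$. Read through the Theta dictionary, these three quadrics are nothing but instances of the Mumford relations among the $\xi_j=\theta_j^2$ in Proposition~\ref{prop:MF}, which furnishes an independent check. To promote this inclusion to an equality I would run a degree count. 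On the blow-up one has $(p^*\mathsf{\Theta})^2=2$, $p^*\mathsf{\Theta}\cdot E_i=0$, and $E_i\cdot E_j=-\delta_{ij}$, whence $(4p^*\mathsf{\Theta}-\sum E_i)^2=32-16=16$; since the morphism factors through the degree-two quotient $\tilde{\mathbf{A}}\to\operatorname{Kum}(\mathbf{A})$, its image is an irreducible surface of degree $8$. On the other side, the three quadrics of~(\ref{Kum:quadrics}) have the distinct leading squares $\mathsf{t}_1^2,\mathsf{t}_2^2,\mathsf{t}_3^2$, hence form a regular sequence, so $V(Q_1,Q_2,Q_3)$ is a genuine complete intersection of dimension $5-3=2$ and degree $2^3=8$. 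An irreducible (indeed smooth, hence reduced) surface of degree $8$ sitting inside a two-dimensional complete intersection of the same degree must fill it out, giving the claimed equality. Equivalently, knowing already that the Kummer surface is a complete intersection of three quadrics, its degree-two part of the ideal is three-dimensional, and the linearly independent $Q_1,Q_2,Q_3$ span it.

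The one step requiring genuine care — the crux of the argument — is the identification $\mathsf{t}_i^2=\mathsf{T}_i$ with a coherent labeling: one must verify that the particular matching of odd Theta divisors, $16_6$ labels, and odd Theta functions set up in Remark~\ref{rem:isos} is exactly the one for which the squared linear relations~(\ref{eqn:tropes_QR}) reproduce~(\ref{Kum:quadrics}) on the nose, including the coefficients $1-\lambda_i$, $\lambda_i$, $\lambda_i(\lambda_i-1)$. Once this compatibility between the finite geometry and the Theta-function dictionary is in hand, the remaining work is the substitution of the second paragraph together with the Riemann--Roch bookkeeping of the degree count.
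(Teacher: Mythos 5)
Your proof is correct, but it reaches the conclusion by a genuinely different route than the paper. The paper's own proof stays entirely in the double-plane picture: it recalls from Lemma~\ref{lem:Shioda} that the surface is $\tilde{z}_4^2=\mathsf{T}_1\cdots\mathsf{T}_6$, invokes the rank-three relations~(\ref{eqn:tropes_QR}), and then formally introduces $\mathsf{t}_i$ with $\mathsf{t}_i^2=\mathsf{T}_i$ and $\tilde{z}_4=\mathsf{t}_1\cdots\mathsf{t}_6$ --- the classical ``irrational normal form'' manipulation. As written, that argument does not explain why adjoining the six square roots produces the Kummer surface itself rather than a cover of the double plane or a proper subvariety of the complete intersection; it leans on the background statement, made just before the proposition, that $|4p^*\mathsf{\Theta}-\sum E_i|$ maps $\tilde{\mathbf{A}}$ two-to-one onto a complete intersection of three quadrics. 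You supply exactly that missing half: realizing the $\mathsf{t}_i$ as global sections (the odd Theta functions, suitably normalized) gives an honest morphism whose image lies on $V(Q_1,Q_2,Q_3)$ by squaring~(\ref{eqn:tropes_QR}), and the computation $(4p^*\mathsf{\Theta}-\sum E_i)^2=32-16=16$, hence an irreducible image of degree $8$, set against the B\'ezout degree $2^3=8$ of the pure two-dimensional locus cut out by the regular sequence $Q_1,Q_2,Q_3$, forces equality. Two caveats. First, the identification $\mathsf{T}_i=(\mathrm{const})\cdot\theta_{j(i)}(z)^2$, with constants the monomials in even Theta constants of Equations~(\ref{eqn:sol_MatchTropes}), is precisely the content of Lemma~\ref{lem:bijection_tropes_thetas}, which the paper establishes only \emph{after} this proposition by a computer-algebra check that~(\ref{eqn:tropes_QR}) matches~(\ref{eqn:Mumford_odd}); there is no circularity, but your proof is not self-contained without it, and those constants are exactly what make the squared relations come out with the coefficients $1-\lambda_i$, $\lambda_i$, $\lambda_i(\lambda_i-1)$ on the nose, as you note. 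Second, your degree count tacitly uses that all six odd sections of $\mathcal{L}^4$ vanish at all sixteen two-torsion points, so that the base locus consists of the full set of exceptional curves $E_i$ and the correction to $(4\mathsf{\Theta})^2=32$ is exactly $16$; this is the standard fact that anti-invariant sections of a totally symmetric line bundle vanish on the fixed locus of $-\mathbb{I}$, and it deserves an explicit sentence.
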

\begin{proof}
We proved that the Shioda sextic in Lemma~\ref{lem:Shioda} is given by the double cover of $\mathbb{P}^2$ branched along the reducible sextic that is the union of the six lines given by the tropes $\mathsf{T}_1, \dots, \mathsf{T}_6$ , i.e., 
\[
 \tilde{z}_4^2 = \mathsf{T}_1 \mathsf{T}_2 \mathsf{T}_3 \mathsf{T}_4 \mathsf{T}_5 \mathsf{T}_6 \;.
\]
The tropes $\mathsf{T}_1, \dots, \mathsf{T}_6$ satisfy the fifteen linear relations in Lemma~\ref{lem:LinearRelation} of rank three equivalent to Equations~(\ref{Kummer:topesLRa}). Introducing $\mathsf{T}_i=\mathsf{t}_i^2$ such that $\tilde{z}_4=\mathsf{t}_1 \mathsf{t}_2 \mathsf{t}_3 \mathsf{t}_4 \mathsf{t}_5 \mathsf{t}_6$, the Kummer surface $\operatorname{Kum}(\mathbf{A})=\tilde{\mathbf{A}}/\langle -\mathbb{I} \rangle$ is the complete intersection of three quadrics in $\mathbb{P}^5 \ni [\mathsf{t}_1:\mathsf{t}_2:\mathsf{t}_3:\mathsf{t}_4:\mathsf{t}_5:\mathsf{t}_6]$ given by Equations~(\ref{Kum:quadrics}).
\end{proof}
The coordinates $[\mathsf{t}_1:\mathsf{t}_2:\mathsf{t}_3:\mathsf{t}_4:\mathsf{t}_5:\mathsf{t}_6] \in \mathbb{P}^5$ are related to the odd Theta functions.  We have the following:
\begin{lemma}
\label{lem:bijection_tropes_thetas}
Given Thomae's formula~(\ref{Picard}), there is a unique bijection between the six tropes $\mathsf{T}_1, \dots, \mathsf{T}_6$, i.e., the images of the odd symmetric Theta divisors $\mathsf{\Theta}_i$ on $\operatorname{Jac}(\mathcal{C})$, and squares of the odd Theta functions $\theta^2_{11}(z), \dots,\theta^2_{16}(z)$ such that Equations~(\ref{eqn:tropes_QR}) coincide with the Mumford relations in Equations~(\ref{eqn:Mumford_odd}). It is given by setting
\begin{equation}
\label{eqn:sol_MatchTropes}
\begin{array}{lcllcl}
 \mathsf{T}_1	&=& R^2 \theta_1^2 \theta_3^2 \theta_6^2 \theta_7^2 \theta_9^2 \theta_{10}^2 \, \theta_{15}^2(z),&
 \mathsf{T}_2	&=& R^2  \theta_2^2 \theta_3^2 \theta_5^2 \theta_6^2 \theta_8^2 \theta_9^2 \, \theta_{12}^2(z),\\
 \mathsf{T}_3	&=& R^2 \theta_1^2 \theta_4^2 \theta_5^2 \theta_6^2 \theta_7^2 \theta_8^2 \, \theta_{11}^2(z),&
 \mathsf{T}_4	&=& R^2 \theta_1^2 \theta_2^2 \theta_3^2 \theta_4^2 \theta_8^2 \theta_{10}^2 \, \theta_{16}^2(z),\\
 \mathsf{T}_5	&=& R^2 \theta_2^2 \theta_4^2 \theta_5^2 \theta_7^2 \theta_9^2 \theta_{10}^2 \, \theta_{14}^2(z),&
 \mathsf{T}_6	&=& R^2 \theta_2^4 \theta_4^4 \theta_{10}^4 \, \theta_{13}^2(z),
\end{array}
\end{equation}
where $R\in \mathbb{C}^{*}$ is a non-zero constant.
\end{lemma}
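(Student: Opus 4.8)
The plan is to take the combinatorial pairing of the six tropes with the six odd Theta functions as dictated by Remark~\ref{rem:isos} together with the identification of the tropes $\mathsf{T}_1,\dots,\mathsf{T}_6$ with the odd symmetric Theta divisors, namely $\mathsf{T}_1 \leftrightarrow \theta_{15}^2(z)$, $\mathsf{T}_2 \leftrightarrow \theta_{12}^2(z)$, $\mathsf{T}_3 \leftrightarrow \theta_{11}^2(z)$, $\mathsf{T}_4 \leftrightarrow \theta_{16}^2(z)$, $\mathsf{T}_5 \leftrightarrow \theta_{14}^2(z)$, $\mathsf{T}_6 \leftrightarrow \theta_{13}^2(z)$, and to substitute the explicit scalings~(\ref{eqn:sol_MatchTropes}) into the three generating relations~(\ref{eqn:tropes_QR}). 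Writing $\xi_i = \theta_i^2(z)$, I would verify that each of the three relations collapses, after clearing a common monomial in the even Theta constants, onto one of the three Mumford relations~(\ref{eqn:Mumford_odd}). The whole argument thus reduces to bookkeeping of Theta-constant monomials, powered entirely by Thomae's formula~(\ref{Picard})--(\ref{Picard2}).

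Concretely, for the first relation $\mathsf{T}_1 = (1-\lambda_1)\mathsf{T}_4 + \lambda_1\mathsf{T}_5 + \lambda_1(\lambda_1-1)\mathsf{T}_6$ I would insert $1-\lambda_1 = -\theta_7^2\theta_9^2/(\theta_2^2\theta_4^2)$, $\lambda_1 = \theta_1^2\theta_3^2/(\theta_2^2\theta_4^2)$, and $\lambda_1(\lambda_1-1) = \theta_1^2\theta_3^2\theta_7^2\theta_9^2/(\theta_2^4\theta_4^4)$, together with the four values of $\mathsf{T}_1,\mathsf{T}_4,\mathsf{T}_5,\mathsf{T}_6$ read off from~(\ref{eqn:sol_MatchTropes}). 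After cancellation every term carries the common factor $R^2\theta_1^2\theta_3^2\theta_7^2\theta_9^2\theta_{10}^2$; dividing it out leaves precisely $\theta_{10}^2\xi_{13} + \theta_5^2\xi_{14} - \theta_6^2\xi_{15} - \theta_8^2\xi_{16} = 0$, which is the third equation of~(\ref{eqn:Mumford_odd}). The second and third relations are handled identically, using the Thomae expressions for $\lambda_2,\lambda_2-1$ and $\lambda_3,\lambda_3-1$; they reduce to the second and first equations of~(\ref{eqn:Mumford_odd}), respectively. This establishes existence together with the explicit formula.

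For uniqueness I would first match the two linear systems by their incidence pattern: in~(\ref{eqn:tropes_QR}) the tropes $\mathsf{T}_4,\mathsf{T}_5,\mathsf{T}_6$ occur in all three relations whereas $\mathsf{T}_1,\mathsf{T}_2,\mathsf{T}_3$ each occur in exactly one, and the identical pattern holds in~(\ref{eqn:Mumford_odd}) for $\xi_{13},\xi_{14},\xi_{16}$ versus $\xi_{11},\xi_{12},\xi_{15}$. Hence any admissible bijection must send $\{\mathsf{T}_4,\mathsf{T}_5,\mathsf{T}_6\}$ to $\{\theta_{13}^2(z),\theta_{14}^2(z),\theta_{16}^2(z)\}$ and $\{\mathsf{T}_1,\mathsf{T}_2,\mathsf{T}_3\}$ to $\{\theta_{11}^2(z),\theta_{12}^2(z),\theta_{15}^2(z)\}$, and it ties each of the last three tropes to the unique relation in which it appears. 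The remaining finite ambiguity is removed by comparing coefficients: once the pairing is fixed, the three scalar equations in each relation determine the ratios $c_4/c_i$, $c_5/c_i$, $c_6/c_i$, and the resulting values of $c_5/c_4$ and $c_6/c_4$ must be independent of which relation is used. I expect only the pairing above to reconcile these three determinations, so that uniqueness of the bijection coincides with the uniqueness of the $16_6$-configuration isomorphism already recorded in Remark~\ref{rem:isos}; the scalings are then fixed up to the single overall constant $R$, which is the stated freedom.

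The main obstacle is precisely this uniqueness bookkeeping together with the sign discipline in the verification. The minus signs in~(\ref{eqn:Mumford_odd}) are produced only because it is $1-\lambda_i$, rather than $\lambda_i-1$, that enters the trope relations, so that $1-\lambda_i$ evaluates through~(\ref{Picard2}) to a \emph{negative} ratio of Theta constants. Keeping track of these signs, and confirming that for each of the three relations the surviving common monomial is exactly the one that clears to the Mumford coefficients $\theta_j^2$, is the step where an error would most easily creep in; everything else is a mechanical consequence of Thomae's formula.
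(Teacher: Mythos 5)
Your existence verification is correct and complete: substituting the Thomae values $\lambda_1=\theta_1^2\theta_3^2/(\theta_2^2\theta_4^2)$, $1-\lambda_1=-\theta_7^2\theta_9^2/(\theta_2^2\theta_4^2)$, and their analogues into~(\ref{eqn:tropes_QR}) together with the scalings~(\ref{eqn:sol_MatchTropes}) does produce the common factors $R^2\theta_1^2\theta_3^2\theta_7^2\theta_9^2\theta_{10}^2$, $R^2\theta_2^2\theta_3^2\theta_5^2\theta_8^2\theta_9^2$, and $R^2\theta_1^2\theta_4^2\theta_5^2\theta_7^2\theta_8^2$, and dividing them out yields exactly the third, second, and first equations of~(\ref{eqn:Mumford_odd}), signs included. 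In spirit this is the same proof as the paper's, which simply reports that a computer algebra system confirms~(\ref{eqn:sol_MatchTropes}) is the only solution; you have made the existence half human-readable.

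The gap is in uniqueness. Your incidence argument does force $\{\mathsf{T}_4,\mathsf{T}_5,\mathsf{T}_6\}$ onto $\{\theta_{13}^2(z),\theta_{14}^2(z),\theta_{16}^2(z)\}$ and $\{\mathsf{T}_1,\mathsf{T}_2,\mathsf{T}_3\}$ onto $\{\theta_{11}^2(z),\theta_{12}^2(z),\theta_{15}^2(z)\}$, but only under the reading that each equation of~(\ref{eqn:tropes_QR}) must become a nonzero scalar multiple of some equation of~(\ref{eqn:Mumford_odd}); if ``coincide'' only demands equality of the linear spans, the support pattern of a particular generating set is not invariant and the reduction needs a separate justification. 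More importantly, after this reduction there remain $3!\cdot 3!=36$ candidate bijections, and the assertion that exactly one of them admits consistent scalars $c_1,\dots,c_6$ \emph{is} the uniqueness claim of the lemma; your ``I expect only the pairing above to reconcile these three determinations'' is a plan, not a proof. To close the argument you must actually run the consistency check: for each candidate, each of the three relations determines the ratios $c_5/c_4$ and $c_6/c_4$ as explicit Theta-constant monomials via~(\ref{Picard}) and~(\ref{Picard2}), and one must verify that the three determinations agree only for the stated pairing. This is a finite, mechanical computation---precisely the one the paper delegates to a computer algebra system---but as written it is asserted rather than established. Note also that you cannot outsource this step to Remark~\ref{rem:isos}: that remark explicitly defers its own uniqueness claim to the present lemma, so invoking it here would be circular.
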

\begin{proof}
Using a computer algebra system, we showed that Equations~(\ref{eqn:sol_MatchTropes}) is the only solution such that Equations~(\ref{Kummer:topesLRa}) coincide with Equations~(\ref{eqn:Mumford_odd}).
\end{proof}
\par We introduce expressions $\mathsf{t}_i$, that is sections of appropriate lines bundles over $\mathbf{A}_{\tau}$ (see Remarks~\ref{fact:sections1} and ~\ref{fact:sections2}), such that $\mathsf{t}_i^2=\mathsf{T}_i$ for $1 \le i \le 6$. We set
\begin{equation}
\label{def:t_i}
  \mathsf{t}_1	= R \, \theta_1 \theta_3 \theta_6 \theta_7 \theta_9 \theta_{10} \, \theta_{15}(z), \quad
 \mathsf{t}_2	= R \,  \theta_2 \theta_3 \theta_5 \theta_6 \theta_8 \theta_9 \, \theta_{12}(z), \quad \text{etc.}
\end{equation}
We have the following:
\begin{theorem}
For $[\tau] \in \mathcal{A}_2(2,4)$ the Kummer surface $\operatorname{Kum}(\mathbf{A}_{\tau})$ associated with the principally polarized abelian surfaces $\mathbf{A}_{\tau}$ with period matrix $(\mathbb{I}_2,\tau)$ is isomorphic to the image of the odd Theta functions $[\theta_{11}(z): \dots: \theta_{16}(z)]$ in $\mathbb{P}^5$ which satisfy the Mumford relations~(\ref{eqn:Mumford_odd}).
\end{theorem}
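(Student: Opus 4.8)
The plan is to obtain the theorem as a synthesis of the Proposition that realizes $\operatorname{Kum}(\operatorname{Jac}\mathcal{C})$ as a complete intersection of three quadrics in $\mathbb{P}^5$, together with the explicit coordinates of Lemma~\ref{lem:bijection_tropes_thetas} and the substitution~(\ref{def:t_i}). I would first prove the statement on the generic (Jacobian) locus $\mathbf{A}_\tau=\operatorname{Jac}(\mathcal{C})$ and then extend it to all of $\mathcal{A}_2(2,4)$ by continuity.

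First I would recall, from the Proposition, that the morphism attached to the linear system $|4p^*\mathsf{\Theta}-\sum E_i|$ realizes $\operatorname{Kum}(\operatorname{Jac}\mathcal{C})$ as the locus $V\subset\mathbb{P}^5$ with homogeneous coordinates $[\mathsf{t}_1:\dots:\mathsf{t}_6]$ cut out by the three quadrics~(\ref{Kum:quadrics}). By~(\ref{def:t_i}) each coordinate factors as $\mathsf{t}_i=c_i\,\theta_{\sigma(i)}(z)$, where $\sigma$ is the relabelling $(1,2,3,4,5,6)\mapsto(15,12,11,16,14,13)$ read off from~(\ref{eqn:sol_MatchTropes}) and $c_i$ equals $R$ times a product of six even Theta constants. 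The crucial point is that each $c_i$ is nonzero: since $\mathcal{C}$ is smooth, its Rosenhain roots are distinct and avoid $0,1,\infty$, so Thomae's formula~(\ref{Thomaeg=2}) exhibits every $\theta_i^4$ with $1\le i\le 10$ as a nonzero product of root differences, whence every even Theta constant is nonzero. Consequently the assignment $[\mathsf{t}_1:\dots:\mathsf{t}_6]\mapsto[\theta_{11}(z):\dots:\theta_{16}(z)]$ is, up to the coordinate permutation $\sigma$, the diagonal projective-linear automorphism $\operatorname{diag}(c_1^{-1},\dots,c_6^{-1})$ of $\mathbb{P}^5$; it carries $V$ isomorphically onto the image of the odd-Theta map $z\mapsto[\theta_{11}(z):\dots:\theta_{16}(z)]$.

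Next I would match the defining equations. By construction Lemma~\ref{lem:bijection_tropes_thetas} chooses~(\ref{eqn:sol_MatchTropes}) so that, after the substitution $\mathsf{T}_i=\mathsf{t}_i^2$, the three quadrics~(\ref{Kum:quadrics}) coincide term-by-term with the three Mumford relations~(\ref{eqn:Mumford_odd}) in the squares $\xi_j=\theta_j^2(z)$. Hence the diagonal isomorphism above transports~(\ref{Kum:quadrics}) onto exactly~(\ref{eqn:Mumford_odd}), so the image of $[\theta_{11}(z):\dots:\theta_{16}(z)]$ in $\mathbb{P}^5$ is the common zero locus of~(\ref{eqn:Mumford_odd}), a complete intersection of three quadrics isomorphic to $\operatorname{Kum}(\mathbf{A}_\tau)$. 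To reach every $[\tau]\in\mathcal{A}_2(2,4)$, including the reducible (product) points, I would note that the odd Theta functions and the coefficients $\theta_j^2$ of~(\ref{eqn:Mumford_odd}) depend holomorphically on $\tau\in\mathbb{H}_2$, while the Jacobian locus is dense (its complement being the $\Gamma_2$-orbit of $\{\tau_{12}=0\}$), so in the framework of Lemma~\ref{compactifications} the identification persists over the whole moduli space by continuity.

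The hard part is really packaged already into Lemma~\ref{lem:bijection_tropes_thetas}; the only genuine subtlety that remains for this theorem is the non-vanishing of the even Theta constants, which is precisely what guarantees that the passage from the $\mathsf{t}$-coordinates to the odd-Theta coordinates is an honest projective isomorphism rather than a degeneration. Everything else is the bookkeeping established in the Proposition and in Lemma~\ref{lem:bijection_tropes_thetas}.
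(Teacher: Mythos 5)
Your proof follows essentially the same route as the paper's: both rest on the complete-intersection model of $\operatorname{Kum}(\operatorname{Jac}\mathcal{C})$ in $\mathbb{P}^5$ and on Lemma~\ref{lem:bijection_tropes_thetas}, which makes the passage from $[\mathsf{t}_1:\dots:\mathsf{t}_6]$ to $[\theta_{11}(z):\dots:\theta_{16}(z)]$ a diagonal projective-linear change of coordinates carrying the quadrics~(\ref{Kum:quadrics}) onto the Mumford relations~(\ref{eqn:Mumford_odd}). The paper states this in two lines; your added checks — non-vanishing of the even Theta constants via Thomae's formula, and the density/continuity argument extending the identification beyond the Jacobian locus — are correct and fill in details the paper leaves implicit.
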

\begin{proof}
Using Lemma~\ref{lem:bijection_tropes_thetas} we obtain the linear map of $\mathbb{P}^5$ given by $[\mathsf{T}_1: \dots : \mathsf{T}_6] \mapsto [\theta_{11}(z): \dots: \theta_{16}(z)]$ which is well defined over $\mathcal{A}_2(2,4)$. Equations~(\ref{Kummer:topesLRa}) then coincide with Equations~(\ref{eqn:Mumford_odd}).
\end{proof}
\par In Lemma~\ref{lem:bijection_tropes_thetas} we determined the unique bijection between the tropes $\mathsf{T}_1, \dots, \mathsf{T}_6$, i.e., the images of the odd symmetric Theta divisors $\mathsf{\Theta}_i$ on $\operatorname{Jac}(\mathcal{C})$, and squares of the odd Theta functions $\theta^2_{11}(z), \dots,\theta^2_{16}(z)$ such that Equations~(\ref{eqn:tropes_QR}) coincide with the Mumford identities in Equations~(\ref{eqn:Mumford_odd}). We now extend the bijection to all tropes. We have the following:
\begin{proposition}
\label{lem:bijection_tropes_thetas2}
Given Thomae's formula~(\ref{Picard}), there is a unique bijection between the tropes $\mathsf{T}_1, \dots, \mathsf{T}_6$ and $\mathsf{T}_{126}, \dots, \mathsf{T}_{456}$ and the squares of the Theta functions $\theta^2_{1}(z), \dots,\theta^2_{16}(z)$ such that Equations~(\ref{eqn:tropes_QR}) coincide with the Mumford identities in Equations~(\ref{eqn:Mumford_odd}) and Equations~(\ref{eqn:Mumford_mix}). It is given by Equations~(\ref{eqn:sol_MatchTropes}) and by
\begin{equation}
\label{eqn:sol_MatchTropes2}
\begin{array}{lcllcl}
 \mathsf{T}_{126}	&=& k \, \theta_1^2 \theta_2^2 \theta_3^2 \theta_5^2 \theta_6^2 \theta_7^2 \theta_8^2 \theta_9^2\theta_{10}^2 \, \theta_{4}^2(z),&
 \mathsf{T}_{136}	&=& k \, \theta_1^2 \theta_3^2 \theta_4^2 \theta_5^2 \theta_6^2 \theta_7^2 \theta_8^2 \theta_9^2\theta_{10}^2 \, \theta_{2}^2(z),\\
 \mathsf{T}_{146}	&=& k \, \theta_1^2 \theta_2^2 \theta_3^2 \theta_4^2 \theta_6^2 \theta_7^2 \theta_8^2 \theta_9^2\theta_{10}^2 \, \theta_{5}^2(z),&
 \mathsf{T}_{156}	&=& k \, \theta_1^2 \theta_2^2 \theta_3^2 \theta_4^2 \theta_5^2 \theta_6^2 \theta_7^2 \theta_9^2\theta_{10}^2 \, \theta_{8}^2(z),\\
 \mathsf{T}_{236}	&=& k \, \theta_1^2 \theta_2^2 \theta_3^2 \theta_4^2 \theta_5^2 \theta_6^2 \theta_7^2 \theta_8^2\theta_9^2 \, \theta_{10}^2(z),&
 \mathsf{T}_{246}	&=& k \, \theta_1^2 \theta_2^2 \theta_3^2 \theta_4^2 \theta_5^2 \theta_6^2 \theta_8^2 \theta_9^2\theta_{10}^2 \, \theta_{7}^2(z),\\
 \mathsf{T}_{256}	&=& k \, \theta_2^2 \theta_3^2 \theta_4^2 \theta_5^2 \theta_6^2 \theta_7^2 \theta_8^2 \theta_9^2\theta_{10}^2 \, \theta_{1}^2(z),&
 \mathsf{T}_{346}	&=& k \, \theta_1^2 \theta_2^2 \theta_3^2 \theta_4^2 \theta_5^2 \theta_6^2 \theta_7^2 \theta_8^2\theta_{10}^2 \, \theta_{9}^2(z),\\
 \mathsf{T}_{356}	&=& k \, \theta_1^2 \theta_2^2 \theta_4^2 \theta_5^2 \theta_6^2 \theta_7^2 \theta_8^2\theta_9^2 \theta_{10}^2 \, \theta_{3}^2(z),&
 \mathsf{T}_{456}	&=& k \, \theta_1^2 \theta_2^2 \theta_3^2 \theta_4^2 \theta_5^2 \theta_7^2 \theta_8^2 \theta_9^2\theta_{10}^2 \, \theta_{6}^2(z),
\end{array}
\end{equation}
where $k\in \mathbb{C}^{*}$ is a non-zero constant, and $R^2=-\theta_2^2 \theta_4^2 \theta_{10}^2 \, k$ in Equations~(\ref{eqn:sol_MatchTropes2}).
\end{proposition}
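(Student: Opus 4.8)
The plan is to bootstrap from Lemma~\ref{lem:bijection_tropes_thetas}, which already fixes the six odd tropes $\mathsf{T}_1,\dots,\mathsf{T}_6$ via Equations~(\ref{eqn:sol_MatchTropes}), and then to pin down the ten even tropes $\mathsf{T}_{ij6}$ using the nine linear relations in Equations~(\ref{Kummer:topesLRb}) of Lemma~\ref{lem:LinearRelation}. The key observation is a numerical match: each of those nine relations expresses one even trope $\mathsf{T}_{ij6}$ as a combination of the single even trope $\mathsf{T}_{456}$ together with exactly two of the odd tropes $\mathsf{T}_4,\mathsf{T}_5,\mathsf{T}_6$; on the Theta side, each of the nine mixed Mumford relations~(\ref{eqn:Mumford_mix}) is a four-term relation involving $\xi_6$, one further even square $\xi_m=\theta_m^2(z)$, and two of the odd squares $\xi_{13},\xi_{14},\xi_{16}$. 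Under the dictionary of Remark~\ref{rem:isos} one has $\xi_6\leftrightarrow\mathsf{T}_{456}$ and $\xi_{13},\xi_{14},\xi_{16}\leftrightarrow\mathsf{T}_6,\mathsf{T}_5,\mathsf{T}_4$, so the two families of nine relations are in exact correspondence, and the proposal is to require that matched relations coincide.

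Concretely, I would first read off from Remark~\ref{rem:isos} which even Theta function is attached to each even trope: for instance $\mathsf{\Theta}_{256}\leftrightarrow\theta_1(z)$ forces the ansatz $\mathsf{T}_{256}=k\,(\prod\theta^2)\,\theta_1^2(z)$, and the geometric relation $\mathsf{T}_{256}=-\lambda_2\mathsf{T}_5+\lambda_2(\lambda_1-1)(\lambda_3-1)\mathsf{T}_6+\mathsf{T}_{456}$ must then be matched against the Mumford relation $\theta_6^2\xi_1-\theta_1^2\xi_6+\theta_7^2\xi_{13}-\theta_2^2\xi_{14}=0$. Substituting the proposed even-trope forms together with the already-fixed $\mathsf{T}_4,\mathsf{T}_5,\mathsf{T}_6$, I would use Thomae's formula~(\ref{Picard}), the companion identities~(\ref{Picard2}), and the Frobenius identities~(\ref{Eq:FrobeniusIdentities}) to rewrite the polynomial coefficients in $\lambda_1,\lambda_2,\lambda_3$ (such as $\lambda_2(\lambda_1-1)(\lambda_3-1)$) as ratios of Theta constants. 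Demanding that each rewritten geometric relation be a scalar multiple of its Mumford counterpart produces linear constraints on the product of Theta constants multiplying each $\theta_m^2(z)$.

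Solving this system fixes every even-trope product of Theta constants up to a single overall constant $k$, and simultaneously forces the compatibility relation $R^2=-\theta_2^2\theta_4^2\theta_{10}^2\,k$ between the normalization of the odd tropes (which carry $R$) and that of the even tropes (which carry $k$). This cross-calibration is unavoidable precisely because the relations in~(\ref{Kummer:topesLRb}) mix the two families, so the prefactors of $\mathsf{T}_6=R^2\theta_2^4\theta_4^4\theta_{10}^4\,\theta_{13}^2(z)$ and of the even tropes must agree on a common scale. Uniqueness then follows because the nine relations overdetermine the ten even-trope assignments once $\mathsf{T}_4,\mathsf{T}_5,\mathsf{T}_6$ are fixed, so a solution, if it exists, is forced.

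The main obstacle I anticipate is not the logical structure but the verification of \emph{simultaneous} consistency: each of the nine relations imposes its own constraint on the prefactors, and one must check that a single constant $k$ and a single even-Theta assignment satisfy all nine without contradiction. This is a large but entirely explicit symbolic reduction of polynomials in the $\lambda_i$ to Theta-constant ratios via Thomae and Frobenius, and, exactly as in the proof of Lemma~\ref{lem:bijection_tropes_thetas}, I would carry it out and confirm both existence and uniqueness with a computer algebra system.
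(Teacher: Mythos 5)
Your proposal takes essentially the same route as the paper: the authors' proof is precisely to demand that Equations~(\ref{Kummer:topesLRa}) and~(\ref{Kummer:topesLRb}) coincide with the Mumford relations~(\ref{eqn:Mumford_odd}) and~(\ref{eqn:Mumford_mix}) and to verify by computer algebra that the stated assignment (with the calibration $R^2=-\theta_2^2\theta_4^2\theta_{10}^2\,k$) is the unique solution. Your structural observations --- the term-by-term correspondence between the nine relations of~(\ref{Kummer:topesLRb}) and the nine mixed Mumford identities under the dictionary $\xi_6\leftrightarrow\mathsf{T}_{456}$, $\xi_{13},\xi_{14},\xi_{16}\leftrightarrow\mathsf{T}_6,\mathsf{T}_5,\mathsf{T}_4$ --- are a correct and more explicit account of the same computation.
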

\begin{proof}
Using a computer algebra system, we showed that Equations~(\ref{eqn:sol_MatchTropes2}) constitute the only solution such that Equations~(\ref{Kummer:topesLRa}) and Equations~(\ref{Kummer:topesLRb}) coincide with Equations~(\ref{eqn:Mumford_odd}) and Equations~(\ref{eqn:Mumford_mix}), respectively.
\end{proof}
\begin{remark}
The isomorphism between $\mathbf{A}[2]$ and $\mathbb{F}_2^4$ in Table~\ref{tab:16_6_configuration} in Lemma~\ref{lem:bijection_tropes_thetas} was determined. It was the unique isomorphism that mapped the Theta divisors to the translates of the characteristics of the Theta functions such that the symmetric Theta divisors are the tropes on the Kummer surface given by the squares of Theta functions.
\end{remark}
We also introduce sections $\mathsf{t}_{a,b}$ which are bi-monomial expressions in terms of Theta functions, such that $\mathsf{t}_{a,b}^2 = \mathsf{T}_a \mathsf{T}_b$ for $a, b \in \{k, ij6\}$ with $1\le i < j <6$ and $1\le k <6$ and the following consistent choice for the sign of the 120 square roots:
\begin{small}
\begin{equation}
\label{def:t_ijk}
\begin{array}{lclclcl}
 \mathsf{t}_{1,2} & = & k \,  \theta_1 \theta_2^3 \theta_3^2 \theta_4^2 \theta_5 \theta_6^2 \theta_7 \theta_8 \theta_9^2 \theta_{10}^3 \, \theta_{12}(z)\theta_{15}(z),  
 	&\dots, &\mathsf{t}_{5,6}  &=& k\dots, \\ 
 \mathsf{t}_{1,126} & = & ik \,  \theta_1^2 \theta_2^2 \theta_3^2 \theta_4 \theta_5 \theta_6^2 \theta_7^2 \theta_8 \theta_9^2 \theta_{10}^3 \, \theta_{4}(z)\theta_{15}(z),  
 	&\dots, &\mathsf{t}_{5,456}  &=& ik\dots, \\ 
 \mathsf{t}_{6,126} & = & -ik \,  \theta_1 \theta_2^4 \theta_3 \theta_4^3 \theta_5 \theta_6 \theta_7 \theta_8 \theta_9 \theta_{10}^4 \, \theta_{4}(z)\theta_{13}(z),  
 	&\dots, &\mathsf{t}_{6,456}  &=& -ik\dots, \\ 
 \mathsf{t}_{126,136} & = & k \,  \theta_1^2 \theta_2 \theta_3^2 \theta_4 \theta_5^2 \theta_6^2 \theta_7^2 \theta_8^2 \theta_9^2 \theta_{10}^2 \, \theta_{2}(z)\theta_{4}(z),
 	&\dots, &\mathsf{t}_{356,456}  &=& k\dots.
 \end{array}
\end{equation}
\end{small}
We have the following:
\begin{proposition}
\label{prop:MMFT}
In terms of the sections $\mathsf{t}_{a,b}$ with $\mathsf{t}_{a,b}^2 = \mathsf{T}_a \mathsf{T}_b$ introduced above, the 72 Equations~(\ref{eqn:Mumford_odd}),~(\ref{eqn:Mumford_mix}), and~(\ref{Eqn:MumfordBimonomial}) are given by Equations~(\ref{Kummer:topesLRa}),~(\ref{Kummer:topesLRb}), and
\begin{gather}
\label{Kummer:topesLRc_eg}
  (\lambda_2-\lambda_3)(\lambda_1-1) \mathsf{t}_{4,6}+\mathsf{t}_{136, 256}-\mathsf{t}_{126, 356}=0, \; \dots \\
  \nonumber
(\lambda_1-\lambda_2)(\lambda_3-1) \mathsf{t}_{126,356}- (\lambda_1-\lambda_3)(\lambda_2-1) \mathsf{t}_{136,256}+ (\lambda_2-\lambda_3)(\lambda_1-1) \mathsf{t}_{156,236}=0,\\
\nonumber
 \text{(A complete generating set of 60 equations is given in Equation~(\ref{Kummer:topesLRc}).)}
\end{gather}
In particular, all coefficients of the 72 conics are polynomials in $\mathbb{Z}[\lambda_1,\lambda_2,\lambda_3]$.
\end{proposition}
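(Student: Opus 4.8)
The plan is to split the seventy-two conics into the twelve that involve only the squares $\xi_i=\theta_i^2(z)$ and the sixty three-term relations among the bi-monomials $\xi_{i,j}=\theta_i(z)\theta_j(z)$ of Proposition~\ref{prop:MMF}. For the former there is essentially nothing new to verify: Lemma~\ref{lem:bijection_tropes_thetas} and Proposition~\ref{lem:bijection_tropes_thetas2} already established that, under the bijection $\mathsf{T}_a=c_a\,\theta_{\sigma(a)}^2(z)$ recorded in Equations~(\ref{eqn:sol_MatchTropes}) and~(\ref{eqn:sol_MatchTropes2}), the twelve Mumford square relations~(\ref{eqn:Mumford_odd}) and~(\ref{eqn:Mumford_mix}) become exactly the twelve linear trope relations~(\ref{Kummer:topesLRa}) and~(\ref{Kummer:topesLRb}). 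So the genuine content is the passage from the sixty bi-monomial relations~(\ref{Eqn:MumfordBimonomial}) to the sixty relations~(\ref{Kummer:topesLRc}) among the sections $\mathsf{t}_{a,b}$.

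First I would record the elementary observation underlying the definitions~(\ref{def:t_ijk}). Since $\mathsf{T}_a=c_a\,\theta_{\sigma(a)}^2(z)$ with each $c_a$ a monomial in the even squared Theta constants, the requirement $\mathsf{t}_{a,b}^2=\mathsf{T}_a\mathsf{T}_b$ forces $\mathsf{t}_{a,b}=\varepsilon_{a,b}\,\sqrt{c_ac_b}\;\xi_{\sigma(a),\sigma(b)}$ for some $\varepsilon_{a,b}\in\{\pm1\}$, where $\sqrt{c_ac_b}$ is itself a monomial in Theta constants, occasionally multiplied by $i$. Thus each $\mathsf{t}_{a,b}$ is a fixed Theta-constant multiple of the Mumford variable $\xi_{\sigma(a),\sigma(b)}$, and substituting $\xi_{\sigma(a),\sigma(b)}=\mathsf{t}_{a,b}/(\varepsilon_{a,b}\sqrt{c_ac_b})$ into any of the sixty relations in~(\ref{Eqn:MumfordBimonomial}) turns a three-term identity with Theta-constant coefficients into a three-term linear relation among the $\mathsf{t}_{a,b}$. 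Because the three bi-monomials occurring in a single Mumford relation carry a common Theta weight — as one sees already in the example $\theta_1\theta_2\,\xi_{1,2}-\theta_3\theta_4\,\xi_{3,4}-\theta_8\theta_{10}\,\xi_{8,10}=0$ of~(\ref{Eqn:MumfordBimonomial_eg}) — the three transformed coefficients share a common monomial factor in the Theta constants; dividing it out leaves products and ratios of even squared Theta constants, which Thomae's formula~(\ref{Thomaeg=2}), together with the factored differences in~(\ref{Picard2}), converts into elements of $\mathbb{Z}[\lambda_1,\lambda_2,\lambda_3]$. This is precisely what produces the integer polynomial coefficients displayed in~(\ref{Kummer:topesLRc_eg}).

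The hard part will be the global consistency of the signs $\varepsilon_{a,b}$ and the interspersed factors of $i$. Fixing the sixteen square roots $\mathsf{t}_a=\sqrt{c_a}\,\theta_{\sigma(a)}(z)$ of~(\ref{def:t_i}) determines all $120$ products $\mathsf{t}_a\mathsf{t}_b$, but the sections~(\ref{def:t_ijk}) are instead all normalized by the single constant $k$ so as to lie in one common line bundle; once the branch $R=i\sqrt{k}\,\theta_2\theta_4\theta_{10}$ dictated by the relation $R^2=-\theta_2^2\theta_4^2\theta_{10}^2\,k$ is fixed, the ratio $\mathsf{t}_{a,b}/(\mathsf{t}_a\mathsf{t}_b)$ is a fourth root of unity, which is exactly why the factors $\pm i$ appear in~(\ref{def:t_ijk}). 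One must check that these $120$ choices can be made simultaneously so that, in each of the sixty relations, the three phases combine to a common unit and the surviving coefficients are the real polynomials of~(\ref{Kummer:topesLRc}); a single misplaced sign would obstruct one of the relations. This bookkeeping is finite but delicate, and I would carry it out with a computer algebra system, following the method of Propositions~\ref{prop:MMF} and~\ref{lem:bijection_tropes_thetas2}: generate the sixty relations~(\ref{Eqn:MumfordBimonomial}), substitute~(\ref{def:t_i}) and~(\ref{def:t_ijk}), reduce all coefficients through~(\ref{Thomaeg=2}), and verify that the output is exactly~(\ref{Kummer:topesLRc}). Assembling the twelve square relations with the sixty bi-monomial relations then accounts for all $72$ conics cutting out the image of $\mathbf{A}_\tau$ in $\mathbb{P}^{15}$ from Remark~\ref{rem:embedding}, with every coefficient in $\mathbb{Z}[\lambda_1,\lambda_2,\lambda_3]$, completing the proof.
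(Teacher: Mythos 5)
Your proposal is correct and follows essentially the route the paper takes: the paper states this proposition without a written proof, but its method (as in the later proposition identifying the irrational Kummer normal forms with the Mumford relations) is exactly your direct computation — substitute the explicit Theta-function expressions~(\ref{def:t_i}) and~(\ref{def:t_ijk}) into the 72 relations, reduce the Theta-constant coefficients via~(\ref{Picard}),~(\ref{Picard2}) and~(\ref{Thomaeg=2}), and verify the sign/phase consistency by computer algebra. The only quibble is your remark that $\mathsf{t}_{a,b}/(\mathsf{t}_a\mathsf{t}_b)$ is a fourth root of unity: since both squares equal $\mathsf{T}_a\mathsf{T}_b$ the ratio is $\pm 1$; the factors of $i$ in~(\ref{def:t_ijk}) arise only relative to the normalization by $k$ rather than by $R^2=-\theta_2^2\theta_4^2\theta_{10}^2\,k$, which does not affect your argument.
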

We have the following:
\begin{corollary}
The image of the embedding of $\operatorname{Jac}(\mathcal{C}) \hookrightarrow \mathbb{P}^{15}$ of the Jacobian $\operatorname{Jac}(\mathcal{C})$ of a genus-two curve~$\mathcal{C}$ in Equation~(\ref{Eq:Rosenhain}) given by $z \mapsto [\mathsf{t}_1: \dots : \mathsf{t}_{16}]$ is the intersection of the 72 conics given by Equations~(\ref{Kummer:topesLRa}),~(\ref{Kummer:topesLRb}), and~(\ref{Kummer:topesLRc}).  Similarly, the image of the image of the embedding of $\operatorname{Kum}(\operatorname{Jac} \mathcal{C}) \hookrightarrow \mathbb{P}^{5}$ given by $z \mapsto [\mathsf{t}_1: \dots : \mathsf{t}_{6}]$ is the intersection of the 3 conics given by Equations~(\ref{Kummer:topesLRa}). In particular, the images are defined over $\mathcal{A}_2(2)$, i.e., in terms of the Rosenhain parameters of $\mathcal{C}$
in Equation~(\ref{Eq:Rosenhain}).
\end{corollary}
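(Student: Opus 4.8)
The plan is to deduce both assertions from the high-dimensional embedding recorded in Remark~\ref{rem:embedding}, combined with the dictionary of Proposition~\ref{prop:MMFT} that rewrites the Mumford identities in trope coordinates. The essential observation is that the map $z \mapsto [\mathsf{t}_1 : \dots : \mathsf{t}_{16}]$ is nothing but $z \mapsto [\theta_1(z) : \dots : \theta_{16}(z)]$ followed by a generalized permutation of $\mathbb{P}^{15}$. Indeed, by the definitions~(\ref{def:t_i}) and~(\ref{def:t_ijk}) together with Lemma~\ref{lem:bijection_tropes_thetas} and Proposition~\ref{lem:bijection_tropes_thetas2}, each coordinate satisfies $\mathsf{t}_i = c_i \, \theta_{\sigma(i)}(z)$, where $\sigma$ is a fixed permutation of $\{1,\dots,16\}$ matching the six sections $\mathsf{t}_1,\dots,\mathsf{t}_6$ to the odd Theta functions and the ten sections $\mathsf{t}_7,\dots,\mathsf{t}_{16}$ to the even ones, and each $c_i$ is a monomial in $R$ and the even Theta constants $\theta_1,\dots,\theta_{10}$.

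First I would check that the multipliers $c_i$ are non-zero on $\mathcal{A}_2(2)$. By Thomae's formula~(\ref{Thomaeg=2}) the fourth powers $\theta_j^4$ for $1 \le j \le 10$ are products of differences of the Rosenhain roots, all of which are non-zero when the $\lambda_i$ are distinct and different from $0,1,\infty$; hence every even Theta constant is non-zero, and so is each $c_i$. Consequently the generalized permutation $[\theta_1(z):\dots:\theta_{16}(z)] \mapsto [\mathsf{t}_1:\dots:\mathsf{t}_{16}]$ is an element of $\mathrm{PGL}_{16}$, so $z \mapsto [\mathsf{t}_1:\dots:\mathsf{t}_{16}]$ is again an embedding of $\operatorname{Jac}(\mathcal{C})$ with image the image of Remark~\ref{rem:embedding} under this coordinate change.

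The first claim now follows: a projective-linear isomorphism carries the image to the image and the 72 defining conics~(\ref{eqn:Mumford_odd}),~(\ref{eqn:Mumford_mix}),~(\ref{Eqn:MumfordBimonomial}) to their transforms, and Proposition~\ref{prop:MMFT} identifies these transforms with Equations~(\ref{Kummer:topesLRa}),~(\ref{Kummer:topesLRb}),~(\ref{Kummer:topesLRc}). Since these have coefficients in $\mathbb{Z}[\lambda_1,\lambda_2,\lambda_3]$, the image of $[\mathsf{t}_1:\dots:\mathsf{t}_{16}]$ is their common zero locus and is defined over $\mathcal{A}_2(2)$. For the Kummer surface I would note that $\mathsf{t}_1,\dots,\mathsf{t}_6$ are scalar multiples of the odd Theta functions $\theta_{11}(z),\dots,\theta_{16}(z)$, which are odd in $z$; hence $z\mapsto[\mathsf{t}_1:\dots:\mathsf{t}_6]$ is $(-\mathbb{I})$-invariant and descends to $\operatorname{Kum}(\operatorname{Jac}\mathcal{C})$. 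The preceding Theorem identifies this image with the locus cut out by the Mumford relations~(\ref{eqn:Mumford_odd}), which under the same rescaling become the three conics~(\ref{Kummer:topesLRa}); equivalently this is the complete-intersection-of-three-quadrics description established earlier with $\mathsf{T}_i = \mathsf{t}_i^2$, now read off in Theta coordinates.

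The one point that genuinely needs care — and the step I expect to be the main obstacle — is that the 72 conics cut out the image scheme-theoretically, not merely set-theoretically, and that the three quadrics in $\mathbb{P}^5$ give exactly $\operatorname{Kum}(\operatorname{Jac}\mathcal{C})$ rather than a strictly larger complete intersection. Both properties are supplied by the cited results — Remark~\ref{rem:embedding} for $\mathbb{P}^{15}$ and the preceding Theorem for $\mathbb{P}^5$ — and are preserved under the invertible coordinate change, so that the remaining content of the proof reduces entirely to the non-vanishing of the even Theta constants verified above.
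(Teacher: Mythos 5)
Your argument is correct and follows exactly the route the paper intends: the paper leaves this corollary without a written proof precisely because it is the combination of Remark~\ref{rem:embedding}, Proposition~\ref{prop:MMFT}/\ref{prop:MMFT}'s trope-coordinate rewriting, and the preceding theorem on the odd Theta embedding, transported through the diagonal coordinate change $\mathsf{t}_i = c_i\,\theta_{\sigma(i)}(z)$. Your additional check that the multipliers $c_i$ are non-vanishing on $\mathcal{A}_2(2)$ via Thomae's formula is the one detail the paper takes for granted, and it is verified correctly.
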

\qed
\subsection{Tetrahedra and even eights}
We now use the Cassels-Flynn quartic in Equation~(\ref{kummer})  as model for a singular Kummer variety to derive special relations between the tropes.  The $16_6$ configuration on the Jacobian descends to the Kummer surface given by Equation~(\ref{kummer}) as follows: any trope contains exactly six nodes. Any node is contained in exactly six tropes. Any two different tropes have exactly two nodes in common. This is easily verified using Table~\ref{tab:tropes}. Tetrahedra in $\mathbb{P}^3$ whose faces are tropes are given by tuples $\{\mathsf{T}_a, \mathsf{T}_b, \mathsf{T}_c, \mathsf{T}_d\}$. They are called \emph{Rosenhain tetrahedra} if all vertices are nodes. They are called \emph{G\"opel tetrahedra} if none of the vertices are nodes. We also remind the reader that we call the tropes $\{ \mathsf{T}_i \}$ and $\{ \mathsf{T}_{jk6} \}$, with $1\le i \le6$ and $1\le j < k<6$, the six \emph{odd} and the ten \emph{even} tropes, respectively. We have the following:
\begin{lemma}
There are 60 G\"opel tetrahedra on the quartic surface given by Equation~(\ref{kummer}) which are obtained for $\lbrace i, j, k, l, m  \rbrace= \lbrace 1, \dots, 5\rbrace$ as follows:
\begin{enumerate}
\item $30$ tetrahedra are of the form $\lbrace  \mathsf{T}_i, \mathsf{T}_j, \mathsf{T}_{ik6}, \mathsf{T}_{jk6} \rbrace$;
\item $15$ tetrahedra are of the form $\lbrace  \mathsf{T}_m, \mathsf{T}_{6}, \mathsf{T}_{ij6}, \mathsf{T}_{kl6} \rbrace$;
\item $15$ tetrahedra are of the form $\lbrace  \mathsf{T}_{ij6}, \mathsf{T}_{kl6},  \mathsf{T}_{ik6}, \mathsf{T}_{jl6}  \rbrace$.
\end{enumerate}
In particular, 15 G\"opel tetrahedra contain only even tropes, and 45 contain two even and two odd tropes.
\end{lemma}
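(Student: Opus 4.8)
The plan is to reduce the statement to a purely combinatorial count on the $16_6$ incidence of nodes and tropes recorded in Table~\ref{tab:tropes}, organized by the number of odd tropes occurring in a tetrahedron. The starting observation is that a tetrahedron $\{\mathsf{T}_a,\mathsf{T}_b,\mathsf{T}_c,\mathsf{T}_d\}$ is a G\"opel tetrahedron if and only if no three of its four face-tropes pass through a common node: three tropes in linearly general position meet in a single point of $\mathbb{P}^3$, and that point is one of the sixteen nodes exactly when the three tropes share a node. (Linear general position of the four planes in each configuration below is checked from the explicit equations in Table~\ref{tab:tropes} together with the relations of Lemma~\ref{lem:LinearRelation}.) Recalling the incidences of Section~\ref{sec:16_6}, an odd trope $\mathsf{T}_i$ carries $p_0$ and the five nodes $p_{ij}$, while an even trope, corresponding to a partition of $\{1,\dots,6\}$ into two triples, carries exactly the six \emph{internal} duad-nodes $p_{ab}$ with $a,b$ in a common triple. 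The decisive first reduction is that $p_0$ lies on all six odd tropes and on no even trope; hence any three odd tropes already meet at $p_0$, so a G\"opel tetrahedron contains at most two odd tropes.

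I would next rule out exactly one odd trope. If $\mathsf{T}_i$ is odd and $\mathsf{T}_P$ is even, then $\mathsf{T}_i\cap\mathsf{T}_P=\{p_{ia},p_{ib}\}$, where $\{a,b\}$ is the partner-pair of $i$ inside its triple of $P$, a $2$-subset of the $5$-set $\{1,\dots,6\}\setminus\{i\}$. For three even tropes $\mathsf{T}_P,\mathsf{T}_Q,\mathsf{T}_R$ the G\"opel condition forces these three partner-pairs to be pairwise disjoint, which is impossible for three $2$-subsets of a $5$-element set. Thus every G\"opel tetrahedron has either exactly two or exactly zero odd tropes, leaving two cases.

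For two odd tropes $\mathsf{T}_i,\mathsf{T}_j$ and two even tropes $\mathsf{T}_P,\mathsf{T}_Q$, the triples $\{\mathsf{T}_i,\mathsf{T}_j,\mathsf{T}_P\}$ and $\{\mathsf{T}_i,\mathsf{T}_j,\mathsf{T}_Q\}$ are node-free exactly when $\{i,j\}$ is split by both $P$ and $Q$ (since $\mathsf{T}_i\cap\mathsf{T}_j=\{p_0,p_{ij}\}$ and $p_0$ avoids even tropes), while $\{\mathsf{T}_i,\mathsf{T}_P,\mathsf{T}_Q\}$ is node-free exactly when the partner-pairs of $i$ in $P$ and in $Q$ are disjoint. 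Writing $P=\{i,x,y\}\mid\{j,z,w\}$, this forces $Q=\{i,z,w\}\mid\{j,x,y\}$, after which $\{\mathsf{T}_j,\mathsf{T}_P,\mathsf{T}_Q\}$ is automatically node-free. Hence such a tetrahedron is determined by the pair $\{i,j\}$ together with an unordered splitting of the remaining four indices into two pairs, giving $\binom{6}{2}\cdot 3=45$; separating the subcases $6\notin\{i,j\}$ and $6\in\{i,j\}$ reproduces exactly the $30$ tetrahedra of type (1) and the $15$ of type (2), each with two even and two odd tropes.

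Finally, for four even tropes I would index each even trope by the partner-pair of $6$, identifying the ten even tropes with the edges of the complete graph $K_5$ on $\{1,\dots,5\}$. Analyzing the two node-types shows that three even tropes share a node if and only if their three edges either meet in a common vertex (a shared node $p_{6x}$) or lie in a common triangle-plus-disjoint-edge subgraph $S_{yz}$, the edge $\{y,z\}$ together with the triangle on $\{1,\dots,5\}\setminus\{y,z\}$ (a shared node $p_{yz}$). Thus the G\"opel condition says the four edges form a graph of maximum degree at most $2$ containing no three edges inside any $S_{yz}$. The $4$-edge subgraphs of $K_5$ of maximum degree $\le 2$ are exactly a $4$-cycle (plus an isolated vertex), a $5$-path, or a triangle-plus-edge; the last two each contain three edges inside some $S_{yz}$ and are discarded, leaving only the $4$-cycles. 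Choosing the isolated vertex $m$ and a $4$-cycle on the remaining four vertices yields $5\cdot 3=15$ tetrahedra, matching type (3). Summing gives $30+15+15=60$, with $15$ four-even-trope tetrahedra and $45$ two-odd-two-even ones, as claimed. I expect the four-even case to be the real obstacle: turning ``three tropes share a node'' into the graph dichotomy on $K_5$ and then excluding the $5$-path and the triangle-plus-edge (both deceptively close to being G\"opel) carries the genuine content, whereas the remaining cases are bookkeeping once the $p_0$ reduction and the partner-pair disjointness criterion are installed.
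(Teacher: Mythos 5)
Your argument is correct, and it supplies substantially more than the paper does: the paper's proof of this lemma is a one-line ``one checks that the given list contains all sets of four tropes whose vertices are not nodes,'' i.e.\ an appeal to exhaustive verification against Table~\ref{tab:tropes}, whereas you derive the classification. The shared starting point is the same (a tetrahedron of tropes is G\"opel iff no three of its faces pass through a common node), but from there you replace the brute-force check by a clean combinatorial scheme: the $p_0$-reduction bounding the number of odd faces by two, the partner-pair disjointness criterion killing the one-odd-face case and pinning down the two-odd-face case (giving $\binom{6}{2}\cdot 3=45$, split $30+15$ according to whether $6\in\{i,j\}$), and the identification of the ten even tropes with edges of $K_5$ so that the all-even case becomes the selection of $4$-edge subgraphs of maximum degree $\le 2$ avoiding the sets $S_{yz}$ --- which correctly leaves only the $4$-cycles and yields the remaining $15$. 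I verified the two incidence facts your dichotomy rests on (three even tropes share $p_{x6}$ iff their edges are concurrent at $x$; they share $p_{xy}$ iff their edges lie in $S_{xy}$) against Table~\ref{tab:tropes}, and the exclusion of the $5$-path and triangle-plus-edge both check out (e.g.\ $\mathsf{T}_{126},\mathsf{T}_{236},\mathsf{T}_{456}$ all contain $p_{45}$). What your approach buys is an explanation of \emph{why} the list is complete and why the three shapes in the statement exhaust all possibilities; what it costs is the one residual verification you correctly flag, namely that each relevant triple of tropes is in linearly general position (no three tropes contain a common line), which for generic Rosenhain parameters follows from the explicit plane equations in Table~\ref{tab:tropes} and should be stated once rather than left implicit.
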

\begin{proof}
One checks that the given list contains all sets of four tropes $\{\mathsf{T}_a, \mathsf{T}_b, \mathsf{T}_c, \mathsf{T}_d\}$ whose vertices are not nodes.
\end{proof}
 \begin{lemma}
There are 80 Rosenhain tetrahedra $\{\mathsf{T}_a, \mathsf{T}_b, \mathsf{T}_c, \mathsf{T}_d\}$ on the surface given by Equation~(\ref{kummer}). The Rosenhain tetrahedra fall into five subsets $\mathsf{R}^{(1)}, \dots, \mathsf{R}^{(5)}$ such that all Rosenhain tetrahedra are obtained for $\lbrace i, j, k, l, m  \rbrace= \lbrace 1, \dots, 5\rbrace$ as follows:
\begin{equation}
\begin{array}{c|c|llll}
 \text{notation} & \# & \mathsf{T}_a & \mathsf{T}_b & \mathsf{T}_c &\mathsf{T}_d \\
\hline
\mathsf{R}^{(1)}
& 10 &\mathsf{T}_{i} & \mathsf{T}_{j} & \mathsf{T}_{k} & \mathsf{T}_{lm6} \\
\mathsf{R}^{(2)}
&10 &\mathsf{T}_{jk6} & \mathsf{T}_{ik6} & \mathsf{T}_{ij6} & \mathsf{T}_{6} \\
\mathsf{R}^{(3)}
&10 &\mathsf{T}_{i} & \mathsf{T}_{j} & \mathsf{T}_{ij6} & \mathsf{T}_{6} \\
\mathsf{R}^{(4)}
& 30 &\mathsf{T}_{i} & \mathsf{T}_{ij6} & \mathsf{T}_{ik6} & \mathsf{T}_{lm6} \\
\mathsf{R}^{(5)}
& 20 &\mathsf{T}_{i} & \mathsf{T}_{jk6} & \mathsf{T}_{jl6} & \mathsf{T}_{jm6} \\
\end{array}
\end{equation}
In particular, there are 60 Rosenhain tetrahedra with one odd and three even tropes, and 20 that contain one even and three odd tropes.
\end{lemma}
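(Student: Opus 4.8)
The plan is to reduce the projective condition defining a Rosenhain tetrahedron to a combinatorial condition on the $16_6$ configuration and then to enumerate the solutions using the symmetry permuting the indices $1,\dots,5$. Four tropes in general position meet in four vertices, each the intersection $\mathsf{T}_a\cap\mathsf{T}_b\cap\mathsf{T}_c$ of a triple of the four planes; since three planes in general position meet in a single point, such a vertex is a node precisely when the three tropes share a common node. Hence $\{\mathsf{T}_a,\mathsf{T}_b,\mathsf{T}_c,\mathsf{T}_d\}$ is a Rosenhain tetrahedron exactly when each of its four triples of tropes has a common node. First I would record the combinatorial skeleton this forces: since each trope contains six nodes and each pair of tropes shares exactly two (Table~\ref{tab:tropes}), an inclusion--exclusion count shows that the four vertex-nodes each lie on exactly three of the four tropes, that the two nodes shared by any pair of tropes are precisely the two vertices on that edge, and that the four tropes together meet all sixteen nodes (four at vertices, twelve lying on a single trope). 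This is the vertex--face incidence pattern of a tetrahedron realized by nodes, and it is the exact mirror of the G\"opel case, in which no vertex is a node.

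I would then enumerate, organized by the number of odd tropes $\mathsf{T}_1,\dots,\mathsf{T}_6$ among the four. By the rank-three relations of Lemma~\ref{lem:LinearRelation}, the six odd tropes span only a three-dimensional space, so any four of them are linearly dependent and cannot form a non-degenerate tetrahedron; this disposes of the case of four odd tropes. For the remaining cases I would use the action of $S_5$ permuting $\lambda_1,\dots,\lambda_5$, which permutes $\mathsf{T}_1,\dots,\mathsf{T}_5$ and the even tropes $\mathsf{T}_{jk6}$ while fixing $\mathsf{T}_6$. Reading the incidences off Table~\ref{tab:tropes}, one checks on a single representative of each $S_5$-orbit whether all four triple-intersections are nodes and then propagates by symmetry. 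The orbits with an even number of odd tropes give only G\"opel or mixed tetrahedra, while the orbits with one or three odd tropes give exactly the five families $\mathsf{R}^{(1)},\dots,\mathsf{R}^{(5)}$; for example the vertices of $\{\mathsf{T}_1,\mathsf{T}_2,\mathsf{T}_3,\mathsf{T}_{456}\}\in\mathsf{R}^{(1)}$ computed from Table~\ref{tab:tropes} are $p_0,p_{12},p_{13},p_{23}$, all nodes.

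Finally I would tally the orbits and the odd/even signatures. The orbit sizes are $\binom{5}{3}=10$ for $\mathsf{R}^{(1)}$ and $\mathsf{R}^{(2)}$, $\binom{5}{2}=10$ for $\mathsf{R}^{(3)}$, $5\binom{4}{2}=30$ for $\mathsf{R}^{(4)}$, and $5\cdot 4=20$ for $\mathsf{R}^{(5)}$, summing to $80$; the families $\mathsf{R}^{(1)}$ and $\mathsf{R}^{(3)}$ contain three odd and one even trope ($20$ in all), while $\mathsf{R}^{(2)},\mathsf{R}^{(4)},\mathsf{R}^{(5)}$ contain one odd and three even ($60$ in all). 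As an independent check I would invoke the self-duality of the $16_6$ configuration (Lemma~\ref{lem:16_6}): the incidence skeleton above shows that the four vertex-nodes of any Rosenhain tetrahedron form a translate of a Rosenhain group of $\mathbf{A}[2]$ --- in the example the vertices $\{\mathfrak{p}_0,\mathfrak{p}_{12},\mathfrak{p}_{13},\mathfrak{p}_{23}\}$ are one of the groups of Lemma~\ref{lem:Rosenhain} --- and this assignment is a bijection onto the $80$ such translates, which fixes the count.

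The main obstacle is exhaustiveness together with non-degeneracy. The symmetry reduction keeps the per-orbit check short, but one must confirm that the five families are pairwise disjoint (clear from their odd/even signatures and index patterns), that they exhaust all $80$ Rosenhain tetrahedra (secured either by the duality count against Lemma~\ref{lem:Rosenhain} or by a complete finite enumeration), and that in each family the four tropes are genuinely linearly independent, so that the four vertices are distinct points and not a concurrent or otherwise degenerate configuration. This last point can be verified generically from the explicit relations of Lemma~\ref{lem:LinearRelation} by checking that one $4\times 4$ determinant in $\lambda_1,\lambda_2,\lambda_3$ per family is not identically zero.
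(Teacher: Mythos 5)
Your proposal is correct and is at bottom the same argument the paper gives, namely a direct verification against the node--trope incidence data of Table~\ref{tab:tropes} that the listed families are exactly the four-trope sets all of whose vertices are nodes; your reduction to ``every triple of the four tropes shares a common node,'' the $S_5$-orbit organization, and the non-degeneracy remark (any four odd tropes concur at $p_0$) are sound refinements of that check, and your counts $10+10+10+30+20=80$ with signatures $20$ odd-heavy and $60$ even-heavy match the statement. Your closing cross-check via the $80$ translates of Rosenhain groups is exactly the content of the paper's subsequent lemma identifying Rosenhain tetrahedra with Rosenhain groups under the isomorphism of Table~\ref{tab:16_6_configuration}, so it is consistent but not an independent confirmation within the paper's logical order.
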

\begin{proof}
One checks that the given list contains all sets of four tropes $\{\mathsf{T}_a, \mathsf{T}_b, \mathsf{T}_c, \mathsf{T}_d\}$ whose vertices are nodes.
\end{proof}
One easily checks the following:
\begin{lemma}
Under the isomorphism in Table~\ref{tab:16_6_configuration}, the G\"opel and Rosenhain tetrahedra are bijectively mapped to the G\"opel groups and their translates in Lemma~\ref{lem:Goepel} and the Rosenhain groups and their translates in Lemma~\ref{lem:Rosenhain}, respectively.
\end{lemma}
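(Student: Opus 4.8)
The plan is to transport the whole statement through the dictionary of Table~\ref{tab:16_6_configuration}. That table pairs, in each row, a two-torsion point $\mathfrak{p}_\bullet\in\mathbf{A}[2]$ with the trope $\mathsf{T}_\bullet=\mathsf{\Theta}_\bullet$ carrying the same index matrix as a singular plane; one checks (as in the verification preceding the table) that the singular plane indexed by a matrix $A$ is precisely the trope sitting in the row of $A$, so the table defines a bijection $\mathsf{T}_\bullet\mapsto\mathfrak{p}_\bullet$ between the sixteen tropes and the sixteen points of $\mathbf{A}[2]$. Under this bijection a set of four tropes becomes a set of four points of $\mathbf{A}[2]$, and since the dictionary is a bijection on the sixteen objects, distinct four-sets of tropes map to distinct four-sets of points. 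Thus the assertion reduces to the purely combinatorial claim that the four points attached to each of the $60$ G\"opel tetrahedra lie among the $60$ G\"opel cosets of Lemma~\ref{lem:Goepel}, and those attached to each of the $80$ Rosenhain tetrahedra lie among the $80$ Rosenhain cosets of Lemma~\ref{lem:Rosenhain}.

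The organizing observation is that the G\"opel and Rosenhain cosets together exhaust the $35\cdot 4=140$ affine planes of $\mathbf{A}[2]\cong\mathbb{F}_2^4$: the fifteen G\"opel groups are exactly the isotropic two-dimensional subgroups and the twenty Rosenhain groups the non-isotropic ones, because a subgroup $\{\mathfrak{p}_0,\mathfrak{p},\mathfrak{p}',\mathfrak{p}+\mathfrak{p}'\}$ is isotropic precisely when $\langle\mathfrak{p},\mathfrak{p}'\rangle=0$, and $15+20=35$ accounts for all two-subspaces. A quadruple of distinct points is such a coset if and only if it sums to $\mathfrak{p}_0$, so the test for membership is simply the zero-sum condition, evaluated with the group law~(\ref{group_law}), while the type is decided by a single Weil pairing. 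I would then run this test over the explicit lists of the two preceding lemmas. For example, the G\"opel family $\{\mathsf{T}_i,\mathsf{T}_j,\mathsf{T}_{ik6},\mathsf{T}_{jk6}\}$ at $i=1,j=2,k=3$ maps to $\{\mathfrak{p}_{45},\mathfrak{p}_{36},\mathfrak{p}_{12},\mathfrak{p}_0\}$, which sums to $\mathfrak{p}_0$ and is the isotropic group $\{\mathfrak{p}_0,\mathfrak{p}_{12},\mathfrak{p}_{36},\mathfrak{p}_{45}\}$; the Rosenhain family $\mathsf{R}^{(1)}=\{\mathsf{T}_i,\mathsf{T}_j,\mathsf{T}_k,\mathsf{T}_{lm6}\}$ at the same indices maps to $\{\mathfrak{p}_{45},\mathfrak{p}_{36},\mathfrak{p}_{26},\mathfrak{p}_{16}\}$, again summing to $\mathfrak{p}_0$ but with $\langle\mathfrak{p}_{16},\mathfrak{p}_{26}\rangle=1$, hence a translate of the Rosenhain group $\{\mathfrak{p}_0,\mathfrak{p}_{12},\mathfrak{p}_{13},\mathfrak{p}_{23}\}$. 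Once every member of each family is confirmed to land in a coset of the correct isotropy type, injectivity of the dictionary together with the matching cardinalities $60=15\cdot 4$ and $80=20\cdot 4$ forces the map to be a bijection onto the G\"opel cosets and the Rosenhain cosets, respectively.

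The only real difficulty is bookkeeping rather than mathematics: because the dictionary $\mathsf{T}_\bullet\mapsto\mathfrak{p}_\bullet$ of Table~\ref{tab:16_6_configuration} does not follow a simple closed formula in the indices, one cannot collapse a family to a single symbolic computation and must instead apply the table member by member, taking care to track \emph{which} of the four translates of a given group is produced so that each coset is hit exactly once as the free indices vary. This is a finite check over the $140$ tetrahedra listed in the two lemmas and is naturally carried out with a computer algebra system, as elsewhere in the paper. No additional geometric input is required: the defining property of a G\"opel tetrahedron (its vertices avoid the nodes) versus a Rosenhain tetrahedron (its vertices are nodes) is already encoded in the incidences of Tables~\ref{tab:tropes} and~\ref{tab:nodes}, which were used to compile the two lemmas, so that the verification above of the zero-sum and isotropy conditions simultaneously confirms the asserted correspondence with the G\"opel and Rosenhain cosets.
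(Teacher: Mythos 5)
Your reduction is sound and is, in substance, the direct verification that the paper leaves to the reader: Table~\ref{tab:16_6_configuration} gives a bijection between the sixteen tropes and the sixteen points of $\mathbf{A}[2]$, the zero-sum criterion characterizes quadruples that are cosets of two-dimensional subgroups, and your observation that the fifteen G\"opel groups are exactly the isotropic planes and the twenty Rosenhain groups exactly the non-isotropic ones (together exhausting the $35$ planes of $\mathbb{F}_2^4$) turns the member-by-member check into a systematic finite computation; injectivity of the dictionary together with the counts $60=15\cdot 4$ and $80=20\cdot 4$ then forces bijectivity. This adds useful structure to what the paper simply asserts as an easy check.

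One caution about the isotropy test as you illustrate it: the type of a coset $\{a,b,c,d\}$ is the type of its difference subgroup $\{\mathfrak{p}_0,a+b,a+c,a+d\}$, so the Weil pairing must be evaluated on differences of coset elements, not on the elements themselves. Since $\langle a+x,b+x\rangle=\langle a,b\rangle+\langle x,a+b\rangle$, pairing two coset elements can misclassify a proper translate. In your Rosenhain example $\langle\mathfrak{p}_{16},\mathfrak{p}_{26}\rangle=1$ happens to agree with the correct value $\langle\mathfrak{p}_{12},\mathfrak{p}_{13}\rangle=1$, but for the G\"opel tetrahedron $\{\mathsf{T}_3,\mathsf{T}_6,\mathsf{T}_{126},\mathsf{T}_{456}\}$ of your type (2), whose image is $\{\mathfrak{p}_{26},\mathfrak{p}_{23},\mathfrak{p}_{13},\mathfrak{p}_{16}\}=\mathfrak{p}_{13}+\{\mathfrak{p}_0,\mathfrak{p}_{12},\mathfrak{p}_{36},\mathfrak{p}_{45}\}$, one has $\langle\mathfrak{p}_{13},\mathfrak{p}_{16}\rangle=1$ even though the underlying group is isotropic; the correct test $\langle\mathfrak{p}_{13}+\mathfrak{p}_{23},\,\mathfrak{p}_{13}+\mathfrak{p}_{16}\rangle=\langle\mathfrak{p}_{12},\mathfrak{p}_{36}\rangle=0$ classifies it correctly. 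With the pairing applied to differences throughout, your procedure is complete and proves the lemma.
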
 
\qed
\par By explicit computation one also checks:
\begin{lemma}
\label{lem:quadratics}
There are 30 quadratic relations involving eight tropes. The relations can be written in the form
\begin{equation}
\label{QuadraticEquation}
 \mu\nu\rho \mathsf{T}_a \mathsf{T}_{a'} + \gamma\delta\mu  \mathsf{T}_b \mathsf{T}_{b'} + \beta\delta \nu \mathsf{T}_c \mathsf{T}_{c'} + \beta\gamma\rho \mathsf{T}_d \mathsf{T}_{d'} =0 ,
\end{equation}
where $\beta, \gamma, \delta, \mu, \nu, \gamma \in \mathbb{C}[\lambda_1,\lambda_2,\lambda_3]$ with $\mu+\nu+\rho=0$ and $\beta\mu+\gamma\nu+\delta\rho=0$, and $\{\mathsf{T}_a, \mathsf{T}_b, \mathsf{T}_c, \mathsf{T}_d\}$ and $\{\mathsf{T}_{a'}, \mathsf{T}_{b'}, \mathsf{T}_{c'}, \mathsf{T}_{d'}\}$ are two disjoint Rosenhain tetrahedra. In particular, all quadratic relations are obtained for $\lbrace i, j, k, l, m  \rbrace= \lbrace 1, \dots, 5\rbrace$ as follows:
\begin{equation}
\label{tab:RosenhainT}
\scalemath{0.85}{
\begin{array}{r|c|llll|llll}
\#	& \mathsf{R}'s 					& \mathsf{T}_a & \mathsf{T}_b & \mathsf{T}_c &\mathsf{T}_d 
								& \mathsf{T}_{a'} & \mathsf{T}_{b'} & \mathsf{T}_{c'} & \mathsf{T}_{d'} \\
\hline
10 	& \mathsf{R}^{(3)}, \mathsf{R}^{(4)} 	& \mathsf{T}_{6}&\mathsf{T}_i& \mathsf{T}_j& \mathsf{T}_{ij6}
								& \mathsf{T}_{lm6}& \mathsf{T}_{jk6}& \mathsf{T}_{ik6}& \mathsf{T}_{k}\\
	&& \multicolumn{8}{|l}{\mu=\lambda_j -\lambda_k, \nu=\lambda_k-\lambda_i, \rho=\lambda_i -\lambda_j}\\
	&& \multicolumn{8}{|l}{\beta=\gamma=\delta=1}\\
\hline
10 	&\mathsf{R}^{(4)}, \mathsf{R}^{(4)} 	& \mathsf{T}_i& \mathsf{T}_{jm6}& \mathsf{T}_{ik6}& \mathsf{T}_{il6}
								& \mathsf{T}_j& \mathsf{T}_{im6}& \mathsf{T}_{jk6}& \mathsf{T}_{jl6}\\
	&& \multicolumn{8}{|l}{\mu=\lambda_l-\lambda_k, \nu=\lambda_m-\lambda_l, \rho=\lambda_k-\lambda_m}\\						
	&& \multicolumn{8}{|l}{\beta=\gamma=\delta=1}\\
\hline
5 	&\mathsf{R}^{(1)}, \mathsf{R}^{(5)} 	& \mathsf{T}_{lm6}& \mathsf{T}_i& \mathsf{T}_j& \mathsf{T}_k
								& \mathsf{T}_l& \mathsf{T}_{im6}& \mathsf{T}_{jm6}	& \mathsf{T}_{km6}\\
	&& \multicolumn{8}{|l}{\mu=\lambda_j-\lambda_k, \nu=\lambda_k-\lambda_i, \rho=\lambda_i-\lambda_j}\\[0.2em]
	&& \multicolumn{8}{|l}{\beta =\lambda_i-\lambda_l, \gamma = \lambda_j-\lambda_l, \delta = \lambda_k-\lambda_l}\\
\hline
5 	&\mathsf{R}^{(2)}, \mathsf{R}^{(5)} 	& \mathsf{T}_6& \mathsf{T}_{jk6}& \mathsf{T}_{ik6}& \mathsf{T}_{ij6}
								& \mathsf{T}_m& \mathsf{T}_{il6}& \mathsf{T}_{jl6}	& \mathsf{T}_{kl6}\\
	&& \multicolumn{8}{|l}{\mu=(\lambda_j-\lambda_k)(\lambda_i-\lambda_l),\nu=(\lambda_k-\lambda_i)(\lambda_j-\lambda_l),\rho=(\lambda_i-\lambda_j)(\lambda_k-\lambda_l)}\\
	&& \multicolumn{8}{|l}{\beta=\gamma=\delta=1}\\
	\hline
\end{array}}
\end{equation}
\end{lemma}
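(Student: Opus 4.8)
The plan is to produce the relations directly from the explicit linear forms for the tropes in Table~\ref{tab:tropes}, using the fact that a relation of the shape~(\ref{QuadraticEquation}) is precisely a linear dependence among four quadratic forms on $\mathbb{P}^3$.

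First I would record the underlying structure. Each trope $\mathsf{T}_a$ is a linear form in $[z_1:z_2:z_3:z_4]$, so each product $\mathsf{T}_a\mathsf{T}_{a'}$ is a quadratic form, an element of the ten-dimensional space $\mathrm{Sym}^2$ of quadrics. The two scalar conditions on the coefficients reduce to elementary polynomial identities in the $\lambda_i$: the constraint $\mu+\nu+\rho=0$ holds either by telescoping, as in $(\lambda_j-\lambda_k)+(\lambda_k-\lambda_i)+(\lambda_i-\lambda_j)=0$, or by the cyclic identity $(\lambda_j-\lambda_k)(\lambda_i-\lambda_l)+(\lambda_k-\lambda_i)(\lambda_j-\lambda_l)+(\lambda_i-\lambda_j)(\lambda_k-\lambda_l)=0$; and $\beta\mu+\gamma\nu+\delta\rho=0$ is either identical to the first (when $\beta=\gamma=\delta=1$) or reduces to the same cyclic identity (as in the family $(\mathsf{R}^{(1)},\mathsf{R}^{(5)})$, where $\beta=\lambda_i-\lambda_l$, $\gamma=\lambda_j-\lambda_l$, $\delta=\lambda_k-\lambda_l$). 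In every family both displayed constraints then hold as identities in $\mathbb{C}[\lambda_1,\lambda_2,\lambda_3]$.

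Next, for each of the four pairing-types I would pair the tropes of the two disjoint Rosenhain tetrahedra as prescribed, expand the four products $\mathsf{T}_a\mathsf{T}_{a'},\dots,\mathsf{T}_d\mathsf{T}_{d'}$ as explicit quadratic forms in $z_1,z_2,z_3,z_4$ via Table~\ref{tab:tropes}, insert the tabulated coefficients $\mu\nu\rho,\gamma\delta\mu,\beta\delta\nu,\beta\gamma\rho$, and check that the left-hand side of~(\ref{QuadraticEquation}) collapses to the zero quadratic form. To obtain the count $30$, and to confirm that no further relation of this shape exists, I would enumerate the disjoint pairs drawn from the five families $\mathsf{R}^{(1)},\dots,\mathsf{R}^{(5)}$ of the $80$ Rosenhain tetrahedra and, for each admissible pairing together with each matching of their tropes, test whether the $4\times 10$ matrix of quadratic-form coefficients drops rank. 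Only the combinations $(\mathsf{R}^{(3)},\mathsf{R}^{(4)})$, $(\mathsf{R}^{(4)},\mathsf{R}^{(4)})$, $(\mathsf{R}^{(1)},\mathsf{R}^{(5)})$, and $(\mathsf{R}^{(2)},\mathsf{R}^{(5)})$ produce a dependence, yielding $10+10+5+5=30$ relations, each supported on the eight tropes of a pair of disjoint Rosenhain tetrahedra.

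The main obstacle is one of bookkeeping rather than of ideas: the genuine content is the rank computation that singles out which matched pairs of disjoint Rosenhain tetrahedra support a relation, and completeness of the count requires the enumeration over all such pairs and matchings to be exhaustive. Both tasks are finite and mechanical and are most safely carried out in a computer algebra system; reducing the coefficient conditions to the two elementary $\lambda$-identities above is what makes the resulting relations transparent and verifies the constraints $\mu+\nu+\rho=0$ and $\beta\mu+\gamma\nu+\delta\rho=0$ uniformly across the four families.
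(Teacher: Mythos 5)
Your proposal is correct and follows essentially the same route as the paper, whose proof is simply a direct computation using the explicit linear forms for the tropes in Table~\ref{tab:tropes}. Your additional observations — that the coefficient constraints reduce to the telescoping and cyclic identities in the $\lambda_i$, and that completeness of the count of $30$ comes from an exhaustive rank test over matched pairs of disjoint Rosenhain tetrahedra — merely make explicit the bookkeeping the paper leaves implicit.
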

\begin{proof}
The proof follows from direct computation using the explicit equations for the tropes in Table~\ref{tab:tropes}.
\end{proof}
\begin{lemma}
\label{lem:quadraticsEE}
\begin{enumerate}
\item[]
\item Every set of eight tropes given in Lemma~\ref{lem:quadratics} can be decomposed into three different pairs of disjoint G\"opel tetrahedra. 
\item Every G\"opel tetrahedron is contained in three sets of eight tropes. 
\item Every set of eight tropes given in Lemma~\ref{lem:quadratics} can be decomposed into four different pairs of disjoint Rosenhain tetrahedra. Eight nodes arise as the vertices of each such pair of Rosenhain tetrahedra. Moreover, the set of eight nodes is independent of the chosen decomposition.
\item Every Rosenhain tetrahedron is contained in three sets of eight tropes.
\end{enumerate}
\end{lemma}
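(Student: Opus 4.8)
The plan is to translate the entire statement into finite geometry on $\mathbb{F}_2^4$ via the identification of Table~\ref{tab:16_6_configuration} and Remark~\ref{rem:isos}, under which each of the sixteen tropes is indexed by an element of $\mathbb{F}_2^4$, G\"opel tetrahedra become cosets of G\"opel groups, and Rosenhain tetrahedra become cosets of Rosenhain groups. The first step is to check, directly from the explicit trope equations of Table~\ref{tab:tropes} together with the dictionary of Remark~\ref{rem:isos}, that each of the thirty eight-trope sets of Lemma~\ref{lem:quadratics} is a coset of a three-dimensional linear subspace $H\subset\mathbb{F}_2^4$; since $\mathbb{F}_2^4$ has exactly $15$ such subspaces and each has two cosets, the thirty sets of Lemma~\ref{lem:quadratics} are precisely the thirty cosets of the three-dimensional subspaces. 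I would also record the dichotomy $15+20=35$ coming from Lemmas~\ref{lem:Goepel} and~\ref{lem:Rosenhain}: as $\mathbb{F}_2^4$ has exactly $35$ two-dimensional subspaces, the G\"opel groups are precisely the isotropic planes and the Rosenhain groups precisely the non-isotropic planes for the symplectic form.

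With this dictionary in place, parts (1), (2), (4) and the counting in part (3) become linear algebra over $\mathbb{F}_2$. For a three-dimensional subspace $H$ of the non-degenerate symplectic space $\mathbb{F}_2^4$ the restricted form has a one-dimensional radical $R=H^\perp\subset H$, and $H/R$ is a non-degenerate symplectic plane. A two-dimensional subspace of $H$ is isotropic if and only if it contains $R$, so $H$ contains exactly three isotropic planes (one per line of $H/R$) and $7-3=4$ non-isotropic planes. A coset of $H$ is the union of the two cosets of any such plane $G\subset H$, so the number of decompositions of an eight-trope set into disjoint G\"opel (resp.\ Rosenhain) tetrahedra is $3$ (resp.\ $4$), giving part (1) and the count in part (3). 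Dually, the three-dimensional subspaces containing a fixed plane $G$ are in bijection with the lines of $V/G\cong\mathbb{F}_2^2$, of which there are three; for each the relevant coset is forced to be the one containing the given tetrahedron, which yields parts (2) and (4).

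The one point that is not pure subspace-counting is the even eight in part (3), and I would handle it by an intrinsic description of the eight nodes that makes no reference to a decomposition. Using the point--plane incidence of Lemma~\ref{lem:16_6}, I would show that each node lies on either $4$ or $2$ of the eight tropes of such a set, and that the eight nodes lying on $4$ of them are exactly those whose $\mathbb{F}_2^4$-index lies in the same coset that indexes the eight tropes. Call this set of eight nodes $E$; it depends only on the eight-trope set. Now, in any decomposition into two disjoint Rosenhain tetrahedra, each tetrahedron has four vertices, and every vertex lies on three of that tetrahedron's tropes, hence on at least three of the eight tropes. By the incidence count such a node must lie in $E$ and on exactly four of the eight tropes, so it is a vertex of one tetrahedron and meets only a single trope of the other. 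In particular no node is a vertex of both tetrahedra, the eight vertices are distinct, and since $|E|=8$ they exhaust $E$. As $E$ was defined without reference to the decomposition, the even eight is well defined, which proves part (3).

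Finally, I would reduce the incidence count underlying the previous paragraph (the split $4$ versus $2$) to a single representative set of eight tropes, for instance $H=\{a_4=0\}$, and verify it by hand from Tables~\ref{tab:tropes} and~\ref{tab:16_6_configuration}. This is legitimate because, by Lemma~\ref{lem:16_6}, the automorphism group $\mathbb{F}_2^4\rtimes\operatorname{Sp}(4,\mathbb{F}_2)$ of the $16_6$ configuration preserves incidence and hence the G\"opel/Rosenhain and vertex structure, and it acts transitively on the thirty cosets of three-dimensional subspaces: the symplectic group is transitive on three-dimensional subspaces, and the translations interchange the two cosets of each. I expect the even-eight independence to be the only real obstacle: the subspace-counting for (1), (2), (4) is routine, whereas the decomposition-independence in (3) genuinely requires the intrinsic incidence characterization of $E$ rather than a comparison of the four Rosenhain decompositions one by one.
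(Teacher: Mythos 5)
Your argument is correct in substance, but it is a genuinely different route from the paper's: the paper disposes of this lemma with the single line ``the proof follows from direct computation,'' i.e.\ a brute-force check against Tables~\ref{tab:tropes} and~\ref{tab:16_6_configuration}, whereas you derive everything from the symplectic geometry of $\mathbb{F}_2^4$. Your key structural input --- that each of the thirty eight-trope sets is a coset of a three-dimensional subspace $H$, that the restricted form on $H$ has a one-dimensional radical $R=H^\perp\subset H$, and that the planes in $H$ split as the $3$ isotropic ones (those containing $R$) plus the $4$ non-isotropic ones --- is exactly what explains the otherwise unmotivated counts $3$ and $4$ in parts (1)--(4), and your intrinsic characterization of the even eight via the $4$-versus-$2$ incidence count is a clean way to get the decomposition-independence in (3), which a case-by-case comparison of the four Rosenhain decompositions would obscure. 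The trade-off is that you still need one finite verification (that the thirty sets are cosets, plus the incidence count for one representative, propagated by the transitive action of $\mathbb{F}_2^4\rtimes\operatorname{Sp}(4,\mathbb{F}_2)$), so the computation is not eliminated, only organized; what you buy is that the statement becomes visibly forced rather than accidental.

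One small correction to the incidence step: with the paper's normalization (Remark~\ref{rem:isos} indexes tropes by the Theta characteristic shifted by $\left(\begin{smallmatrix}1&1\\1&1\end{smallmatrix}\right)$), the eight nodes lying on four of the eight tropes form the coset of $H$ \emph{complementary} to the one indexing the tropes, not ``the same coset.'' For instance, for $\Delta_{16}$ the trope indices form the subspace $H$ itself (they contain $\left(\begin{smallmatrix}0&0\\0&0\end{smallmatrix}\right)$, the index of $\mathsf{T}_{236}$), while $\mathfrak{p}_0$, whose index is $\left(\begin{smallmatrix}0&0\\0&0\end{smallmatrix}\right)\in H$, lies on only two of the eight tropes ($\mathsf{T}_1$ and $\mathsf{T}_6$); the even eight $\{p_{12},\dots,p_{56}\}$ occupies the other coset. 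This does not affect your argument, since all you use is that the set $E$ of nodes meeting four of the eight tropes is an intrinsically defined set of cardinality eight, and a coset of either kind has eight elements.
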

\begin{proof}
The proof follows from direct computation.
\end{proof}
On a singular Kummer variety, sets of eight distinct nodes are called \emph{even eights}.  After minimal resolution, an even eight is a set of eight disjoint smooth rational curves on the smooth Kummer surface whose divisors add up to an even element in the N\'eron-Severi group. Nikulin proved \cite{MR0429917} that on a Kummer surface there are 30 even eights. If we fix a node, say $p_0$, the 15 even eights not containing $p_0$ are enumerated by nodes $p_{ij}$ as follows
\[
 \Delta_{ij} = \{ p_{1i}, \dots , \widehat{p_{ij}} , \dots , p_{i6} , p_{1j} , \dots , \widehat{p_{ij}} , \dots , p_{j6} \} \;,
 \]
where $p_{11}=0$ and a hat indicates a node that is not part of the even eight; see Mehran \cite{MR2804549}. On the other hand, sets of eight distinct nodes arose  in Lemma~\ref{lem:quadratics}.(3) as the vertices of the two Rosenhain tetrahedra forming each set of eight tropes in Lemma~\ref{lem:quadratics}. We have the following:
\begin{proposition}
\label{prop:EvenEights}
The 30 sets of eight tropes satisfying a quadratic relation in Lemma~\ref{lem:quadratics} are in one-to-one correspondence with the 30 even eights where we denote an even eight not containing $p_0$ by $\Delta_{ij}$ and its complement by $\Delta^{\complement}_{ij}$:
\begin{equation*}
\scalemath{0.85}{
\begin{array}{c|l||c|l}
\Delta_{12} 	& \{ \mathsf{T}_1, \mathsf{T}_{136}, \mathsf{T}_{146}, \mathsf{T}_{156}, \mathsf{T}_{2}, \mathsf{T}_{236}, \mathsf{T}_{246}, \mathsf{T}_{256} \}
& 	\Delta^{\complement}_{12} & \{\mathsf{T}_{126}, \mathsf{T}_3, \mathsf{T}_{346}, \mathsf{T}_{356}, \mathsf{T}_{4}, \mathsf{T}_{456}, \mathsf{T}_{5}, \mathsf{T}_{6}\}\\[0.2em]
\Delta_{13} 	& \{ \mathsf{T}_1, \mathsf{T}_{126}, \mathsf{T}_{146}, \mathsf{T}_{156}, \mathsf{T}_{236}, \mathsf{T}_3, \mathsf{T}_{346}, \mathsf{T}_{356} \}
& 	\Delta^{\complement}_{13} & \{\mathsf{T}_{136}, \mathsf{T}_2, \mathsf{T}_{246}, \mathsf{T}_{256}, \mathsf{T}_{4}, \mathsf{T}_{456}, \mathsf{T}_{5}, \mathsf{T}_{6}\}\\[0.2em]
\Delta_{14} 	&\{ \mathsf{T}_1, \mathsf{T}_{126}, \mathsf{T}_{136}, \mathsf{T}_{156}, \mathsf{T}_{246}, \mathsf{T}_{346}, \mathsf{T}_{4}, \mathsf{T}_{456}\}
& 	\Delta^{\complement}_{14} &  \{ \mathsf{T}_{146}, \mathsf{T}_{2}, \mathsf{T}_{236}, \mathsf{T}_{256}, \mathsf{T}_{3}, \mathsf{T}_{356}, \mathsf{T}_{5}, \mathsf{T}_6\}
\\[0.2em]
\Delta_{15} 	& \{ \mathsf{T}_1, \mathsf{T}_{126}, \mathsf{T}_{136}, \mathsf{T}_{146}, \mathsf{T}_{256}, \mathsf{T}_{356}, \mathsf{T}_{456}, \mathsf{T}_5\}
& 	  \Delta^{\complement}_{15} & \{ \mathsf{T}_{156}, \mathsf{T}_{2}, \mathsf{T}_{236}, \mathsf{T}_{246}, \mathsf{T}_{3}, \mathsf{T}_{346}, \mathsf{T}_{4}, \mathsf{T}_6\}
\\[0.2em]
\Delta_{16} 	& \{ \mathsf{T}_1, \mathsf{T}_{236}, \mathsf{T}_{246}, \mathsf{T}_{256}, \mathsf{T}_{346}, \mathsf{T}_{356}, \mathsf{T}_{456}, \mathsf{T}_{6} \}
& 	\Delta^{\complement}_{16} & \{\mathsf{T}_{126}, \mathsf{T}_{136}, \mathsf{T}_{146}, \mathsf{T}_{156}, \mathsf{T}_{2}, \mathsf{T}_{3}, \mathsf{T}_{4}, \mathsf{T}_{5}\}\\[0.2em]
\Delta_{23} 	& \{ \mathsf{T}_{126}, \mathsf{T}_{136}, \mathsf{T}_{2}, \mathsf{T}_{246}, \mathsf{T}_{256}, \mathsf{T}_{3}, \mathsf{T}_{346}, \mathsf{T}_{356} \}
& 	\Delta^{\complement}_{23} & \{\mathsf{T}_{1}, \mathsf{T}_{146}, \mathsf{T}_{156}, \mathsf{T}_{236}, \mathsf{T}_{4}, \mathsf{T}_{456}, \mathsf{T}_{5}, \mathsf{T}_{6}\}
\\[0.2em]
\Delta_{24} 	& \{ \mathsf{T}_{126}, \mathsf{T}_{146}, \mathsf{T}_{2}, \mathsf{T}_{236}, \mathsf{T}_{256}, \mathsf{T}_{346}, \mathsf{T}_{4}, \mathsf{T}_{456} \}
& 	\Delta^{\complement}_{24} & \{\mathsf{T}_{1}, \mathsf{T}_{136}, \mathsf{T}_{156}, \mathsf{T}_{246}, \mathsf{T}_{3}, \mathsf{T}_{356}, \mathsf{T}_{5}, \mathsf{T}_{6}\}\\[0.2em]
\Delta_{25} 	& \{ \mathsf{T}_{126}, \mathsf{T}_{156}, \mathsf{T}_{2}, \mathsf{T}_{236}, \mathsf{T}_{246}, \mathsf{T}_{356}, \mathsf{T}_{456}, \mathsf{T}_{5} \} 
& 	\Delta^{\complement}_{25} &\{\mathsf{T}_{1}, \mathsf{T}_{136}, \mathsf{T}_{146}, \mathsf{T}_{256}, \mathsf{T}_{3}, \mathsf{T}_{346}, \mathsf{T}_{4}, \mathsf{T}_{6}\}
\\[0.2em]
\Delta_{26} 	& \{ \mathsf{T}_{136}, \mathsf{T}_{146}, \mathsf{T}_{156}, \mathsf{T}_{2}, \mathsf{T}_{346}, \mathsf{T}_{356}, \mathsf{T}_{456}, \mathsf{T}_{6} \} 
& 	\Delta^{\complement}_{26} &\{\mathsf{T}_{1}, \mathsf{T}_{126}, \mathsf{T}_{236}, \mathsf{T}_{246}, \mathsf{T}_{256}, \mathsf{T}_{3}, \mathsf{T}_{4}, \mathsf{T}_{5}\}
\\[0.2em]
\Delta_{34} 	& \{ \mathsf{T}_{136}, \mathsf{T}_{146}, \mathsf{T}_{236}, \mathsf{T}_{246}, \mathsf{T}_{3}, \mathsf{T}_{356}, \mathsf{T}_{4}, \mathsf{T}_{456} \}
& 	\Delta^{\complement}_{34} & \{\mathsf{T}_{1}, \mathsf{T}_{126}, \mathsf{T}_{156}, \mathsf{T}_{2}, \mathsf{T}_{256}, \mathsf{T}_{346}, \mathsf{T}_{5}, \mathsf{T}_{6}\}\\[0.2em]
\Delta_{35} 	& \{ \mathsf{T}_{136}, \mathsf{T}_{156}, \mathsf{T}_{236}, \mathsf{T}_{256}, \mathsf{T}_{3}, \mathsf{T}_{346}, \mathsf{T}_{456}, \mathsf{T}_{5} \}
& 	\Delta^{\complement}_{35} & \{\mathsf{T}_{1}, \mathsf{T}_{126}, \mathsf{T}_{146}, \mathsf{T}_{2}, \mathsf{T}_{246}, \mathsf{T}_{356}, \mathsf{T}_{4}, \mathsf{T}_{6}\}\\[0.2em]
\Delta_{36} 	& \{ \mathsf{T}_{126}, \mathsf{T}_{146}, \mathsf{T}_{156}, \mathsf{T}_{246}, \mathsf{T}_{256}, \mathsf{T}_{3}, \mathsf{T}_{456}, \mathsf{T}_{6} \}
& 	\Delta^{\complement}_{36} & \{\mathsf{T}_{1}, \mathsf{T}_{136}, \mathsf{T}_{2}, \mathsf{T}_{236}, \mathsf{T}_{346}, \mathsf{T}_{356}, \mathsf{T}_{4}, \mathsf{T}_{5}\}
\\[0.2em]
\Delta_{45} 	&\{ \mathsf{T}_{146}, \mathsf{T}_{156}, \mathsf{T}_{246}, \mathsf{T}_{256}, \mathsf{T}_{346}, \mathsf{T}_{356}, \mathsf{T}_{4}, \mathsf{T}_{5} \} 
& 	\Delta^{\complement}_{55} & \{\mathsf{T}_{1}, \mathsf{T}_{126}, \mathsf{T}_{136}, \mathsf{T}_{2}, \mathsf{T}_{236}, \mathsf{T}_{3}, \mathsf{T}_{456}, \mathsf{T}_{6}\}
\\[0.2em]
\Delta_{46} 	& \{ \mathsf{T}_{126}, \mathsf{T}_{136}, \mathsf{T}_{156}, \mathsf{T}_{236}, \mathsf{T}_{256}, \mathsf{T}_{356}, \mathsf{T}_{4}, \mathsf{T}_{6} \}
& 	\Delta^{\complement}_{46} & \{\mathsf{T}_{1}, \mathsf{T}_{146}, \mathsf{T}_{2}, \mathsf{T}_{246}, \mathsf{T}_{3}, \mathsf{T}_{346}, \mathsf{T}_{456}, \mathsf{T}_{5}\}\\[0.2em]
\Delta_{56} 	& \{ \mathsf{T}_{126}, \mathsf{T}_{136}, \mathsf{T}_{146}, \mathsf{T}_{236}, \mathsf{T}_{246}, \mathsf{T}_{346}, \mathsf{T}_{5}, \mathsf{T}_{6} \}
& 	\Delta^{\complement}_{56} & \{\mathsf{T}_{1}, \mathsf{T}_{156}, \mathsf{T}_{2}, \mathsf{T}_{256}, \mathsf{T}_{3}, \mathsf{T}_{356}, \mathsf{T}_{4}, \mathsf{T}_{456}\}\\[0.2em]
\end{array}}
\end{equation*}
In particular, the sets of eight tropes corresponding to even eights $\Delta_{ij}$ contain 6 even tropes $\mathsf{T}_{kl6}$ and 2 odd tropes $\mathsf{T}_{m}$ (second and fourth case in Equation~(\ref{tab:RosenhainT})).
\end{proposition}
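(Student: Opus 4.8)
The plan is to realize the asserted correspondence as the explicit map that sends a set of eight tropes to the eight nodes arising as the common vertices of its Rosenhain--tetrahedron decompositions, and then to identify these node-sets with the even eights in Mehran's list. The advantage of this route is that both sides have already been reduced to explicit combinatorial data: the trope/node incidences of Table~\ref{tab:tropes} on one side, and the sets $\Delta_{ij}, \Delta^{\complement}_{ij}$ on the other. The proof is therefore a matching of two explicit enumerations of thirty objects, and no new geometry need be introduced.

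First I would invoke Lemma~\ref{lem:quadraticsEE}(3): each of the thirty sets of eight tropes of Lemma~\ref{lem:quadratics} splits into four distinct pairs of disjoint Rosenhain tetrahedra, and the eight vertices of any such pair form the \emph{same} eight-node set. This decomposition-independence is exactly what makes the assignment from a set of eight tropes to its eight vertex-nodes well defined, and this is the candidate map. To compute these vertices concretely I would use the rule read off from Table~\ref{tab:tropes}: the four vertices of a Rosenhain tetrahedron $\{\mathsf{T}_a,\mathsf{T}_b,\mathsf{T}_c,\mathsf{T}_d\}$ are precisely the four nodes lying on exactly three of the four faces, since any two tropes meet in two nodes and the third face selects one of them. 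Taking the union of the vertices of the two tetrahedra in a decomposition then produces the eight-node set attached to the given set of tropes.

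Next I would exploit symmetry to reduce the thirty verifications to two. The group $S_6\cong\operatorname{Sp}(4,\mathbb{F}_2)$ fixing $\mathfrak{p}_0$ acts as automorphisms of the $16_6$ configuration, hence compatibly on tropes and on nodes, and it acts transitively on unordered pairs $\{i,j\}\subset\{1,\dots,6\}$; it therefore has exactly two orbits on the thirty sets of eight tropes, namely the fifteen giving $\Delta_{ij}$ and the fifteen giving $\Delta^{\complement}_{ij}$. Both the vertex map and Mehran's index-labelling are equivariant under this action, so it suffices to check one representative in each orbit, say the set $\{\mathsf{T}_1,\mathsf{T}_2,\mathsf{T}_{136},\mathsf{T}_{146},\mathsf{T}_{156},\mathsf{T}_{236},\mathsf{T}_{246},\mathsf{T}_{256}\}$ and its complementary set $\{\mathsf{T}_3,\mathsf{T}_4,\mathsf{T}_5,\mathsf{T}_6,\mathsf{T}_{126},\mathsf{T}_{346},\mathsf{T}_{356},\mathsf{T}_{456}\}$. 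For the first (two odd tropes $\mathsf{T}_m$ and six even tropes $\mathsf{T}_{kl6}$, i.e.\ the type~2 and type~4 relations of Equation~(\ref{tab:RosenhainT})) the vertex computation returns a set $\Delta_{ij}$ avoiding $\mathfrak{p}_0$; for the second (four odd and four even tropes, the type~1 and type~3 relations) it returns a complement $\Delta^{\complement}_{ij}$ containing $\mathfrak{p}_0$.

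Finally, Mehran's enumeration exhausts the thirty even eights as the fifteen sets $\Delta_{ij}$ together with their fifteen complements $\Delta^{\complement}_{ij}$. Since the vertex map returns precisely these thirty node-sets, one for each set of eight tropes, it is a bijection onto the even eights; injectivity and surjectivity are immediate from the explicit matching, and reading off the tropes row by row yields both the tabulated trope inventory and the concluding odd/even count. I expect the main difficulty to be organizational rather than conceptual: keeping the $S_6$-labelling consistent across the two orbit representatives so that each set of tropes is paired with the correctly indexed $\Delta_{ij}$, and confirming once concretely in each representative that all four Rosenhain decompositions return the same eight vertices (guaranteed abstractly by Lemma~\ref{lem:quadraticsEE}(3), but worth exhibiting). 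One pleasant simplification is that no independent divisibility computation in $\operatorname{NS}\bigl(\operatorname{Kum}(\mathbf{A})\bigr)$ is needed to certify that the node-sets are genuine even eights: this follows for free from matching them against Mehran's list, which already consists of even eights.
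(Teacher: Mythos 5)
Your proposal matches the paper's own argument: both define the correspondence by sending each set of eight tropes to the common eight-node vertex set of its Rosenhain-tetrahedron decompositions (well defined by Lemma~\ref{lem:quadraticsEE}(3)) and then identify the resulting node-sets with Mehran's even eights $\Delta_{ij}$ and their complements by explicit computation against Table~\ref{tab:tropes}. Your $S_6$-equivariance reduction to two orbit representatives is a tidier way to organize the verification than the paper's case-by-case check (which instead invokes the Theta-characteristic duality of Remark~\ref{rem:isos} and complementation for the remaining cases), but it is the same proof in substance.
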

\begin{proof}
Sets of eight distinct nodes arise  in Lemma~\ref{lem:quadratics}.(3) as the vertices of the two Rosenhain tetrahedra forming each set of eight tropes. Thus, we have two ways of constructing even eights. One checks that this defines a one-to-one correspondence between even eights and sets of eight tropes satisfying a quadratic relation. This one-to-one correspondence maps the even eights $\Delta_{14}, \Delta_{15}, \Delta_{23}, \Delta_{26}, \Delta_{36}, \Delta_{45}$ precisely to the sets of eight tropes obtained using the duality map between Theta divisors and Theta characteristics in Remark~\ref{rem:isos}. For the remaining even eights one has to take the complement. The proof then follows from a direct computation and Lemmas~\ref{lem:quadratics} and~\ref{lem:quadraticsEE}
\end{proof}
One easily checks the following:
\begin{lemma}
\label{lem:quadraticsEEb}
\begin{enumerate}
\item[]
\item Every G\"opel tetrahedron with all even tropes is contained in three sets of eight tropes corresponding to an even eight $\Delta_{ij}$.
\item Every Rosenhain tetrahedron with one odd and three even tropes is contained in two sets of eight tropes corresponding to an even eight $\Delta_{ij}$.
\end{enumerate}
\end{lemma}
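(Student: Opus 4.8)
The plan is to reduce both statements to a parity bookkeeping on the decompositions already recorded in Lemma~\ref{lem:quadraticsEE}, rather than to scan the table of Proposition~\ref{prop:EvenEights} by hand. The starting observations are that by Lemma~\ref{lem:quadraticsEE}.(2) and~(4) every G\"opel tetrahedron and every Rosenhain tetrahedron already lies in exactly three of the thirty sets of eight tropes, and that by Proposition~\ref{prop:EvenEights} a set of the form $\Delta_{ij}$ carries two odd and six even tropes, so that its complement $\Delta^{\complement}_{ij}$ (the complementary eight tropes among the sixteen, which consist of six odd and ten even) carries four odd and four even tropes. Thus the only thing left to decide is, among the three eight-trope sets containing a given tetrahedron, how many are of type $\Delta_{ij}$ and how many of type $\Delta^{\complement}_{ij}$.

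The key device is the list of admissible odd-trope counts. From the classification of G\"opel tetrahedra, the number of odd tropes in a G\"opel tetrahedron is either $0$ (the all-even ones) or $2$; from the classification of Rosenhain tetrahedra into $\mathsf{R}^{(1)},\dots,\mathsf{R}^{(5)}$, the number of odd tropes is either $1$ (types $\mathsf{R}^{(2)},\mathsf{R}^{(4)},\mathsf{R}^{(5)}$) or $3$ (types $\mathsf{R}^{(1)},\mathsf{R}^{(3)}$). For part~(1) I would argue that a complement set $\Delta^{\complement}_{ij}$ cannot contain an all-even G\"opel tetrahedron: in its decomposition into three disjoint pairs of G\"opel tetrahedra (Lemma~\ref{lem:quadraticsEE}.(1)), each pair must partition the four odd tropes, and since G\"opel tetrahedra have odd-count in $\{0,2\}$ the only admissible split is $2+2$, leaving no $0$-odd piece. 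Hence all three eight-trope sets containing an all-even G\"opel tetrahedron are of type $\Delta_{ij}$, which is exactly the assertion of~(1).

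For part~(2) the same parity analysis of the four-pair Rosenhain decompositions (Lemma~\ref{lem:quadraticsEE}.(3)) shows that in a $\Delta_{ij}$ the only admissible split of the two odd tropes is $1+1$, so its eight Rosenhain tetrahedra are all of the one-odd type, whereas in a $\Delta^{\complement}_{ij}$ the four odd tropes can only split as $1+3$, so it contains exactly four one-odd Rosenhain tetrahedra. Counting incidences then gives $15\cdot 8=120$ pairings of one-odd Rosenhain tetrahedra with sets of type $\Delta_{ij}$, against the $60$ such tetrahedra, for an average of two apiece. To upgrade this average to the uniform value claimed, I would observe that each of the three families $\mathsf{R}^{(2)},\mathsf{R}^{(4)},\mathsf{R}^{(5)}$ is a single orbit (of sizes $10,30,20$) under the $S_5$ permuting the Weierstrass indices $\{1,\dots,5\}$, a symmetry that fixes $\mathfrak{p}_0$ and therefore preserves the $\Delta_{ij}$ versus $\Delta^{\complement}_{ij}$ dichotomy and acts on the set of $\Delta_{ij}$'s. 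Consequently the number of containing $\Delta_{ij}$'s is constant on each family, and a single explicit check of one representative of each of the three families against the table of Proposition~\ref{prop:EvenEights} fixes that constant at $2$.

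The step I expect to be the main obstacle is precisely this passage from the averaged count to a uniform one in part~(2): the global double count $15\cdot 8 = 120 = 60\cdot 2$ is equally compatible with the three families having incidence numbers other than $(2,2,2)$, so one genuinely needs the orbit argument together with one verification per family. Everything else, namely the odd/even trope census of $\Delta_{ij}$ and $\Delta^{\complement}_{ij}$ and the parity constraints on the pairings, is forced and requires no computation beyond reading off Lemma~\ref{lem:quadraticsEE} and Proposition~\ref{prop:EvenEights}. As a cross-check, and as the route implicit in the phrase ``one easily checks,'' one may instead directly scan the thirty eight-trope sets of Proposition~\ref{prop:EvenEights}, which yields three sets $\Delta_{ij}$ for each all-even G\"opel tetrahedron and two for each one-odd Rosenhain tetrahedron.
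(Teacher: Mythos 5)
Your proof is correct, and it is genuinely different in character from the paper's, which disposes of this lemma with the single line ``the proof follows from direct computation'' (i.e., a scan of the table in Proposition~\ref{prop:EvenEights}). Your parity bookkeeping is sound: the $\Delta_{ij}$-type sets carry $2$ odd tropes and their complements carry $4$; Göpel tetrahedra have odd-trope count in $\{0,2\}$ and Rosenhain tetrahedra in $\{1,3\}$; and since each disjoint pair in the decompositions of Lemma~\ref{lem:quadraticsEE} partitions the eight tropes, the only admissible splits are $2+2$ in a complement set (so no all-even Göpel tetrahedron occurs there, giving part~(1) immediately) and $1+1$ versus $1+3$ for Rosenhain tetrahedra in a $\Delta_{ij}$-set versus its complement (giving $8$ and $4$ one-odd tetrahedra respectively, hence the incidence total $15\cdot 8=120$). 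Two small points are worth making explicit. First, you implicitly use that every tetrahedron contained in an eight-trope set actually occurs as one half of a disjoint-pair decomposition; this follows from the counting consistency $60\cdot 3=30\cdot 6$ and $80\cdot 3=30\cdot 8$ built into Lemma~\ref{lem:quadraticsEE} and Corollary~\ref{cor:Goepel}, but it deserves a sentence. Second, your $S_5$-orbit argument for part~(2) is valid because the $15$ $\Delta_{ij}$-type sets are characterized intrinsically as those with exactly two odd tropes, a property preserved by the index permutations; and in fact the final step can be shortened further, since once the representative check gives the value $2$ for the family $\mathsf{R}^{(2)}$ of size $10$, the constraint $10a+30b+20c=120$ with $a=2$ and $b,c\in\{0,1,2,3\}$ already forces $b=c=2$, so a single representative suffices. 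What your approach buys is a proof that is self-verifying and explains \emph{why} the counts are $3$ and $2$; what the paper's approach buys is brevity and no reliance on the auxiliary classification of tetrahedra by odd-trope count.
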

\begin{proof}
The proof follows from direct computation.
\end{proof}

\begin{lemma}
The 15 sets of eight tropes corresponding to even eights $\Delta_{ij}$
\end{lemma}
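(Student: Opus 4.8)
The plan is to prove the claim by the same mechanism used throughout this section, namely by combining the explicit incidence data already assembled with a finite symbolic verification, exactly in the spirit of the proofs of Lemmas~\ref{lem:quadratics},~\ref{lem:quadraticsEE}, and~\ref{lem:quadraticsEEb}. First I would fix, once and for all, the explicit list of the fifteen sets of eight tropes attached to the even eights $\Delta_{ij}$. By Proposition~\ref{prop:EvenEights} these are precisely the sets appearing in the second and fourth case of Equation~(\ref{tab:RosenhainT}), i.e.\ the $\mathsf{R}^{(4)},\mathsf{R}^{(4)}$ and $\mathsf{R}^{(2)},\mathsf{R}^{(5)}$ cases, each consisting of six even tropes $\mathsf{T}_{kl6}$ and two odd tropes $\mathsf{T}_m$. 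Using Table~\ref{tab:tropes} these become completely explicit linear forms in $[z_1:z_2:z_3:z_4]$ with coefficients in $\mathbb{Z}[\lambda_1,\lambda_2,\lambda_3]$, so that every incidence or relation assertion reduces to a polynomial identity that can be verified directly.

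Next I would pass to the finite geometry of $\mathbb{F}_2^4$ through the isomorphism of Table~\ref{tab:16_6_configuration}. Under this identification each even eight $\Delta_{ij}$ becomes a set of eight points of $\mathbf{A}[2]$ and each set of eight tropes a set of eight singular planes, so the claim reduces to a statement about the $16_6$ configuration and its symplectic form, which can be settled by enumerating the Göpel and Rosenhain subgroups of Lemmas~\ref{lem:Goepel} and~\ref{lem:Rosenhain}. The key intermediate input is Lemma~\ref{lem:quadraticsEEb}: since every all-even Göpel tetrahedron lies in exactly three of these fifteen sets and every Rosenhain tetrahedron with one odd and three even tropes lies in exactly two, a double-count pins down all incidence numbers and forces the asserted structure. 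From this I would read off the natural correspondence with the fifteen Göpel groups of Lemma~\ref{lem:Goepel}, recording for each $\Delta_{ij}$ which even tropes, and hence which four two-torsion points, it distinguishes.

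Finally, to promote the identification from a numerical match to a canonical one, I would check compatibility with the Theta-function dictionary of Proposition~\ref{lem:bijection_tropes_thetas2}: substituting Equations~(\ref{eqn:sol_MatchTropes}) and~(\ref{eqn:sol_MatchTropes2}) into each of the fifteen quadratic relations of Lemma~\ref{lem:quadratics} should reproduce the corresponding relation among squares of Theta functions, which simultaneously confirms the combinatorial bijection and its compatibility with the $\mathbb{F}_2^4$-structure. The main obstacle I anticipate is not any single computation but the bookkeeping: one must keep the labellings of nodes, tropes, two-torsion points, and Theta characteristics mutually consistent across all fifteen cases, and in particular confirm, via Lemma~\ref{lem:quadraticsEE}.(3), that the eight nodes attached to a given set of eight tropes do not depend on the chosen decomposition into disjoint Rosenhain tetrahedra, so that the assignment $\Delta_{ij}\mapsto\{\text{eight tropes}\}$ is well defined. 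Once the labelling is fixed, each remaining verification is a finite polynomial identity in $\mathbb{Z}[\lambda_1,\lambda_2,\lambda_3]$ that a computer algebra system dispatches at once.
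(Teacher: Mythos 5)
There is a fundamental problem here that is not of your making: the lemma as printed in the paper is a sentence fragment. It reads ``The 15 sets of eight tropes corresponding to even eights $\Delta_{ij}$'' and then stops --- no predicate, no claim, and no proof follows it in the source (the next item is Corollary~\ref{cor:LR}). So there is no determinate mathematical statement to prove, and no proof of the paper's to compare yours against. Your proposal implicitly completes the statement for the authors --- you read it as asserting a canonical correspondence between the fifteen sets of eight tropes attached to the $\Delta_{ij}$ and the fifteen G\"opel groups of Lemma~\ref{lem:Goepel}, compatible with the Theta dictionary of Proposition~\ref{lem:bijection_tropes_thetas2}. That is a reasonable guess given the surrounding material (Proposition~\ref{prop:EvenEights} already identifies these fifteen sets as the ones with six even and two odd tropes, and Lemma~\ref{lem:quadraticsEEb} supplies the incidence counts you invoke), but it is a guess, and a different completion --- e.g.\ a statement feeding into Corollary~\ref{cor:LR}, or an anticipation of Theorem~\ref{thm:KumB_IsoClasses} on the fifteen isomorphism classes of $(1,2)$-polarized covers --- would require a different argument.

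Taking your completion at face value, the strategy you outline is consistent with how the authors handle every neighboring result: reduce to explicit linear forms via Table~\ref{tab:tropes}, transport to $\mathbb{F}_2^4$ via Table~\ref{tab:16_6_configuration}, and finish by finite symbolic verification. Two cautions. First, your double-counting step (``every all-even G\"opel tetrahedron lies in three of the fifteen sets, every one-odd Rosenhain tetrahedron in two, hence the incidence numbers force the asserted structure'') establishes consistency of incidence counts but does not by itself produce a \emph{bijection} with the G\"opel groups; you would still need to exhibit the map $\Delta_{ij}\mapsto$ G\"opel group explicitly and check injectivity, which your third paragraph only gestures at. Second, you correctly flag that well-definedness of $\Delta_{ij}\mapsto\{\text{eight tropes}\}$ rests on Lemma~\ref{lem:quadraticsEE}.(3); that is the right dependency to make explicit. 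But until the authors supply the missing predicate of the lemma, neither your proof nor any other can be certified as proving \emph{it}.
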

\begin{corollary}
\label{cor:LR}
For every Rosenhain or G\"opel tetrahedron, the remaining twelve tropes are linear functions of the tropes contained in the tetrahedron with coefficients in $\mathbb{C}(\lambda_1,\lambda_2,\lambda_3)$.
\end{corollary}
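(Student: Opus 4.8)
The plan is to reduce the statement to the purely linear-algebraic fact that the four tropes forming a tetrahedron are linearly independent as linear forms on $\mathbb{P}^3$. First I would recall that the sixteen tropes $\mathsf{T}_\bullet$ are linear forms in the coordinates $z_1, z_2, z_3, z_4$ whose coefficients lie in $\mathbb{C}[\lambda_1, \lambda_2, \lambda_3]$, as listed explicitly in Table~\ref{tab:tropes}. By Lemma~\ref{lem:LinearRelation}, the ideal of linear relations among these sixteen forms is generated by twelve independent relations; equivalently, the sixteen tropes span a vector space of dimension $16-12=4$ over $\mathbb{C}(\lambda_1, \lambda_2, \lambda_3)$, which is necessarily the entire space of linear forms in the four variables $z_1, \dots, z_4$. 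Indeed, Equations~(\ref{Kummer:topesLRa}) and~(\ref{Kummer:topesLRb}) already exhibit $\{\mathsf{T}_4, \mathsf{T}_5, \mathsf{T}_6, \mathsf{T}_{456}\}$ as a spanning set for all sixteen tropes, hence as a basis of this four-dimensional space.

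The key step is then to observe that for any Rosenhain or G\"opel tetrahedron $\{\mathsf{T}_a, \mathsf{T}_b, \mathsf{T}_c, \mathsf{T}_d\}$ the four constituent tropes are linearly independent. This is a consequence of the defining property of a tetrahedron in $\mathbb{P}^3$: its four faces meet in four distinct vertices and admit no point common to all four. If the four linear forms satisfied a relation $\sum c_\bullet \mathsf{T}_\bullet = 0$ with every $c_\bullet \neq 0$, then the vertex cut out by any three of the planes would automatically lie on the fourth, forcing all four faces through a single point and destroying the tetrahedron; and a relation with some vanishing coefficient would already be a dependence among three tropes, contradicting the fact that those three meet in a single vertex. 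Hence the $4 \times 4$ determinant of the coefficient matrix of $\mathsf{T}_a, \mathsf{T}_b, \mathsf{T}_c, \mathsf{T}_d$ in the variables $z_1, \dots, z_4$ is a nonzero element of $\mathbb{C}[\lambda_1, \lambda_2, \lambda_3]$.

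With linear independence established, the four tropes of the tetrahedron themselves form a basis of the four-dimensional space of linear forms. Changing basis from $\{\mathsf{T}_4, \mathsf{T}_5, \mathsf{T}_6, \mathsf{T}_{456}\}$ to $\{\mathsf{T}_a, \mathsf{T}_b, \mathsf{T}_c, \mathsf{T}_d\}$ is given by an invertible matrix with entries in $\mathbb{C}(\lambda_1, \lambda_2, \lambda_3)$ whose inverse, computed by Cramer's rule, carries the nonzero determinant above in its denominator. Applying this change of basis to the expressions of Lemma~\ref{lem:LinearRelation} writes each of the remaining twelve tropes as a linear combination of $\mathsf{T}_a, \mathsf{T}_b, \mathsf{T}_c, \mathsf{T}_d$ with coefficients in $\mathbb{C}(\lambda_1, \lambda_2, \lambda_3)$, which is exactly the claim.

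The main obstacle is the uniform verification of linear independence across all sixty G\"opel and eighty Rosenhain tetrahedra. This is eased by the classification into a handful of families, each indexed symbolically by a choice of $\{i,j,k,l,m\}=\{1,\dots,5\}$ together with the distinguished role of the index $6$: using the explicit coefficients of Table~\ref{tab:tropes}, it suffices to compute the $4\times 4$ determinant once per family as a polynomial in $\lambda_1,\lambda_2,\lambda_3$ and confirm that it does not vanish identically. Equivalently, one may invoke the action of $S_6\cong\operatorname{Sp}(4,\mathbb{F}_2)$ on the $16_6$ configuration, which permutes the tetrahedra within each family, to reduce the nonvanishing check to a single representative per family.
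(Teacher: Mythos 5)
Your proposal is correct and follows essentially the same route as the paper: the paper's proof also rests on the observation that for each tetrahedron the coordinates $[z_1:z_2:z_3:z_4]$ can be recovered as linear functions of its four tropes (i.e., the four tropes form a basis of the space of linear forms), after which every other trope, being linear in the coordinates by Table~\ref{tab:tropes}, is linear in the tetrahedron's tropes. Your additional justification of the linear independence via the tetrahedron's vertex configuration, and the reduction of the determinant check to one representative per family, merely makes explicit what the paper leaves as an implicit computation.
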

\begin{proof}
For every Rosenhain or G\"opel tetrahedron, the projective coordinates $[z_1:z_2:z_3:z_4] \in \mathbb{P}^3$ are linear functions of the tropes contained in the tetrahedron with coefficients that are rational functions in $\mathbb{C}[\lambda_1,\lambda_2,\lambda_3]$. But all tropes are linear functions of the coordinates $[z_1:z_2:z_3:z_4] \in \mathbb{P}^3$ and were given in Table~\ref{tab:tropes}.
\end{proof}
\subsection{The irrational Kummer normal form}
The following lemma shows that every G\"opel tetrahedron can be completed into one of the sets of eight tropes given by Lemma~\ref{lem:quadratics}:
\begin{corollary}
\label{cor:Goepel}
Every G\"opel tetrahedron arises as union of two two-tuples chosen from 
\[\Big\{ \{ \mathsf{T}_a ,\mathsf{T}_{a'} \} , \{ \mathsf{T}_b, \mathsf{T}_{b'}\},  \{ \mathsf{T}_c, \mathsf{T}_{c'}\}, \{  \mathsf{T}_d, \mathsf{T}_{d'}\}\Big\},
\]
where $\{\mathsf{T}_a, \mathsf{T}_b, \mathsf{T}_c, \mathsf{T}_d\}$ and $\{\mathsf{T}_{a'}, \mathsf{T}_{b'}, \mathsf{T}_{c'}, \mathsf{T}_{d'}\}$ are two disjoint Rosenhain tetrahedra given in Lemma~\ref{lem:quadratics}. Moreover, every G\"opel tetrahedron is contained in exactly three different sets of eight tropes (out of 30) in Lemma~\ref{lem:quadratics}. Conversely, every set of eight tropes given in Lemma~\ref{lem:quadratics} can be decomposed into pairs of disjoint G\"opel tetrahedra in three different ways. 
\end{corollary}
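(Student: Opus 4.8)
The plan is to derive the statement from Lemma~\ref{lem:quadratics} and Lemma~\ref{lem:quadraticsEE}; indeed, the last two assertions of the corollary are literally parts~(2) and~(1) of Lemma~\ref{lem:quadraticsEE}, so the only genuinely new content is the first sentence, which identifies an arbitrary G\"opel tetrahedron with a union of two of the four pairs $\{\mathsf{T}_a,\mathsf{T}_{a'}\}$, $\{\mathsf{T}_b,\mathsf{T}_{b'}\}$, $\{\mathsf{T}_c,\mathsf{T}_{c'}\}$, $\{\mathsf{T}_d,\mathsf{T}_{d'}\}$ that a quadratic relation attaches to a set of eight tropes.

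First I would recall that, by Lemma~\ref{lem:quadratics}, each of the $30$ sets of eight tropes carries a canonical partition into four unordered pairs, namely the pairs $\{\mathsf{T}_x,\mathsf{T}_{x'}\}$ whose products appear as the four monomials $\mathsf{T}_x\mathsf{T}_{x'}$ of the quadratic relation~(\ref{QuadraticEquation}); here $\{\mathsf{T}_a,\mathsf{T}_b,\mathsf{T}_c,\mathsf{T}_d\}$ and $\{\mathsf{T}_{a'},\mathsf{T}_{b'},\mathsf{T}_{c'},\mathsf{T}_{d'}\}$ are the two disjoint Rosenhain tetrahedra making up the set, and each pair joins one trope of the first tetrahedron to one trope of the second. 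Selecting two of the four pairs yields a four-element subset of tropes; the complementary selection yields the complementary subset, so the $\binom{4}{2}=6$ selections organize into three complementary pairs of four-element subsets.

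Next I would verify that every such pair-union is a G\"opel tetrahedron. The conceptual route is to pass to finite geometry: recalling that under Table~\ref{tab:16_6_configuration} the G\"opel tetrahedra are exactly the images of cosets of isotropic (G\"opel) subgroups of $\mathbb{F}_2^4$, it suffices, after translating one trope of the pair-union to the origin, to check that the resulting two-dimensional subspace is isotropic, i.e. a single vanishing of the symplectic form. One runs this check once for each of the four types of quadratic relation tabulated in Equation~(\ref{tab:RosenhainT}). Alternatively, the direct route matches each of the six pair-unions against the three admissible shapes $\{\mathsf{T}_i,\mathsf{T}_j,\mathsf{T}_{ik6},\mathsf{T}_{jk6}\}$, $\{\mathsf{T}_m,\mathsf{T}_6,\mathsf{T}_{ij6},\mathsf{T}_{kl6}\}$, $\{\mathsf{T}_{ij6},\mathsf{T}_{kl6},\mathsf{T}_{ik6},\mathsf{T}_{jl6}\}$ of the $60$ G\"opel tetrahedra, but this is more error-prone. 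Either way one learns that the six pair-unions of a set are G\"opel tetrahedra forming three complementary pairs.

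Finally I would close the loop by double counting. Writing $c$ for the number of G\"opel tetrahedra whose four tropes all lie in a given set, summation over the $30$ sets and over the $60$ G\"opel tetrahedra gives $30\,c = 60\cdot 3$ using Lemma~\ref{lem:quadraticsEE}(2), so $c=6$; since the six pair-unions of a set are six distinct G\"opel tetrahedra contained in it, they exhaust the G\"opel tetrahedra of that set. As every G\"opel tetrahedron lies in at least one set, it appears there as a pair-union, which proves the first assertion; the last two are parts~(2) and~(1) of Lemma~\ref{lem:quadraticsEE}. The main obstacle is the middle step, namely ruling out that a pair-union is a Rosenhain rather than a G\"opel tetrahedron; the safest way to settle it is the symplectic-form computation in $\mathbb{F}_2^4$, which reduces the whole point to checking isotropy of four two-dimensional subspaces.
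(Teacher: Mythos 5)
Your proposal is correct and follows essentially the route the paper intends: the paper offers no written proof (the corollary is stamped \qed as an immediate consequence of Lemma~\ref{lem:quadratics} and Lemma~\ref{lem:quadraticsEE}, themselves established by direct computation), and you derive exactly the same content from those two lemmas, checking that the pair-unions attached to each quadratic relation land in the classified list of G\"opel tetrahedra. The one genuine addition on your side is the double-counting step $30\cdot 6 = 60\cdot 3$ showing that the six pair-unions exhaust the G\"opel tetrahedra of each set of eight tropes, which the paper leaves implicit; this is a clean way to avoid re-running the finite verification for every G\"opel tetrahedron separately.
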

\qed
\par Having already established quadratic relations between these sets of eight tropes, we can re-write the equation of a singular Kummer surface. We have the following:
\begin {lemma}
For every set of eight tropes given in Lemma~\ref{lem:quadratics}, the equation
\begin{equation}
\label{KummerNormalForm}
\begin{split}
 \left(\mu^2 \mathsf{T}_b \mathsf{T}_{b'} + \nu^2 \mathsf{T}_c \mathsf{T}_{c'} - \rho^2 \mathsf{T}_d \mathsf{T}_{d'}\right)^2 & = 4 \mu^2 \nu^2 \mathsf{T}_b \mathsf{T}_{b'}\mathsf{T}_c \mathsf{T}_{c'}
 \end{split}
\end{equation}
is equivalent to the Cassels-Flynn quartic in Equation~(\ref{kummer}).
\end{lemma}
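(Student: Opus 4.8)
The plan is to recognize the left-hand side of~(\ref{KummerNormalForm}) as the rationalization of a three-term relation among square roots of products of tropes, to show that the resulting quartic vanishes identically on the Kummer surface, and then to conclude by a degree argument that it must coincide with the Cassels--Flynn quartic~(\ref{kummer}) up to a nonzero scalar. Indeed, writing $Q=\mathsf{T}_b\mathsf{T}_{b'}$, $S=\mathsf{T}_c\mathsf{T}_{c'}$, $W=\mathsf{T}_d\mathsf{T}_{d'}$, the equation $(\mu^2 Q+\nu^2 S-\rho^2 W)^2=4\mu^2\nu^2 QS$ is exactly what one obtains by isolating $\rho\sqrt{W}$ in $\mu\sqrt{Q}+\nu\sqrt{S}-\rho\sqrt{W}=0$ and squaring twice. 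Thus the content of the lemma is that the three products of tropes in the chosen set of eight admit such a square-root relation on $\operatorname{Kum}(\operatorname{Jac}\mathcal{C})$, with coefficients given by the very same $\mu,\nu,\rho$ (which satisfy $\mu+\nu+\rho=0$) that appear in the quadratic relation of Lemma~\ref{lem:quadratics}.

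The key step is to produce this square-root relation from the Mumford identities already established. On the image of the abelian surface each trope pulls back to a square of a section, $\mathsf{T}_i=\mathsf{t}_i(z)^2$ by~(\ref{def:t_i}), so that $\mathsf{T}_b\mathsf{T}_{b'}=\mathsf{t}_{b,b'}(z)^2$ for the bi-monomial sections $\mathsf{t}_{b,b'}$ fixed in~(\ref{def:t_ijk}); the consistent sign choices made there are precisely what is needed to turn $\sqrt{Q},\sqrt{S},\sqrt{W}$ into the genuine holomorphic functions $\mathsf{t}_{b,b'},\mathsf{t}_{c,c'},\mathsf{t}_{d,d'}$. I would then identify, among the sixty three-term relations of Proposition~\ref{prop:MMFT}/Equation~(\ref{Kummer:topesLRc}), the relation $\mu\,\mathsf{t}_{b,b'}+\nu\,\mathsf{t}_{c,c'}+\rho\,\mathsf{t}_{d,d'}=0$ attached to the given set of eight tropes; these are nothing but the bi-monomial Mumford identities of Proposition~\ref{prop:MMF} transported to tropes. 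Rationalizing this linear relation as above yields~(\ref{KummerNormalForm}) as an identity holding at every point of $\operatorname{Kum}(\operatorname{Jac}\mathcal{C})$, so the quartic $F$ defined by~(\ref{KummerNormalForm}) contains the Kummer surface.

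To upgrade containment to equality of equations, I would invoke that the Cassels--Flynn quartic $G$ of~(\ref{kummer}) is irreducible, being the singular Kummer variety $\operatorname{Jac}(\mathcal{C})/\langle-\mathbb{I}\rangle$. Since each trope is a linear form in $[z_1:z_2:z_3:z_4]$, the polynomial $F$ is homogeneous of degree four, and $F\not\equiv 0$ (it does not vanish at a generic point of $\mathbb{P}^3$ off the six tropes). A nonzero quartic vanishing on the irreducible quartic surface $V(G)$ is divisible by $G$, and equality of degrees then forces $F=c\,G$ for some $c\in\mathbb{C}^*$; hence the two equations define the same surface, which is the asserted equivalence.

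The main obstacle is the bookkeeping in the middle step: matching each of the thirty sets of eight tropes of Lemma~\ref{lem:quadratics} to the correct three-term relation in~(\ref{Kummer:topesLRc}), and verifying that the signs dictated by~(\ref{def:t_ijk}) produce exactly the coefficient pattern $(\mu,\nu,-\rho)$ rather than some other sign combination, since a wrong sign would rationalize to a different, spurious quartic. I would organize this using the symmetry $S_6\cong\operatorname{Sp}(4,\mathbb{F}_2)$ to reduce the thirty cases to one representative of each of the four families in~(\ref{tab:RosenhainT}), checking the sign compatibility and the coefficient identification there. Alternatively, one can bypass the square-root argument entirely and verify the lemma directly, substituting the explicit linear forms of Table~\ref{tab:tropes} together with the tabulated $\mu,\nu,\rho$ into~(\ref{KummerNormalForm}) and confirming in a computer algebra system that the resulting quartic is a constant multiple of $K_2z_4^2+K_1z_4+K_0$.
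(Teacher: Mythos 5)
Your proposal is correct, but your primary route is genuinely different from the one the paper takes: the paper's proof of this lemma is a single direct computation, substituting the explicit linear forms of the tropes from Table~\ref{tab:tropes} together with the tabulated $\mu,\nu,\rho$ of Lemma~\ref{lem:quadratics} into~(\ref{KummerNormalForm}) and checking that the result is a scalar multiple of $K_2z_4^2+K_1z_4+K_0$ --- i.e.\ precisely the ``alternative'' you mention in your last sentence. Your main argument instead runs the paper's later material backwards: you take the three-term relation $\mu\,\mathsf{t}_{b,b'}+\nu\,\mathsf{t}_{c,c'}+\rho\,\mathsf{t}_{d,d'}=0$ among the bi-monomial sections of~(\ref{def:t_ijk}) (available from Proposition~\ref{prop:MMFT}, which precedes the lemma, so there is no circularity), rationalize it by squaring twice to get that the quartic $F$ of~(\ref{KummerNormalForm}) vanishes on the image of $\operatorname{Jac}(\mathcal{C})$ in $\mathbb{P}^3$, and then use irreducibility of the Cassels--Flynn quartic $G$ plus equality of degrees to conclude $F=cG$. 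This is sound: the only points needing care are (i) the verification that the coefficients appearing in the relevant relation of~(\ref{Kummer:topesLRc}) really are the $(\mu,\nu,\rho)$ of Lemma~\ref{lem:quadratics} with the signs dictated by~(\ref{def:t_ijk}) --- which you correctly flag as the bookkeeping burden and propose to reduce by the $S_6$-symmetry to one representative per family --- and (ii) that $F\not\equiv 0$, which follows for instance by restricting to the plane $\mathsf{T}_d=0$, where $F$ becomes $(\mu^2\mathsf{T}_b\mathsf{T}_{b'}-\nu^2\mathsf{T}_c\mathsf{T}_{c'})^2$. What your route buys is a conceptual explanation of \emph{why} the identity holds (it is the rationalization of a Mumford theta relation), at the cost of shifting the computational burden from one polynomial identity in $z_1,\dots,z_4$ to the sign/coefficient matching between Lemma~\ref{lem:quadratics} and~(\ref{Kummer:topesLRc}); the paper's direct computation is shorter but opaque, and the structural content you extract here is only made explicit by the paper two results later, in the proposition identifying the irrational Kummer normal forms with the Mumford relations.
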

\begin{proof}
The proof follows from direct computation using the explicit equations for the tropes in Table~\ref{tab:tropes}.
\end{proof}
\begin{remark}
Because of Equation~(\ref{QuadraticEquation}), $\mu +\nu + \rho=0$, and $\beta=\gamma=\delta=1$ or $\beta=\delta-\nu, \gamma=\delta+\mu$, Equation~(\ref{KummerNormalForm}) is identical to any of the following equations:
\begin{equation}
\label{AllKummer}
\begin{split}
 \left(\mu^2 \mathsf{T}_b \mathsf{T}_{b'} + \nu^2 \mathsf{T}_c \mathsf{T}_{c'} -\rho^2 \mathsf{T}_d \mathsf{T}_{d'}\right)^2 & = 4 \mu^2 \nu^2 \mathsf{T}_b \mathsf{T}_{b'}\mathsf{T}_c \mathsf{T}_{c'}\\
 \left(\mu^2 \mathsf{T}_b \mathsf{T}_{b'} - \nu^2 \mathsf{T}_c \mathsf{T}_{c'} +\rho^2 \mathsf{T}_d \mathsf{T}_{d'}\right)^2 & = 4 \mu^2 \rho^2 \mathsf{T}_b \mathsf{T}_{b'}T_d \mathsf{T}_{d'}, \\
  \left(\mu^2 \mathsf{T}_b \mathsf{T}_{b'} - \nu^2\mathsf{T}_c \mathsf{T}_{c'} -\rho^2 \mathsf{T}_d \mathsf{T}_{d'}\right)^2 & = 4 \nu^2 \rho^2 \mathsf{T}_c \mathsf{T}_{c'}\mathsf{T}_d \mathsf{T}_{d'}, \\
 \left(\rho^2 \mathsf{T}_a \mathsf{T}_{a'} + \gamma^2\mathsf{T}_b \mathsf{T}_{b'} -\beta^2 \mathsf{T}_c \mathsf{T}_{c'}\right)^2 & = 4 \gamma^2\rho^2\mathsf{T}_a \mathsf{T}_{a'}\mathsf{T}_b \mathsf{T}_{b'},\\ 
 \left(\nu^2 \mathsf{T}_a \mathsf{T}_{a'} - \delta^2\mathsf{T}_b \mathsf{T}_{b'} +\beta^2 \mathsf{T}_d \mathsf{T}_{d'}\right)^2 & = 4 \beta^2\nu^2\mathsf{T}_a \mathsf{T}_{a'}\mathsf{T}_d \mathsf{T}_{d'},\\ 
   \left(\mu^2 \mathsf{T}_a \mathsf{T}_{a'} +\delta^2 \mathsf{T}_c \mathsf{T}_{c'} -\gamma^2  \mathsf{T}_d \mathsf{T}_{d'} \right)^2 & = 4 \delta^2 \mu^2 \mathsf{T}_a \mathsf{T}_{a'}\mathsf{T}_c\mathsf{T}_{c'}.
\end{split}
\end{equation}
\end{remark}
We have the following:
\begin{proposition}
\label{prop:KummerNF}
For every G\"opel tetrahedron $\{\mathsf{T}_a, \mathsf{T}_{a'}, \mathsf{T}_b, \mathsf{T}_{b'}\}$, the Cassels-Flynn quartic in Equation~(\ref{kummer}) is equivalent to an equation of the form
\[
\left(\rho^2 \mathsf{T}_a \mathsf{T}_{a'} + \gamma^2\mathsf{T}_b \mathsf{T}_{b'} -\beta^2 \mathsf{T}_c \mathsf{T}_{c'}\right)^2  = 4 \gamma^2\rho^2\mathsf{T}_a \mathsf{T}_{a'}\mathsf{T}_b \mathsf{T}_{b'},
\]
where $\beta, \gamma, \rho$ are polynomials in $\mathbb{C}[\lambda_1,\lambda_2,\lambda_3]$, and $\mathsf{T}_c, \mathsf{T}_{c'}$ are two tropes that are linear functions of $\mathsf{T}_a, \mathsf{T}_{a'}, \mathsf{T}_b, \mathsf{T}_{b'}$ with coefficients in $\mathbb{C}(\lambda_1,\lambda_2,\lambda_3)$.
\end{proposition}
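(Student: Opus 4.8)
The plan is to read the desired equation straight off the apparatus already assembled, so the argument is short and amounts to careful bookkeeping. First I would invoke Corollary~\ref{cor:Goepel}: the given G\"opel tetrahedron $\{\mathsf{T}_a, \mathsf{T}_{a'}, \mathsf{T}_b, \mathsf{T}_{b'}\}$ sits inside one of the $30$ sets of eight tropes of Lemma~\ref{lem:quadratics}, arising as a union of two of the four disjoint pairs $\{\mathsf{T}_a, \mathsf{T}_{a'}\}$, $\{\mathsf{T}_b, \mathsf{T}_{b'}\}$, $\{\mathsf{T}_c, \mathsf{T}_{c'}\}$, $\{\mathsf{T}_d, \mathsf{T}_{d'}\}$ that join the two disjoint Rosenhain tetrahedra $\{\mathsf{T}_a, \mathsf{T}_b, \mathsf{T}_c, \mathsf{T}_d\}$ and $\{\mathsf{T}_{a'}, \mathsf{T}_{b'}, \mathsf{T}_{c'}, \mathsf{T}_{d'}\}$ of that octuple. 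Relabeling the octuple if necessary, I arrange the given tetrahedron to be exactly $\{\mathsf{T}_a, \mathsf{T}_{a'}\}\cup\{\mathsf{T}_b, \mathsf{T}_{b'}\}$, so that the complementary four tropes of the octuple are $\mathsf{T}_c, \mathsf{T}_{c'}, \mathsf{T}_d, \mathsf{T}_{d'}$.

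Second, I would apply the rewriting of the Cassels-Flynn quartic given in Equation~(\ref{KummerNormalForm}) together with its equivalent forms in Equation~(\ref{AllKummer}). Among those six forms the fourth,
\[
\left(\rho^2 \mathsf{T}_a \mathsf{T}_{a'} + \gamma^2\mathsf{T}_b \mathsf{T}_{b'} -\beta^2 \mathsf{T}_c \mathsf{T}_{c'}\right)^2 = 4 \gamma^2\rho^2\mathsf{T}_a \mathsf{T}_{a'}\mathsf{T}_b \mathsf{T}_{b'},
\]
is precisely the one whose right-hand side is built from the two products $\mathsf{T}_a \mathsf{T}_{a'}$ and $\mathsf{T}_b \mathsf{T}_{b'}$ attached to the chosen G\"opel tetrahedron; note that $\mathsf{T}_d, \mathsf{T}_{d'}$ drop out entirely and only the pair $\mathsf{T}_c, \mathsf{T}_{c'}$ survives. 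Since Equation~(\ref{AllKummer}) asserts this form is equivalent to Equation~(\ref{KummerNormalForm}), it is equivalent to the Cassels-Flynn quartic. The coefficients $\beta, \gamma, \rho$ are exactly the polynomials furnished by Lemma~\ref{lem:quadratics}, hence lie in $\mathbb{C}[\lambda_1,\lambda_2,\lambda_3]$, as claimed.

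Finally, I would eliminate $\mathsf{T}_c$ and $\mathsf{T}_{c'}$ in favor of the tetrahedron tropes using Corollary~\ref{cor:LR}. Because the four faces of a G\"opel tetrahedron are planes in general position, the linear forms $\mathsf{T}_a, \mathsf{T}_{a'}, \mathsf{T}_b, \mathsf{T}_{b'}$ form a basis of the space of linear forms on $\mathbb{P}^3$, so every remaining trope --- in particular $\mathsf{T}_c$ and $\mathsf{T}_{c'}$ --- is a linear function of $\mathsf{T}_a, \mathsf{T}_{a'}, \mathsf{T}_b, \mathsf{T}_{b'}$ with coefficients in $\mathbb{C}(\lambda_1,\lambda_2,\lambda_3)$. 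This gives an equation of exactly the stated shape.

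The only genuine subtlety, and the step I would be most careful about, is the bookkeeping in the first paragraph: one must verify that the labeling provided by Corollary~\ref{cor:Goepel} actually places the two products of the given G\"opel tetrahedron on the right-hand side of one of the six forms in Equation~(\ref{AllKummer}). This is guaranteed, since all $\binom{4}{2}=6$ pairings of the four products $\mathsf{T}_a\mathsf{T}_{a'}, \mathsf{T}_b\mathsf{T}_{b'}, \mathsf{T}_c\mathsf{T}_{c'}, \mathsf{T}_d\mathsf{T}_{d'}$ occur as the right-hand sides of the six equations in~(\ref{AllKummer}), and Corollary~\ref{cor:Goepel} exhibits the given tetrahedron as precisely one such pairing; everything after the match is substitution within $\mathbb{C}(\lambda_1,\lambda_2,\lambda_3)$.
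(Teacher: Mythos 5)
Your proposal is correct and follows essentially the same route as the paper's own proof: invoke Corollary~\ref{cor:Goepel} to realize the G\"opel tetrahedron inside an octuple from Lemma~\ref{lem:quadratics}, select the form in Equations~(\ref{AllKummer}) whose right-hand side carries the two products attached to the tetrahedron, and then use Corollary~\ref{cor:LR} to express $\mathsf{T}_c, \mathsf{T}_{c'}$ linearly in the tetrahedron tropes. Your extra paragraph checking that all six pairings of the four products appear among the right-hand sides of~(\ref{AllKummer}) is a useful explicit verification of a point the paper leaves implicit, but it does not change the argument.
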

\begin{proof}
By Corollary~\ref{cor:Goepel}, each G\"opel tetrahedron on a Kummer quartic is obtained from two pairs of Rosenhain tetrahedra in Lemma~\ref{lem:quadratics} such that Equations~(\ref{AllKummer}) are equivalent to the Cassels-Flynn quartic in Equation~(\ref{kummer}). By Corollary~\ref{cor:LR}, the remaining twelve tropes are linear functions of the tropes contained in the G\"opel tetrahedron with coefficients that are rational functions in $\mathbb{C}[\lambda_1,\lambda_2,\lambda_3]$. 
\end{proof}
\par We give the following example:
\begin{example}
If we choose the two Rosenhain tetrahedra 
\[ \{\mathsf{T}_{lm6}, \mathsf{T}_i, \mathsf{T}_j, \mathsf{T}_k\} \quad \text{and} \quad \{\mathsf{T}_6, \mathsf{T}_{jk6}, \mathsf{T}_{ik6}, \mathsf{T}_{ij6}\} \]
in Lemma~\ref{lem:LinearRelation}, then Equation~(\ref{KummerNormalForm}) is equivalent to
\begin{equation}
 \varphi^2 = 4 \, (\lambda_i-\lambda_k)^2 (\lambda_j-\lambda_k)^2 \, \mathsf{T}_i\, \mathsf{T}_j \, \mathsf{T}_{ik6} \mathsf{T}_{jk6} \;,
\end{equation}
with 
$$
 \varphi= (\lambda_j-\lambda_k)^2 \, \mathsf{T}_i \, \mathsf{T}_{jk6} + (\lambda_i-\lambda_k)^2 \, \mathsf{T}_j \, \mathsf{T}_{ik6}  - (\lambda_i-\lambda_j)^2 \, \mathsf{T}_k \, \mathsf{T}_{ij6} \;.
$$
$\mathsf{T}_k$ and $\mathsf{T}_{ij6}$ are linear functions of the G\"opel tetrahedron $\{\mathsf{T}_i, \mathsf{T}_j, \mathsf{T}_{ik6}, \mathsf{T}_{jk6}\}$ given by
\begin{equation}
\begin{array}{ccc}
 \mathsf{T}_{k} &=& \frac{(\lambda_k-\lambda_m)(\lambda_k-\lambda_l)(\lambda_j-\lambda_k)  \mathsf{T}_{i}}{(\lambda_i-\lambda_j)(\lambda_i\lambda_j-\lambda_i\lambda_k-\lambda_j\lambda_k+\lambda_k\lambda_l)}
 + \frac{(\lambda_k-\lambda_m)(\lambda_k-\lambda_l)(\lambda_i-\lambda_k)  \mathsf{T}_{j}}{(\lambda_j-\lambda_i)(\lambda_i\lambda_j-\lambda_i\lambda_k-\lambda_j\lambda_k+\lambda_k\lambda_l)}\\[0.6em]
 &+& \frac{(\lambda_i-\lambda_k)(\lambda_j-\lambda_k)  \mathsf{T}_{ik6}}{(\lambda_i-\lambda_j)(\lambda_i\lambda_j-\lambda_i\lambda_k-\lambda_j\lambda_k+\lambda_k\lambda_l)}
 + \frac{(\lambda_i-\lambda_k)(\lambda_j-\lambda_k)  \mathsf{T}_{jk6}}{(\lambda_j-\lambda_i)(\lambda_i\lambda_j-\lambda_i\lambda_k-\lambda_j\lambda_k+\lambda_k\lambda_l)},\\[0.6em]
 \mathsf{T}_{ij6} &=&  \frac{(\lambda_j-\lambda_m)(\lambda_j-\lambda_l)(\lambda_j-\lambda_k)(\lambda_i-\lambda_k)  \mathsf{T}_{i}}{(\lambda_j-\lambda_i)(\lambda_i\lambda_j-\lambda_i\lambda_k-\lambda_j\lambda_k+\lambda_k\lambda_l)}
 + \frac{(\lambda_i-\lambda_m)(\lambda_i-\lambda_l)(\lambda_i-\lambda_k)(\lambda_j-\lambda_k)  \mathsf{T}_{j}}{(\lambda_i-\lambda_j)(\lambda_i\lambda_j-\lambda_i\lambda_k-\lambda_j\lambda_k+\lambda_k\lambda_l)}\\[0.6em]
 &+& \frac{(\lambda_j-\lambda_m)(\lambda_j-\lambda_l) (\lambda_i-\lambda_k) \mathsf{T}_{ik6}}{(\lambda_j-\lambda_i)(\lambda_i\lambda_j-\lambda_i\lambda_k-\lambda_j\lambda_k+\lambda_k\lambda_l)}
 + \frac{(\lambda_i-\lambda_m)(\lambda_i-\lambda_l) (\lambda_j-\lambda_k) \mathsf{T}_{jk6}}{(\lambda_i-\lambda_j)(\lambda_i\lambda_j-\lambda_i\lambda_k-\lambda_j\lambda_k+\lambda_k\lambda_l)}.
\end{array} 
\end{equation}
\end{example} 
Kummer~\cite{MR1579281} introduced the notion of formal square roots of Equations~(\ref{AllKummer}) which he called \emph{irrational Kummer normal form}. Irrational Kummer normal forms of a singular Kummer surface also appeared in a slightly different form in \cite{MR1097176}. We make the following:
\begin{remark}
For every G\"opel tetrahedron, four formal square roots giving rise to Equations~(\ref{AllKummer}) are as follows:
\begin{equation}
\label{AllKummer2}
\begin{split}
 \mu \sqrt{\mathsf{T}_b \mathsf{T}_{b'}} + \nu \sqrt{\mathsf{T}_c \mathsf{T}_{c'}} +\rho \sqrt{\mathsf{T}_d \mathsf{T}_{d'}} & = 0,\\
 \rho \sqrt{\mathsf{T}_a \mathsf{T}_{a'}} + \gamma \sqrt{\mathsf{T}_b \mathsf{T}_{b'}} - \beta \sqrt{\mathsf{T}_c \mathsf{T}_{c'}} & = 0,\\ 
 \nu \sqrt{\mathsf{T}_a \mathsf{T}_{a'}} - \delta \sqrt{\mathsf{T}_b \mathsf{T}_{b'}} + \beta \sqrt{\mathsf{T}_d \mathsf{T}_{d'}} & = 0,\\ 
 \mu \sqrt{\mathsf{T}_a \mathsf{T}_{a'}} + \delta \sqrt{\mathsf{T}_c \mathsf{T}_{c'}} - \gamma \sqrt{\mathsf{T}_d \mathsf{T}_{d'}} & = 0.
\end{split}
\end{equation}
A particular compatible choice for the signs of the coefficients in Equations~(\ref{AllKummer2}) has to be made such that two equations are linear combinations of the other two equations using $\mu+\nu+\rho=0$ and $\beta\mu+\gamma\nu+\delta\rho=0$. 
\end{remark}
If we replace the formal expressions $\sqrt{\mathsf{T}_b \mathsf{T}_{b'}}$ in Equations~(\ref{AllKummer}) by the well-defined sections $\mathsf{t}_{b,b'}$ introduced in Equations~(\ref{def:t_ijk}), we obtain the following:
\begin{proposition}
The irrational Kummer normal forms coincide with the Mumford Theta relations in Proposition~\ref{prop:MMF}. In particular, Equations~(\ref{Kummer:topesLRc}) or, equivalently, Equations~(\ref{Eqn:MumfordBimonomial}), generate the same ideal as the following 60 equations given by
\begin{equation}
 \mu \, \mathsf{t}_{b,b'}  + \nu \, \mathsf{t}_{c,c'} +\rho \, \mathsf{t}_{d,d'} = 0,\quad
 \rho \, \mathsf{t}_{a,a'}+ \gamma \, \mathsf{t}_{b,b'}- \beta \, \mathsf{t}_{c,c'}  = 0,
\end{equation}
where $\{a,b,c,d,a',b',c',d'\}$ and $\beta, \gamma, \mu, \nu, \gamma$ run over all cases in Lemma~\ref{lem:quadratics}.
\end{proposition}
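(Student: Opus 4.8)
The plan is to reduce the statement to the correspondence already established in Proposition~\ref{prop:MMFT} and then to match coefficients. By construction in Equation~(\ref{def:t_ijk}), each section $\mathsf{t}_{a,b}$ is a fixed bi-monomial $\xi_{i,j}=\theta_i(z)\theta_j(z)$ multiplied by an explicit product of even Theta constants, where the index pair $(i,j)$ is read off from the trope-to-Theta dictionary of Equations~(\ref{eqn:sol_MatchTropes}) and~(\ref{eqn:sol_MatchTropes2}). In particular the $120$ sections $\mathsf{t}_{a,b}$ are in bijection with the bi-monomials $\xi_{i,j}$, with all signs and constant factors pinned down. Proposition~\ref{prop:MMFT} already records that, under this substitution, the Mumford three-term relations of Proposition~\ref{prop:MMF} --- equivalently Equations~(\ref{Eqn:MumfordBimonomial}) --- become exactly Equations~(\ref{Kummer:topesLRc}). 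Hence it suffices to identify the $60$ irrational normal forms with Equations~(\ref{Kummer:topesLRc}).

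First I would make the replacement $\sqrt{\mathsf{T}_b\mathsf{T}_{b'}}\mapsto\mathsf{t}_{b,b'}$ in the formal square roots~(\ref{AllKummer2}), turning the two families of relations in the statement into genuine linear relations among the sections $\mathsf{t}_{a,b}$. The coefficients $\mu,\nu,\rho,\beta,\gamma,\delta$ are, by Lemma~\ref{lem:quadratics}, products of root differences $\lambda_i-\lambda_j$ (recall $\lambda_5=1$, so that e.g.\ $\lambda_i-1=\lambda_i-\lambda_5$ is again such a difference). These are precisely the coefficients appearing in Equations~(\ref{Kummer:topesLRc}); likewise the two disjoint Rosenhain tetrahedra attached to each of the $30$ eight-trope configurations in Lemma~\ref{lem:quadratics} index exactly the triples of sections occurring there. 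A term-by-term comparison, running over all cases of Lemma~\ref{lem:quadratics}, then shows that the two systems of $60$ equations are identical.

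Since the relations of Proposition~\ref{prop:MMF} and the relations~(\ref{Kummer:topesLRc}) are both linear in the bi-monomials, and the dictionary above is a linear change of variables, equality of the generated ideals follows immediately once the generating systems are matched: each irrational normal form is, after the substitution, literally one of the Equations~(\ref{Kummer:topesLRc}), so the spans coincide and the ideals agree.

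The step I expect to be the main obstacle is the sign bookkeeping. Equations~(\ref{AllKummer2}) only hold after the ``particular compatible choice of signs'' noted there, chosen so that two of the four formal relations are linear combinations of the other two via $\mu+\nu+\rho=0$ and $\beta\mu+\gamma\nu+\delta\rho=0$. On the other hand the sections~(\ref{def:t_ijk}) carry their own rigid signs, including the factors $\pm i$. The crux is to verify that these two conventions are mutually consistent across all $30$ eight-trope configurations at once --- that the square roots selected in~(\ref{def:t_ijk}) are exactly those for which the Mumford bi-monomial relations reproduce the irrational normal forms with the correct relative signs. This is a finite but delicate verification, most efficiently performed with a computer algebra system in the spirit of the proofs of Propositions~\ref{lem:bijection_tropes_thetas2} and~\ref{prop:MMFT}.
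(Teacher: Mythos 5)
Your proposal is correct and matches the paper's own argument, which is likewise a direct computation using the explicit Theta-function expressions for the sections $\mathsf{t}_{a,b}$ in Equations~(\ref{def:t_i}) and~(\ref{def:t_ijk}) to identify the irrational normal forms with the relations~(\ref{Kummer:topesLRc}) and hence with the Mumford relations of Proposition~\ref{prop:MMF}. You also correctly isolate the sign bookkeeping across the $30$ eight-trope configurations as the only delicate point, which is exactly the content of the paper's ``direct computation.''
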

\begin{proof}
The proof follows from direct computation using the explicit equations for the tropes in terms of Theta functions in Equations~(\ref{def:t_i}) and Equations~(\ref{def:t_ijk}).
The ideal generated by the irrational Kummer normal forms then coincides with the one generated by the Mumford Theta relations in Proposition~\ref{prop:MMF}.
\end{proof}
\subsection{The G\"opel and G\"opel-Hudson normal forms}
The following definition is due to Kummer~\cite{MR1579281} and Borchardt \cite{MR1579732}:
\begin{definition}
A G\"opel quartic is the surface in $\mathbb{P}^3(P,Q,R,S)$ given by
\begin{equation}
\label{Kummer-Quartic}
    \Phi^2 - 4 \, \delta^2 SPQR  =0\;,
\end{equation}
with 
\begin{equation}
\Phi= P^2 + Q^2 + R^2 +S^2 - \alpha \, \big( PS + QR \big)  - \beta \,  \big( PQ + RS \big) - \gamma \,  \big(PR+QS\big) \;,
\end{equation}
where $\alpha, \beta, \gamma, \delta \in \mathbb{C}$ such that
\begin{equation}
\label{paramG}
 \delta^2 = \alpha^2 + \beta^2 + \gamma^2 + \alpha \beta \gamma -4 \;.
\end{equation}
\end{definition}
We have the following:
\begin{proposition}
\label{lem:KumNF}
Every G\"opel tetrahedron determines an isomorphism between the Cassels-Flynn quartic in Equation~(\ref{kummer}) and a G\"opel quartic in Equation~(\ref{Kummer-Quartic}). In particular, for generic parameters $(\alpha, \beta, \gamma, \delta)$ satisfying Equation~(\ref{paramG}), the G\"opel quartic in Equation~(\ref{Kummer-Quartic}) is isomorphic to the singular Kummer variety associated with a principally polarized abelian variety.
\end{proposition}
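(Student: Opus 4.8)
The plan is to build the isomorphism directly out of the Kummer normal form already established in Proposition~\ref{prop:KummerNF}, using the four faces of the given G\"opel tetrahedron as a projective coordinate tetrahedron. Fix a G\"opel tetrahedron $\{\mathsf{T}_a,\mathsf{T}_{a'},\mathsf{T}_b,\mathsf{T}_{b'}\}$. Because its vertices are not nodes and the $\lambda_i$ are distinct, the four defining linear forms are linearly independent, so after a choice of scalars the assignment $[P:Q:R:S]=[\mathsf{T}_a:\mathsf{T}_{a'}:\mathsf{T}_b:\mathsf{T}_{b'}]$ is a projective linear isomorphism of $\mathbb{P}^3$. This is the isomorphism I will use, and the entire task is to identify the image of Equation~(\ref{kummer}) under it.

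By Proposition~\ref{prop:KummerNF} the Cassels--Flynn quartic is equivalent to $(\rho^2\,\mathsf{T}_a\mathsf{T}_{a'}+\gamma^2\,\mathsf{T}_b\mathsf{T}_{b'}-\beta^2\,\mathsf{T}_c\mathsf{T}_{c'})^2=4\gamma^2\rho^2\,\mathsf{T}_a\mathsf{T}_{a'}\mathsf{T}_b\mathsf{T}_{b'}$, where $\mathsf{T}_c,\mathsf{T}_{c'}$ are two further tropes. Under the coordinate change one has $\mathsf{T}_a\mathsf{T}_{a'}=PQ$, $\mathsf{T}_b\mathsf{T}_{b'}=RS$, and $\mathsf{T}_a\mathsf{T}_{a'}\mathsf{T}_b\mathsf{T}_{b'}=PQRS$, while by Corollary~\ref{cor:LR} the tropes $\mathsf{T}_c,\mathsf{T}_{c'}$ become explicit linear forms in $P,Q,R,S$ with coefficients in $\mathbb{C}(\lambda_1,\lambda_2,\lambda_3)$, so that $\mathsf{T}_c\mathsf{T}_{c'}=Z(P,Q,R,S)$ is an explicit quadratic form. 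The equation therefore becomes $(\Phi')^2=4\gamma^2\rho^2\,PQRS$ with $\Phi'=\rho^2 PQ+\gamma^2 RS-\beta^2 Z$, a quadratic form in $P,Q,R,S$ carrying the single four--fold monomial $PQRS$ on the right.

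It remains to bring $\Phi'$ into the standard shape of $\Phi$ in Equation~(\ref{Kummer-Quartic}). The conceptual reason this succeeds is the G\"opel--group symmetry: under the identification of G\"opel tetrahedra with G\"opel systems, the four faces correspond to a coset of the G\"opel group $\mathsf{K}$, and translation on $\mathbf{A}$ by each of the three nonzero elements of $\mathsf{K}$ is a projective linear automorphism of the Kummer quartic that preserves this set of four faces and permutes them by a fixed--point--free involution. In the coordinates $P,Q,R,S$ these three involutions are, up to scaling, the pair interchanges $(P,Q,R,S)\mapsto(Q,P,S,R)$, $(R,S,P,Q)$, $(S,R,Q,P)$, which are precisely the linear symmetries of the G\"opel quartic; their invariance forces the four diagonal coefficients of $\Phi'$ to agree and the six off--diagonal coefficients to collapse onto the three pairs $\{PS,QR\}$, $\{PQ,RS\}$, $\{PR,QS\}$. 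I would then use the four scalings of $P,Q,R,S$ to normalize the diagonal coefficients to $1$, read off $\alpha,\beta,\gamma$ as explicit rational functions of $\lambda_1,\lambda_2,\lambda_3$, and take $\delta$ to be determined by the normalized right--hand side $4\delta^2 PQRS$. That the resulting parameters satisfy Equation~(\ref{paramG}) is forced rather than separately imposed: the surface is projectively equivalent to the $16$--nodal Cassels--Flynn quartic, and a G\"opel quartic acquires sixteen nodes precisely when~(\ref{paramG}) holds. By Corollary~\ref{cor:Goepel} and Lemma~\ref{lem:16_6}, the $60$ G\"opel tetrahedra are permuted by the projective automorphisms coming from $\mathbb{F}_2^4\rtimes\operatorname{Sp}(4,\mathbb{F}_2)$, so it suffices to run this computation on one representative per orbit. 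For the converse assertion, if $(\alpha,\beta,\gamma,\delta)$ is generic subject to~(\ref{paramG}) then the G\"opel quartic is a $16$--nodal quartic surface, and by the earlier corollary that every such surface is isomorphic to the singular Kummer variety of a genus--two Jacobian, it is the singular Kummer variety of a principally polarized abelian surface.

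I expect the main obstacle to be the structural normalization step: after fixing the diagonal by rescaling, one must verify that the six off--diagonal coefficients of $\Phi'$ genuinely pair up as dictated by the Weil pairing on $\mathsf{K}$, rather than merely having the correct monomial support, and that the three resulting values $\alpha,\beta,\gamma$ together with $\delta$ satisfy~(\ref{paramG}) identically in $\lambda_1,\lambda_2,\lambda_3$. The symmetry argument explains why the pairing must occur, but the explicit reconciliation with $\Phi$---including a consistent choice of signs so that $\Phi'$, and not merely $(\Phi')^2$, matches $\Phi$ up to scale---is the delicate part, and it is where the reduction via the $16_6$ automorphisms keeps the computation finite and tractable.
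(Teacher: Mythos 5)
Your proposal follows essentially the same route as the paper's proof: both invoke Proposition~\ref{prop:KummerNF} and Corollary~\ref{cor:LR} to rewrite the Cassels--Flynn quartic in terms of the four tropes of the chosen G\"opel tetrahedron and then rescale them to $[c_1P:c_2Q:c_3R:c_4S]$ so that the resulting equation matches Equation~(\ref{Kummer-Quartic}). The only difference is that where the paper simply solves for $c_1,\dots,c_4$ by direct computation, you add a Heisenberg/G\"opel-group symmetry argument explaining why the coefficients must collapse into the form of $\Phi$, and you spell out the converse via the sixteen-nodal classification; these are sound elaborations rather than a departure from the paper's method.
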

\begin{proof}
First, one realizes every G\"opel tetrahedron $\{\mathsf{T}_a, \mathsf{T}_{a'}, \mathsf{T}_b, \mathsf{T}_{b'}\}$ using two disjoint Rosenhain tetrahedra given in Lemma~\ref{lem:quadratics}. Using Proposition~\ref{prop:KummerNF}, the Cassels-Flynn quartic in Equation~(\ref{kummer}) is identical to the equation
\begin{equation}
\label{eqn:test}
\left(\rho^2 \mathsf{T}_a \mathsf{T}_{a'} + \gamma^2\mathsf{T}_b \mathsf{T}_{b'} -\beta^2 \mathsf{T}_c \mathsf{T}_{c'}\right)^2  - 4 \gamma^2\rho^2\mathsf{T}_a \mathsf{T}_{a'}\mathsf{T}_b \mathsf{T}_{b'} =0,
\end{equation}
with $\beta, \gamma, \rho \in \mathbb{C}[\lambda_1,\lambda_2,\lambda_3]$. Using Corollary~\ref{cor:LR}, the two additional tropes $\mathsf{T}_c, \mathsf{T}_{c'}$ are linear functions of $\{\mathsf{T}_a, \mathsf{T}_{a'}, \mathsf{T}_b, \mathsf{T}_{b'}\}$ with coefficients in $\mathbb{C}(\lambda_1,\lambda_2,\lambda_3)$. One substitutes
\[
   [\mathsf{T}_a, \mathsf{T}_{a'}, \mathsf{T}_b, \mathsf{T}_{b'}]=[\, c_1 P\, : \, c_2 Q \, : \, c_3 R \, : \, c_4 S\, ],
\]
into Equation~(\ref{eqn:test}) and solves for the coefficients $c_1, \dots, c_4$ such that Equation~(\ref{eqn:test}) coincides with Equation~(\ref{Kummer-Quartic}). 
\end{proof}
We give another description of a singular Kummer variety associated with a principally polarized abelian variety based on results in \cite{MR2062673}. Let $\mathcal{L}$ be the ample line symmetric bundle of an abelian surface $\mathbf{A}$ defining its  principal polarization and consider the rational map $\varphi_{\mathcal{L}^2}: \mathbf{A} \to \mathbb{P}^3$ associated with the line bundle $\mathcal{L}^2$. Its image $\varphi_{\mathcal{L}^2}(\mathbf{A})$  is a quartic surface in $\mathbb{P}^3$ which in projective coordinates $[w:x:y:z]$ can be written as
\begin{gather}
\label{Eq:QuarticSurfaces12}
    0 = \xi_0 \, (w^4+x^4+y^4+z^4)  + \xi_4 \, w x y z \qquad \\
\nonumber
    +\xi_1 \, \big(w^2 z^2+x^2 y^2\big)  +\xi_2 \, \big(w^2 x^2+y^2 z^2\big)  +\xi_3 \, \big(w^2 y^2+x^2 z^2\big) ,
\end{gather}
with $[\xi_0:\xi_1:\xi_2:\xi_3:\xi_4] \in \mathbb{P}^4$. Any general member of the family~(\ref{Eq:QuarticSurfaces12}) is smooth. As soon as the surface is singular at a general point, it must have sixteen singular nodal points because of its symmetry. The discriminant turns out to be a homogeneous polynomial of degree eighteen in the parameters $[\xi_0:\xi_1:\xi_2:\xi_3:\xi_4] \in \mathbb{P}^4$ and was determined in \cite[Sec.~7.7 (3)]{MR2062673}.   Thus, the Kummer surfaces form an open set among the hypersurfaces in Equation~(\ref{Eq:QuarticSurfaces12}) with parameters $[\xi_0:\xi_1:\xi_2:\xi_3:\xi_4] \in \mathbb{P}^4$, namely the ones that make the only irreducible factor of degree three in the discriminant vanish, i.e.,
\begin{equation}
 \xi_0 \, \big( 16 \xi_0^2 - 4 \xi_1^2-4 \xi_2^2 - 4 \xi_3^3+ \xi_4^2\big) + 4 \, \xi_1 \xi_2 \xi_3 =0.
\end{equation}
Setting $\xi_0=1$ and using the affine moduli $\xi_1=-A$, $\xi_2=-B$, $\xi_3=-C$, $\xi_4=2 D$, we obtain the normal form of a nodal quartic surface. We give the following:
\begin{definition}
A G\"opel-Hudson quartic is the surface in $\mathbb{P}^3(w,x,y,z)$ given by
\begin{gather}
\label{Goepel-Quartic}
  0 = w^4+x^4+y^4+z^4  + 2  D  w x y z \\
\nonumber
    - A  \big(w^2 z^2+x^2 y^2\big)  - B \big(w^2 x^2+y^2 z^2\big)  - C  \big(w^2 y^2+x^2 z^2\big)  \;, 
\end{gather}
where $A, B, C, D \in \mathbb{C}$ such that
\begin{equation}
\label{paramGH}
 D^2 = A^2 + B^2 + C^2 + ABC - 4\;.
\end{equation}
\end{definition}
\begin{remark}
The G\"opel-Hudson quartic in Equation~(\ref{Goepel-Quartic}) is invariant under the transformations changing signs of two coordinates, generated by 
\[
 [w:x:y:z] \to [-w:-x:y:z], \quad [w:x:y:z] \to [-w:x:-y:z],
\] 
and under the permutations of variables, generated by $[w:x:y:z] \to [x:w:z:y]$ and $[w:x:y:z] \to [y:z:w:x]$. 
\end{remark}
We have the following:
\begin{lemma}
\label{lem:GH}
The G\"opel-Hudson quartic in Equation~(\ref{Goepel-Quartic}) is isomorphic to the G\"opel quartic in Equation~(\ref{Kummer-Quartic}). In particular,  for generic parameters $(A,B,C,D)$ satisfying Equation~(\ref{paramGH}), the G\"opel quartic in Equation~(\ref{Kummer-Quartic}) is isomorphic to the singular Kummer variety associated with a principally polarized abelian surface.
\end{lemma}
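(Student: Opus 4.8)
The plan is to produce an explicit linear isomorphism of $\mathbb{P}^3$ carrying the G\"opel quartic~(\ref{Kummer-Quartic}) to the G\"opel--Hudson quartic~(\ref{Goepel-Quartic}) and to read off the induced correspondence of moduli. The natural candidate is the finite Fourier (Hadamard) transform on $(\mathbb{Z}/2\mathbb{Z})^2$ indexing the four coordinates, followed by a diagonal rescaling. Concretely, I would substitute
\begin{equation}
 P = w+x+y+z, \quad Q = w+x-y-z, \quad R = w-x+y-z, \quad S = w-x-y+z
\end{equation}
into $\Phi^2 - 4\delta^2\,SPQR$. A short computation shows that this substitution diagonalizes the quadric, $\Phi = a\,w^2 + b\,x^2 + c\,y^2 + d\,z^2$ with $a = 2(2-\alpha-\beta-\gamma)$, $b = 2(2+\alpha-\beta+\gamma)$, $c = 2(2+\alpha+\beta-\gamma)$, $d = 2(2-\alpha+\beta+\gamma)$, while $SPQR$ becomes the fully symmetric quartic $\sum w^4 - 2\sum_{i<j} w_i^2 w_j^2 + 8\,wxyz$. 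Thus all of the ``non-symmetric'' monomials are eliminated at once, and the image already has the shape of~(\ref{Eq:QuarticSurfaces12}).

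The key point that makes the matching possible is the factorization, valid precisely because of the constraint~(\ref{paramG}):
\begin{equation}
\begin{split}
 a^2 - 4\delta^2 &= 4(2-\alpha)(2-\beta)(2-\gamma), \quad b^2-4\delta^2 = 4(2+\alpha)(2-\beta)(2+\gamma),\\
 c^2 - 4\delta^2 &= 4(2+\alpha)(2+\beta)(2-\gamma), \quad d^2-4\delta^2 = 4(2-\alpha)(2+\beta)(2+\gamma).
\end{split}
\end{equation}
These are the coefficients of $w^4,x^4,y^4,z^4$ in the transformed equation; since they are generically unequal, I would then apply a diagonal rescaling $w \mapsto \mu_1 w$, etc., with $\mu_i^4$ chosen to equalize them. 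Using the companion identities $2ad+8\delta^2 = -8(2-\alpha)(2\alpha+\beta\gamma)$ and $2bc+8\delta^2 = 8(2+\alpha)(2\alpha+\beta\gamma)$ (and their analogues), the common factors $(2\pm\alpha)$ cancel against the square roots coming from the $\mu_i$, and the two coefficients in each partition-pair $w^2z^2+x^2y^2$, $w^2x^2+y^2z^2$, $w^2y^2+x^2z^2$ become equal after a suitable choice of the signs of the $\mu_i^2$. This yields exactly the G\"opel--Hudson form with moduli (up to the common normalization)
\begin{equation}
 A = \frac{2(2\alpha+\beta\gamma)}{\sqrt{(4-\beta^2)(4-\gamma^2)}}, \quad
 B = \frac{2(2\beta+\alpha\gamma)}{\sqrt{(4-\alpha^2)(4-\gamma^2)}}, \quad
 C = \frac{2(2\gamma+\alpha\beta)}{\sqrt{(4-\alpha^2)(4-\beta^2)}},
\end{equation}
and $D^2 = 16\delta^4/\bigl((4-\alpha^2)(4-\beta^2)(4-\gamma^2)\bigr)$ read off from the $wxyz$-coefficient.

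The final and most delicate step is to check that this moduli correspondence carries the constraint~(\ref{paramG}) to the constraint~(\ref{paramGH}); that is, one must verify the polynomial identity
\begin{equation}
 A^2 + B^2 + C^2 + ABC - 4 = \frac{16\delta^4}{(4-\alpha^2)(4-\beta^2)(4-\gamma^2)}
\end{equation}
after clearing denominators and substituting $\delta^2 = \alpha^2+\beta^2+\gamma^2+\alpha\beta\gamma-4$. I expect this to be the main obstacle: it is a genuine algebraic identity, best confirmed with a computer algebra system, as is the bookkeeping needed to guarantee that the sign branches of the $\mu_i$ can be fixed once and for all so that all three partition-pairs match simultaneously (a direct check shows each pair matches only up to a sign that must be absorbed consistently). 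Once the identity is established, the constructed map is an honest linear isomorphism for generic moduli, where the factors $(4-\alpha^2)$, $(4-\beta^2)$, $(4-\gamma^2)$ are nonzero; composing with Proposition~\ref{lem:KumNF}, which identifies the G\"opel quartic with the Cassels--Flynn quartic and hence with a singular Kummer variety, then shows that the G\"opel--Hudson quartic is likewise isomorphic to the singular Kummer variety of a principally polarized abelian surface, completing the proof.
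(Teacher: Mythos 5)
Your transformation is, up to the relabeling $R\leftrightarrow S$, exactly the one the paper uses: composing the Hadamard substitution with the diagonal rescaling $\operatorname{diag}(\mu_1,\mu_2,\mu_3,\mu_4)$ gives precisely the paper's map $P=w_0w+x_0x+y_0y+z_0z$, etc., with $(w_0,x_0,y_0,z_0)=(\mu_1,\mu_2,\mu_3,\mu_4)$. The difference is organizational: the paper parametrizes \emph{both} moduli quadruples by the auxiliary point $[w_0:x_0:y_0:z_0]$ via Equations~(\ref{Eq:paramABCD}) and~(\ref{KummerParameter_c}), which sidesteps the square-root branch bookkeeping, whereas you work forward from $(\alpha,\beta,\gamma,\delta)$ and must chase signs. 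Your intermediate identities ($a^2-4\delta^2=4(2-\alpha)(2-\beta)(2-\gamma)$, $2ad+8\delta^2=-8(2-\alpha)(2\alpha+\beta\gamma)$, and their analogues) are all correct.

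There is, however, one concrete error that would derail the ``final and most delicate step'' exactly as you have set it up. Matching the $w^2z^2$ and $x^2y^2$ coefficients requires $\mu_1^2\mu_4^2\,(2-\alpha)=-\mu_2^2\mu_3^2\,(2+\alpha)$; multiplying both sides by $\mu_2^2\mu_3^2(2+\alpha)$ and normalizing the common quartic coefficient to $1$ forces
\[
(\mu_1\mu_2\mu_3\mu_4)^2=\mu_1^2\mu_2^2\mu_3^2\mu_4^2=-\frac{1}{16\,(4-\alpha^2)(4-\beta^2)(4-\gamma^2)}\,,
\]
i.e.\ the \emph{negative} square root of $\prod_i\mu_i^4$. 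The $wxyz$-coefficient then yields $D^2=-16\delta^4/\bigl((4-\alpha^2)(4-\beta^2)(4-\gamma^2)\bigr)$, not $+$. With your sign, the identity you propose to verify is false: at $\alpha=\beta=0$ your version reads $\gamma^2-4=4-\gamma^2$, so the CAS check you defer to would fail and the proof would not close. With the corrected sign of $D^2$, and with the sign of $ABC$ that the same consistency conditions force (one finds $\epsilon_{14}\epsilon_{12}\epsilon_{13}=-1$, so $ABC=-8(2\alpha+\beta\gamma)(2\beta+\alpha\gamma)(2\gamma+\alpha\beta)/\prod(4-\ast^2)$ relative to a fixed branch), the identity does hold --- at $\alpha=\beta=0$ both sides equal $\gamma^2-4$ --- and the argument goes through for generic moduli where $4-\alpha^2$, $4-\beta^2$, $4-\gamma^2$ are nonzero; composing with Proposition~\ref{lem:KumNF} then finishes the proof as you indicate.
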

\begin{proof}
We first introduce complex numbers $[w_0:x_0:y_0:z_0] \in \mathbb{P}^3$ such that
\begin{gather}
\nonumber
  A   = \frac{w_0^4 - x_0^4-y_0^4+z_0^4}{w_0^2 z_0^2 - x_0^2 y_0^2}, \quad
  B  = \frac{w_0^4 + x_0^4-y_0^4-z_0^4}{w_0^2 x_0^2 - y_0^2 z_0^2}, \quad
  C  = \frac{w_0^4 - x_0^4+y_0^4-z_0^4}{w_0^2 y_0^2 - x_0^2 z_0^2}, \\
\label{Eq:paramABCD}
   D  = \frac{w_0 x_0 y_0 z_0 \prod_{\epsilon, \epsilon' \in \lbrace \pm1 \rbrace} (w_0^2 + \epsilon x_0^2+ \epsilon' y_0^2+ \epsilon \epsilon' z_0^2)}{(w_0^2 z_0^2 - x_0^2 y_0^2)
   (w_0^2 x_0^2 - y_0^2 z_0^2)(w_0^2 y_0^2 - x_0^2 z_0^2)}, 
 \end{gather}
and
\begin{gather}
\nonumber
  \alpha  = 2 \, \frac{w_0^2 \, y_0^2 + x_0^2 \, z_0^2}{w_0^2 \, y_0^2 - x_0^2 \, z_0^2}, \quad
  \beta  = 2 \, \frac{w_0^2 \, x_0^2 + y_0^2 \, z_0^2}{w_0^2 \, x_0^2 - y_0^2 \, z_0^2}, \quad
  \gamma  =  2\, \frac{w_0^2 \, z_0^2 + x_0^2 \, y_0^2}{w_0^2 \, z_0^2 - x_0^2 \, y_0^2},\\
\label{KummerParameter_c}
  \delta^2 = 16 \, \frac{w_0^4 x_0^4 y_0^4 z_0^4 \, \prod_{\epsilon, \epsilon' \in \lbrace \pm1 \rbrace} (w_0^2 + \epsilon x_0^2+ \epsilon' y_0^2+ \epsilon \epsilon' z_0^2)}{\big(w_0^2 \, y_0^2 - x_0^2 \, z_0^2\big)^2 \, \big(w_0^2 \, x_0^2 - y_0^2 \, z_0^2\big)^2 \, \big(w_0^2 \, z_0^2 - x_0^2 \, y_0^2\big)^2}.
  \end{gather}
Then, the linear transformation given by
\begin{equation}
\label{eq:LinearTransfo}
\begin{split}
 P & =  w_0 \, w + x_0 \, x + y_0 \, y+ z_0 \, z \;,\\
 Q & =  w_0 \, w + x_0 \, x - y_0 \, y- z_0 \, z \;,\\
 R & =  w_0 \, w  - x_0 \, x - y_0 \, y+ z_0 \, z \;,\\
 S & =  w_0 \, w - x_0 \, x + y_0 \, y- z_0 \, z \;,
\end{split}
\end{equation} 
transforms Equation~(\ref{Kummer-Quartic}) into Equation~(\ref{Goepel-Quartic}). 
\end{proof}
\begin{lemma}
\label{lem:16nodes}
Using the same notation as in Equation~(\ref{Eq:paramABCD}), the sixteen nodes of the G\"opel-Hudson quartic~(\ref{Goepel-Quartic}) are the points $[w:x:y:z]$ given by
\begin{gather*}
[w_0:x_0:y_0:z_0], [-w_0:-x_0:y_0:z_0], [-w_0:x_0:-y_0:z_0], [-w_0:x_0:y_0:-z_0],\\
[x_0:w_0:z_0:y_0], [-x_0:-w_0:z_0:y_0], [-x_0:w_0:-z_0:y_0], [-x_0:w_0:z_0:-y_0],\\
[y_0:z_0:w_0:x_0], [-y_0:-z_0:w_0:x_0], [-y_0:z_0:-w_0:x_0], [-y_0:z_0:w_0:-x_0],\\
[z_0:y_0:x_0:w_0], [-z_0:-y_0:x_0:w_0], [-z_0:y_0:-x_0:w_0], [-z_0:y_0:x_0:-w_0].
\end{gather*}
In particular, for generic parameters $(A,B,C,D)$, no node is contained in the coordinate planes $w=0$, $x=0$, $y=0$, or $z=0$.
\end{lemma}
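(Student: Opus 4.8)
The plan is to exploit the order-sixteen symmetry group $G$ of the G\"opel-Hudson quartic recorded in the Remark following Equation~(\ref{Goepel-Quartic}): $G$ is generated by the projective even sign changes $[w:x:y:z]\mapsto[-w:-x:y:z]$ and $[w:x:y:z]\mapsto[-w:x:-y:z]$, together with the double transpositions $[w:x:y:z]\mapsto[x:w:z:y]$ and $[w:x:y:z]\mapsto[y:z:w:x]$. First I would check that the sixteen listed points constitute exactly one $G$-orbit, namely the orbit of $[w_0:x_0:y_0:z_0]$. The four double transpositions form a Klein four-group and carry $[w_0:x_0:y_0:z_0]$ to the four ``leading'' points $[w_0:x_0:y_0:z_0]$, $[x_0:w_0:z_0:y_0]$, $[y_0:z_0:w_0:x_0]$, $[z_0:y_0:x_0:w_0]$, each of which then acquires three further images under the (projectively four-element) group of even sign changes; these are precisely the rows of the displayed list. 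Since $G$ preserves the surface and hence permutes its singular locus, it suffices to prove that the single point $[w_0:x_0:y_0:z_0]$ is a node.

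The main computational step is to verify that $[w_0:x_0:y_0:z_0]$ is a singular point of $F=0$, where $F$ denotes the left-hand side of Equation~(\ref{Goepel-Quartic}). Writing the node conditions out, one must show that $F$ and its four partial derivatives vanish at this point after inserting the explicit values~(\ref{Eq:paramABCD}) of $A,B,C,D$; for example $\partial_w F = 4w^3 + 2Dxyz - 2Awz^2 - 2Bwx^2 - 2Cwy^2$ must satisfy $2w_0^3 + Dx_0y_0z_0 - Aw_0z_0^2 - Bw_0x_0^2 - Cw_0y_0^2 = 0$, and symmetrically for $\partial_x, \partial_y, \partial_z$. Clearing the common denominators $w_0^2z_0^2-x_0^2y_0^2$, $w_0^2x_0^2-y_0^2z_0^2$, $w_0^2y_0^2-x_0^2z_0^2$ appearing in $A,B,C,D$ turns each of these into a polynomial identity in $w_0,x_0,y_0,z_0$, which I expect to be the crux of the argument and would dispatch by a direct computer-algebra check. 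Along the way one uses that the parametrization~(\ref{Eq:paramABCD}) was built in Lemma~\ref{lem:GH} so that the constraint~(\ref{paramGH}) holds identically, which is what makes these identities close up.

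With the node at $[w_0:x_0:y_0:z_0]$ in hand, invariance under $G$ immediately promotes all sixteen orbit points to nodes. To see these are all the nodes, I would argue that for generic $w_0,x_0,y_0,z_0$ the stabilizer in $G$ is trivial, so the orbit has full size sixteen; any coincidence among the listed points would force a nontrivial polynomial relation among $w_0,x_0,y_0,z_0$ and hence can only occur on a proper closed subset of parameter space. Since a quartic surface in $\mathbb{P}^3$ carries at most sixteen nodes, as recalled in the discussion of the Cassels-Flynn quartic, these sixteen distinct nodes exhaust the singular locus.

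Finally, for the concluding assertion I would observe that, from the displayed list, a node lies on one of the coordinate planes $w=0$, $x=0$, $y=0$, $z=0$ precisely when one of $w_0,x_0,y_0,z_0$ vanishes. The assignment $[w_0:x_0:y_0:z_0]\mapsto(A,B,C,D)$ of~(\ref{Eq:paramABCD}) dominates the hypersurface~(\ref{paramGH}), while each locus $w_0=0$, $x_0=0$, $y_0=0$, $z_0=0$ maps to a proper closed subset of that hypersurface; hence for generic $(A,B,C,D)$ every coordinate of the base point is nonzero and no node meets a coordinate plane.
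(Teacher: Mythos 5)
The paper gives no proof of Lemma~\ref{lem:16nodes} --- it is stated bare and implicitly left as a direct computation --- so your write-up supplies an argument rather than diverging from an existing one. Your structure is the efficient way to organize that verification: the sixteen listed points are exactly the orbit of $[w_0:x_0:y_0:z_0]$ under the order-sixteen projective symmetry group recorded in the remark after Equation~(\ref{Goepel-Quartic}) (the Klein four-group of double transpositions composed with the even sign changes), this group preserves the surface and hence permutes its singular locus, so everything reduces to the single polynomial check that $F$ and its four partials vanish at $[w_0:x_0:y_0:z_0]$ after substituting~(\ref{Eq:paramABCD}); that identity, which as you note closes up only because the constraint~(\ref{paramGH}) is built into the parametrization, is precisely the computation the authors suppress. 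One place to tighten the endgame: the ``at most sixteen'' bound you invoke is a bound on ordinary double points, whereas a priori your sixteen points are merely singular points of unspecified type; to conclude that they are nodes and that they exhaust the singular locus you should appeal to Lemma~\ref{lem:GH}, which for generic parameters satisfying~(\ref{paramGH}) identifies the surface with a singular Kummer variety, hence a quartic with exactly sixteen singularities, all nodes. Your genericity arguments --- trivial stabilizer forcing the orbit to have full size sixteen, and the loci $w_0=0,\dots,z_0=0$ mapping to proper closed subsets of the hypersurface~(\ref{paramGH}) --- are sound and correctly deliver the final clause of the lemma.
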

We also set
\begin{gather}
\label{KummerParameter_thetas}
p_0^2 = w_0^2 + x_0^2+ y_0^2+z_0^2, \quad q_0^2 = w_0^2 + x_0^2- y_0^2-z_0^2,\\
\nonumber
r_0^2 = w_0^2 - x_0^2+ y_0^2-z_0^2, \quad s_0^2 = w_0^2 - x_0^2- y_0^2+z_0^2.
\end{gather}
We have the following:
\begin{lemma}
\label{lem:16nodesG}
The nodes of the G\"opel quartic~(\ref{Kummer-Quartic}) are the points $[P:Q:R:S]$ given by
\begin{gather*}
[p_0^2:q_0^2:r_0^2:s_0^2], [q_0^2:p_0^2:s_0^2:r_0^2], [r_0^2:s_0^2:r_0^2:q_0^2], [s_0^2:r_0^2:q_0^2:p_0^2], \\
[w_0x_0+y_0z_0	:w_0x_0-y_0z_0	:0				:0],				[w_0x_0-y_0z_0	:w_0x_0+y_0z_0	:0				:0], \\
[0				:0				:w_0x_0+y_0z_0	:w_0x_0-y_0z_0], 	[0				:0				:w_0x_0-y_0z_0	:w_0x_0+y_0z_0],\\
[w_0z_0+x_0y_0	:0				:w_0z_0-x_0y_0	:0],				[0				:w_0z_0+x_0y_0	:0				:w_0z_0-x_0y_0], \\
[w_0z_0-x_0y_0	:0				:w_0z_0+x_0y_0	:0],				[0				:w_0z_0-x_0y_0	:0				:w_0z_0+x_0y_0], \\
[w_0y_0+x_0z_0	:0				:0				:w_0y_0-x_0z_0],	[0				:w_0y_0+x_0z_0	:w_0y_0-x_0z_0	:0], \\
[0				:w_0y_0-x_0z_0	:w_0y_0+x_0z_0	:0],				[w_0y_0-x_0z_0	:0				:0				:w_0y_0+x_0z_0].
\end{gather*}
\end{lemma}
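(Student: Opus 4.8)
The plan is to obtain the sixteen nodes of the G\"opel quartic as the images, under the linear change of coordinates already constructed in Lemma~\ref{lem:GH}, of the sixteen nodes of the G\"opel-Hudson quartic listed in Lemma~\ref{lem:16nodes}. By Lemma~\ref{lem:GH} the transformation~(\ref{eq:LinearTransfo}) is a linear isomorphism of $\mathbb{P}^3$ carrying the G\"opel quartic~(\ref{Kummer-Quartic}) onto the G\"opel-Hudson quartic~(\ref{Goepel-Quartic}); since a linear isomorphism preserves the singularity type of every point, it maps the nodes of one surface bijectively onto the nodes of the other. Because~(\ref{eq:LinearTransfo}) expresses $P,Q,R,S$ directly as linear forms in $w,x,y,z$, I would simply substitute each of the sixteen nodes $[w:x:y:z]$ from Lemma~\ref{lem:16nodes} into these four forms and read off the resulting point $[P:Q:R:S]$.

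First I would treat the four nodes with all coordinates nonzero, i.e. the orbit of $[w_0:x_0:y_0:z_0]$ under the sign changes $[-w_0:-x_0:y_0:z_0]$, $[-w_0:x_0:-y_0:z_0]$, $[-w_0:x_0:y_0:-z_0]$. Substituting $[w_0:x_0:y_0:z_0]$ into~(\ref{eq:LinearTransfo}) yields $P=w_0^2+x_0^2+y_0^2+z_0^2$ together with the three analogous sign-patterns for $Q,R,S$; comparing with the definitions~(\ref{KummerParameter_thetas}) of $p_0^2,q_0^2,r_0^2,s_0^2$ identifies this image with the point whose homogeneous coordinates are $p_0^2,q_0^2,r_0^2,s_0^2$ in the appropriate order. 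The three sign-changed nodes produce permutations of these same four quantities by an identical computation, the overall sign being irrelevant in $\mathbb{P}^3$, and thus fill out the first row of the table.

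Next I would handle the twelve remaining nodes, which are obtained from the first four by permuting the entries of $[w_0:x_0:y_0:z_0]$, for instance $[x_0:w_0:z_0:y_0]$. Here the mixed products cause two of the four linear forms in~(\ref{eq:LinearTransfo}) to vanish identically while the other two collapse to $2(w_0x_0+y_0z_0)$ and $2(w_0x_0-y_0z_0)$, whose common factor $2$ drops out projectively. Carrying this out for all twelve permuted nodes yields exactly the twelve points with two vanishing entries and bi-monomial nonzero entries $w_0x_0\pm y_0z_0$, $w_0z_0\pm x_0y_0$, and $w_0y_0\pm x_0z_0$ recorded in the last three blocks of the statement.

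The computation is entirely mechanical and requires no new idea beyond the isomorphism of Lemma~\ref{lem:GH}, so the only genuine difficulty is bookkeeping: for each permuted node one must track which pair among $P,Q,R,S$ survives and confirm that the plus/minus pattern of the surviving bi-monomial coordinates agrees with the tabulated entries, and similarly one must pin down the precise ordering of $p_0^2,q_0^2,r_0^2,s_0^2$ in the first row against the definitions~(\ref{KummerParameter_thetas}). I expect this label-matching, rather than any conceptual point, to be the sole source of potential error, and it is readily verified by a short direct substitution into~(\ref{eq:LinearTransfo}) (or with a computer algebra system).
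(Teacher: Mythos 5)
Your approach is correct and is essentially the only natural one: the paper states Lemma~\ref{lem:16nodesG} without proof, and transporting the sixteen nodes of Lemma~\ref{lem:16nodes} through the linear isomorphism~(\ref{eq:LinearTransfo}) of Lemma~\ref{lem:GH} is exactly the computation the statement rests on. One caution that vindicates your remark about bookkeeping: actually performing the substitution gives, for the node $[w_0:x_0:y_0:z_0]$, the image $[p_0^2:q_0^2:s_0^2:r_0^2]$ (since $R=w_0^2-x_0^2-y_0^2+z_0^2=s_0^2$ and $S=w_0^2-x_0^2+y_0^2-z_0^2=r_0^2$ by~(\ref{KummerParameter_thetas})), so the $R$- and $S$-entries of the first four tabulated points come out transposed relative to the printed statement, whose third point $[r_0^2:s_0^2:r_0^2:q_0^2]$ is in any case a typo; the remaining twelve points match the table exactly, up to reordering and an overall projective sign.
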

\subsection{$(2,2)$-polarized Kummer surfaces}
We use the G\"opel and G\"opel-Hudson quartics to provide an explicit model for $(2,2)$-isogenous Kummer surfaces.
\begin{remark}
The transformation $[w:x:y:z] \to [-w:x:y:z]$ is an isomorphism between the G\"opel-Hudson quartic with moduli parameters $(A, B, C, D)$ and the one with parameters $(A, B, C, -D)$. Moreover, the two quartics coincide exactly along the coordinate planes $w=0$, $x=0$, $y=0$, or $z=0$.
\end{remark}
\par The map $\pi: \mathbb{P}(w,x,y,z) \to \mathbb{P}(P,Q,R,S)$ with $P=w^2, \dots, S=z^2$ is $8:1$ outside the coordinate planes. The map $\pi$ induces a covering of a reducible octic surface in $\mathbb{P}^3$ given by
\begin{equation}
\begin{split}
\left(\Psi - 2 D wxyz\right) \, \left(\Psi + 2 D wxyz\right) = 0 \;,
 \end{split} 
\end{equation}
with
\begin{gather*}
\Psi = w^4 + x^4 + y^4 + z^4 - A \big( w^2 y^2 + y^2 z^2 \big) 
- B\big( w^2z^2 + x^2y^2 \big) - C  \big(w^2x^2+y^2z^2\big),
\end{gather*}
onto the G\"opel quartic in Equation~(\ref{Kummer-Quartic}) with $\alpha=A$, $\beta=B$, $C=\gamma$, and $D=\delta$. Because of Lemma~\ref{lem:GH} and Lemma~\ref{lem:KumNF}, we can assume that the G\"opel-Hudson quartic and G\"opel quartic are the singular Kummer varieties associated with two principally polarized abelian varieties, say $\mathbf{A}$ and $\mathbf{A}'$, respectively. We have the following:
\begin{proposition}
\label{prop:RatMap}
The map $\pi: \mathbb{P}(w,x,y,z) \to \mathbb{P}(P,Q,R,S)$ with $P=w^2, \dots, S=z^2$ restricted to the G\"opel-Hudson quartic onto the G\"opel quartic with $\alpha=A$, $\beta=B$, $C=\gamma$, and $D=\delta$, is induced by a $(2,2)$-isogeny $\psi: \mathbf{A} \to \mathbf{A}'\cong \hat{\mathbf{A}}$.
\end{proposition}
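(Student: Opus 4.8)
The plan is to pass to Theta coordinates on both quartics and thereby recognize the squaring map $\pi$ as the morphism of Kummer surfaces induced by the analytic duplication $z\mapsto 2z$, which is a $(2,2)$-isogeny $\mathbf{A}\to\hat{\mathbf{A}}$; Lemma~\ref{lem:decomp} then supplies $\mathbf{A}'\cong\hat{\mathbf{A}}$. First I would identify the coordinates $[w:x:y:z]$ on the G\"opel--Hudson quartic with the level-two Theta functions of $\mathbf{A}$. Putting $[w_0:x_0:y_0:z_0]=[\Theta_1:\Theta_2:\Theta_3:\Theta_4]$, the Theta null point of level two of $\mathbf{A}_\tau$, the doubling formulas~(\ref{Eq:degree2doubling}) and~(\ref{Eq:degree2doublingR}) rewrite the moduli~(\ref{Eq:paramABCD}) and the quantities~(\ref{KummerParameter_thetas}) as rational functions of the even Theta constants; in particular $p_0^2,q_0^2,r_0^2,s_0^2$ become $\theta_1^2,\theta_2^2,\theta_3^2,\theta_4^2$. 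Since the node of~(\ref{Goepel-Quartic}) at the origin is $[w_0:x_0:y_0:z_0]$ by Lemma~\ref{lem:16nodes} and $\varphi_{\mathcal{L}^2}$ sends the origin of $\mathbf{A}$ to the level-two Theta null point, the consistent choice is
\[
 [w:x:y:z]=[\Theta_1(2z):\Theta_2(2z):\Theta_3(2z):\Theta_4(2z)],
\]
the basis of sections of $\mathcal{L}^2$ singled out in Remark~\ref{fact:sections1}. A direct substitution of $P=w^2,\dots,S=z^2$ into~(\ref{Goepel-Quartic}) then reproduces the G\"opel quartic~(\ref{Kummer-Quartic}) with $\alpha=A$, $\beta=B$, $\gamma=C$, $\delta=D$, so $\pi$ really does carry one surface onto the other.

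The core step is to read the squared coordinates as level-two Theta functions of the isogenous surface. Unwinding the convention~(\ref{Eqn:Theta_short}) of the doubled-modular Theta functions as the row-swapped characteristics of $\theta_1,\theta_5,\theta_7,\theta_8$, I would record
\[
 \Theta_1(z)=\theta_1(z,2\tau),\quad \Theta_2(z)=\theta_3(z,2\tau),\quad \Theta_3(z)=\theta_4(z,2\tau),\quad \Theta_4(z)=\theta_2(z,2\tau),
\]
so that, up to the fixed permutation $\sigma=(2\,3\,4)$ of indices, the $\Theta_i$ are exactly the level-$(2,2)$ Theta functions of the abelian surface $\hat{\mathbf{A}}$ with period matrix $(\mathbb{I}_2,2\tau)$. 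Hence
\[
 [w^2:x^2:y^2:z^2]=[\theta_1(2z,2\tau)^2:\theta_3(2z,2\tau)^2:\theta_4(2z,2\tau)^2:\theta_2(2z,2\tau)^2],
\]
and by Remark~\ref{fact:sections1} applied to $\hat{\mathbf{A}}$ the functions $\theta_j(\,\cdot\,,2\tau)^2$ are the sections of the second tensor power of the polarization of $\hat{\mathbf{A}}$ that realize $\operatorname{Kum}(\hat{\mathbf{A}})$, here evaluated at $w=2z$. Because $z\mapsto 2z$ is a well-defined homomorphism from $\mathbf{A}=\mathbb{C}^2/\langle\mathbb{Z}^2\oplus\tau\mathbb{Z}^2\rangle$ onto $\hat{\mathbf{A}}=\mathbb{C}^2/\langle\mathbb{Z}^2\oplus 2\tau\mathbb{Z}^2\rangle$ with kernel the G\"opel group $\mathsf{K}=\tfrac12\mathbb{Z}^2/\mathbb{Z}^2$, the displayed identity says precisely that, under the two Theta embeddings and the coordinate permutation $\sigma$, the squaring map $\pi$ coincides with the morphism $\operatorname{Kum}(\mathbf{A})\to\operatorname{Kum}(\hat{\mathbf{A}})$ induced by the $(2,2)$-isogeny $\psi\colon z\mapsto 2z$. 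In particular the target G\"opel quartic is $\operatorname{Kum}(\hat{\mathbf{A}})$, whence $\mathbf{A}'\cong\hat{\mathbf{A}}$.

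To close the loop I would verify that the deck transformations of the $4:1$ squaring map, namely the even sign changes generated by $[w:x:y:z]\mapsto[-w:-x:y:z]$ and $[-w:x:-y:z]$ that preserve~(\ref{Goepel-Quartic}), act on the sections $\Theta_i(2z)$ through exactly the sign characters produced by translating $z$ by the three nontrivial half-periods in $\mathsf{K}$; this matches the deck group of $\pi$ with $\mathsf{K}=\ker\psi$ and identifies the quotient with $\mathbf{A}/\mathsf{K}=\hat{\mathbf{A}}$ as in Lemma~\ref{lem:decomp}. I expect the only genuine difficulty to be bookkeeping: one must keep the duplication of the modular variable $\tau\mapsto 2\tau$ (which defines $\hat{\mathbf{A}}$) rigorously separate from the duplication of the elliptic argument $z\mapsto 2z$ (which realizes $\psi$), and carry the permutation $\sigma$ through the computation, so as to confirm that the parameters of the target quartic are genuinely $(A,B,C,D)$ and not the parameters $(\alpha,\beta,\gamma,\delta)$ of the linearly isomorphic G\"opel model of $\operatorname{Kum}(\mathbf{A})$ appearing in Lemma~\ref{lem:GH}.
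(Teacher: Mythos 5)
Your argument is essentially correct in outline, but it takes a genuinely different route from the paper. The paper's proof is soft: it observes that $\pi$ is $4:1$ on the quartic, that it collapses the sixteen nodes of Lemma~\ref{lem:16nodes} onto four nodes of the target, and then cites Mehran's classification of rational maps between Kummer varieties to conclude that $\pi$ is induced by a degree-four isogeny whose kernel is a two-dimensional isotropic subspace of $\mathbf{A}[2]$. You instead exhibit the isogeny explicitly through the Theta parametrizations, in effect anticipating the computations the paper only performs later in Propositions~\ref{prop:ThetaImageG} and~\ref{prop:ThetaImageGH} and the theorem that follows them. What your approach buys is a concrete identification of $\ker\psi$ and a matching of the deck group of $\pi$ (the even sign changes) with translations by the half-periods in $\mathsf{K}$, which the paper's citation leaves implicit; what it costs is that every step depends on getting the Theta bookkeeping exactly right.

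And there is one concrete error in that bookkeeping: the dictionary $\Theta_2(z)=\theta_3(z,2\tau)$, $\Theta_3(z)=\theta_4(z,2\tau)$, $\Theta_4(z)=\theta_2(z,2\tau)$ is false. The convention~(\ref{Eqn:Theta_short}) swaps the rows of the characteristic, so the duals of $\theta_2,\theta_3,\theta_4$ (zero top row) are $\theta_8,\theta_5,\theta_7$ (zero bottom row); one has $[\Theta_1:\Theta_2:\Theta_3:\Theta_4](z)=[\theta_1:\theta_8:\theta_5:\theta_7](z,2\tau)$, exactly as the paper states when it says $\theta_1,\theta_5,\theta_7,\theta_8$ play the role dual to $\theta_1,\dots,\theta_4$ (similarly $(p_0^2,q_0^2,r_0^2,s_0^2)=(\theta_1^2,\theta_2^2,\theta_4^2,\theta_3^2)$, with $3$ and $4$ transposed). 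Consequently $[w^2:x^2:y^2:z^2]$ is \emph{not} the quadruple $[\theta_j(2z,2\tau)^2]_{j=1,\dots,4}$ named in Remark~\ref{fact:sections1}, and you cannot invoke that remark verbatim for the basis claim. The argument survives because the squares $\theta_j(\cdot,2\tau)^2$ for $j\in\{1,5,7,8\}$ are still sections of $\hat{\mathcal{L}}^2$ on $\hat{\mathbf{A}}=\mathbf{A}_{2\tau}$ spanning the same four-dimensional space as $\theta_1^2,\dots,\theta_4^2$ -- this follows from the relations~(\ref{Eq:degree2doubling}) and~(\ref{Eq:degree2doublingR}) read with doubled modular variable, or from the Mumford relations of Proposition~\ref{prop:MF} -- so the squaring map is still the composition of the duplication isogeny $z\mapsto 2z$ with a Kummer embedding of $\hat{\mathbf{A}}$, merely in a different basis of $H^0(\hat{\mathbf{A}},\hat{\mathcal{L}}^2)$. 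You then still owe the verification that in this basis the image is the G\"opel quartic with parameters literally $(A,B,C,D)$ rather than some linear transform of it; the cleanest way to discharge that is the direct substitution $P=w^2,\dots,S=z^2$ into Equation~(\ref{Goepel-Quartic}) via the octic factorization displayed just before the proposition.
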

\begin{proof}
The rational map $\pi$ is 4:1 from the G\"opel-Hudson quartic onto the G\"opel quartic. The map $\pi: \mathbb{P}(w,x,y,z) \to \mathbb{P}(P,Q,R,S)$ maps the sixteen nodes on the G\"opel-Hudson quartic in Lemma~\ref{lem:16nodes} to four nodes on the G\"opel quartic. It follows from \cite{MR2804549} that the map $\pi$ between Kummer varieties associated with two principally polarized abelian varieties $\mathbf{A}$ and $\mathbf{A}'$ is induced by an isogeny $p: \mathbf{A}' \to \mathbf{A}$ of degree four. The  kernel of $p$ is a two-dimensional isotropic subspace $\mathsf{K}$ of $\mathbf{A}[2]$ such that $\mathbf{A}'=\mathbf{A}/\mathsf{K}$.
\end{proof}
\subsubsection{An explicit model using Theta functions}
\label{computation1}
We provide an explicit model for the G\"opel quartic in terms of Theta functions.  We have the following:
\begin{proposition}
\label{prop:ThetaImageG}
The surface in $\mathbb{P}^3$ given by Equation~(\ref{Kummer-Quartic}) is isomorphic to the singular Kummer variety $\mathbf{A}_{\tau}/\langle -\mathbb{I} \rangle$ where the coordinates are given by
\begin{equation} 
\label{KummerVariables}
 [P:Q:R:S]=\left[\theta_1(z)^2:\theta_2(z)^2:\theta_3(z)^2:\theta_4(z)^2\right] \;,
\end{equation}
and the moduli parameters are
\begin{gather}
\nonumber
 \alpha =  \frac{\theta_1^4-\theta_2^4-\theta_3^4+\theta_4^4}{\theta_1^2\theta_4^2-\theta_2^2\theta_3^2}, \quad
 \beta  =   \frac{\theta_1^4+\theta_2^4-\theta_3^4-\theta_4^4}{\theta_1^2\theta_2^2-\theta_3^2\theta_4^2}, \quad
 \gamma  = \frac{\theta_1^4-\theta_2^4+\theta_3^4-\theta_4^4}{\theta_1^2\theta_3^2-\theta_2^2\theta_4^2},\\
\label{KummerParameter}
 \delta = \frac{ \theta_1 \theta_2 \theta_3 \theta_4 \prod_{\epsilon, \epsilon' \in \lbrace \pm1 \rbrace} (\theta_1^2 + \epsilon \theta_2^2+ \epsilon' \theta_3^2+ \epsilon \epsilon' \theta_3^2)}{\big(\theta_1^2 \, \theta_2^2 - \theta_3^2 \, \theta_4^2\big) \, \big(\theta_1^2 \, \theta_3^2 - \theta_2^2 \, \theta_4^2\big) \, \big(\theta_1^2 \, \theta_4^2 - \theta_2^2 \, \theta_3^2\big)}.
\end{gather}
\end{proposition}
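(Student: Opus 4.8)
The plan is to exploit the two natural bases of $H^0(\mathcal{L}^2)$ and to recognize the change of basis between them as the linear coordinate change of Lemma~\ref{lem:GH}. By Remark~\ref{fact:sections1} the assignment $z\mapsto[\theta_1(z)^2:\theta_2(z)^2:\theta_3(z)^2:\theta_4(z)^2]$ is the map $\varphi_{\mathcal{L}^2}$, so its image is the singular Kummer variety $\mathbf{A}_\tau/\langle -\mathbb{I} \rangle$, a quartic in $\mathbb{P}^3$; I take these four squares as the coordinates $[P:Q:R:S]$. The same remark shows that $\Theta_1(2z),\dots,\Theta_4(2z)$ form a second basis of $H^0(\mathcal{L}^2)$, and the second principal transformation~(\ref{Eq:Degree2doubling_z}) reads $4\,\Theta_i\,\Theta_i(2z)=P\pm Q\pm R\pm S$, which is precisely the inverse of the coordinate change~(\ref{eq:LinearTransfo}) of Lemma~\ref{lem:GH} taken with $(w_0,x_0,y_0,z_0)=(\Theta_1,\Theta_2,\Theta_3,\Theta_4)$ and with the $\Theta_i(2z)$ as the G\"opel-Hudson coordinates.

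Granting for a moment that the coordinates $[\Theta_1(2z):\dots:\Theta_4(2z)]$ put $\operatorname{Kum}(\mathbf{A}_\tau)$ into the G\"opel-Hudson form~(\ref{Goepel-Quartic}) with moduli~(\ref{Eq:paramABCD}) evaluated at $(w_0,\dots,z_0)=(\Theta_1,\dots,\Theta_4)$, Lemma~\ref{lem:GH} transports the equation through~(\ref{eq:LinearTransfo}) and shows that $[P:Q:R:S]=[\theta_1(z)^2:\dots:\theta_4(z)^2]$ satisfies the G\"opel quartic~(\ref{Kummer-Quartic}) whose parameters are given by~(\ref{KummerParameter_c}) at $(w_0,\dots)=(\Theta_1,\dots,\Theta_4)$. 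To establish the bracketed claim I would argue by symmetry: translation by the two-torsion points acts on the four second-order Theta functions $\Theta_i(2z)$ by the sign changes and permutations recorded in the Remark following the definition of the G\"opel-Hudson quartic, which forces the defining quartic into the invariant shape~(\ref{Eq:QuarticSurfaces12}); the five remaining coefficients are then pinned down by requiring that the image $[\Theta_1:\Theta_2:\Theta_3:\Theta_4]$ of the origin, a node, together with the remaining nodes listed in Lemma~\ref{lem:16nodes}, be singular points of the surface, and solving these linear conditions returns~(\ref{Eq:paramABCD}).

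It remains to rewrite the $\Theta$-moduli as the $\theta$-expressions in~(\ref{KummerParameter}). Here I would substitute the second principal transformations for constants~(\ref{Eq:degree2doubling}) and~(\ref{Eq:degree2doublingR}) into~(\ref{KummerParameter_c}); for example $\theta_5^2\theta_6^2=4(\Theta_1^2\Theta_3^2-\Theta_2^2\Theta_4^2)$ and $\theta_5^4+\theta_6^4=8(\Theta_1^2\Theta_3^2+\Theta_2^2\Theta_4^2)$, whence $\alpha=2(\Theta_1^2\Theta_3^2+\Theta_2^2\Theta_4^2)/(\Theta_1^2\Theta_3^2-\Theta_2^2\Theta_4^2)=(\theta_5^4+\theta_6^4)/(\theta_5^2\theta_6^2)$, and the Frobenius identities~(\ref{Eq:FrobeniusIdentities}) turn this into $(\theta_1^4-\theta_2^4-\theta_3^4+\theta_4^4)/(\theta_1^2\theta_4^2-\theta_2^2\theta_3^2)$; the passages for $\beta$, $\gamma$, and $\delta$ are identical in spirit. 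The constraint~(\ref{paramG}) requires no separate verification, since it has the same shape as the G\"opel-Hudson relation~(\ref{paramGH}), which the moduli satisfy by construction.

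I expect the genuine obstacle to be the bracketed step of the second paragraph: certifying that the second-order Theta coordinates yield the G\"opel-Hudson quartic with \emph{precisely} the Theta-nulls~(\ref{Eq:paramABCD}) as moduli, rather than some merely projectively equivalent Kummer quartic. The symmetry argument fixes the monomial shape for free, but isolating the five coefficients rests on the explicit node table of Lemma~\ref{lem:16nodes} and on evaluating Theta functions at the two-torsion points with the correct normalization. Alternatively one can bypass the geometry and derive the quartic identity directly from Mumford's master relation~$(R_{CH})$, at the cost of a longer but entirely formal computation.
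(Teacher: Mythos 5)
Your route is sound but genuinely different from the paper's. The paper works projectively from the Cassels--Flynn model: it picks the G\"opel tetrahedron $\{\mathsf{T}_{256},\mathsf{T}_{136},\mathsf{T}_{356},\mathsf{T}_{126}\}$, invokes the eight-trope quadratic relations of Lemma~\ref{lem:quadratics} and Proposition~\ref{prop:KummerNF} to put the quartic in the form~(\ref{KummerNormalForm}), and then solves for the scaling constants $c_1,\dots,c_4$ that turn that equation into Equation~(\ref{Kummer-Quartic}); the identification of the coordinates with $\theta_i(z)^2$ and of the moduli with~(\ref{KummerParameter_B}) falls out of that matching, after which the doubling identities~(\ref{Eq:degree2doubling}),~(\ref{Eq:degree2doublingR}) give~(\ref{KummerParameter}). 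You instead start from the linear system $|2\mathsf{\Theta}|$ (Remark~\ref{fact:sections1}), establish the G\"opel--Hudson form for the basis $\Theta_i(2z)$ by the Heisenberg symmetry plus the node conditions of Lemma~\ref{lem:16nodes}, and transport it to the basis $\theta_i(z)^2$ via the change of basis~(\ref{Eq:Degree2doubling_z}), recognized as the linear map~(\ref{eq:LinearTransfo}) of Lemma~\ref{lem:GH} at $(w_0,\dots,z_0)=(\Theta_1,\dots,\Theta_4)$; your reduction of~(\ref{KummerParameter_c}) to~(\ref{KummerParameter}) via~(\ref{Eq:degree2doublingR}) and the Frobenius identities~(\ref{Eq:FrobeniusIdentities}) checks out. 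Note that this inverts the paper's logical order --- there Proposition~\ref{prop:ThetaImageGH} is \emph{deduced from} the present statement, so you must (as you do) prove the G\"opel--Hudson step independently rather than cite it. Your approach buys conceptual transparency (the moduli are visibly the Theta nulls of the isogenous surface, and no choice of tetrahedron or trope normalization enters) at the cost of two inputs the paper never makes explicit: that translation by $\mathbf{A}_\tau[2]$ acts on $\Theta_1(2z),\dots,\Theta_4(2z)$ by exactly the signed permutations that cut out the family~(\ref{Eq:QuarticSurfaces12}), a standard theta-group fact but one you would need to state and source, and the linear-algebra verification that singularity at the Theta null point forces the coefficients~(\ref{Eq:paramABCD}). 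The paper's approach buys instead the compatibility with the trope/even-eight bookkeeping (the choice of $\Delta_{15},\Delta_{46},\Delta_{23}$ in Proposition~\ref{prop:EvenEights}) that it reuses in the $(1,2)$- and $(2,4)$-polarized sections. You have correctly identified the one real obstacle in your argument and both of your proposed ways of closing it are viable.
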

\begin{proof}
One checks that the Rosenhain tetrahedron $\{\mathsf{T}_{256}, \mathsf{T}_{136}, \mathsf{T}_{356},\mathsf{T}_{126}\}$, with tropes given by the Theta function $\theta_1(z)^2,\theta_2(z)^2,\theta_3(z)^2,\theta_4(z)^2$ is contained in three sets of eight tropes corresponding to the even eights $\Delta_{15}$, $\Delta_{46}$, $\Delta_{23}$ in Proposition~\ref{prop:EvenEights}. One can choose any of these sets of eight tropes to proceed. For example, n Lemma~\ref{lem:LinearRelation} we can choose the Rosenhain tetrahedra $\{ \mathsf{T}_{256}, \mathsf{T}_{126}, \mathsf{T}_{236},\mathsf{T}_{4}\}$ and $\{\mathsf{T}_{136}, \mathsf{T}_{356}, \mathsf{T}_{156},\mathsf{T}_{6}\}$. Then, Equation~(\ref{KummerNormalForm}) is equivalent to
\begin{equation}
\label{eqn:test2}
 \varphi^2 - 4 \, (\lambda_2-\lambda_1)^2 (\lambda_3-\lambda_1)^2 (\lambda_2-1)^2 (\lambda_3-1)^2   \mathsf{T}_{126} \mathsf{T}_{136}\mathsf{T}_{256}  \mathsf{T}_{356} \;,
\end{equation}
with 
\begin{gather*}
 \varphi = (\lambda_3-\lambda_1)^2(\lambda_2-1)^2 \mathsf{T}_{136} \mathsf{T}_{256} + (\lambda_2-\lambda_1)^2(\lambda_3-\lambda_1)^2  \mathsf{T}_{126}  \mathsf{T}_{236} \\
 - (\lambda_3-\lambda_2)^2 (\lambda_1-1)^2 \mathsf{T}_{156} \mathsf{T}_{236}.
\end{gather*}
In Proposition~\ref{prop:KummerNF} we then choose the coordinates determined by the G\"opel tetrahedron $\{\mathsf{T}_{256}, \mathsf{T}_{136}, \mathsf{T}_{356}, \mathsf{T}_{126}\}$. We choose coefficients $c_1, \dots, c_4$ with
\[
   [\mathsf{T}_{256}, \mathsf{T}_{136}, \mathsf{T}_{356}, \mathsf{T}_{126}]=[\, c_1 P\, : \, c_2 Q \, : \, c_3 R \, : \, c_4 S\, ],
\]
such that Equation~(\ref{eqn:test2}) coincides with Equation~(\ref{Kummer-Quartic}).  It turns out that the coefficients $c_1, \dots, c_4$ are rational functions in the squares of (even) Theta constants $\theta_1^2, \dots, \theta_{10}^2$, and the coordinates are then given by Theta functions with non-vanishing elliptic variables $[P:Q:R:S]=[\theta_1(z)^2:\theta_2(z)^2:\theta_3(z)^2:\theta_4(z)^2]$. In particular, setting
\begin{equation}
\label{Transfo_Irr2Kummer}
 \big\lbrack \mathsf{T}_{256}, \mathsf{T}_{136}, \mathsf{T}_{356}, \mathsf{T}_{126} \big\rbrack 
 =  \big\lbrack  \theta_2^2 \theta_3^2 \theta_4^2 \,P :  \theta_1^2 \theta_3^2 \theta_4^2 \,Q :  \theta_1^2 \theta_2^2 \theta_4^2 \, R:  \theta_1^2 \theta_2^2 \theta_3^2 \, S\big\rbrack
\end{equation}
changes Equation~(\ref{QuadraticEquation}) into Equation~(\ref{Kummer-Quartic}). Using Equations~(\ref{Eq:degree2doublingR}), the transformation~(\ref{Transfo_Irr2Kummer}) descends to $\mathcal{A}_2(2,4)$ with
\begin{equation}
\begin{split}
\label{KummerParameter_B}
 \alpha =  2\, \frac{\Theta_1^2 \Theta_3^2+\Theta_2^2\Theta_4^2}{\Theta_1^2 \Theta_3^2-\Theta_2^2\Theta_4^2}, \quad
 \beta  =   2\, \frac{\Theta_1^2 \Theta_2^2+\Theta_3^2\Theta_4^2}{\Theta_1^2 \Theta_2^2-\Theta_3^2\Theta_4^2}, \quad
 \gamma  =   2\, \frac{\Theta_1^2 \Theta_4^2+\Theta_2^2\Theta_3^2}{\Theta_1^2 \Theta_4^2-\Theta_2^2\Theta_3^2}.
\end{split}
\end{equation}
Using Equations~(\ref{Eq:degree2doubling}) the moduli parameter can be re-written 
\begin{equation}
\label{KummerParameter_b}
\begin{split}
 \alpha =  \frac{\theta_1^4-\theta_2^4-\theta_3^4+\theta_4^4}{\theta_1^2\theta_4^2-\theta_2^2\theta_3^2}, \quad
 \beta  =   \frac{\theta_1^4+\theta_2^4-\theta_3^4-\theta_4^4}{\theta_1^2\theta_2^2-\theta_3^2\theta_4^2}, \quad
 \gamma  = \frac{\theta_1^4-\theta_2^4+\theta_3^4-\theta_4^4}{\theta_1^2\theta_3^2-\theta_2^2\theta_4^2}.
\end{split}
\end{equation}
such that over $\mathcal{A}_2(4,8)$ the modulus $\delta$ is given as
\begin{equation*}
  \delta = \frac{ \theta_1 \theta_2 \theta_3 \theta_4 \prod_{\epsilon, \epsilon' \in \lbrace \pm1 \rbrace} (\theta_1^2 + \epsilon \theta_2^2+ \epsilon' \theta_3^2+ \epsilon \epsilon' \theta_3^2)}{\big(\theta_1^2 \, \theta_2^2 - \theta_3^2 \, \theta_4^2\big) \, \big(\theta_1^2 \, \theta_3^2 - \theta_2^2 \, \theta_4^2\big) \, \big(\theta_1^2 \, \theta_4^2 - \theta_2^2 \, \theta_3^2\big)}.
\end{equation*}
\end{proof}
The Rosenhain roots of the $(2,2)$-isogenous genus-two curve $\hat{\mathcal{C}}$ in Equation~(\ref{Eq:Rosenhain2}) can be considered coordinates of an isogenous moduli space that we denote by $\hat{\mathcal{A}}_2(2)$. We have the following:
\begin{lemma}
\label{lem:ModuliG}
The moduli $\alpha, \beta, \gamma, \delta$ are rational functions over $\hat{\mathcal{A}}_2(2)$.
\end{lemma}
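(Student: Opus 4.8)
The plan is to take the explicit formulas of Proposition~\ref{prop:ThetaImageG} and rewrite each modulus as an element of $\mathbb{C}(\Lambda_1,\Lambda_2,\Lambda_3)$, the function field of $\hat{\mathcal{A}}_2(2)$. By Equation~(\ref{Picard_sq}) the Rosenhain roots of $\hat{\mathcal{C}}$ are $\Lambda_1=\Theta_1^2\Theta_3^2/(\Theta_2^2\Theta_4^2)$, $\Lambda_2=\Theta_3^2\Theta_8^2/(\Theta_4^2\Theta_{10}^2)$ and $\Lambda_3=\Theta_1^2\Theta_8^2/(\Theta_2^2\Theta_{10}^2)$, so the content of the lemma is that the expressions for $\alpha,\beta,\gamma$ in Equation~(\ref{KummerParameter_B}) and for $\delta$ in Equation~(\ref{KummerParameter}) descend to this field.

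For $\alpha$ and $\gamma$ this is a one-line reduction: dividing the $\alpha$-quotient in Equation~(\ref{KummerParameter_B}) top and bottom by $\Theta_2^2\Theta_4^2$ turns it into $\alpha=2(\Lambda_1+1)/(\Lambda_1-1)$, and using $\Theta_1^2\Theta_4^2/(\Theta_2^2\Theta_3^2)=\Lambda_3/\Lambda_2$ gives $\gamma=2(\Lambda_2+\Lambda_3)/(\Lambda_3-\Lambda_2)$. The parameter $\beta$ is governed by the ratio $X:=\Theta_1^2\Theta_2^2/(\Theta_3^2\Theta_4^2)$, which is not a monomial in the $\Lambda_i$. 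To produce it I would eliminate $\Theta_8,\Theta_{10}$ using the Frobenius identities~(\ref{Eq:FrobeniusIdentities}) for the isogenous surface $\hat{\mathbf{A}}$ (i.e.\ with $\theta$ replaced by $\Theta$), namely $\Theta_8^2\Theta_{10}^2=\Theta_1^2\Theta_2^2-\Theta_3^2\Theta_4^2$ and $\Theta_8^4+\Theta_{10}^4=\Theta_1^4+\Theta_2^4-\Theta_3^4-\Theta_4^4$, together with the identity $\Theta_8^4/\Theta_{10}^4=\Lambda_2\Lambda_3/\Lambda_1$ read off from Equation~(\ref{Picard_sq}). Substituting into the second Frobenius relation, all square roots cancel and one obtains $X=(\Lambda_2-1)(\Lambda_3-\Lambda_1)/\big((\Lambda_2-\Lambda_1)(\Lambda_3-1)\big)$, so that $\beta=2(X+1)/(X-1)$ is rational in the $\Lambda_i$.

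The crux is $\delta$, since Equation~(\ref{paramG}) determines only $\delta^2$, and the explicit formula in Equation~(\ref{KummerParameter}) carries the factor $\theta_1\theta_2\theta_3\theta_4$, which is not rational in the $\Theta_i^2$ and a priori lives on a finer cover than $\hat{\mathcal{A}}_2(2)$. The key observation I would use is that the doubling relations~(\ref{Eq:degree2doubling}) identify the four combinations $\Theta_1^2\pm\Theta_2^2\pm\Theta_3^2\pm\Theta_4^2$ with $\theta_1^2,\dots,\theta_4^2$; factoring $\theta_1^2\theta_2^2\theta_3^2\theta_4^2$ accordingly and comparing with the $\hat{\mathbf{A}}$-Frobenius data yields $\theta_1\theta_2\theta_3\theta_4=\pm(\Theta_8^4-\Theta_{10}^4)$. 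Dividing by $\Theta_8^4+\Theta_{10}^4=\Theta_1^4+\Theta_2^4-\Theta_3^4-\Theta_4^4$ and using $\Theta_8^4/\Theta_{10}^4=\Lambda_2\Lambda_3/\Lambda_1$ gives the decisive relation $\theta_1\theta_2\theta_3\theta_4/(\Theta_1^4+\Theta_2^4-\Theta_3^4-\Theta_4^4)=\pm(\Lambda_2\Lambda_3-\Lambda_1)/(\Lambda_2\Lambda_3+\Lambda_1)\in\mathbb{C}(\Lambda_i)$. The complementary factor of $\delta$ is homogeneous of degree zero in the $\Theta_i^2$, and after extracting the common factor $\Theta_1^2\Theta_2^2\Theta_3^2\Theta_4^2$ its numerator and denominator involve only even combinations of the $\Theta_i$, hence are rational in the ratios $\Lambda_1$, $\Lambda_3/\Lambda_2$, $X$ already shown to lie in $\mathbb{C}(\Lambda_i)$. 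Multiplying the two factors shows $\delta\in\mathbb{C}(\Lambda_1,\Lambda_2,\Lambda_3)$.

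I expect the hard part to be exactly this control over $\delta$: isolating the single genuine square root $\theta_1\theta_2\theta_3\theta_4$, recognizing it as $\pm(\Theta_8^4-\Theta_{10}^4)$, and checking that every other ingredient is sign-independent. In practice I would perform the two eliminations symbolically and record the resulting rational functions of $\Lambda_1,\Lambda_2,\Lambda_3$, cross-checking the answer against the constraint $\delta^2=\alpha^2+\beta^2+\gamma^2+\alpha\beta\gamma-4$ of Equation~(\ref{paramG}), which is the verification style already used for the companion statements in this section.
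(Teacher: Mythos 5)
Your final formulas agree with the paper's Equations~(\ref{KummerParameter2}) (for $\delta$ up to the sign, which is immaterial since only $\delta^2$ is constrained by Equation~(\ref{paramG})), and your overall strategy -- eliminate the theta constants in favour of the Rosenhain roots $\Lambda_i$ -- is exactly what the paper does; the paper simply records the resulting rational functions without showing the elimination, so your write-up supplies the computation the paper omits. One step deserves more care than you give it: the identities you invoke, $\Theta_8^2\Theta_{10}^2=\Theta_1^2\Theta_2^2-\Theta_3^2\Theta_4^2$ and $\Theta_8^4+\Theta_{10}^4=\Theta_1^4+\Theta_2^4-\Theta_3^4-\Theta_4^4$, are \emph{not} literally ``the Frobenius identities with $\theta$ replaced by $\Theta$,'' because by Equation~(\ref{Eqn:Theta_short}) the $\Theta_i$ carry \emph{transposed} characteristics at $2\tau$ (e.g.\ $\Theta_2$ has the characteristic of $\theta_8$, $\Theta_3$ that of $\theta_5$, $\Theta_4$ that of $\theta_7$). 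In the standard labelling at $2\tau$ your two identities read $\vartheta_2^2\vartheta_{10}^2=\vartheta_1^2\vartheta_8^2-\vartheta_5^2\vartheta_7^2$ and its quartic companion; these are valid theta relations (they lie in the $\operatorname{Sp}(4,\mathbb{F}_2)$-orbit of Equations~(\ref{Eq:FrobeniusIdentities})), but the index assignment and signs require a separate check rather than a substitution. Granting them, your eliminations are sound: the ratio $X=\Theta_1^2\Theta_2^2/(\Theta_3^2\Theta_4^2)$ does come out rational, equal to $(\Lambda_2-1)(\Lambda_3-\Lambda_1)/\bigl((\Lambda_2-\Lambda_1)(\Lambda_3-1)\bigr)$, the identification $\theta_1\theta_2\theta_3\theta_4=\pm(\Theta_8^4-\Theta_{10}^4)$ follows from the two displayed relations, and the remaining factor of $\delta$ is a degree-zero rational function of the $\Theta_i^2$ expressible through $\Lambda_1$, $\Lambda_3/\Lambda_2$ and $X$; multiplying out reproduces $\delta=\pm 4(\Lambda_1-\Lambda_2\Lambda_3)/\bigl((\Lambda_1-1)(\Lambda_3-\Lambda_2)\bigr)$ as in the paper.
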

\begin{proof}
Equations~(\ref{KummerParameter}) descend to rational functions on $\hat{\mathcal{A}}_2(2)$ given by
\begin{gather}
\nonumber
 \alpha =  2 \, \frac{\Lambda_1+1}{\Lambda_1-1},\;
 \beta =  2 \, \frac{\Lambda_1\Lambda_2+\Lambda_1\Lambda_3-2\Lambda_2\Lambda_3-2\Lambda_1+\Lambda_2+\Lambda_3}{(\Lambda_2-\Lambda_3)(\Lambda_1-1)}, \;
 \gamma = 2 \, \frac{\Lambda_3+\Lambda_2}{\Lambda_3-\Lambda_2}, \\
 \label{KummerParameter2}
\delta = \frac{4(\Lambda_1-\Lambda_2\Lambda_3)}{(\Lambda_1-1)(\Lambda_3-\Lambda_2)}.
\end{gather}
\end{proof}
\par We also provide an explicit model for the G\"opel-Hudson quartic in terms of Theta functions.  In terms of Theta functions, the relation between Equation~(\ref{Goepel-Quartic}) and Equation~(\ref{Kummer-Quartic}) was first determined by Borchardt \cite{MR1579732}.  We have the following:
\begin{proposition}
\label{prop:ThetaImageGH}
The surface in $\mathbb{P}^3$ given by Equation~(\ref{Goepel-Quartic}) is isomorphic to the singular Kummer variety $\mathbf{A}_{\tau}/\langle -\mathbb{I} \rangle$ where the coordinates are given by
\begin{equation} 
 [w:x:y:z]=[ \Theta_1(2  z): \Theta_2(2  z): \Theta_3(2  z) : \Theta_4(2  z) ] \;,
\end{equation}
and the moduli parameters are
\begin{gather}
  \nonumber
  A   = \frac{\Theta_1^4 - \Theta_2^4-\Theta_3^4+\Theta_4^4}{\Theta_1^2 \Theta_4^2 - \Theta_2^2 \Theta_3^2}, \quad
  B  = \frac{\Theta_1^4 + \Theta_2^4-\Theta_3^4-\Theta_4^4}{\Theta_1^2 \Theta_2^2 - \Theta_3^2 \Theta_4^2}, \quad
  C  = \frac{\Theta_1^4 - \Theta_2^4+\Theta_3^4-\Theta_4^4}{\Theta_1^2 \Theta_3^2 - \Theta_2^2 \Theta_4^2}, \\
  \label{KummerParameter3}
  D  = \frac{\Theta_1 \Theta_2 \Theta_3 \Theta_4 \prod_{\epsilon, \epsilon' \in \lbrace \pm1 \rbrace} (\Theta_1^2 + \epsilon \Theta_2^2+ \epsilon' \Theta_3^2+ \epsilon \epsilon' \Theta_4^2)}
  {(\Theta_1^2 \Theta_2^2 - \Theta_3^2 \Theta_4^2)(\Theta_1^2 \Theta_3^2 - \Theta_2^2 \Theta_4^2)(\Theta_1^2 \Theta_4^2 - \Theta_2^2 \Theta_3^2)}.
\end{gather}
\end{proposition}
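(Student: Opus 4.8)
The plan is to obtain this statement by combining the already-established realization of the Göpel quartic in Proposition~\ref{prop:ThetaImageG} with the explicit linear isomorphism of Lemma~\ref{lem:GH}, specialized to a single judicious choice of the seed point $[w_0:x_0:y_0:z_0]$. The conceptual key — and the one genuine idea in the argument — is to recognize that the auxiliary point $[w_0:x_0:y_0:z_0]$ appearing in Lemma~\ref{lem:GH} should be taken to be the level-two Theta null point $[\Theta_1:\Theta_2:\Theta_3:\Theta_4]$, and that with this choice the linear map~(\ref{eq:LinearTransfo}) is, up to a uniform scaling, exactly the inverse of the doubling identity~(\ref{Eq:Degree2doubling_z}). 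Everything else is then forced.

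First I would set $[w_0:x_0:y_0:z_0]=[\Theta_1:\Theta_2:\Theta_3:\Theta_4]$ and verify parameter compatibility. Under this substitution the quantities $\alpha,\beta,\gamma$ of~(\ref{KummerParameter_c}) reproduce verbatim the Göpel moduli~(\ref{KummerParameter_B}) furnished by Proposition~\ref{prop:ThetaImageG}; for instance $\alpha=2(w_0^2y_0^2+x_0^2z_0^2)/(w_0^2y_0^2-x_0^2z_0^2)$ becomes $2(\Theta_1^2\Theta_3^2+\Theta_2^2\Theta_4^2)/(\Theta_1^2\Theta_3^2-\Theta_2^2\Theta_4^2)$, and likewise for $\beta,\gamma,\delta$. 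By the identical substitution, the parameters $A,B,C,D$ defined by~(\ref{Eq:paramABCD}) collapse to precisely the expressions~(\ref{KummerParameter3}) asserted in the statement. Thus Lemma~\ref{lem:GH} applies with exactly these Theta data and guarantees that the resulting Göpel-Hudson quartic is isomorphic to the Göpel quartic of Proposition~\ref{prop:ThetaImageG}, hence to the singular Kummer variety $\mathbf{A}_{\tau}/\langle -\mathbb{I}\rangle$.

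Next I would read off the coordinate identification. Inverting~(\ref{eq:LinearTransfo}) gives $4w_0w=P+Q+R+S$, $4x_0x=P+Q-R-S$, $4y_0y=P-Q-R+S$, and $4z_0z=P-Q+R-S$. Substituting the Göpel coordinates $[P:Q:R:S]=[\theta_1(z)^2:\theta_2(z)^2:\theta_3(z)^2:\theta_4(z)^2]$ from Proposition~\ref{prop:ThetaImageG} and comparing term by term with the doubling relations~(\ref{Eq:Degree2doubling_z}) yields $w_0w=\Theta_1\Theta_1(2z)$, $x_0x=\Theta_2\Theta_2(2z)$, $y_0y=\Theta_3\Theta_3(2z)$, and $z_0z=\Theta_4\Theta_4(2z)$. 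Since $w_0=\Theta_1,\ x_0=\Theta_2,\ y_0=\Theta_3,\ z_0=\Theta_4$, the Theta-constant factors cancel uniformly across all four coordinates, leaving the claimed $[w:x:y:z]=[\Theta_1(2z):\Theta_2(2z):\Theta_3(2z):\Theta_4(2z)]$; the uniformity of this cancellation is what makes the assignment a well-defined point of $\mathbb{P}^3$.

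The main (if modest) obstacle is the internal consistency of the specialization rather than any hard computation: I must confirm that taking $[w_0:x_0:y_0:z_0]=[\Theta_1:\Theta_2:\Theta_3:\Theta_4]$ is admissible for Lemma~\ref{lem:GH}, i.e.\ that the constraint~(\ref{paramGH}) holds and that the two sets of moduli genuinely agree rather than merely resembling one another. The decisive check is that the auxiliary quantities of~(\ref{KummerParameter_thetas}) specialize to $p_0^2=\theta_1^2,\ q_0^2=\theta_2^2,\ r_0^2=\theta_3^2,\ s_0^2=\theta_4^2$ by the second principal transformation~(\ref{Eq:degree2doubling}); with this, the relations $A,B,C,D$ in terms of $\Theta_i$ match $\alpha,\beta,\gamma,\delta$ in terms of $\theta_i$ after applying~(\ref{Eq:degree2doubling}) and~(\ref{Eq:degree2doublingR}), and~(\ref{paramGH}) reduces to a routine substitution into the Frobenius-type identities~(\ref{Eq:FrobeniusIdentities}). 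Once this compatibility is recorded, the proposition follows at once.
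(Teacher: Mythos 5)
Your proposal is correct and follows essentially the same route as the paper: the paper's proof likewise specializes $[w_0:x_0:y_0:z_0]=[\Theta_1:\Theta_2:\Theta_3:\Theta_4]$ by comparing Equations~(\ref{KummerParameter_B}) and~(\ref{KummerParameter_c}), and then identifies the coordinates by matching the linear transformation~(\ref{eq:LinearTransfo}) against the inverted doubling identities~(\ref{Eq:Degree2doubling_z}). Your additional consistency checks (the specialization of~(\ref{KummerParameter_thetas}) via~(\ref{Eq:degree2doubling}) and the verification of~(\ref{paramGH})) are implicit in the paper's argument but correctly spelled out.
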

\begin{proof}
Comparing Equations~(\ref{KummerParameter_B}) and (\ref{KummerParameter_c}), we find a solution in terms of Theta function given by
\begin{equation}
 [w_0:x_0:y_0:z_0]=[\Theta_1:\Theta_2:\Theta_3:\Theta_4] \;.
 \end{equation}
Equations~(\ref{Eq:Degree2doubling_z}) are equivalent to
\begin{equation}
 \label{2isogTHETA}
\begin{split}
  \theta_1^2(z)  & = \Theta_1 \, \Theta_1(2z) + \Theta_2 \, \Theta_2(2z) + \Theta_3 \, \Theta_3(2z) + \Theta_4 \, \Theta_4(2z) ,\\
 \theta_2^2(z)  & = \Theta_1 \, \Theta_1(2z) + \Theta_2 \, \Theta_2(2z) - \Theta_3 \, \Theta_3(2z) - \Theta_4 \, \Theta_4(2z) ,\\
  \theta_3^2(z)  & =  \Theta_1 \, \Theta_1(2z) - \Theta_2 \, \Theta_2(2z) - \Theta_3 \, \Theta_3(2z) + \Theta_4 \, \Theta_4(2z) ,\\
   \theta_4^2(z)  & =  \Theta_1 \, \Theta_1(2z) - \Theta_2 \, \Theta_2(2z) + \Theta_3 \, \Theta_3(2z) - \Theta_4 \, \Theta_4(2z) .
  \end{split} 
\end{equation}
Comparing Equations~(\ref{eq:LinearTransfo}) with Equations~(\ref{2isogTHETA}), the coordinates can be expressed in terms of Theta functions with non-vanishing elliptic arguments as
$$[w:x:y:z]=[ \Theta_1(2  z): \Theta_2(2  z): \Theta_3(2  z) : \Theta_4(2  z) ] .$$
\end{proof}
It follows:
\begin{lemma}
\label{lem:ModuliGH}
The moduli parameter $A, B, C, D$ are rational functions over $\mathcal{A}_2(2)$.
\end{lemma}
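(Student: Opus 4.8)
The plan is to read Lemma~\ref{lem:ModuliGH} as the mirror image of Lemma~\ref{lem:ModuliG} under the interchange of the Theta constants $\theta_i$ and $\Theta_i$. The starting observation is that the expressions for $A,B,C,D$ in \eqref{KummerParameter3}, viewed as functions of $\Theta_1,\Theta_2,\Theta_3,\Theta_4$, are formally identical to the expressions for $\alpha,\beta,\gamma$ in \eqref{KummerParameter_b} and for $\delta$ in \eqref{KummerParameter}, viewed as functions of $\theta_1,\theta_2,\theta_3,\theta_4$. Since the proof of Lemma~\ref{lem:ModuliG} turned the latter into the rational functions \eqref{KummerParameter2} of the Rosenhain roots $\Lambda_i$ of $\hat{\mathcal C}$, running the same reduction with $\theta_i$ and $\Theta_i$ exchanged -- hence with the roots $\Lambda_i$ replaced by the roots $\lambda_i$ of $\mathcal C$ -- will express $A,B,C,D$ as the rational functions of $\lambda_1,\lambda_2,\lambda_3$ obtained from \eqref{KummerParameter2} by the substitution $\Lambda_i\mapsto\lambda_i$. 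This is legitimate because the second principal transformations \eqref{Eq:degree2doubling} form a self-inverse (Hadamard) relation, so the passage from a fourth-power expression in $\Theta$ to a cross-product expression in $\theta$ is exactly the passage used in Lemma~\ref{lem:ModuliG} read backwards.

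Concretely, I would first handle $A,B,C$. Solving \eqref{Eq:degree2doubling} for the $\Theta_i^2$ and substituting into \eqref{KummerParameter3} collapses the fourth-power numerators and quadratic denominators into the cross-product forms $A=2(\theta_1^2\theta_3^2+\theta_2^2\theta_4^2)/(\theta_1^2\theta_3^2-\theta_2^2\theta_4^2)$, $B=2(\theta_1^2\theta_2^2+\theta_3^2\theta_4^2)/(\theta_1^2\theta_2^2-\theta_3^2\theta_4^2)$, and $C=2(\theta_1^2\theta_4^2+\theta_2^2\theta_3^2)/(\theta_1^2\theta_4^2-\theta_2^2\theta_3^2)$; these are precisely \eqref{KummerParameter_B} with $\Theta_i$ replaced by $\theta_i$. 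Thomae's formula \eqref{Picard} then gives $\theta_1^2\theta_3^2/(\theta_2^2\theta_4^2)=\lambda_1$ and $\theta_1^2\theta_4^2/(\theta_2^2\theta_3^2)=\lambda_3/\lambda_2$, so that $A=2(\lambda_1+1)/(\lambda_1-1)$ and $C=2(\lambda_2+\lambda_3)/(\lambda_3-\lambda_2)$; the remaining ratio $\theta_1^2\theta_2^2/(\theta_3^2\theta_4^2)$ that controls $B$ is rational in the $\lambda_i$ by the same manipulations together with the Frobenius identities \eqref{Eq:FrobeniusIdentities}, reproducing the second entry of \eqref{KummerParameter2}. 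Every quantity is homogeneous of degree zero in the $\theta_i^2$, so the overall normalization (and the constant $R$ of Thomae) cancels.

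The delicate term, and the one I expect to require the most care, is $D$. Its numerator contains the genuinely mixed monomial $\Theta_1\Theta_2\Theta_3\Theta_4$ and the sign product $\prod_{\epsilon,\epsilon'\in\{\pm1\}}(\Theta_1^2+\epsilon\Theta_2^2+\epsilon'\Theta_3^2+\epsilon\epsilon'\Theta_4^2)$, so the purely quadratic identities \eqref{Eq:degree2doubling} no longer suffice. Here the plan is to use \eqref{Eq:degree2doublingR} in the forms $\Theta_1\Theta_3=\tfrac14(\theta_5^2+\theta_6^2)$ and $\Theta_2\Theta_4=\tfrac14(\theta_5^2-\theta_6^2)$, giving $\Theta_1\Theta_2\Theta_3\Theta_4=\tfrac1{16}(\theta_5^4-\theta_6^4)$, to recognize the sign product as $\theta_1^2\theta_2^2\theta_3^2\theta_4^2$ (it is the product of the four left-hand sides of \eqref{Eq:degree2doubling}), and to rewrite the three denominator factors as $\Theta_1^2\Theta_2^2-\Theta_3^2\Theta_4^2=\tfrac14\theta_8^2\theta_{10}^2$, $\Theta_1^2\Theta_3^2-\Theta_2^2\Theta_4^2=\tfrac14\theta_5^2\theta_6^2$, and $\Theta_1^2\Theta_4^2-\Theta_2^2\Theta_3^2=\tfrac14\theta_7^2\theta_9^2$. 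The crucial point is that the combination appearing is $\theta_5^4-\theta_6^4$ rather than the individually irrational $\theta_5^2-\theta_6^2$; this, together with the other products, is fixed by $\Gamma_2(2)$ and therefore lies in the function field $\mathbb C(\lambda_1,\lambda_2,\lambda_3)$ of $\mathcal A_2(2)$ by Thomae \eqref{Picard}, \eqref{Picard2}, and \eqref{Thomaeg=2}, yielding $D=4(\lambda_1-\lambda_2\lambda_3)/\bigl((\lambda_1-1)(\lambda_3-\lambda_2)\bigr)$. A cross-check with the Frobenius identities \eqref{Eq:FrobeniusIdentities} confirms that all square-root ambiguities cancel, which together with the formulas for $A,B,C$ shows that $A,B,C,D$ are rational over $\mathcal A_2(2)$.
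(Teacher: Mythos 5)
Your proposal is correct and takes essentially the same route as the paper: the paper's proof merely asserts that Equations~(\ref{KummerParameter3}) descend to the rational functions~(\ref{KummerParameter4}) on $\mathcal{A}_2(2)$, and your computation --- rewriting $A,B,C,D$ in terms of the $\theta_i$ via the doubling identities~(\ref{Eq:degree2doubling})--(\ref{Eq:degree2doublingR}) and then invoking Thomae's formulas~(\ref{Picard})--(\ref{Picard2}) --- is precisely the verification this requires, landing on the same formulas (including the correct treatment of $D$ via $\Theta_1\Theta_2\Theta_3\Theta_4=\tfrac{1}{16}(\theta_5^4-\theta_6^4)$). The only cosmetic slip is that the target expressions in the $\lambda_i$ are Equations~(\ref{KummerParameter4}), i.e.\ the $\lambda$-analogues of~(\ref{KummerParameter2}), rather than~(\ref{KummerParameter2}) itself.
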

\begin{proof}
Equations~(\ref{KummerParameter3}) descend to rational functions on $\mathcal{A}_2(2)$ given by
\begin{gather}
\nonumber
 A =  2 \, \frac{\lambda_1+1}{\lambda_1-1}, \quad
 B =  2 \, \frac{\lambda_1\lambda_2+\lambda_1\lambda_3-2\lambda_2\lambda_3-2\lambda_1+\lambda_2+\lambda_3}{(\lambda_2-\lambda_3)(\lambda_1-1)},  \quad
 C  = 2 \, \frac{\lambda_3+\lambda_2}{\lambda_3-\lambda_2}, \\ 
 \label{KummerParameter4}
 D = 4 \, \frac{\lambda_1-\lambda_2 \lambda_3}{(\lambda_2 - \lambda_3) (\lambda_1-1)} \;.
\end{gather}
\end{proof}
We have proved the following:
\begin{theorem}
\begin{enumerate}
\item[]
\item The singular Kummer variety associated with the Jacobian $\operatorname{Jac}(\mathcal{C})$ of the genus-two curve~$\mathcal{C}$ in Rosenhain normal form given by Equation~(\ref{Eq:Rosenhain}) is the image of $[\Theta_{1}(2z): \Theta_{2}(2z): \Theta_{3}(2z): \Theta_{4}(2z)]$ in $\mathbb{P}^3$ which satisfies Equation~(\ref{Goepel-Quartic}) with moduli parameters given by Equations~(\ref{KummerParameter4}). 
\item The singular Kummer variety associated with the Jacobian $\operatorname{Jac}(\hat{\mathcal{C}})$ of the $(2,2)$-Isogenous curve~$\hat{\mathcal{C}}$ (cf.~Lemma~\ref{lem:R2isog}) in Rosenhain normal form~(\ref{Eq:Rosenhain2}) is the image of $[\theta^2_{1}(z): \theta^2_{2}(z): \theta^2_{3}(z): \theta^2_{4}(z)]$ in $\mathbb{P}^3$ which satisfies Equation~(\ref{Kummer-Quartic}) with moduli parameters given by Equations~(\ref{KummerParameter2}).
\end{enumerate}
\end{theorem}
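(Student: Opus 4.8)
The plan is to assemble the theorem directly from the two explicit Theta-function models established in the preceding subsections, since each item is a repackaging of one proposition together with the lemma that descends its moduli parameters to the appropriate Rosenhain moduli space. Throughout I use that for a generic period point $\tau\in\mathcal{A}_2(2)$ the $\mathsf{\Theta}$-divisor of $\mathbf{A}_\tau$ is a smooth genus-two curve, so that $\mathbf{A}_\tau=\operatorname{Jac}(\mathcal{C})$ with $\mathcal{C}$ in the Rosenhain form~(\ref{Eq:Rosenhain}), and that $\hat{\mathbf{A}}=\mathbf{A}_\tau/\mathsf{K}$ with period matrix $(\mathbb{I}_2,2\tau)$ is the Jacobian $\operatorname{Jac}(\hat{\mathcal{C}})$ of the $(2,2)$-isogenous curve $\hat{\mathcal{C}}$ in~(\ref{Eq:Rosenhain2}), whose Rosenhain roots $\Lambda_1,\Lambda_2,\Lambda_3$ are the ratios~(\ref{Picard_sq}) of the doubled Theta constants $\Theta_i$.

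For part~(1) I would invoke Proposition~\ref{prop:ThetaImageGH}, which exhibits $\mathbf{A}_\tau/\langle-\mathbb{I}\rangle$ as the image of $[\Theta_1(2z):\Theta_2(2z):\Theta_3(2z):\Theta_4(2z)]$ cutting out the G\"opel--Hudson quartic~(\ref{Goepel-Quartic}), with moduli $A,B,C,D$ expressed through the Theta constants $\Theta_i$. Since $\mathbf{A}_\tau=\operatorname{Jac}(\mathcal{C})$, it only remains to descend the parameters to $\mathcal{A}_2(2)$: Lemma~\ref{lem:ModuliGH} does exactly this, rewriting $A,B,C,D$ as the rational functions~(\ref{KummerParameter4}) of the Rosenhain roots $\lambda_1,\lambda_2,\lambda_3$. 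Concatenating the two statements yields part~(1) verbatim.

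Part~(2) is the dual assembly. Proposition~\ref{prop:ThetaImageG} realizes the relevant singular Kummer variety as the image of $[\theta_1(z)^2:\theta_2(z)^2:\theta_3(z)^2:\theta_4(z)^2]$ satisfying the G\"opel quartic~(\ref{Kummer-Quartic}), and Lemma~\ref{lem:ModuliG} descends its parameters $\alpha,\beta,\gamma,\delta$ to the rational functions~(\ref{KummerParameter2}) of $\Lambda_1,\Lambda_2,\Lambda_3$. The one point requiring care is the identification of the underlying Jacobian: because the parameters are rational in the Rosenhain roots $\Lambda_i$ of $\hat{\mathcal{C}}$ -- equivalently, because under the quadratic map $\pi$ of Proposition~\ref{prop:RatMap} the G\"opel quartic is the target $\hat{\mathbf{A}}/\langle-\mathbb{I}\rangle$ of the $(2,2)$-isogeny $\psi:\mathbf{A}_\tau\to\hat{\mathbf{A}}$ -- the surface so obtained is $\operatorname{Kum}(\operatorname{Jac}(\hat{\mathcal{C}}))$. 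I would make this explicit by matching $(\alpha,\beta,\gamma,\delta)$ from~(\ref{KummerParameter2}) against the Borchardt--G\"opel parametrization of Equation~(\ref{paramG}), so that the moduli point is pinned to the curve with Rosenhain roots $\Lambda_i$.

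The routine computations -- converting the $\Theta_i$- and $\theta_i$-expressions for the moduli into the closed forms~(\ref{KummerParameter4}) and~(\ref{KummerParameter2}) via the doubling formulas~(\ref{Eq:degree2doubling}),~(\ref{Eq:degree2doublingR}) and Thomae's identities~(\ref{Picard}),~(\ref{Picard_sq}) -- are already discharged in the cited lemmas, so no fresh calculation is needed. The genuine subtlety, and hence the main obstacle, is the $(2,2)$-isogeny bookkeeping: keeping straight the dual roles of $\theta_i$ and $\Theta_i$ (as level-$2$ coordinates on $\mathbf{A}_\tau$ versus parameters governing $\hat{\mathbf{A}}$, cf.\ the parametrizations of $\mathcal{A}_2(2,4)$ by $[\Theta_i]$ and of $\hat{\mathcal{A}}_2(2,4)$ by $[\theta_i]$) so that part~(1) is correctly attached to $\operatorname{Jac}(\mathcal{C})$ and part~(2) to the isogenous $\operatorname{Jac}(\hat{\mathcal{C}})$, rather than conflating the two Kummer surfaces exchanged by $\psi$.
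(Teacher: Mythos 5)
Your proposal is correct and follows essentially the same route as the paper: the paper's proof consists precisely of citing Propositions~\ref{prop:RatMap}, \ref{prop:ThetaImageG}, \ref{prop:ThetaImageGH} and Lemmas~\ref{lem:ModuliG}, \ref{lem:ModuliGH}, which are exactly the five results you assemble, with Proposition~\ref{prop:RatMap} playing the role you assign it of attaching the G\"opel quartic in the coordinates $[\theta_1^2(z):\cdots:\theta_4^2(z)]$ to the isogenous Jacobian $\operatorname{Jac}(\hat{\mathcal{C}})$. Your added remarks on the $(2,2)$-isogeny bookkeeping make explicit what the paper leaves implicit, but introduce no new argument.
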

\begin{proof}
The proof follows from Propositions~\ref{prop:RatMap}, \ref{prop:ThetaImageG}, \ref{prop:ThetaImageGH} and Lemmas~\ref{lem:ModuliG}, \ref{lem:ModuliGH}.
\end{proof}
\subsection{The Rosenhain normal form}
In~\cite[Prop.~\!10.3.2]{MR2062673} another normal form for a nodal quartic surfaces was established. We make the following definition:
\begin{definition}
\label{def:BL}
A Rosenhain quartic is the surface in $\mathbb{P}^3(Y_0, \dots, Y_3)$ given by
\begin{equation}
\label{Birkenhake-Lange-Quartic}
\begin{split}
a^2  \big(Y_0^2 Y_1^2 + Y_2^2 Y_3^2\big) + b^2 \big(Y_0^2 Y_2^2 + Y_1^2 Y_3^2\big) + c^2  \big(Y_0^2 Y_3^2 + Y_1^2 Y_2^2\big)\\
+ 2ab  \big(Y_0 Y_1 - Y_2 Y_3\big) \big(Y_0 Y_2 + Y_1 Y_3\big)  - 2ac \big(Y_0 Y_1 + Y_2 Y_3\big)  \big(Y_0 Y_3 + Y_1 Y_2\big) \\
+ 2 b c \big(Y_0 Y_2 - Y_1 Y_3\big)  \big(Y_0 Y_3 - Y_1 Y_2\big) + d^2  Y_0Y_1Y_2Y_3 =0 ,
\end{split}
\end{equation}
with $[a:b:c:d] \in \mathbb{P}^3$.
\end{definition}
The following was proved in~\cite[Prop.~10.3.2]{MR2062673}:
\begin{proposition}
\label{lem:BL}
For generic parameters $[a:b:c:d] \in \mathbb{P}^3$, the Rosenhain quartic in Equation~(\ref{Birkenhake-Lange-Quartic}) is isomorphic to the singular Kummer variety associated with a principally polarized abelian variety.
\end{proposition}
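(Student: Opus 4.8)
The plan is to reduce the statement to a normal form already identified with a singular Kummer variety. The cleanest target is the G\"opel-Hudson quartic of Equation~(\ref{Goepel-Quartic}), which by Lemma~\ref{lem:GH} and Proposition~\ref{prop:ThetaImageGH} is, for generic moduli $(A,B,C,D)$ obeying the relation~(\ref{paramGH}), the singular Kummer variety of a principally polarized abelian surface, with coordinates $[w:x:y:z]=[\Theta_1(2z):\Theta_2(2z):\Theta_3(2z):\Theta_4(2z)]$. Concretely, I would produce an explicit invertible projective-linear map $L\in\operatorname{PGL}(4,\mathbb{C})$ carrying $[Y_0:Y_1:Y_2:Y_3]$ to $[w:x:y:z]$, together with a choice of $a,b,c,d$ as algebraic functions of $(A,B,C,D)$, such that $L$ pulls the G\"opel-Hudson polynomial back to a nonzero scalar multiple of the Rosenhain polynomial in Equation~(\ref{Birkenhake-Lange-Quartic}). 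The isomorphism for generic parameters then follows, provided the induced parameter map $(A,B,C,D)\mapsto[a:b:c:d]$ is dominant.

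First I would pin down $L$ by symmetry. The G\"opel-Hudson quartic is invariant under the sign changes and coordinate permutations listed in the Remark following Equation~(\ref{Goepel-Quartic}), and one checks directly that the Rosenhain quartic~(\ref{Birkenhake-Lange-Quartic}) carries an isomorphic group of order-two linear symmetries in the variables $[Y_0:Y_1:Y_2:Y_3]$; these are the linearized actions of the two-torsion translations on the two Kummer models. Requiring $L$ to intertwine the two symmetry groups forces $L$ to be, up to diagonal scaling of the individual coordinates, the Hadamard-type matrix already used in Equation~(\ref{eq:LinearTransfo}). With $L$ constrained to this shape, I would substitute $Y_i=L^{-1}(w,x,y,z)$ into~(\ref{Birkenhake-Lange-Quartic}) and impose proportionality with~(\ref{Goepel-Quartic}); matching coefficients yields a polynomial system in the coordinate scalings and in $a,b,c,d$, whose solution (as elsewhere in the paper, via a computer algebra system) gives explicit formulas $a=a(A,B,C,D),\dots,d=d(A,B,C,D)$.

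The main obstacle is the quartic identity itself. Because~(\ref{Birkenhake-Lange-Quartic}) contains the antisymmetric products $(Y_0Y_1-Y_2Y_3)(Y_0Y_2+Y_1Y_3)$ and their analogues, the pulled-back G\"opel-Hudson form must reproduce exactly these mixed combinations rather than a diagonal sum of monomials; verifying this is a genuine identity between two degree-four forms in four variables, and it is precisely the antisymmetrization built into $L$ that must do the work. Once the identity is established, I would conclude by a dimension count: the admissible $(A,B,C,D)$ sweep out a three-dimensional family (matching $\dim\mathcal{A}_2=3$), as does $[a:b:c:d]\in\mathbb{P}^3$, so it suffices to check that the explicit parameter map is generically finite, hence dominant; then a generic $[a:b:c:d]$ lies in its image and the corresponding Rosenhain quartic is a singular Kummer variety. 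Alternatively, one may route the argument through the G\"opel quartic~(\ref{Kummer-Quartic}) using the transformation~(\ref{eq:LinearTransfo}) and Proposition~\ref{prop:ThetaImageG}, with identical structure.
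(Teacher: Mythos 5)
Your overall strategy---reduce Equation~(\ref{Birkenhake-Lange-Quartic}) to the G\"opel-Hudson quartic~(\ref{Goepel-Quartic}) by an explicit projective-linear change of coordinates together with an identification of $[a:b:c:d]$ as functions of the G\"opel-Hudson moduli, then conclude by dominance of the parameter map---is sound and is essentially how the paper itself proceeds: the proposition proper is quoted from \cite[Prop.~10.3.2]{MR2062673}, and the explicit reduction is carried out in the lemma immediately following it via the transformation~(\ref{eq:LinearTransfo2}) and explicit degree-six expressions for $a,b,c$ (and a degree-twelve expression for $d^2$) in $w_0,x_0,y_0,z_0$.

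The gap is in the step where you pin down $L$. You assert that the Rosenhain quartic carries a group of order-two \emph{sign-change} symmetries matching those of the G\"opel-Hudson quartic, and that intertwining forces $L$ to be the Hadamard matrix of Equation~(\ref{eq:LinearTransfo}) up to diagonal scaling. Both claims fail. The coordinate tetrahedron of Equation~(\ref{Birkenhake-Lange-Quartic}) is a \emph{Rosenhain} tetrahedron---its vertices are nodes (visible in Theorem~\ref{prop:ThetaImageR}, where the coordinates are $\theta_1(z)^2,\theta_2(z)^2,\theta_7(z)^2,\theta_{12}(z)^2$, one of them odd)---whereas no node of the G\"opel-Hudson quartic lies in a coordinate plane (Lemma~\ref{lem:16nodes}). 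A sign-change map fixes the coordinate vertices, so the sixteen two-torsion translations, which act simply transitively on the nodes, cannot all be realized this way; concretely, $Y_0\mapsto -Y_0$, $Y_2\mapsto -Y_2$ flips the sign of the $2ab$ and $2bc$ terms of~(\ref{Birkenhake-Lange-Quartic}) while fixing the others, so it is not a symmetry for generic $[a:b:c:d]$, and the same happens for every nontrivial sign change of a pair of coordinates. Correspondingly, the transformation that actually works, Equation~(\ref{eq:LinearTransfo2}), is \emph{not} of Hadamard-times-diagonal shape: its last two rows carry the reversed scaling $(z_0,y_0,x_0,w_0)$ rather than $(w_0,x_0,y_0,z_0)$, and no diagonal rescaling of inputs or outputs converts one into the other. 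If you run the coefficient-matching computation with the Hadamard ansatz, the pulled-back quartic remains invariant under all sign changes of coordinate pairs and therefore cannot produce the mixed terms $(Y_0Y_1-Y_2Y_3)(Y_0Y_2+Y_1Y_3)$, etc.; the system has no solution. The fix is to enlarge the ansatz for $L$ (equivalently, to adapt the target coordinates to a Rosenhain rather than a G\"opel tetrahedron of tropes); with the correct $L$ the remainder of your argument, including the dimension count and generic finiteness of $[w_0:x_0:y_0:z_0]\mapsto[a:b:c:d]$, goes through.
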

We also have the following:
\begin{lemma}
The Rosenhain quartic in Equation~(\ref{Birkenhake-Lange-Quartic}) is isomorphic to the G\"opel-Hudson quartic in Equation~(\ref{Goepel-Quartic}). 
\end{lemma}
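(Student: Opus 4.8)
The plan is to produce an explicit invertible linear substitution $[Y_0:Y_1:Y_2:Y_3]\mapsto[w:x:y:z]$ together with a dictionary expressing $[a:b:c:d]$ as functions of the Göpel--Hudson moduli $(A,B,C,D)$, and then to verify by direct substitution that the Rosenhain quartic in Equation~(\ref{Birkenhake-Lange-Quartic}) pulls back to a nonzero scalar multiple of the Göpel--Hudson quartic in Equation~(\ref{Goepel-Quartic}). The conceptual reason such a linear map must exist is that both equations describe the image of the Kummer map $\mathbf{A}_\tau\to\mathbb{P}(H^0(\mathcal{L}^2)^\ast)\cong\mathbb{P}^3$, merely written in two different bases of the four-dimensional space $H^0(\mathcal{L}^2)$ (cf.~Remark~\ref{fact:sections1}). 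Indeed, inspection shows that each coordinate plane $Y_i=0$ of the Rosenhain quartic cuts out a double conic; for instance $Y_0=0$ yields $(a\,Y_2Y_3-b\,Y_1Y_3-c\,Y_1Y_2)^2=0$, so the four planes $Y_i=0$ form a tetrahedron of tropes whose vertices $[1:0:0:0],\dots,[0:0:0:1]$ are nodes, that is, a Rosenhain tetrahedron. By contrast the coordinate tetrahedron underlying the Göpel--Hudson form is of Göpel type. Passing from one to the other is therefore a change of basis in $H^0(\mathcal{L}^2)$, hence linear.

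To make the map explicit I would start from the Theta parametrization of the Göpel--Hudson quartic in Proposition~\ref{prop:ThetaImageGH}, namely $[w:x:y:z]=[\Theta_1(2z):\Theta_2(2z):\Theta_3(2z):\Theta_4(2z)]$, and choose a Rosenhain tetrahedron of four tropes. Using the Theta dictionary of Proposition~\ref{lem:bijection_tropes_thetas2} together with the linear relations among tropes in Lemma~\ref{lem:LinearRelation}, each of these four trope planes can be written as an explicit linear form $L_i$ in the four functions $\Theta_j(2z)$, i.e.\ in $w,x,y,z$. Setting $Y_i:=L_i(w,x,y,z)$ then defines the desired invertible linear substitution, and the tangent conics of the four tropes (the expressions such as $a\,Y_2Y_3-b\,Y_1Y_3-c\,Y_1Y_2$ above) read off the ratios $a:b:c$ in terms of even Theta constants, while $d$ is fixed---up to the sign ambiguity already present in Equation~(\ref{paramGH})---by the coefficient of $Y_0Y_1Y_2Y_3$. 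Composing with Equations~(\ref{KummerParameter3}) converts these Theta-constant expressions into the required rational dictionary between $[a:b:c:d]$ and $(A,B,C,D)$.

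Finally I would substitute $Y_i=L_i(w,x,y,z)$ into Equation~(\ref{Birkenhake-Lange-Quartic}) and expand. The quartic is linear in the quantities $a^2,b^2,c^2,ab,ac,bc,d^2$, so the verification amounts to checking that, after the substitution, the coefficient of each monomial in $w,x,y,z$ matches the corresponding coefficient in Equation~(\ref{Goepel-Quartic}) up to a common scalar, using the defining relation $D^2=A^2+B^2+C^2+ABC-4$ to reconcile the $wxyz$ term. The main obstacle is precisely this matching of the three mixed products $2ab(Y_0Y_1-Y_2Y_3)(Y_0Y_2+Y_1Y_3)$, $-2ac(Y_0Y_1+Y_2Y_3)(Y_0Y_3+Y_1Y_2)$, and $2bc(Y_0Y_2-Y_1Y_3)(Y_0Y_3-Y_1Y_2)$: a priori the two surfaces are only known to be abstractly isomorphic Kummer varieties, and one must confirm that they coincide as explicit polynomials after the transformation, a finite but lengthy algebraic identity most safely discharged with a computer algebra system. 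Alternatively, one can bypass the direct expansion by matching the sixteen nodes of Equation~(\ref{Goepel-Quartic}) listed in Lemma~\ref{lem:16nodes} to the sixteen nodes of the Rosenhain quartic, which determines the linear map uniquely up to the Kummer automorphism group and thereby forces the two quartics to agree.
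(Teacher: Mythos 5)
Your proposal is correct and follows essentially the same route as the paper: the paper's proof also exhibits an explicit invertible linear substitution (whose coefficients $w_0,x_0,y_0,z_0$ are exactly the Theta constants $\Theta_1,\dots,\Theta_4$, so that the map is precisely the doubling identities expressing the Rosenhain-tetrahedron coordinates $\theta_1(z)^2,\theta_2(z)^2,\theta_7(z)^2,\theta_{12}(z)^2$ as linear forms in the G\"opel-tetrahedron coordinates $\Theta_j(2z)$), together with explicit formulas for $a,b,c$ and $d^2$ in those parameters, and then verifies the identity by direct expansion. Your derivation of the linear map from the trope--Theta dictionary is exactly where the paper's transformation comes from, so the two arguments coincide.
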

\begin{proof}
Using the same notation as in the proof of Lemma~\ref{lem:KumNF}, we set
\begin{equation}
 \begin{split}
 a &= 4 (w_0^2 z_0^2-x_0^2 y_0^2)(w_0 y_0+x_0 z_0),\\
 b &=(w_0^2+x_0^2-y_0^2-z_0^2)(w_0^2-x_0^2+y_0^2-z_0^2)(w_0x_0-y_0z_0) ,\\
 c & =(w_0^2-x_0^2-y_0^2+z_0^2)(w_0^2+x_0^2+y_0^2+z_0^2)(w_0x_0+y_0z_0) ,
 \end{split}
 \end{equation}
and a polynomial expression for $d^2$ of degree twelve that we do not write out explicitly. Then, the linear transformation given by
\begin{equation}
\label{eq:LinearTransfo2}
\begin{split}
 Y_0 & =  w_0 \, w + x_0 \, x + y_0 \, y+ z_0 \, z \;,\\
 Y_1 & =  w_0 \, w  + x_0 \, x - y_0 \, y- z_0 \, z \;,\\
 Y_2 & =  z_0 \, w+ y_0 \, x + x_0 \, y+ w_0 \, z \;,\\
 Y_3 & =  z_0 \, w + y_0 \, x - x_0 \, y- w_0 \, z \;,
\end{split}
\end{equation} 
transforms Equation~(\ref{Birkenhake-Lange-Quartic}) into Equation~(\ref{Goepel-Quartic}). 
\end{proof}
We have the stronger result:
\begin{proposition}
\label{lem:KumRNF}
Every Rosenhain tetrahedron determines an isomorphism between the Cassels-Flynn quartic in Equation~(\ref{kummer}) and the Rosenhain quartic in Equation~(\ref{Birkenhake-Lange-Quartic}).
\end{proposition}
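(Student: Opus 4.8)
The plan is to follow the scheme of Proposition~\ref{lem:KumNF}, but with a Rosenhain tetrahedron in place of a G\"opel tetrahedron and with the normal form~(\ref{Birkenhake-Lange-Quartic}) in place of~(\ref{Kummer-Quartic}). The organizing observation is that the coordinate tetrahedron of the Rosenhain quartic is itself a Rosenhain tetrahedron. Indeed, restricting Equation~(\ref{Birkenhake-Lange-Quartic}) to $Y_0=0$ collapses it to the perfect square $\big(b\,Y_1 Y_3 + c\,Y_1 Y_2 - a\,Y_2 Y_3\big)^2$, so that each coordinate plane $Y_i=0$ is a trope; moreover each coordinate vertex $[1:0:0:0],\dots,[0:0:0:1]$ satisfies~(\ref{Birkenhake-Lange-Quartic}) and is therefore a node. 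Hence exhibiting the isomorphism amounts to constructing a linear change of coordinates of $\mathbb{P}^3$ carrying the four tropes of a prescribed Rosenhain tetrahedron on the Cassels-Flynn quartic onto the four coordinate planes $Y_0,\dots,Y_3$.

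First I would fix a Rosenhain tetrahedron $\{\mathsf{T}_a,\mathsf{T}_b,\mathsf{T}_c,\mathsf{T}_d\}$ and read off the four corresponding linear forms in $[z_1:z_2:z_3:z_4]$ from Table~\ref{tab:tropes}. The Vandermonde shape $\mathsf{T}_i=\lambda_i^2 z_1-\lambda_i z_2+z_3$ of the odd tropes together with the appearance of $z_4$ in the even tropes shows that the four forms are linearly independent, so by Corollary~\ref{cor:LR} they constitute an invertible coordinate system on $\mathbb{P}^3$. I would then introduce scalars $c_0,\dots,c_3\in\mathbb{C}(\lambda_1,\lambda_2,\lambda_3)$, set $Y_i=c_i\,\mathsf{T}_{\sigma(i)}$ for a suitable labeling $\sigma$ of the four tropes, substitute the inverse linear map into the Cassels-Flynn quartic~(\ref{kummer}), and compare the result with~(\ref{Birkenhake-Lange-Quartic}) term by term. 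The three equations forcing the coefficients of $Y_0^2 Y_1^2$ and $Y_2^2 Y_3^2$, of $Y_0^2 Y_2^2$ and $Y_1^2 Y_3^2$, and of $Y_0^2 Y_3^2$ and $Y_1^2 Y_2^2$ to agree fix the ratios of $c_0,\dots,c_3$, whereupon $a,b,c,d$ are read off as rational functions of $\lambda_1,\lambda_2,\lambda_3$.

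The hard part will be verifying that, once the scalars $c_i$ are chosen to symmetrize the three pairs of square terms, the six remaining cross terms and the term $d^2\,Y_0 Y_1 Y_2 Y_3$ assemble automatically into the rigid pattern of~(\ref{Birkenhake-Lange-Quartic}) rather than into a generic quartic. This is precisely where the Kummer geometry enters: each coordinate plane must meet the surface in a double conic and each coordinate vertex must be a node, and these incidences are exactly what the substitution inherits from the Rosenhain tetrahedron. As in Lemma~\ref{lem:bijection_tropes_thetas} and Proposition~\ref{lem:KumNF}, I expect this matching to be confirmed by a computer algebra system. The symmetry of the construction reduces the check to one representative of each of the five families $\mathsf{R}^{(1)},\dots,\mathsf{R}^{(5)}$, the remaining cases following by relabeling the branch points; since the coordinate change is linear and invertible, the resulting map is an isomorphism carrying the Cassels-Flynn quartic~(\ref{kummer}) onto the Rosenhain quartic~(\ref{Birkenhake-Lange-Quartic}), as claimed.
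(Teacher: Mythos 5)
Your proposal is correct and follows essentially the same route as the paper: the paper likewise substitutes $[\mathsf{T}_a,\mathsf{T}_b,\mathsf{T}_c,\mathsf{T}_d]=[c_0Y_0:c_1Y_1:c_2Y_2:c_3Y_3]$ for the chosen Rosenhain tetrahedron and solves for the scalars so that the Cassels--Flynn quartic (rewritten via Proposition~\ref{prop:KummerNF} and Corollary~\ref{cor:LR}) matches Equation~(\ref{Birkenhake-Lange-Quartic}). Your additional observation that the coordinate tetrahedron of the Rosenhain quartic is itself a Rosenhain tetrahedron (each $Y_i=0$ restricting to a perfect square) is a helpful motivation but does not change the argument.
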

\begin{proof}
Given a Rosenhain tetrahedron $\{\mathsf{T}_a, \mathsf{T}_b, \mathsf{T}_c, \mathsf{T}_d\}$, we use Proposition~\ref{prop:KummerNF} to write the Cassels-Flynn quartic in Equation~(\ref{kummer}) in the equivalent form
\begin{equation}
\label{eqn:test3}
\left(\rho^2 \mathsf{T}_a \mathsf{T}_{a'} + \gamma^2\mathsf{T}_b \mathsf{T}_{b'} -\beta^2 \mathsf{T}_c \mathsf{T}_{c'}\right)^2  - 4 \gamma^2\rho^2\mathsf{T}_a \mathsf{T}_{a'}\mathsf{T}_b \mathsf{T}_{b'} =0,
\end{equation}
with $\beta, \gamma, \rho \in \mathbb{C}[\lambda_1,\lambda_2,\lambda_3]$. The two additional tropes $\mathsf{T}_c \mathsf{T}_{c'}$ are linear functions of $\mathsf{T}_a, \mathsf{T}_{a'}, \mathsf{T}_b, \mathsf{T}_{b'}$ with coefficients in $\mathbb{C}(\lambda_1,\lambda_2,\lambda_3)$. One substitutes
\[
   [\mathsf{T}_a, \mathsf{T}_b, \mathsf{T}_c, \mathsf{T}_d]=[\, c_0 Y_0\, : \, c_1 Y_1 \, : \, c_2 Y_2 \, : \, c_3 Y_3\, ],
\]
into Equation~(\ref{eqn:test3}) and solves for the coefficients $c_0, \dots, c_3$ such that Equation~(\ref{Birkenhake-Lange-Quartic}) coincides with Equation~(\ref{Kummer-Quartic}). 
\end{proof}
\begin{remark}
Equation~(\ref{Birkenhake-Lange-Quartic}) is unchanged by the Cremona transformation 
\[
[ Y_0: Y_1: Y_2: Y_3] \mapsto [Y_1Y_2 Y_3: Y_0Y_2Y_3: Y_0Y_1Y_3: Y_0Y_1Y_2] \;.
\]
\end{remark}
We also have the following:
\begin{theorem}
\label{prop:ThetaImageR}
The surface in $\mathbb{P}^3$ given by Equation~(\ref{Birkenhake-Lange-Quartic}) is isomorphic to the singular Kummer variety $\mathbf{A}_{\tau}/\langle -\mathbb{I} \rangle$ where the coordinates are given by
\begin{equation} 
\label{KummerVariablesR}
 [Y_0:Y_1:Y_2:Y_3]=\left[\theta_1(z)^2:\theta_2(z)^2:\theta_7(z)^2:\theta_{12}(z)^2\right] \;,
\end{equation}
and the moduli parameters are
\begin{gather*}
a= \left( 2\,\Theta_1\Theta_4-2\,\Theta_2\Theta_3  \right)  \left( 2\,\Theta_1\Theta_4+2\,\Theta_2\Theta_3 \right)  \left( 2\,\Theta_1\Theta_3+2\,\Theta_2\Theta_4 \right), \\
b= \left( \Theta_1^2+\Theta_2^2-\Theta_3^2-\Theta_4^2 \right) \left( \Theta_1^2-\Theta_2^2+\Theta_3^2-\Theta_4^2 \right)  \left( 2\,\Theta_1\Theta_2-2\,\Theta_3\Theta_4 \right), \\
c= \left( \Theta_1^2-\Theta_2^2-\Theta_3^2+\Theta_4^2 \right) \left( \Theta_1^2+\Theta_2^2+\Theta_3^2+\Theta_4^2 \right)  \left( 2\,\Theta_1\Theta_2+2\,\Theta_3\Theta_4\right), \\
d^2 =256\,\Theta_1\Theta_2\Theta_4\Theta_3 \left( \Theta_1^2\Theta_4^2-\Theta_2^2\Theta_3^2 \right)  \left( \Theta_1^4-\Theta_2^4-\Theta_3^4+\Theta_4^4 \right) \\
+ 8 \left( \Theta_1^2+\Theta_4^2 \right) \left( \Theta_2^2+\Theta_3^2 \right) \left( \Theta_1^2+\Theta_2^2+\Theta_3^2+\Theta_4^2 \right)^2\left( \Theta_1^2-\Theta_2^2-\Theta_3^2+\Theta_4^2 \right)^2\\
+8 \left(\Theta_1^2-\Theta_4^2 \right) \left( \Theta_2^2-\Theta_3^2 \right)   \left( \Theta_1^2+\Theta_2^2-\Theta_3^2-\Theta_4^2 \right)^2 \left( \Theta_1^2-\Theta_2^2+\Theta_3^2-\Theta_4^2 \right)^2\\
-32\left( \Theta_1^2\Theta_2^2+\Theta_3^2\Theta_4^2 \right)  \left( \Theta_1^2+\Theta_2^2+\Theta_3^2+\Theta_4^2 \right)  \left( \Theta_1^2-\Theta_2^2+\Theta_3^2-\Theta_4^2 \right) \\
\times  \left( \Theta_1^2-\Theta_2^2-\Theta_3^2+\Theta_4^2 \right)  \left( \Theta_1^2+\Theta_2^2-\Theta_3^2-\Theta_4^2 \right).
\end{gather*}
\end{theorem}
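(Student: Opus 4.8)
The plan is to specialize the general isomorphism of Proposition~\ref{lem:KumRNF} to one carefully chosen Rosenhain tetrahedron and then to translate every quantity that appears into Theta constants. Concretely, I would take the Rosenhain tetrahedron $\{\mathsf{T}_{256}, \mathsf{T}_{136}, \mathsf{T}_{246}, \mathsf{T}_{2}\}$, which is of type $\mathsf{R}^{(4)}$ with $(i,j,k)=(2,5,4)$ and $\{l,m\}=\{1,3\}$. By the bijection of Proposition~\ref{lem:bijection_tropes_thetas2}, together with Equations~(\ref{eqn:sol_MatchTropes}) and~(\ref{eqn:sol_MatchTropes2}), these four tropes are proportional to the Theta-function squares $\theta_1^2(z)$, $\theta_2^2(z)$, $\theta_7^2(z)$, and $\theta_{12}^2(z)$, respectively, with proportionality constants that are monomials in the even Theta constants. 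This is exactly the ordering demanded by Equation~(\ref{KummerVariablesR}), so the coordinate identification is forced by the choice of tetrahedron; note that this tetrahedron has three even tropes and the one odd trope $\mathsf{T}_2$, matching the appearance of the odd function $\theta_{12}(z)$ among the coordinates.

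Next I would run the construction in the proof of Proposition~\ref{lem:KumRNF}: complete the tetrahedron to one of the three sets of eight tropes containing it (Lemma~\ref{lem:quadraticsEE}), use the associated quadratic relation from Lemma~\ref{lem:quadratics} to put the Cassels--Flynn quartic~(\ref{kummer}) into the form~(\ref{eqn:test3}), and then substitute $[\mathsf{T}_{256} : \mathsf{T}_{136} : \mathsf{T}_{246} : \mathsf{T}_2] = [c_0 Y_0 : c_1 Y_1 : c_2 Y_2 : c_3 Y_3]$, solving for $c_0,\dots,c_3$ and for $a,b,c,d$ so that~(\ref{eqn:test3}) becomes the Rosenhain quartic~(\ref{Birkenhake-Lange-Quartic}). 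At this stage all coefficients are rational functions of $\lambda_1,\lambda_2,\lambda_3$. I would then absorb the monomial prefactors from the previous paragraph into $c_0,\dots,c_3$, so that the coordinates become precisely $[\theta_1^2(z):\theta_2^2(z):\theta_7^2(z):\theta_{12}^2(z)]$, and rewrite the $\lambda_i$ via Thomae's formula (Lemma~\ref{ThomaeLemma}, Equations~(\ref{Picard})--(\ref{Picard2})) as ratios of even Theta constants.

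The cleanest step is to pass from the $\theta_i$ constants to the dual constants $\Theta_i$ using the second principal transformation formulas~(\ref{Eq:degree2doubling}) and~(\ref{Eq:degree2doublingR}). Under these substitutions the parameters collapse into the stated products: one finds $a=\theta_5^2\theta_7^2\theta_9^2$, $b=\theta_2^2\theta_4^2\theta_{10}^2$, and $c=\theta_1^2\theta_3^2\theta_8^2$, which are exactly the three factorizations listed, since, for instance, $2\Theta_1\Theta_4+2\Theta_2\Theta_3=\theta_7^2$ and $\Theta_1^2+\Theta_2^2-\Theta_3^2-\Theta_4^2=\theta_2^2$ reproduce the first two factors of $b$ and the middle factor of $a$. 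The isomorphism with the singular Kummer variety $\mathbf{A}_\tau/\langle -\mathbb{I}\rangle$ then follows from Proposition~\ref{lem:BL} once one checks that $[a:b:c:d]$ is generic.

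The main obstacle will be the modulus $d$. Because $d$ enters~(\ref{Birkenhake-Lange-Quartic}) only through $d^2$, it is determined by Proposition~\ref{lem:KumRNF} only up to sign, and --- unlike $a,b,c$ --- it does not factor over the $\Theta_i$; it is the degree-twelve polynomial already encountered in the proof of the lemma relating the Rosenhain and G\"opel--Hudson quartics. Verifying its explicit expansion, and in particular confirming the integer coefficients $256$, $8$, $8$, $-32$ and the correct grouping into the four displayed summands, requires a careful symbolic computation in a computer algebra system, together with a consistent tracking of the square-root and sign choices made when replacing the $\theta_i$ by the $\Theta_i$. As an independent check one can substitute the resulting $a,b,c,d^2$ into the defining data of the G\"opel--Hudson family and confirm the discriminant condition of~\cite{MR2062673}, i.e.\ that the surface is indeed a Kummer quartic.
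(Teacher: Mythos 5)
Your proposal follows essentially the same route as the paper: its proof likewise starts from the Rosenhain tetrahedron $\{\mathsf{T}_{256},\mathsf{T}_{136},\mathsf{T}_{246},\mathsf{T}_2\}$ with tropes $\theta_1(z)^2,\theta_2(z)^2,\theta_7(z)^2,\theta_{12}(z)^2$, notes its membership in the sets of eight tropes attached to the even eights $\Delta_{12}$ and $\Delta_{13}$, and then runs the same substitute-and-solve argument used for the G\"opel quartic in Proposition~\ref{prop:ThetaImageG}, translating to $\Theta$-constants via Thomae's formula and the degree-two transformations. Your identifications $a=\theta_5^2\theta_7^2\theta_9^2$, $b=\theta_2^2\theta_4^2\theta_{10}^2$, $c=\theta_1^2\theta_3^2\theta_8^2$ agree with the stated factorizations under Equations~(\ref{Eq:degree2doubling}) and~(\ref{Eq:degree2doublingR}), and your deferral of the $d^2$ expansion to a symbolic computation matches the level of detail the paper itself provides.
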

\begin{proof}
All tropes in Proposition~\ref{prop:KummerNF} are determined in terms of the Rosenhain tetrahedron $\{\mathsf{T}_{256}, \mathsf{T}_{136}, \mathsf{T}_{246}, \mathsf{T}_{2}\}$, with tropes given by $\theta_1(z)^2,\theta_2(z)^2,\theta_7(z)^2,\theta_{12}(z)^2$. One checks that the Rosenhain tetrahedron is contained in three sets of eight tropes corresponding to the even eights $\Delta_{12}$, $\Delta_{13}$ in Proposition~\ref{prop:EvenEights}. The proof then proceeds analogous to the proof of Proposition~\ref{prop:ThetaImageG}. 
\end{proof}
\section{Non-principally polarized Kummer surfaces}
In this section we consider abelian surfaces $\mathbf{B}$ with a polarization of type $(d_1,d_2)$ given by an ample symmetric line bundle $\mathcal{N}$. We recall that by the Riemann-Roch theorem we have $\chi(\mathcal{N})=(\mathcal{N}^2)/2=d_1d_2$. It follows from \cite[Prop.~4.5.2]{MR2062673} that $\mathcal{N}$ is ample if and only if $h^i(\mathbf{B},\mathcal{N})=0$ for $i=1,2$ and $(\mathcal{N}^2)>0$. Therefore, the line bundle defines an associated rational map $\varphi_{\mathcal{N}}: \mathbf{B} \to \mathbb{P}^{d_1d_2-1}$. It was proven in \cite[Prop.~4.1.6, Lemma 10.1.2]{MR2062673} that the linear system $|\mathcal{N}|$ is base point free for $d_1=2$ and for $d_1=1$ and $d_2 \ge 3$, and it has exactly four base points if $(d_1,d_2)=(1,2)$. Moreover, every polarization is induced by a principal polarization $\mathcal{L}$ on an abelian surface $\mathbf{A}$, that is, there is an isogeny $\phi: \mathbf{B} \to \mathbf{A}$ such that $\phi^* \mathcal{L}\cong \mathcal{N}$  \cite[Prop.~4.1.2]{MR2062673}.

\subsection{$(1,4)$-polarized Kummer surfaces}
Let us now consider a generic abelian surface $\mathbf{B}$ with a $(1,4)$-polarization.  In~\cite{MR2062673} the following octic surface in $\mathbb{P}^3$ was considered:
\begin{definition}
A Birkenhake-Lange octic is the surface in $\mathbb{P}^3(Z_0, \dots,Z_3)$ given by
\begin{equation}
\label{Birkenhake-Lange-Octic}
\begin{split}
a^2  \big(Z_0^4 Z_1^4 + Z_2^4 Z_3^4\big) + b^2  \big(Z_0^4 Z_2^4 + Z_1^4 Z_3^4\big) + c^2 \big(Z_0^4 Z_3^4 + Z_1^4 Z_2^4\big)\\
+  2ab \big(Z_0^2 Z_1^2 - Z_2^2 Z_3^2\big) \big(Z_0^2 Z_2^2 + Z_1^2 Z_3^2\big) - 2 a c  \big(Z_0^2 Z_1^2 + Z_2^2 Z_3^2\big) \big(Z_0^2 Z_3^2 + Z_1^2 Z_2^2\big) \\
+ 2 b c \big(Z_0^2 Z_2^2 - Z_1^2 Z_3^2\big) \big(Z_0^2 Z_3^2 - Z_1^2 Z_2^2\big) + d^2 Z_0^2Z_1^2Z_2^2Z_3^2 =0 ,
\end{split}
\end{equation}
with $[a:b:c:d] \in \mathbb{P}^3$.
\end{definition}
We have the following lemma:
\begin{lemma}
The map $\mathbb{P}(Z_0,Z_1,Z_2,Z_3) \to \mathbb{P}(Y_0,Y_1,Y_2,Y_3)$ with $Y_i=Z_i^2$ induces a covering 
of the octic surface in Equation~(\ref{Birkenhake-Lange-Octic}) onto the quartic in Equation~(\ref{Birkenhake-Lange-Quartic}) which is $8:1$ outside the coordinate planes
$Z_i=0$. 
\end{lemma}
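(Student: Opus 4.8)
The plan is to show that the octic polynomial in Equation~(\ref{Birkenhake-Lange-Octic}) is literally the pullback of the quartic polynomial in Equation~(\ref{Birkenhake-Lange-Quartic}) under the componentwise squaring map, and then to identify that map, away from the coordinate planes, with the $8:1$ squaring isogeny of the three-dimensional algebraic torus. First I would set $\pi\colon \mathbb{P}^3 \to \mathbb{P}^3$, $[Z_0:Z_1:Z_2:Z_3] \mapsto [Z_0^2:Z_1^2:Z_2^2:Z_3^2]$, which is an everywhere-defined finite morphism since the four quadrics $Z_i^2$ have no common zero in $\mathbb{P}^3$. Writing $F(Y_0,\dots,Y_3)$ for the left-hand side of Equation~(\ref{Birkenhake-Lange-Quartic}) and $G(Z_0,\dots,Z_3)$ for the left-hand side of Equation~(\ref{Birkenhake-Lange-Octic}), the key identity is
\[
 G(Z_0,Z_1,Z_2,Z_3) = F(Z_0^2, Z_1^2, Z_2^2, Z_3^2).
\]
This is immediate term by term: $F$ is homogeneous of degree four in the $Y_i$, each of its monomials is a product of factors $Y_iY_j$, and under $Y_i \mapsto Z_i^2$ one has $Y_iY_j \mapsto Z_i^2 Z_j^2$. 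For instance the mixed product $(Y_0 Y_1 - Y_2 Y_3)(Y_0 Y_2 + Y_1 Y_3)$ maps to $(Z_0^2 Z_1^2 - Z_2^2 Z_3^2)(Z_0^2 Z_2^2 + Z_1^2 Z_3^2)$, which is exactly the matching factor in $G$; the $a^2,b^2,c^2$ squares, the other two mixed products, and the $d^2$ term match in the same way, with identical constants. Consequently the octic is the scheme-theoretic preimage $\pi^{-1}(\mathcal{Q})$ of the Rosenhain quartic $\mathcal{Q}=\{F=0\}$.

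Next I would analyze $\pi$ over the open set $U = \mathbb{P}^3 \setminus \bigcup_{i}\{Z_i = 0\}$. Identifying $U$ with the torus $(\mathbb{C}^*)^3$ via $[Z_0:\dots:Z_3] \mapsto (Z_1/Z_0, Z_2/Z_0, Z_3/Z_0)$, the map $\pi$ becomes the squaring homomorphism $(t_1,t_2,t_3) \mapsto (t_1^2,t_2^2,t_3^2)$, which is surjective onto $U$ with kernel $\{\pm 1\}^3$. Equivalently, the finite group $G \cong (\mathbb{Z}/2\mathbb{Z})^3$ of diagonal sign changes $[Z_0:\dots:Z_3] \mapsto [\epsilon_0 Z_0:\dots:\epsilon_3 Z_3]$, taken modulo the global scalar $-1$, acts freely on $U$ and transitively on the fibers of $\pi|_U$; hence $\pi|_U$ is an $8:1$ \'etale Galois covering, and its ramification locus is exactly the union of coordinate hyperplanes (over a point lying on a single coordinate hyperplane the fiber collapses to four points). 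Since every monomial of $G(Z)$ has even degree in each $Z_i$, the group $G$ preserves the octic $\mathcal{O}=\{G=0\}$.

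Finally I would restrict to the octic. Because $\mathcal{O} \cap U = \pi^{-1}(\mathcal{Q}) \cap U$ and every point of $\mathcal{Q} \cap U$ has all coordinates nonzero, each of its eight $\pi$-preimages again lies in $U$, and therefore on $\mathcal{O}$; thus $\pi$ restricts to an $8:1$ covering $\mathcal{O} \cap U \to \mathcal{Q} \cap U$ with deck group $G$, which is precisely the assertion. Here one uses that $\mathcal{Q}$, being the irreducible Kummer quartic of Proposition~\ref{lem:BL}, is not contained in any coordinate hyperplane, so $\mathcal{Q} \cap U$ is dense in $\mathcal{Q}$. There is no serious obstacle in this argument: the only points requiring genuine care are the bookkeeping of the three mixed products in the substitution identity, and the observation that the deck group of sign changes has order $8$ rather than $16$ because the global scalar $-1$ acts trivially on $\mathbb{P}^3$.
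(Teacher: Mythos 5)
Your argument is correct and complete for the statement as written. The substitution identity $G(Z)=F(Z_0^2,\dots,Z_3^2)$ does hold term by term between Equation~(\ref{Birkenhake-Lange-Octic}) and Equation~(\ref{Birkenhake-Lange-Quartic}), the squaring map is a finite morphism of $\mathbb{P}^3$ whose generic fiber is the orbit of the sign-change group $(\mathbb{Z}/2\mathbb{Z})^4$ modulo the global scalar $-1$, hence of order $8$, and the density of $\mathcal{Q}\cap U$ follows from irreducibility (or simply from the fact that no $Y_i$ divides $F$ for generic $[a:b:c:d]$).

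The route differs from the paper's mainly in emphasis. You prove exactly the assertion of the lemma --- the degree-$8$ covering away from the coordinate planes --- by exhibiting the octic as the scheme-theoretic pullback of the quartic and identifying the deck group; the paper takes that generic degree essentially for granted and instead spends its proof describing the degenerate behavior that your argument only mentions in passing: the drop to $4:1$ along the coordinate planes, the multiplicity-four coordinate points, the double curves of the octic along $Z_i=0$, and the count of nodes in the preimage (using that the planes $Y_i=0$ form a Rosenhain tetrahedron, each passing through six nodes of the Kummer quartic). Your version is the more rigorous justification of the stated claim; the paper's version supplies the supplementary geometric data about the branch/non-normal locus that is used implicitly later when the octic is interpreted as the image of a $(1,4)$-polarized abelian surface. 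Neither contains a gap for the lemma itself, but if you wanted your write-up to substitute for the paper's, it would be worth appending the observation that over a point on exactly one coordinate plane the fiber has four points and that the octic acquires double curves there, since that is the content the paper actually records.
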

\begin{proof}
Along the coordinate planes the covering is $4:1$, and the coordinate points $P_0=[1:0:0:0]$, $P_1=[0:1:0:0]$, $P_2=[0:0:1:0]$, $P_3=[0:0:0:1]$ are of multiplicity four in the image. The octic surface has double curves along the coordinate planes. The coordinate planes $Y_i=0$ form a Rosenhain tetrahedron.  Therefore, the coordinate plane passes through six nodes on the Kummer quartic. For example, $Y_3=0$ passes through $P_0, P_1, P_2$ and three more nodes $P'_0, P'_1, P'_2$. Therefore, the preimage contains $3+4\cdot 3$ nodes. 
\end{proof}
\par We consider the rational map $\varphi_{\mathcal{N}}: \mathbf{B} \to \mathbb{P}^3$ associated with the line bundle $\mathcal{N}$. The map $\varphi_{\mathcal{N}}$ cannot be an embedding, i.e., diffeomorphic onto its image. However, it is generically birational onto its image  \cite{MR2062673}.  The following was proved in \cite[Sec.10.5]{MR2062673}:
\begin{corollary}{\cite[Prop.~10.5.7]{MR2062673}}
\label{cor:1-4-polarization}
If $\mathcal{N}$ is the ample symmetric line bundle on an abelian surface $\mathbf{B}$ defining a polarization of type $(1,4)$ such that the induced canonical map $\varphi_{\mathcal{N}}: \mathbf{B} \to \mathbb{P}^3$ is birational, then the coordinates of $\mathbb{P}^3$ can be chosen such that  $\varphi_{\mathcal{N}}(\mathbf{B})$ is given by the Birkenhake-Lange octic in Equation~(\ref{Birkenhake-Lange-Octic}).
\end{corollary}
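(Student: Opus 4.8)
The plan is to realize $\varphi_{\mathcal N}(\mathbf B)$ as an $8:1$ cover of the Rosenhain quartic of a principally polarized abelian surface and then to read off its equation from the already-established normal form~(\ref{Birkenhake-Lange-Quartic}).

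First I would record the numerology. Since $\mathcal N$ is ample and symmetric of type $(1,4)$, ampleness forces $h^i(\mathbf B,\mathcal N)=0$ for $i>0$, so $h^0(\mathbf B,\mathcal N)=\chi(\mathcal N)=d_1d_2=4$ and $\varphi_{\mathcal N}$ maps to $\mathbb P^3$. As recalled at the start of this section, $|\mathcal N|$ is base point free because $d_1=1$ and $d_2=4\ge 3$. Hence, writing $X=\varphi_{\mathcal N}(\mathbf B)$ and using $\varphi_{\mathcal N}^*\mathcal O(1)=\mathcal N$, one has $(\mathcal N^2)=\deg(\varphi_{\mathcal N})\cdot\deg X$; the birationality hypothesis $\deg(\varphi_{\mathcal N})=1$ together with $(\mathcal N^2)=2d_1d_2=8$ then shows that $X$ is a surface of degree $8$. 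Moreover $X$ is irreducible, being the birational image of the irreducible variety $\mathbf B$.

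Next I would fix coordinates adapted to the theta structure. By \cite[Prop.~4.1.2]{MR2062673} the polarization is induced from a principal one: there is a degree-$4$ isogeny $\phi:\mathbf B\to\mathbf A$ with $\phi^*\mathcal L\cong\mathcal N$, where $(\mathbf A,\mathcal L)$ is principally polarized with $\mathcal L$ symmetric. I would choose a symmetric basis $Z_0,\dots,Z_3$ of $H^0(\mathbf B,\mathcal N)$ adapted to the finite Heisenberg group of $\mathcal N$, whose kernel is $K(\mathcal N)\cong(\mathbb Z/4)^2$, so that the $2$-torsion $K(\mathcal N)[2]\cong(\mathbb Z/2)^2$ together with the involution $[-1]^*$ acts on $\mathbb P^3(Z_0,\dots,Z_3)$ precisely by the sign changes and coordinate permutations under which the octic~(\ref{Birkenhake-Lange-Octic}) is invariant. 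This is the Schr\"odinger representation computation, and it already confines the defining octic of the $G$-invariant surface $X$ to the semi-invariant monomials appearing in~(\ref{Birkenhake-Lange-Octic}).

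The heart of the argument is to match these coordinates with the level-two coordinates $Y_0,\dots,Y_3$ in which $\mathbf A\to\operatorname{Kum}(\mathbf A)$ is the Rosenhain quartic~(\ref{Birkenhake-Lange-Quartic}), which is a singular Kummer variety by Proposition~\ref{lem:BL} and Theorem~\ref{prop:ThetaImageR}. The point is that $\mathcal N^{\otimes 2}=\phi^*\mathcal L^{\otimes 2}$, so the squares and products of the canonical theta functions of $\mathcal N$ are pullbacks under $\phi$ of sections of $\mathcal L^{\otimes 2}$, i.e.\ of the level-two theta functions giving the Kummer embedding of $\mathbf A$; the bases can be synchronized so that $Y_i=Z_i^2$. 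With this identification the degree-$8$ map $X\to\operatorname{Kum}(\mathbf A)$ is exactly the squaring map of the lemma preceding this corollary, and substituting $Y_i=Z_i^2$ into the quartic~(\ref{Birkenhake-Lange-Quartic}) reproduces the octic~(\ref{Birkenhake-Lange-Octic}) with the same parameters $[a:b:c:d]$. Since $X$ is irreducible of degree $8$ and is contained in this octic, the two coincide, which is the claim. The main obstacle is precisely this coordinate matching: one must verify that the isogeny $\phi$ and the symmetric theta bases of $\mathcal N$ on $\mathbf B$ and of $\mathcal L^{\otimes 2}$ on $\mathbf A$ can be chosen compatibly so that the canonical coordinates literally square, $Y_i=Z_i^2$. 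This is the theta-transformation and Heisenberg bookkeeping underlying \cite[Prop.~10.5.7]{MR2062673}, and it is the step where the representation theory must be made fully explicit.
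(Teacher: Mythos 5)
The paper offers no proof of this corollary: it is quoted verbatim from \cite[Prop.~10.5.7]{MR2062673} and closed with a \qed. So the honest comparison is with the surrounding machinery the authors do set up, namely the lemma that $Y_i=Z_i^2$ maps the octic~(\ref{Birkenhake-Lange-Octic}) $8:1$ onto the quartic~(\ref{Birkenhake-Lange-Quartic}), and Theorem~\ref{prop:ThetaImageR} together with Corollary~\ref{prop:ThetaImageRO}, which realize the quartic and the octic by the coordinates $[\theta_1(z)^2:\theta_2(z)^2:\theta_7(z)^2:\theta_{12}(z)^2]$ and $[\theta_1(z):\theta_2(z):\theta_7(z):\theta_{12}(z)]$ respectively. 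Your strategy is exactly this mechanism, and your bookkeeping is sound: $h^0(\mathcal N)=\chi(\mathcal N)=4$, base-point-freeness from $d_1=1$, $d_2\ge 3$, $\deg\varphi_{\mathcal N}(\mathbf B)=(\mathcal N^2)=8$ under the birationality hypothesis, and the observation that an irreducible degree-$8$ surface contained in a degree-$8$ hypersurface of $\mathbb P^3$ must equal it. The reduction of the whole statement to the single claim $Y_i=Z_i^2$ is the right reduction.

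The genuine gap is the one you flag yourself: you assert, but do not prove, that a symmetric theta basis $Z_0,\dots,Z_3$ of $H^0(\mathbf B,\mathcal N)$ can be chosen so that the squares $Z_i^2$ are pullbacks of a basis of $H^0(\mathbf A,\mathcal L^2)$ that puts $\operatorname{Kum}(\mathbf A)$ in the form~(\ref{Birkenhake-Lange-Quartic}). This is not mere bookkeeping, and as stated your supporting claim is partly false: the mixed products $Z_iZ_j$ with $i\ne j$ are \emph{not} pullbacks of sections of $\mathcal L^{2}$ but of $\mathcal L^{2}\otimes P$ with $P$ a nontrivial $2$-torsion point of $\operatorname{Pic}^0(\mathbf A)$ (harmless here, since only even powers of each $Z_i$ occur in~(\ref{Birkenhake-Lange-Octic}), but it signals that the "synchronization'' needs an argument). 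To close the gap, do not take an arbitrary $\phi$ from \cite[Prop.~4.1.2]{MR2062673}: take $\mathbf A=\mathbf B/K(\mathcal N)[2]$, so that $\ker\phi\cong(\mathbb Z/2)^2$ is isotropic for $e^{\mathcal N}$ and $\mathcal N$ descends to a principal polarization $\mathcal L$. The isotypic decomposition $H^0(\mathbf B,\mathcal N)=\bigoplus_{\chi}H^0(\mathbf A,\mathcal L\otimes P_\chi)$, with $P_\chi$ running over $\ker\hat\phi=\lambda_{\mathcal L}(G)$ for a subgroup $G\cong(\mathbb Z/2)^2\subset\mathbf A[2]$, exhibits $Z_0,\dots,Z_3$ as pullbacks of the four translates $\theta_g(z)$, $g\in G$, whence $Z_i^2=\phi^*\theta_{g_i}(z)^2$ with $\theta_{g_i}(z)^2\in H^0(\mathbf A,\mathcal L^2)$. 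The decisive point, which your sketch omits entirely, is that $G$ is \emph{non-isotropic} for the Weil pairing (this is what forces the type to be $(1,4)$ rather than $(2,2)$), so the four characteristics form a Rosenhain rather than a G\"opel tetrahedron, and it is Theorem~\ref{prop:ThetaImageR} (not the G\"opel--Hudson normal form) that identifies the image of $[\theta_{g_0}^2:\cdots:\theta_{g_3}^2]$ with the quartic~(\ref{Birkenhake-Lange-Quartic}). Finally, your Heisenberg detour is dispensable and slightly wrong as stated: the octic~(\ref{Birkenhake-Lange-Octic}) is not invariant under all coordinate permutations, and in the paper's coordinates $[-1]^*$ acts by $Z_3\mapsto -Z_3$ (only $\theta_{12}$ is odd), not by a permutation; since your squaring argument supersedes this step, I would simply delete it.
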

\qed
\par We again have the following:
\begin{corollary}
\label{prop:ThetaImageRO}
The surface in $\mathbb{P}^3$ given by Equation~(\ref{Birkenhake-Lange-Octic}) is birational to the singular Kummer variety $\mathbf{B}/\langle -\mathbb{I} \rangle$ where the coordinates are given by
\begin{equation} 
\label{KummerVariables2}
 [Z_0:Z_1:Z_2:Z_3]=\left[\theta_{1}(z):\theta_{2}(z):\theta_7(z):\theta_{12}(z)\right].
\end{equation}
\end{corollary}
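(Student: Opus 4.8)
The plan is to obtain Corollary~\ref{prop:ThetaImageRO} from three ingredients already in place: the square covering of the previous lemma, the Theta-function model of the Rosenhain quartic in Theorem~\ref{prop:ThetaImageR}, and the identification of $\varphi_{\mathcal{N}}(\mathbf{B})$ with the octic in Corollary~\ref{cor:1-4-polarization}. First I would recall that the lemma preceding Corollary~\ref{cor:1-4-polarization} exhibits the map $[Z_0:Z_1:Z_2:Z_3]\mapsto[Z_0^2:Z_1^2:Z_2^2:Z_3^2]=[Y_0:Y_1:Y_2:Y_3]$ as an $8:1$ covering of the Birkenhake-Lange octic~(\ref{Birkenhake-Lange-Octic}) onto the Rosenhain quartic~(\ref{Birkenhake-Lange-Quartic}). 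Since Theorem~\ref{prop:ThetaImageR} identifies that quartic with $\mathbf{A}_{\tau}/\langle -\mathbb{I}\rangle$ under $[Y_0:Y_1:Y_2:Y_3]=[\theta_1(z)^2:\theta_2(z)^2:\theta_7(z)^2:\theta_{12}(z)^2]$, the defining relation $Y_i=Z_i^2$ forces $Z_i=\theta_{\bullet}(z)$ for the indices $1,2,7,12$ up to an independent sign in each coordinate. The group of projective sign changes is exactly $(\mathbb{Z}/2\mathbb{Z})^3$, which is the deck group of the $8:1$ square covering, so every sheet produces the same octic and the coordinate assignment~(\ref{KummerVariables2}) is well defined.

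To make these square roots honest sections rather than formal symbols, I would pass to the abelian surface $\mathbf{B}$ through the isogeny recorded at the start of the section: the symmetric $(1,4)$-bundle $\mathcal{N}$ is induced by the principal polarization $\mathcal{L}$ on $\mathbf{A}_{\tau}$, so there is an isogeny $\phi:\mathbf{B}\to\mathbf{A}_{\tau}$ with $\phi^{*}\mathcal{L}\cong\mathcal{N}$ and $\deg\phi=4$. Although the four functions $\theta_1(z),\theta_2(z),\theta_7(z),\theta_{12}(z)$ are sections of four distinct translates of $\mathcal{L}$ on $\mathbf{A}_{\tau}$ and so do not assemble into a map there, pulling them back through $\phi$ realizes them as a basis of $H^0(\mathbf{B},\mathcal{N})$, and $\varphi_{\mathcal{N}}$ is then given in coordinates by~(\ref{KummerVariables2}). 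By Corollary~\ref{cor:1-4-polarization} the image $\varphi_{\mathcal{N}}(\mathbf{B})$ is precisely the octic, and since $(\mathcal{N}^{2})=2\cdot 1\cdot 4=8$ equals the degree of the octic, the generically birational map $\varphi_{\mathcal{N}}$ exhibits the octic as a birational model of the Kummer quotient of $\mathbf{B}$; squaring the coordinates recovers the $8:1$ covering onto the Kummer quartic of $\mathbf{A}_{\tau}$, consistent with the degree-four $\phi$ together with the two hyperelliptic quotients.

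The hardest part will be the automorphy bookkeeping that justifies passing from the four translated Theta functions on $\mathbf{A}_{\tau}$ to a genuine basis of $H^0(\mathbf{B},\mathcal{N})$ with the correct signs, and thereby confirming that the induced equation is exactly~(\ref{Birkenhake-Lange-Octic}). Concretely, one must track how $\theta_1,\theta_2,\theta_7,\theta_{12}$ transform under the finer period lattice of $\mathbf{B}$ determined by $\phi$, verify that the chosen quadruple is base-point-free and generically injective, and check that the mixed parity --- three even Theta functions and the odd $\theta_{12}$ --- is what prevents the map from factoring through $-\mathbb{I}$, forcing the $8:1$ rather than $4:1$ relation to the quartic. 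Once this is settled, squaring and comparison with Theorem~\ref{prop:ThetaImageR} identifies the image with the octic, and the birational statement together with the coordinate formula follow simultaneously; the remaining verifications are the routine substitutions already used in the proofs of Theorem~\ref{prop:ThetaImageR} and Proposition~\ref{prop:ThetaImageGH}.
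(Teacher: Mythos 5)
The paper offers no written proof of this corollary, and your route --- combining the $8{:}1$ squaring cover of the octic onto the Rosenhain quartic, the Theta-function model of that quartic in Theorem~\ref{prop:ThetaImageR}, and the identification of $\varphi_{\mathcal{N}}(\mathbf{B})$ with the octic in Corollary~\ref{cor:1-4-polarization} --- is clearly the intended one. Your discussion of why the four functions $\theta_1(z),\theta_2(z),\theta_7(z),\theta_{12}(z)$ only assemble into a morphism after pulling back along the degree-four isogeny $\phi:\mathbf{B}\to\mathbf{A}_\tau$, and your observation that the mixed parity (three even Theta functions and the odd $\theta_{12}$) means the map does not factor through $-\mathbb{I}$, are both correct and are the substantive points.

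However, your final paragraphs contain an internal contradiction exactly at the claim you are supposed to establish. You assert both that the octic is ``a birational model of the Kummer quotient of $\mathbf{B}$'' and that the covering of the octic onto the quartic $\mathbf{A}_\tau/\langle-\mathbb{I}\rangle$ is $8{:}1$, ``consistent with the degree-four $\phi$ together with the two hyperelliptic quotients.'' These cannot both hold: the induced map $\mathbf{B}/\langle-\mathbb{I}\rangle\to\mathbf{A}_\tau/\langle-\mathbb{I}\rangle$ has degree $\deg\phi=4$, whereas $\mathbf{B}\to\mathbf{A}_\tau\to\mathbf{A}_\tau/\langle-\mathbb{I}\rangle$ has degree $8$. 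Since you have (correctly) shown that $-\mathbb{I}$ acts on $\mathbb{P}^3$ by $[Z_0:Z_1:Z_2:Z_3]\mapsto[Z_0:Z_1:Z_2:-Z_3]$, a nontrivial projective involution, the map $\varphi_{\mathcal{N}}$ does not descend to $\mathbf{B}/\langle-\mathbb{I}\rangle$, and the $8{:}1$ count forces the octic to be birational to $\mathbf{B}$ itself --- which is also what Corollary~\ref{cor:1-4-polarization} asserts. You should either reconcile this by noting that the target of the birational equivalence must be $\mathbf{B}$ rather than $\mathbf{B}/\langle-\mathbb{I}\rangle$ (flagging the discrepancy with the statement as printed), or explain why the quotient by $\langle-\mathbb{I}\rangle$ nonetheless appears; as written, your argument proves a different statement from the one it concludes with.
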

\qed
\subsection{$(2,4)$-polarized and $(1,2)$-polarized Kummer surfaces}
Let us now consider the generic abelian surface $\mathbf{B}$ with a $(1,2)$-polarization. Barth studied abelian surfaces with $(1,2)$-polarization in~\cite{MR946234}. An excellent summary of Barth's construction was given by Garbagnati in \cite{Garbagnati08, MR3010125}. Kummer surfaces with $(1,2)$-polarization were also discussed \cite{MR2306633, MR2729013}. Barth studied a projective model of the surface $\operatorname{Kum}(\mathbf{B})$ as intersection of three quadrics in $\mathbb{P}^5$, giving also the explicit equations of these quadrics. We will show how these conics arise as Mumford identities of Theta functions. We make the following:
\begin{definition}
A Barth surface is the surface in $\mathbb{P}^7(w,\dots,z,X_1,\dots,X_4)$ given as the complete intersection of the 6 quadrics 
\begin{equation}
\label{Barth_a}
\begin{split}
 2 p_0q_0 (X_1^2+X_2^2) & = (p^2_0 +q ^2_0) (w^2+x^2)-(p^2_0-q^2_0)(y^2+z^2),\\
 2 r_0s_0 (X_1^2-X_2^2) & = (r^2_0 +s^2_0) (w^2-x^2)+(r^2_0-s^2_0)(y^2-z^2),\\
 4 u_0 t_0 X_1 X_2  & =2(t^2_0+u^2_0) wx - 2(t^2_0-u^2_0) yz,
\end{split}
\end{equation}
and
\begin{equation}
\label{Barth_b}
\begin{split}
2 p_0q_0 (X_3^2+X_4^2) & = (-p^2_0 +q^2_0) (w^2+x^2)+(p^2_0+q^2_0)(y^2+z^2),\\
2 r_0s_0 (X_3^2-X_4^2) & = (-r^2_0 +s^2_0) (w^2-x^2)-(r^2_0+s^2_0)(y^2-z^2),\\
4 u_0 t_0 X_3 X_4  & =2(t^2_0-u^2_0) wx - 2(t^2_0+u^2_0) yz,
\end{split}
\end{equation}
where $[p_0: \dots :u_0] \in \mathbb{P}^5$ such that
\begin{equation}
 t_0^2 u_0^2  = p_0^2 q_0^2- r_0^2 s_0^2 \, , \qquad
 t_0^4 + u_0^4  = p_0^4 + q_0^4- r_0^4 - s_0^4 \,.
\end{equation}
\end{definition}
We start by considering the polarization given by the divisor $2\mathcal{N}$ on $\mathbf{B}$  defining a polarization of type $(2, 4)$. Barth proved the following in \cite[Prop.~2.1]{MR946234}:
\begin{proposition} 
The divisor $2\mathcal{N}$ is very ample on $\mathbf{B}$ and the linear system $|2\mathcal{N}|$ embeds $\mathbf{B}$ as a smooth surface of degree 16 in $\mathbb{P}^7$.
\end{proposition}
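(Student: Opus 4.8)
The plan is to realize the linear-system map $\varphi_{2\mathcal{N}}\colon \mathbf{B}\dashrightarrow\mathbb{P}^7$ as a closed embedding by combining a cohomological dimension count, the base-point-freeness already recorded in the excerpt, and a Reider-type very-ampleness criterion adapted to abelian surfaces, and then to read off the degree from the self-intersection. First I would pin down the numerics. Since $\mathcal{N}$ has type $(1,2)$, the line bundle $2\mathcal{N}$ has type $(2,4)$; hence $\bigl((2\mathcal{N})^2\bigr)=4(\mathcal{N}^2)=16$, and by the Riemann--Roch formula quoted above together with the ampleness vanishing $h^1(2\mathcal{N})=h^2(2\mathcal{N})=0$ one gets $h^0(2\mathcal{N})=\chi(2\mathcal{N})=\bigl((2\mathcal{N})^2\bigr)/2=8$. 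This produces the candidate map to $\mathbb{P}^7$. Because $2\mathcal{N}$ is of type $(2,4)$ with first elementary divisor $d_1=2$, the base-point-freeness cited in \cite[Prop.~4.1.6, Lemma~10.1.2]{MR2062673} applies verbatim, so $\varphi_{2\mathcal{N}}$ is in fact a morphism.

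The heart of the argument is very ampleness, which I would establish with Reider's theorem. On the abelian surface $\mathbf{B}$ the canonical bundle is trivial, so $K_{\mathbf{B}}+2\mathcal{N}=2\mathcal{N}$, and $L:=2\mathcal{N}$ is nef with $L^2=16\ge 10$. Reider's criterion then forces $L$ to be very ample unless there is an effective divisor $E$ realizing one of the exceptional numerical configurations. On an abelian surface adjunction gives $E^2=2g(E)-2$, so there are no curves of negative self-intersection, and every configuration collapses to an elliptic curve $E$ (with $E^2=0$) satisfying $(L\cdot E)\le 2$; since $L\cdot E=2(\mathcal{N}\cdot E)$ is even, only $(\mathcal{N}\cdot E)=1$ can occur. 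For the \emph{generic} $(1,2)$-polarized surface $\operatorname{NS}(\mathbf{B})=\mathbb{Z}\,\mathcal{N}$, so no such elliptic curve exists and the obstruction is vacuous; thus $\varphi_{2\mathcal{N}}$ separates points and tangent vectors and is a closed embedding. I would also note that this genericity is genuinely needed: if such an $E$ existed, then $L|_E$ would have degree $2$ and map $E$ two-to-one onto a line, so $2\mathcal{N}$ would fail to be very ample.

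Finally, once $\varphi_{2\mathcal{N}}$ is a closed embedding, its image is smooth because it is isomorphic to the smooth surface $\mathbf{B}$, and its degree as a surface in $\mathbb{P}^7$ equals $L^2=\bigl((2\mathcal{N})^2\bigr)=16$, yielding the claimed smooth surface of degree $16$.

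The step I expect to be the main obstacle is the very-ampleness input: Reider's theorem delivers it painlessly here, but only after verifying the numerical hypotheses and invoking the genericity of $\mathbf{B}$ to rule out low-degree elliptic curves. An alternative route, more in the spirit of the present paper, would be to prove separation of points and tangents directly from the Heisenberg action of the theta group of $2\mathcal{N}$ on $H^0(\mathbf{B},2\mathcal{N})$ --- essentially Mumford's theta-group method --- which has the advantage of simultaneously exhibiting the quadrics cutting out the image and thereby linking the embedding to the Barth surface equations~(\ref{Barth_a})--(\ref{Barth_b}) that follow.
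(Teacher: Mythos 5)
Your argument is correct, but it follows a genuinely different route from the paper, which in fact offers no proof at all: the proposition is quoted verbatim from Barth \cite[Prop.~2.1]{MR946234}, and Barth's original argument (which predates Reider's theorem) proceeds by explicit analysis of the theta functions and the Heisenberg group action on $H^0(\mathbf{B},2\mathcal{N})$ --- essentially the ``alternative route'' you sketch in your final sentence, and the one that meshes with the derivation of the quadrics~(\ref{Barth_a})--(\ref{Barth_b}) later in the section. Your Reider-based proof is clean and complete as far as it goes: the numerics $\bigl((2\mathcal{N})^2\bigr)=16\ge 10$ and $h^0(2\mathcal{N})=8$ are right, the absence of negative curves and the evenness of the intersection form on an abelian surface correctly reduce the exceptional configurations to an elliptic curve $E$ with $\mathcal{N}\cdot E=1$, and the genericity hypothesis (which the paper does assume --- ``the generic abelian surface $\mathbf{B}$ with a $(1,2)$-polarization'') kills that case since $\operatorname{NS}(\mathbf{B})=\mathbb{Z}\,\mathcal{N}$ forces every effective class to have positive self-intersection. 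What the Reider route buys is brevity and a transparent identification of exactly where genericity is needed; what it gives up is the explicit coordinate description of the embedding, which the theta-group method produces as a byproduct and which the rest of the paper relies on. Either way the degree computation $\deg\varphi_{2\mathcal{N}}(\mathbf{B})=L^2=16$ is immediate once the embedding is established.
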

\begin{remark}
It was proved in \cite[Prop.~4.6]{MR946234} that a generic set of moduli parameters satisfies
\begin{gather*}
 (p_0^2s_0^2-q_0^2r_0^2) (p_0^2r_0^2-q_0^2s_0^2)(p_0^2u_0^2-q_0^2t_0^2) (p_0^2t_0^2-q_0^2u_0^2)\\
 \times  (r_0^2u_0^2-s_0^2t_0^2) (r_0^2t_0^2-s_0^2u_0^2) \not = 0.
\end{gather*}
\end{remark}
We have the following:
\begin{corollary}
For generic parameters $[p_0:\dots:u_0] \in \mathbb{P}^5$, the six quadrics defining the Barth surface in Equations~(\ref{Barth_a})~and~(\ref{Barth_b}) generate the ideal of a smooth irreducible surface of degree 16 in $\mathbb{P}^7$ isomorphic to $\mathbf{B}$ with $(2, 4)$-polarization given by $2\mathcal{N}$.
\end{corollary}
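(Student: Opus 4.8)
The plan is to deduce the statement from Barth's embedding theorem \cite[Prop.~2.1]{MR946234}, quoted in the preceding Proposition, combined with the quadratic Theta relations of Propositions~\ref{prop:MF} and~\ref{prop:MMF}. By that Proposition the linear system $|2\mathcal{N}|$ defines a closed embedding $\varphi_{2\mathcal{N}}\colon \mathbf{B}\hookrightarrow\mathbb{P}^7$ whose image $S:=\varphi_{2\mathcal{N}}(\mathbf{B})$ is a smooth surface of degree $16$; note that $h^0(2\mathcal{N})=\chi(2\mathcal{N})=((2\mathcal{N})^2)/2=8$ by Riemann--Roch, which is what makes $\mathbb{P}^7$ the correct ambient space. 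First I would fix a basis of $H^0(\mathbf{B},2\mathcal{N})$ and identify it with the homogeneous coordinates $w,x,y,z,X_1,\dots,X_4$. Pulling these sections back through the isogeny $\phi\colon\mathbf{B}\to\mathbf{A}$ with $\phi^*\mathcal{L}\cong\mathcal{N}$ supplied by \cite[Prop.~4.1.2]{MR2062673}, each coordinate becomes a Theta function of the appropriate level on the principally polarized cover $\mathbf{A}$, and the moduli $p_0,q_0,r_0,s_0,t_0,u_0$ are realized as Theta constants.

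The second step is to check that the six quadrics in Equations~(\ref{Barth_a}) and~(\ref{Barth_b}) vanish on $S$. Each of them is a quadratic form in the coordinates, hence a section of $4\mathcal{N}$, and under the substitution above it becomes one of the four-term relations among squares and bi-monomials of Theta functions generated by Mumford's master equation, i.e.\ a member of the ideals described in Propositions~\ref{prop:MF} and~\ref{prop:MMF}. The two compatibility constraints $t_0^2u_0^2=p_0^2q_0^2-r_0^2s_0^2$ and $t_0^4+u_0^4=p_0^4+q_0^4-r_0^4-s_0^4$ imposed on the moduli are precisely the Frobenius identities~(\ref{Eq:FrobeniusIdentities}), and they are exactly what is needed for these relations to hold identically. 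This yields the inclusion $S\subseteq V(I)$, where $I$ is the ideal generated by the six quadrics.

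It remains to upgrade $S\subseteq V(I)$ to the equality $I=I(S)$ of homogeneous ideals for generic moduli. Since $S$ is irreducible of dimension two and degree $16$, it suffices to show that $V(I)$ has no further top-dimensional components and no embedded points and that $I$ is saturated, which together force $I$ to be the prime ideal of $S$. The main obstacle is that the six quadrics do \emph{not} form a regular sequence: six quadrics in $\mathbb{P}^7$ in general position would cut out a curve rather than a surface, so neither B\'ezout nor the Koszul complex applies directly, and the syzygies among the six forms must be controlled. I would resolve this by semicontinuity: at one explicit choice of parameters satisfying the genericity condition of \cite[Prop.~4.6]{MR946234} --- the nonvanishing of the product of the six quadratic combinations of $p_0,\dots,u_0$ --- a Gr\"obner basis computation verifies that $I$ defines a reduced, irreducible surface of degree $16$ with the Hilbert polynomial of an abelian surface in $\mathbb{P}^7$; the same conclusion then propagates to the dense open locus of parameter space on which $V(I)$ stays two-dimensional of degree $16$. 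On that locus $V(I)=S$ scheme-theoretically, so the six quadrics generate the ideal of a smooth irreducible surface of degree $16$ isomorphic to $\mathbf{B}$ with its $(2,4)$-polarization $2\mathcal{N}$.
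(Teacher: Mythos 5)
Your argument reaches the right conclusion but by a genuinely different route from the paper. The paper's proof is essentially a translation exercise: Barth already wrote down, in \cite[Eq.~(2.10)]{MR946234}, six explicit quadrics that generate the ideal of the $(2,4)$-embedded abelian surface, and the proof consists of exhibiting the linear change of coordinates $(\mathbf{x}_1,\dots,\mathbf{x}_8)\mapsto(w,x,X_1,X_2,y,z,iX_4,-iX_3)$ and the renaming of moduli that turns Barth's equations into Equations~(\ref{Barth_a}) and~(\ref{Barth_b}); the statement that these quadrics cut out the ideal is imported wholesale from Barth. You instead rederive the vanishing of the six quadrics from the Mumford relations of Propositions~\ref{prop:MF} and~\ref{prop:MMF} together with the Frobenius identities~(\ref{Eq:FrobeniusIdentities}) — which is in effect the content of the paper's later Theorem~\ref{prop:ThetaImageB}, front-loaded into this corollary — and you are then forced to establish independently that the quadrics generate the full homogeneous ideal. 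Your approach is more self-contained and fits the paper's Theta-function theme, but it pays for this with the saturation step, which is the one soft spot: you correctly observe that the six quadrics are not a regular sequence, but the proposed remedy (a Gr\"obner basis computation at one parameter value plus semicontinuity of the Hilbert function) is asserted rather than carried out, and the semicontinuity argument additionally requires that the locus in $\mathbb{P}^5$ cut out by the two constraints on $[p_0:\dots:u_0]$ be irreducible (or that one restrict to the component swept out by the Theta constants) for ``true at one point'' to imply ``true generically.'' If that computation and the irreducibility of the parameter locus are supplied, your proof is complete; the paper sidesteps both issues entirely by leaning on Barth's result.
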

\begin{proof}
The quadratic equations for the conics defining $\mathbf{B}$ in $\mathbb{P}^7$ were determined by Barth in \cite[Eq.~(2.10)]{MR946234}. 
The result then follows when taking suitable linear combinations of these equations using the variable transformation
\begin{equation}
 \mathbf{x}_1=w, \; \mathbf{x}_2=x, \; \mathbf{x}_3=X_1, \; \mathbf{x}_4=X_2, \; \mathbf{x}_5=y, \; \mathbf{x}_6=z,\; \mathbf{x}_7=i X_4, \; \mathbf{x}_8=-iX_3,
\end{equation} 
and identifying the moduli parameters according to
\begin{equation}
 \lambda_1=p_0, \; \mu_1=q_0, \; \lambda_2=s_0, \; \mu_2=r_0, \; \lambda_3=t_0, \; \mu_3=u_0.
\end{equation}
\end{proof}
With respect to the action of the involution $-\mathbb{I}$ on $\mathbf{B}$, the space $H^0(\mathbf{B},2\mathcal{N})$ with $h^0(\mathbf{B}, 2\mathcal{N}) = 8$ splits into the direct sum $H^0(\mathbf{B},2\mathcal{N}) = H^0(\mathbf{B},2\mathcal{N})_+ \oplus H^0(\mathbf{B},2\mathcal{N})_-$ of eigenspaces of dimensions $h^0(\mathbf{B},2\mathcal{N})_+=6$ and $h^0(\mathbf{B},2\mathcal{N})_-= 2$, respectively. In particular, it is possible to choose the coordinates of $\mathbb{P}^7$ such that $\{X_3=X_4=0\}$ is the subspace of $\mathbb{P}^7$ invariant under the action of $-\mathbb{I}$. We will denote the subspace by $\mathbb{P}^5_+ \cong \mathbb{P} H^0(\mathbf{B},2\mathcal{N})_+$, and the anti-invariant subspace  $\{w=0, \dots, X_1 =0, X_2=0\}$ by $\mathbb{P}^1_- \cong \mathbb{P} H^0(\mathbf{B},2\mathcal{N})_-$; see \cite[Sec.~2]{MR946234}. If we consider the projection
\[
  \Pi: \; \mathbb{P}^7 \to \mathbb{P}^5_+, \quad  [w: x: y: z :X_1:X_2:X_3 : X_4] \mapsto [w: x:y:z :X_1:X_2],
\]
with center $\mathbb{P}^1_-$, it was proved in \cite[Sec.~4.3]{MR946234} that $\mathbf{B} \cap \mathbb{P}^1_- = \emptyset$. Hence, the projection $\Pi$ is well defined and induces a double cover from $\mathbf{B}$ onto its image. The following was proved in \cite[Prop.~4.6]{MR946234}:
\begin{corollary}
\label{cor:KumB}
The three quadrics in Equations~(\ref{Barth_a}) generate the ideal of an irreducible surface of degree 8 in $\mathbb{P}^5_+$ with 16 normal singularities isomorphic to the singular Kummer variety $\mathbf{B}/\langle -\mathbb{I} \rangle$.
\end{corollary}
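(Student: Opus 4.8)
The plan is to deduce this from the preceding corollary, which realizes $\mathbf{B}$ via the very ample linear system $|2\mathcal{N}|$ as the complete intersection of the six quadrics in Equations~(\ref{Barth_a}) and~(\ref{Barth_b}), a smooth irreducible surface of degree $16$ in $\mathbb{P}^7 \cong \mathbb{P} H^0(\mathbf{B}, 2\mathcal{N})$. Since $2\mathcal{N}$ is symmetric, the involution $-\mathbb{I}$ acts linearly on $H^0(\mathbf{B},2\mathcal{N})$, and by construction the coordinates $w,x,y,z,X_1,X_2$ span the invariant part $H^0(\mathbf{B},2\mathcal{N})_+$ while $X_3,X_4$ span the anti-invariant part $H^0(\mathbf{B},2\mathcal{N})_-$. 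The projection $\Pi: \mathbb{P}^7 \to \mathbb{P}^5_+$ with center $\mathbb{P}^1_- = \{w = \dots = X_2 = 0\}$ is then precisely the map forgetting the anti-invariant coordinates, so $-\mathbb{I}$ acts trivially on $\mathbb{P}^5_+$ and $\Pi \circ (-\mathbb{I}) = \Pi$.

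First I would use the fact, recalled above from \cite[Sec.~4.3]{MR946234}, that $\mathbf{B} \cap \mathbb{P}^1_- = \emptyset$; this guarantees that $\Pi$ restricts to a morphism $\Pi|_{\mathbf{B}}: \mathbf{B} \to \mathbb{P}^5_+$, finite onto its image. Because $\Pi$ is $-\mathbb{I}$-equivariant with trivial target action, $\Pi|_{\mathbf{B}}$ identifies $P$ with $-P$ and hence factors through the quotient $\mathbf{B}/\langle -\mathbb{I}\rangle$, the generic fibre being the orbit $\{P,-P\}$; thus $\Pi|_{\mathbf{B}}$ is the double cover onto its image noted above, and $\mathbf{B}/\langle -\mathbb{I}\rangle \to \Pi(\mathbf{B})$ is a finite birational morphism. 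As $\mathbf{B}/\langle -\mathbb{I}\rangle$ is normal, being the quotient of a smooth surface by a finite group, this morphism is the normalization of $\Pi(\mathbf{B})$; once the singularities of $\Pi(\mathbf{B})$ are shown to be normal it becomes an isomorphism, giving $\Pi(\mathbf{B}) \cong \mathbf{B}/\langle -\mathbb{I}\rangle$.

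Next I would compute the degree and identify the defining equations. Since $\Pi|_{\mathbf{B}}$ is a finite morphism of degree two from a surface of degree $16$, its image has degree $16/2 = 8$. The three quadrics in Equations~(\ref{Barth_a}) involve only the invariant coordinates $w,x,y,z,X_1,X_2$, so every point of $\Pi(\mathbf{B})$ satisfies them and $\Pi(\mathbf{B})$ is contained in their common zero locus in $\mathbb{P}^5_+$, whereas Equations~(\ref{Barth_b}) merely express $X_3^2$, $X_4^2$, and $X_3 X_4$ through the invariant coordinates and are absorbed by the projection. For generic moduli these three quadrics meet properly in codimension three and therefore cut out a complete intersection of degree $2^3 = 8$; comparing degrees forces $\Pi(\mathbf{B})$ to coincide with this complete intersection, which is then irreducible with no extraneous components, and the ideal of a complete intersection is generated by its defining equations. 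Finally, the sixteen two-torsion points $\mathbf{B}[2]$ are the fixed points of $-\mathbb{I}$ and map to sixteen ordinary double points of the quotient, which are normal singularities; this both furnishes the normality needed in the previous step and accounts for the sixteen nodes.

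The main obstacle will be the proper-intersection and ideal-theoretic claim, namely that the three quadrics in Equations~(\ref{Barth_a}) form a complete intersection of degree $8$ and generate the full homogeneous ideal, rather than merely cutting out $\Pi(\mathbf{B})$ set-theoretically. This is exactly the content of Barth's explicit analysis in \cite[Prop.~4.6]{MR946234}, where the genericity condition on $[p_0:\dots:u_0]$ enters, so I would invoke that computation here; the remaining steps, namely the equivariance of $\Pi$, the degree count, and the node count, are formal once it is in place.
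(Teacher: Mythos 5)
Your proposal is correct in substance, but note that the paper does not actually prove this corollary: it is stated as a citation of \cite[Prop.~4.6]{MR946234}, so there is no internal argument to compare against. Your reconstruction is the standard one and is sound: the equivariance of the projection $\Pi$ with respect to $-\mathbb{I}$ (trivial action on $\mathbb{P}^5_+$, sign change on $X_3,X_4$), the degree count $16/2=8$, the containment of $\Pi(\mathbf{B})$ in the zero locus of the three invariant quadrics of Equations~(\ref{Barth_a}), and the identification of the sixteen nodes with the images of $\mathbf{B}[2]$. You also correctly isolate the one genuinely nontrivial input — that for generic $[p_0:\dots:u_0]$ the three quadrics meet properly, so that the complete intersection has degree $8$ and its (Cohen--Macaulay, hence saturated and unmixed) ideal is generated by the quadrics — as exactly the content of Barth's genericity analysis. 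One small point worth making explicit: the claim that the generic fibre of $\Pi|_{\mathbf{B}}$ is a single orbit $\{P,-P\}$ (equivalently, that $\mathbf{B}/\langle -\mathbb{I}\rangle\to\Pi(\mathbf{B})$ is birational, which you need before invoking normality to upgrade it to an isomorphism) does not follow from equivariance alone; it follows from the observation you make in passing, namely that Equations~(\ref{Barth_b}) determine $X_3^2$, $X_4^2$ and $X_3X_4$ as functions of the invariant coordinates, so that over a generic point of the image there are exactly the two solutions $\pm(X_3,X_4)$. With that spelled out, the argument is complete modulo the cited computation of Barth.
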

Using the same argument as before, the projection
\begin{equation}
\label{p35}
 \pi : \mathbb{P}^5_+ \to \mathbb{P}^3, \qquad  [w: x: y: z :X_1:X_2] \mapsto [w:x:y:z],
\end{equation}
is well-defined and induces a double cover of $\mathbf{B}/\langle -\mathbb{I} \rangle$ onto its image. We have the following:
\begin{proposition}
\label{prop:BGH}
The image of the projection $\pi$ is isomorphic to the singular Kummer variety $\mathbf{}/\langle -\mathbb{I} \rangle$ associated with a principally polarized abelian variety $\mathbf{A}$.
\end{proposition}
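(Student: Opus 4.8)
The plan is to eliminate the two anti-invariant coordinates $X_1, X_2$ from the three quadrics in Equation~(\ref{Barth_a}) and to recognize the resulting quartic in $\mathbb{P}^3(w,x,y,z)$ as a member of the G\"opel--Hudson family. By Corollary~\ref{cor:KumB} the source of $\pi$ is the singular Kummer variety $\mathbf{B}/\langle -\mathbb{I}\rangle$, so the image is a surface in $\mathbb{P}^3$ and I only need to identify its equation. First I would observe that the three quadrics in Equation~(\ref{Barth_a}) express the quadratic monomials $X_1^2+X_2^2$, $X_1^2-X_2^2$, and $X_1 X_2$ as quadratic forms $U_1, U_2, U_3 \in \mathbb{C}[w,x,y,z]$. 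Since these three monomials obey the universal syzygy $(X_1^2+X_2^2)^2-(X_1^2-X_2^2)^2 = 4\,(X_1 X_2)^2$, the image $\pi\big(\mathbf{B}/\langle -\mathbb{I}\rangle\big)\subset\mathbb{P}^3$ is cut out by the single quartic $U_1^2 - U_2^2 - 4\,U_3^2 = 0$, and it is this quartic whose Kummer nature must be established.

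Next I would expand the quartic and collect coefficients. Writing $P_\pm = p_0^2\pm q_0^2$, $R_\pm = r_0^2\pm s_0^2$, $T_\pm = t_0^2\pm u_0^2$, the elementary identities $P_+^2 - P_-^2 = 4 p_0^2 q_0^2$, $R_+^2 - R_-^2 = 4 r_0^2 s_0^2$, and $T_+^2 - T_-^2 = 4 t_0^2 u_0^2$ force the coefficients of $w^4, x^4, y^4, z^4$ to coincide, and they organize the six mixed square monomials into the three pairs $(w^2 z^2, x^2 y^2)$, $(w^2 x^2, y^2 z^2)$, $(w^2 y^2, x^2 z^2)$, each pair carrying a common coefficient; the only surviving cross monomial is a single $wxyz$ term. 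Because $U_1, U_2$ are even in each variable and $U_3$ is bilinear, no odd powers occur. Hence the quartic lies in the family of Equation~(\ref{Eq:QuarticSurfaces12}) with parameters $[\xi_0:\xi_1:\xi_2:\xi_3:\xi_4]$ read off explicitly from $p_0,\dots,u_0$, and it is automatically invariant under the sign-change and permutation symmetries characteristic of that family.

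It then remains to verify that these coefficients satisfy the degree-three factor of the discriminant, $\xi_0(16\xi_0^2 - 4\xi_1^2 - 4\xi_2^2 - 4\xi_3^2 + \xi_4^2) + 4\,\xi_1 \xi_2 \xi_3 = 0$, which by the discussion preceding Equation~(\ref{Goepel-Quartic}) is exactly the condition singling out singular Kummer varieties of principally polarized abelian surfaces. I expect this to be the main obstacle, since it is the one step that genuinely uses the two defining relations $t_0^2 u_0^2 = p_0^2 q_0^2 - r_0^2 s_0^2$ and $t_0^4 + u_0^4 = p_0^4 + q_0^4 - r_0^4 - s_0^4$ among the moduli, rather than formal algebra. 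The cleanest route is to notice that these two relations are precisely the Frobenius identities of Equation~(\ref{Eq:FrobeniusIdentities}) under the identification $(p_0,q_0,r_0,s_0,t_0,u_0)=(\theta_1,\theta_4,\theta_2,\theta_3,\theta_5,\theta_6)$; substituting and reducing with the doubling formulas then matches $[\xi_0:\dots:\xi_4]$ with the G\"opel--Hudson moduli $A,B,C,D$ of Proposition~\ref{prop:ThetaImageGH} (equivalently the G\"opel moduli $\alpha,\beta,\gamma,\delta$ of Proposition~\ref{prop:ThetaImageG}), which satisfy Equation~(\ref{paramGH}).

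Normalizing $\xi_0=1$, the image is then exactly a G\"opel--Hudson quartic of the form~(\ref{Goepel-Quartic}) with parameters obeying~(\ref{paramGH}), so by Lemma~\ref{lem:GH} it is isomorphic to the singular Kummer variety of a principally polarized abelian surface $\mathbf{A}$, which proves the proposition. Finally I would remark that the two-to-one nature of $\pi$ recorded before the statement is consistent with the degree-two isogeny $\mathbf{B}\to\mathbf{A}$ through which the $(1,2)$-polarization on $\mathbf{B}$ is induced by the principal polarization on $\mathbf{A}$, so that the target $\mathbf{A}$ can be pinned down concretely rather than merely asserted to exist.
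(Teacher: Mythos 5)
Your proposal is correct and follows essentially the same route as the paper: eliminate $X_1,X_2$ from Equations~(\ref{Barth_a}) via the syzygy among $X_1^2+X_2^2$, $X_1^2-X_2^2$, $X_1X_2$, recognize the resulting quartic as a G\"opel--Hudson quartic, and check that its moduli satisfy the constraint~(\ref{paramGH}). The only cosmetic difference is in that last verification: the paper matches the parameters $A,B,C,D$ directly with Equations~(\ref{Eq:paramABCD}) under the substitution~(\ref{KummerParameter_thetas}), whereas you verify the discriminant condition via a Theta-constant parametrization of the two moduli relations using the Frobenius identities~(\ref{Eq:FrobeniusIdentities}); both amount to the same check.
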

\begin{proof}
The statement follows by eliminating $X_1, X_2$ from Equations~(\ref{Barth_a}) and recovering a G\"opel-Hudson quartic in Equation~(\ref{Goepel-Quartic}) with parameters
\begin{gather}
\nonumber
A = \frac{2(p_0^2r_0^2+q_0^2s_0^2)}{p_0^2r_0^2-q_0^2s_0^2}, \; B = \frac{2(p_0^2q_0^2+r_0^2s_0^2)}{p_0^2q_0^2-r_0^2s_0^2},  
C= \frac{2(p_0^2s_0^2+q_0^2r_0^2)}{p_0^2s_0^2-q_0^2r_0^2}, \\
\label{param:Goepel-Quartic2}
D = \frac{4p_0^2q_0^2r_0^2s_0^2(t_0^2-u_0^2)(t_0^2+u_0^2)}{(p_0^2r_0^2-q_0^2s_0^2)(p_0^2q_0^2-r_0^2s_0^2)(p_0^2s_0^2-q_0^2r_0^2)}.
\end{gather}
These moduli parameters equal the ones in Equations~(\ref{Eq:paramABCD}) when using Equations~(\ref{KummerParameter_thetas}) and~(\ref{KummerParameter_thetas}).
\end{proof}
\begin{remark}
Other projections are obtained by eliminating either $w, x$ or $y, z$ instead. These images are isomorphic G\"opel-Hudson quartics and are related by the action of a projective automorphism mapping one even G\"opel tetrahedron to another one with two tropes in common. 
\end{remark}
We now describe the role of the even eight in the construction above:  Mehran proved in \cite{MR2804549} that the rational double cover of a smooth Kummer surface $\operatorname{Kum}(\mathbf{A})$ with principal polarization branched along an even eight is a Kummer surface with $(1,2)$-polarization. In terms of the singular Kummer variety $\mathbf{B}/\langle -\mathbb{I} \rangle$ and its image -- which is a singular Kummer variety $\mathbf{A}/\langle -\mathbb{I} \rangle$ associated with a principally polarized abelian variety -- this can be interpreted as follows: the 16 singular points of $\mathbf{B}/\langle -\mathbb{I} \rangle$ are mapped to 8 singular points on $\mathbf{A}/\langle -\mathbb{I} \rangle$ that form the complement of the even eight. 
\par The Kummer variety $\mathbf{B}/\langle -\mathbb{I} \rangle$ is the complete intersection of  three quadrics in Equations~(\ref{Barth_a}) which we denote by $\mathsf{Q}_1, \mathsf{Q}_2, \mathsf{Q}_3$. The Kummer variety is contained in each quadric, or equivalently, in the hypernet of quadrics $\alpha_1 \mathsf{Q}_1 + \alpha_2 \mathsf{Q}_2 +\alpha_3 \mathsf{Q}_3$ for complex numbers $\alpha_1, \alpha_2, \alpha_3 \in \mathbb{C}$. We have the following:
\begin{lemma}
\label{lem:Ks}
For generic moduli parameters there are exactly four special quadrics $\mathsf{K}_1, \dots, \mathsf{K}_4$ in the hypernet where the rank equals three. The four special quadrics are given by
\begin{equation}
\begin{split}
 \mathsf{K_1}:  c_{1,1} \big( w_0 x - x_0 w\big)^2 + c_{1,2}   \big( y_0 z - z_0 y\big)^2 + c_{1,3}   \big( X_1^{(0)} X_2 -  X_2^{(0)} X_1\big)^2 =0, \\
 \mathsf{K_2}:  c_{2,1} \big( w_0 x + x_0 w\big)^2 + c_{2,2}   \big( y_0 z+ z_0 y\big)^2 + c_{2,3}   \big( X_1^{(0)} X_2 +  X_2^{(0)} X_1\big)^2 =0, \\
 \mathsf{K_3}:  c_{3,1} \big( w_0 w - x_0 x\big)^2 + c_{3,2}   \big( y_0 y - z_0 z\big)^2 + c_{3,3}   \big( X_1^{(0)} X_1 -  X_2^{(0)} X_2\big)^2 =0, \\
 \mathsf{K_4}:  c_{4,1} \big( w_0 w + x_0 x\big)^2 + c_{4,2}   \big( y_0 y + z_0 z\big)^2 + c_{4,3}   \big( X_1^{(0)} X_1 +  X_2^{(0)} X_2\big)^2 =0,
\end{split} 
\end{equation}
where $c_{i,j} \in \mathbb{C}^*$, the parameters $w_0, x_0, y_0, z_0$ are related to $p_0, q_0, r_0, s_0$ by Equations~(\ref{KummerParameter_thetas}) and
\begin{equation}
 X_1^{(0) \, 2} X_2^{(0) \, 2}   = w_0^2 x_0^2- y_0^2 z_0^2 \, , \qquad
 X_1^{(0) \, 4}  + X_1^{(0) \, 4}   = w_0^4 + x_0^4- y_0^4 - z_0^4 \,.
\end{equation}
\end{lemma}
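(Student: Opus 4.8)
The plan is to exploit the very special shape of the three quadrics $\mathsf{Q}_1, \mathsf{Q}_2, \mathsf{Q}_3$ of Equations~(\ref{Barth_a}). Writing the symmetric $6\times 6$ Gram matrix $M(\alpha_1,\alpha_2,\alpha_3)$ of the generic member $\alpha_1 \mathsf{Q}_1 + \alpha_2 \mathsf{Q}_2 + \alpha_3 \mathsf{Q}_3$ of the hypernet in the ordered basis $(w,x,y,z,X_1,X_2)$, I would first observe that $\mathsf{Q}_1$ and $\mathsf{Q}_2$ contribute only the pure squares $w^2,\dots,X_2^2$, while $\mathsf{Q}_3$ contributes only the three products $wx$, $yz$ and $X_1X_2$. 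Hence $M(\alpha)$ is block diagonal with three $2\times 2$ blocks $M_{wx}$, $M_{yz}$, $M_{X_1X_2}$ coupling $(w,x)$, $(y,z)$ and $(X_1,X_2)$ respectively. Since the rank of a block-diagonal matrix is the sum of the ranks of its blocks, and each block is $2\times2$, the rank of $M(\alpha)$ equals three exactly when each of the three blocks is singular but nonzero. A short argument shows that for $[\alpha_1:\alpha_2:\alpha_3]\in\mathbb{P}^2$ no block can vanish identically (for instance $M_{X_1X_2}=0$ forces $\alpha_3 u_0 t_0 = \alpha_1 p_0 q_0 = \alpha_2 r_0 s_0 = 0$, hence $\alpha=0$ for generic moduli), so the rank-three locus is cut out precisely by the three determinantal conditions $\det M_{wx} = \det M_{yz} = \det M_{X_1 X_2} = 0$.

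Next I would compute these three determinants. The crucial point is that in each block the two diagonal entries have the form $\alpha_1(\cdots)\pm\alpha_2(\cdots)$ and the off-diagonal entry is $\alpha_3(\cdots)$, so every determinant is a quadratic form in $(\alpha_1,\alpha_2,\alpha_3)$ involving only the squares $\alpha_1^2,\alpha_2^2,\alpha_3^2$ and no cross terms. Explicitly, with $A=\alpha_1^2$, $B=\alpha_2^2$, $C=\alpha_3^2$ one obtains the three linear equations $(p_0^2+q_0^2)^2A-(r_0^2+s_0^2)^2B-(t_0^2+u_0^2)^2C=0$, then $(p_0^2-q_0^2)^2A-(r_0^2-s_0^2)^2B-(t_0^2-u_0^2)^2C=0$, and finally $p_0^2q_0^2A-r_0^2s_0^2B-t_0^2u_0^2C=0$. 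This reduction of a corank condition to a \emph{linear} system in the squared coordinates is the heart of the argument.

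I would then invoke the two defining relations of the moduli, $t_0^2u_0^2=p_0^2q_0^2-r_0^2s_0^2$ and $t_0^4+u_0^4=p_0^4+q_0^4-r_0^4-s_0^4$. Combining them yields $(t_0^2\pm u_0^2)^2=(p_0^2\pm q_0^2)^2-(r_0^2\pm s_0^2)^2$, so that in each of the three equations the third column coefficient is minus the sum of the first two. Consequently the three columns of the $3\times3$ coefficient matrix sum to zero, the vector $(A,B,C)=(1,1,1)$ lies in its kernel, and for generic moduli the kernel is exactly one-dimensional. Thus the rank-three condition forces $\alpha_1^2=\alpha_2^2=\alpha_3^2$, which in $\mathbb{P}^2$ has precisely the four solutions $[1:\pm1:\pm1]$; these are the four special quadrics $\mathsf{K}_1,\dots,\mathsf{K}_4$. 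For each of them all three blocks have rank one, so the quadric is a sum of three squares of linear forms, one supported on each pair of variables, which confirms that the rank is exactly three.

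Finally, to obtain the displayed shapes I would substitute the relations~(\ref{KummerParameter_thetas}), together with the analogous relations defining $X_1^{(0)},X_2^{(0)}$, into the rank-one blocks: using $p_0^2\pm q_0^2 = 2(w_0^2+x_0^2)$ or $2(y_0^2+z_0^2)$ and $(t_0^2\pm u_0^2)^2 = 16\,w_0^2x_0^2$ or $16\,y_0^2z_0^2$, each block becomes a constant multiple of a perfect square such as $(w_0x-x_0w)^2$ or $(w_0w-x_0x)^2$, while the sign choice $[1:\pm1:\pm1]$ selects which of the four square patterns occurs. The step I expect to be most delicate is the bookkeeping in this last identification, namely tracking the branch of each square root and the labelling of $p_0,q_0,r_0,s_0$ against $w_0,x_0,y_0,z_0$ so that the three blocks of a single $\mathsf{K}_i$ carry the matching sign pattern, together with pinning down the precise genericity hypothesis (nonvanishing of the relevant $2\times2$ minors of the coefficient matrix) under which the kernel is one-dimensional and the four points are distinct.
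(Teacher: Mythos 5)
Your proposal is correct, and it is more self-contained than the paper's own proof. The paper simply cites Barth (Sec.~4 of his paper) for the fact that exactly four members of the net drop to rank three, records the four parameter triples $(\alpha_1,\alpha_2,\alpha_3)$ as quotients of square roots, and asserts that substitution yields the displayed quadrics ``after a tedious computation''. You instead derive both the count and the parameter values: the observation that $\mathsf{Q}_1,\mathsf{Q}_2$ contribute only pure squares while $\mathsf{Q}_3$ contributes only the products $wx$, $yz$, $X_1X_2$ makes the Gram matrix block-diagonal with three $2\times2$ blocks, each of whose determinants is linear in $(\alpha_1^2,\alpha_2^2,\alpha_3^2)$; the two moduli relations give $(t_0^2\pm u_0^2)^2=(p_0^2\pm q_0^2)^2-(r_0^2\pm s_0^2)^2$, so each row of the resulting $3\times3$ coefficient matrix sums to zero (indeed the third equation is $\tfrac14$ of the difference of the first two), the kernel is generically spanned by $(1,1,1)$, and the four special members are $[\alpha_1:\alpha_2:\alpha_3]=[1:\pm1:\pm1]$. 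This agrees projectively with the paper's radical expressions: modulo the moduli relations their three numerators are square roots of one and the same quantity $p_0^2q_0^2(r_0^4+s_0^4)-r_0^2s_0^2(p_0^4+q_0^4)$, so all three ratios square to $1$. What your route buys is a transparent reason for the number four and for the rank-one-per-block, hence perfect-square, structure of each $\mathsf{K}_i$; what it still shares with the paper is the final substitution step, where, as you correctly flag, the sign bookkeeping (e.g.\ $p_0^2+q_0^2=2(w_0^2+x_0^2)$, $r_0^2+s_0^2=2(w_0^2-x_0^2)$, $t_0^2+u_0^2=\pm 4w_0x_0$, $t_0^2-u_0^2=\pm4y_0z_0$) must be tracked so that the three blocks of a given $\mathsf{K}_i$ carry the matching sign pattern.
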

\begin{proof}
The fact that there are exactly four sets of parameters, and thus four special quadrics $\mathsf{K}_1, \dots, \mathsf{K}_4$ in the hypernet where the rank equals three was proven in \cite[Sec.~4]{MR946234}; see also a similar computation outlined in \cite[Sec.~2.4.2]{Garbagnati08}.  The four set of parameters where the rank of the quadric in the hypernet equals three are given by 
\begin{gather*}
 \alpha_1  = \pm \frac{\sqrt{(s_0^2u_0^2-r_0^2 t_0^2)(s_0^2 t_0^2-r_0^2u_0^2)}}{p_0^2s_0^2t_0^2},\;
 \alpha_2  = \pm \frac{\sqrt{(p_0^2u_0^2-q_0^2 t_0^2)(p_0^2 t_0^2-q_0^2u_0^2)}}{p_0^2s_0^2t_0^2},\;\\
 \alpha_3  = \pm \frac{\sqrt{(p_0^2r_0^2-q_0^2 s_0^2)(q_0^2 r_0^2-p_0^2s_0^2)}}{p_0^2s_0^2t_0^2}.
\end{gather*}
Plugging these values into $\alpha_1 \mathsf{Q}_1 + \alpha_2 \mathsf{Q}_2 +\alpha_3 \mathsf{Q}_3$, we obtain, after a tedious computation, the four quadrics.
\end{proof}
The singular locus of each quadric $\mathsf{K}_i$ in Lemma~\ref{lem:Ks} is a plane $\mathsf{S}_i$ for $1\le i \le 4$, given by
\begin{equation}
\label{eq:SingPlanes}
\begin{split}
 \mathsf{S}_1:& \quad w_0 x = x_0 w, \quad y_0 z = z_0 y, \quad X_1^{(0)} X_2 =  X_2^{(0)} X_1\;,\\
 \mathsf{S}_2:& \quad w_0 x = -x_0 w, \quad y_0 z = -z_0 y, \quad X_1^{(0)} X_2 =  -X_2^{(0)} X_1\;,\\
 \mathsf{S}_3:& \quad x_0 x = w_0 w, \quad y_0 y = z_0 z, \quad X_1^{(0)} X_1 =  X_2^{(0)} X_2\;,\\
 \mathsf{S}_4:& \quad x_0 x = -w_0 w, \quad y_0 y = -z_0 z, \quad X_1^{(0)} X_1 =  -X_2^{(0)} X_2\;.
\end{split}
\end{equation}
On each plane $\mathsf{S}_i$ with $1\le i \le 4$, the other cones cut out, for generic moduli parameters, pencils of conics with four distinct base points \cite[Lemma.~4.1]{MR946234}. These four points on the four different planes constitute precisely the 16 singular points $\mathbf{B}/\langle -\mathbb{I} \rangle$. We have the following:
\begin{proposition}
The even eight of the projection $\pi$ in Equation~(\ref{p35}) consists of the eight nodes $[w:x:y:z]$ on the G\"opel-Hudson quartic~(\ref{Goepel-Quartic}) given by
\begin{gather*}
[y_0:z_0:w_0:x_0], [-y_0:-z_0:w_0:x_0], [-y_0:z_0:-w_0:x_0], [-y_0:z_0:w_0:-x_0],\\
[z_0:y_0:x_0:w_0], [-z_0:-y_0:x_0:w_0], [-z_0:y_0:-x_0:w_0], [-z_0:y_0:x_0:-w_0].
\end{gather*}
\end{proposition}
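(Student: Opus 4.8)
The plan is to realize the even eight as the ramification locus of the double cover $\pi$ and to locate these eight nodes by computing the images of the sixteen nodes of $\mathbf{B}/\langle -\mathbb{I}\rangle$. By Mehran's theorem \cite{MR2804549} and the discussion preceding the statement, $\pi$ exhibits $\mathbf{B}/\langle -\mathbb{I}\rangle$ as a rational double cover of the G\"opel--Hudson quartic $\mathbf{A}/\langle -\mathbb{I}\rangle$; its sixteen nodes split into the eight nodes lying over the complement of the even eight, each the image of two of the sixteen nodes of $\mathbf{B}/\langle -\mathbb{I}\rangle$, and the eight ramification nodes forming the even eight itself. Hence it suffices to show that the sixteen singular points of $\mathbf{B}/\langle -\mathbb{I}\rangle$ map exactly onto the eight nodes listed in the first two rows of Lemma~\ref{lem:16nodes}; the remaining eight nodes (rows three and four) are then forced to be the even eight.

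To compute those images I would use the fact, recorded just before the statement, that the sixteen nodes of $\mathbf{B}/\langle -\mathbb{I}\rangle$ are the base points of the four conic pencils cut out on the singular planes $\mathsf{S}_1,\dots,\mathsf{S}_4$ of Equation~(\ref{eq:SingPlanes}). Each $\mathsf{S}_i$ meets the center $\mathbb{P}^1_-$ of $\pi$ in a single point, so its image $\pi(\mathsf{S}_i)$ is a line $L_i \subset \mathbb{P}^3$; dropping $X_1, X_2$ from the equations of $\mathsf{S}_i$ shows that $L_i$ is cut out by the two relations of the form $w_0 x = \pm x_0 w$ and $y_0 z = \pm z_0 y$ (resp.\ $w_0 w = \pm x_0 x$, $y_0 y = \pm z_0 z$). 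A direct check, carried out after the coordinate identification discussed below, then shows that the only G\"opel--Hudson nodes meeting $L_1 \cup \dots \cup L_4$ are the eight nodes of rows one and two, onto which the sixteen base points project two-to-one. This identifies the complement of the even eight, and hence the even eight.

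The one genuine subtlety is a change of coordinates. The projection yields the image quartic in Barth's coordinates $[w:x:y:z]$, in which it is the consistency relation obtained by eliminating $X_1, X_2$ from Equations~(\ref{Barth_a}), whereas the nodes of Lemma~\ref{lem:16nodes} and the statement are written in the standard coordinates of Proposition~\ref{prop:ThetaImageGH}, where $[w:x:y:z]=[\Theta_1(2z):\dots:\Theta_4(2z)]$ and $[w_0:x_0:y_0:z_0]=[\Theta_1:\dots:\Theta_4]$. These two models are related by the linear map implicit in the proof of Proposition~\ref{prop:BGH}, which matches the Barth moduli of Equation~(\ref{param:Goepel-Quartic2}) with those of Equation~(\ref{Eq:paramABCD}) through Equation~(\ref{KummerParameter_thetas}). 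The main work, and the principal obstacle, is the bookkeeping needed to transport the lines $L_i$ and their node content through this identification; equivalently, one verifies that the ramification locus in Barth's coordinates, where all three right-hand sides of Equations~(\ref{Barth_a}) vanish simultaneously, is carried onto the eight nodes of rows three and four.

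Finally, I would confirm that these eight nodes genuinely constitute an even eight and not an arbitrary octuple. This is automatic from the construction, since the branch divisor of Mehran's double cover is an even eight \cite{MR2804549,MR0429917}, but I would cross-check it against the explicit list in Proposition~\ref{prop:EvenEights}: translating rows three and four into two-torsion points via the node dictionary of Table~\ref{tab:nodes} and matching the resulting set with one of the $\Delta_{ij}$ (or its complement) both confirms the even-eight property and pins down the stated coordinates.
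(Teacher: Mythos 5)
Your proposal is correct and follows essentially the same route as the paper: the paper also locates the sixteen singular points of $\mathbf{B}/\langle -\mathbb{I}\rangle$ on the four planes $\mathsf{S}_i$ of Equation~(\ref{eq:SingPlanes}), observes that $\pi$ preserves the ratios of the first four coordinates imposed by those planes (your lines $L_i$), concludes that the images are the eight nodes in the first two rows of Lemma~\ref{lem:16nodes}, and takes the complement. Your extra care about the coordinate identification via Proposition~\ref{prop:BGH} and Equation~(\ref{KummerParameter_thetas}), and the cross-check against Proposition~\ref{prop:EvenEights}, only make explicit steps the paper leaves implicit.
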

\begin{proof}
The image under the projection $\pi$ is a singular Kummer variety $\mathbf{A}/\langle -\mathbb{I} \rangle$ associated with a principally polarized abelian variety realized as G\"opel-Hudson quartic. We computed the 16 singular points of a G\"opel-Hudson quartic in Lemma~\ref{lem:16nodes}.  The 16 singular points $\mathbf{B}/\langle -\mathbb{I} \rangle$ are located on the four planes $\mathsf{S}_i$ for $1\le i \le 4$. Under the projection $\pi$ the ratios of the first four coordinates as determined by the equations of the planes must be preserved. We computed the 16 singular points of a G\"opel-Hudson quartic in Lemma~\ref{lem:16nodes}. The image of the 16 singular points $\mathbf{B}/\langle -\mathbb{I} \rangle$ under the double cover $\pi$ are the eight nodes on $\mathbf{A}/\langle -\mathbb{I} \rangle$ given by
\begin{gather*}
[w_0:x_0:y_0:z_0], [-w_0:-x_0:y_0:z_0], [-w_0:x_0:-y_0:z_0], [-w_0:x_0:y_0:-z_0],\\
[x_0:w_0:z_0:y_0], [-x_0:-w_0:z_0:y_0], [-x_0:w_0:-z_0:y_0], [-x_0:w_0:z_0:-y_0].
\end{gather*}
The even eight consists of the complimentary set of nodes.
\end{proof}
\par We provide an explicit model for the Barth surface in terms of Theta functions.  We have the following:
\begin{theorem}
\label{prop:ThetaImageB}
The surface $\mathbf{B}$ in $\mathbb{P}^7$ given by Equations~(\ref{Barth_a})~and~(\ref{Barth_b}) with moduli parameters given by
\begin{equation} 
 [p_0:q_0:r_0:s_0:t_0:u_0]=\left[\theta_1:\theta_2:\theta_3:\theta_4: \theta_8:\theta_{10}\right] \;.
\end{equation}
is isomorphic to the image of six even and two odd Theta functions given by 
\begin{gather*} 
 [w:x:y:z:X_1:X_2:X_3:X_4]\\=[ \Theta_1(2  z): \Theta_2(2  z): \Theta_3(2  z) : \Theta_4(2  z): \Theta_8(2  z): \Theta_{10}(2  z): \Theta_{13}(2  z) : \Theta_{16}(2  z) ].
\end{gather*}
In particular, the ideal of the surface $\mathbf{B}$ in $\mathbb{P}^7$ is generated by the Mumford relations for the Theta function $\Theta_1(2  z), \dots, \Theta_{16}(2  z)$. Moreover,  the ideal of the singular Kummer variety $\mathbf{B}/\langle -\mathbb{I} \rangle$ is generated by the Mumford relations for the Theta function $\Theta_1(2  z), \dots, \Theta_4(2z),   \Theta_8(2z),  \Theta_{10}(2z)$. The map $\pi: \mathbf{B}/\langle -\mathbb{I} \rangle \to \mathbf{A}_\tau/\langle -\mathbb{I} \rangle$ is a rational double cover onto the singular Kummer variety associated with the principally polarized abelian variety $\mathbf{A}_\tau$.
\end{theorem}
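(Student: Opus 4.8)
The plan is to identify Barth's presentation of $\mathbf{B}\subset\mathbb{P}^7$ with the image of the doubled-argument Theta functions one structure at a time, reducing every assertion to the Mumford relations together with the second principal transformations of degree two. First I would fix the moduli. Substituting $[p_0:q_0:r_0:s_0:t_0:u_0]=[\theta_1:\theta_2:\theta_3:\theta_4:\theta_8:\theta_{10}]$, the two constraints defining the admissible Barth parameters, namely $t_0^2u_0^2=p_0^2q_0^2-r_0^2s_0^2$ and $t_0^4+u_0^4=p_0^4+q_0^4-r_0^4-s_0^4$, become verbatim the third pair of Frobenius identities in Equations~(\ref{Eq:FrobeniusIdentities}), so the assignment genuinely lands in the Barth family. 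I would then record the parity bookkeeping: since interchanging the two rows of a theta characteristic leaves $a^t\!\cdot b$ unchanged, each $\Theta_i(2z)$ has the same parity as $\theta_i$; hence $\Theta_{13}(2z),\Theta_{16}(2z)$ are odd and the remaining six are even. This matches the splitting $H^0(\mathbf{B},2\mathcal{N})=H^0(\mathbf{B},2\mathcal{N})_+\oplus H^0(\mathbf{B},2\mathcal{N})_-$ of dimensions $6$ and $2$, and identifies the invariant subspace $\mathbb{P}^5_+=\{X_3=X_4=0\}$ with the even coordinates and $X_3,X_4\leftrightarrow\Theta_{13}(2z),\Theta_{16}(2z)$ with the anti-invariant ones.

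The heart of the argument is to show that the eight functions $\Theta_i(2z)$ with $i\in\{1,2,3,4,8,10,13,16\}$ satisfy the six quadrics~(\ref{Barth_a}) and~(\ref{Barth_b}), and that the Mumford relations among $\Theta_1(2z),\dots,\Theta_{16}(2z)$ supported on these eight coordinates generate the same ideal as Barth's quadrics. I would proceed as follows. Applying Remark~\ref{rem:embedding} to the isogenous surface of period $2\tau$, the sixteen functions $\Theta_i(2z)$ satisfy the $72$ conics of Propositions~\ref{prop:MF} and~\ref{prop:MMF}, now with coefficients given by the Theta constants at $2\tau$ (the reparametrization $z\mapsto 2z$ does not alter the coefficients). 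I would then single out the relations involving only the chosen eight indices, convert their coefficients from the constants $\Theta_j$ to the constants $\theta_j$ through the second principal transformations~(\ref{Eq:degree2doubling}) and~(\ref{Eq:degree2doublingR}), for instance $\theta_1^2=\Theta_1^2+\Theta_2^2+\Theta_3^2+\Theta_4^2$ and $\theta_8^2=2(\Theta_1\Theta_2+\Theta_3\Theta_4)$, and match the resulting linear combinations against~(\ref{Barth_a}) and~(\ref{Barth_b}); the normalizations $[w_0:x_0:y_0:z_0]=[\Theta_1:\Theta_2:\Theta_3:\Theta_4]$ and $[p_0:\dots:s_0]=[\theta_1:\dots:\theta_4]$ supplied by Proposition~\ref{prop:ThetaImageGH} pin everything down. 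This step is purely formal but combinatorially heavy, and it is exactly where I would invoke a computer algebra system, in the same spirit as the proofs of Proposition~\ref{prop:MF} and Proposition~\ref{prop:ThetaImageG}.

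With the coordinate and coefficient dictionary in hand, the remaining claims follow almost formally. The corollary following the definition of the Barth surface, resting on~\cite[Prop.~4.6]{MR946234}, asserts that the six quadrics generate the ideal of $\mathbf{B}\subset\mathbb{P}^7$; since those six quadrics are now identified with Mumford relations for $\Theta_1(2z),\dots,\Theta_{16}(2z)$, the ideal of $\mathbf{B}$ is generated by Mumford relations, which is the first assertion. Restricting to $\mathbb{P}^5_+$ kills the two odd coordinates $\Theta_{13}(2z),\Theta_{16}(2z)$, so Corollary~\ref{cor:KumB} exhibits the three quadrics~(\ref{Barth_a}) as the defining relations of $\mathbf{B}/\langle-\mathbb{I}\rangle$, expressed as Mumford relations among the six even functions $\Theta_1(2z),\dots,\Theta_4(2z),\Theta_8(2z),\Theta_{10}(2z)$. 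Finally, the projection $\pi$ forgetting $X_1,X_2=\Theta_8(2z),\Theta_{10}(2z)$ is precisely the map of Proposition~\ref{prop:BGH}: eliminating $X_1,X_2$ from~(\ref{Barth_a}) returns the G\"opel--Hudson quartic in $[w:x:y:z]=[\Theta_1(2z):\Theta_2(2z):\Theta_3(2z):\Theta_4(2z)]$, which by Proposition~\ref{prop:ThetaImageGH} is $\mathbf{A}_\tau/\langle-\mathbb{I}\rangle$, and the morphism is the rational double cover branched along the even eight computed in the preceding proposition.

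The main obstacle will be the middle step: checking that the index set $\{1,2,3,4,8,10,13,16\}$ is closed under exactly the right Mumford relations and that, after the $\Theta$-to-$\theta$ coefficient conversion, these relations reproduce Barth's six quadrics with globally consistent signs rather than six relations each valid only up to an unfixed scalar. The sign choices are genuinely delicate; they are the same square-root compatibility that forced the careful bookkeeping in Equations~(\ref{def:t_ijk}), and securing a single coherent normalization across all six quadrics is where the real work lies.
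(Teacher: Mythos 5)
Your proposal is correct and follows essentially the same route as the paper's proof: both identify the moduli constraints with the third pair of Frobenius identities, select the Mumford relations (two bi-monomial from Proposition~\ref{prop:MMF} and four quadratic from Proposition~\ref{prop:MF}, with $\tau$ replaced by $2\tau$) supported on the index set $\{1,2,3,4,8,10,13,16\}$, convert their coefficients from $\Theta$-constants to $\theta$-constants via the doubling formulas and Frobenius identities to recover Barth's six quadrics, and then deduce the ideal-generation and double-cover statements from Barth's results, Corollary~\ref{cor:KumB}, and Propositions~\ref{prop:BGH} and~\ref{prop:ThetaImageGH}. The sign-consistency issue you flag is real but is exactly the bookkeeping the paper's explicit displayed identities resolve.
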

\begin{proof}
We use a set of eight tropes consisting of the Rosenhain tetrahedra 
\[ 
 \{ \mathsf{T}_{256}, \mathsf{T}_{126}, \mathsf{T}_{236},\mathsf{T}_{4}\}\quad \text{and} \quad \{\mathsf{T}_{136}, \mathsf{T}_{356}, \mathsf{T}_{156},\mathsf{T}_{6}\}.
 \]
 The corresponding Theta functions are 
 \[ 
  \{ \Theta_1(2z), \Theta_4(2z), \Theta_{10}(2z), \Theta_{16}(2z)\} \quad \text{and} \quad \{ \Theta_2(2z), \Theta_3(2z), \Theta_{8}(2z), \Theta_{13}(2z)\}.
 \]
 The bi-monomial Mumford relations in Proposition~\ref{prop:MMF}, after $\tau$ is replaced by $2\tau$, include the equations
 \begin{equation}
 \begin{split}
      \Theta_8 \Theta_{10}  \Theta_8(2z) \Theta_{10}(2z)  &= \Theta_1 \Theta_2  \Theta_1(2z) \Theta_2(2z) - \Theta_3\Theta_4 \Theta_3(2z) \Theta_4(2z), \;\\
   \Theta_8 \Theta_{10}  \Theta_{13}(2z) \Theta_{16}(2z) &=  \Theta_3 \Theta_4  \Theta_1(2z) \Theta_2(2z) - \Theta_1\Theta_2 \Theta_3(2z) \Theta_4(2z).
\end{split}
\end{equation}
Using the Frobenius identities~(\ref{Eq:FrobeniusIdentities}) we obtain
 \begin{equation}
 \begin{split}
   2 \theta_8 \theta_{10}  \Theta_8(2z) \Theta_{10}(2z)     &=  (\theta_8^2+\theta_{10}^2) \Theta_1(2z) \Theta_2(2z) - (\theta_8^2-\theta_{10}^2)  \Theta_3(2z) \Theta_4(2z), \;\\
   2 \theta_8 \theta_{10}  \Theta_{13}(2z) \Theta_{16}(2z) &=  (\theta_8^2-\theta_{10}^2)  \Theta_1(2z) \Theta_2(2z) - (\theta_8^2+\theta_{10}^2) \Theta_3(2z) \Theta_4(2z). \;
\end{split}
\end{equation}
Similarly, the quadratic Mumford relations in Proposition~\ref{prop:MF} combined with Equations~(\ref{Eq:degree2doubling}) yield
\begin{small}
\begin{equation*}
 \begin{split}
2 \theta_1 \theta_2  (\Theta_8^2(2z)+ \Theta_{10}^2(2z))= (\theta_1^2+\theta_2^2) (\Theta_1^2(2z)+\Theta_2^2(2z)) - (\theta_1^2-\theta_2^2) (\Theta_3^2(2z)+\Theta_4^2(2z)),\\
2 \theta_3 \theta_4  (\Theta_8^2(2z)- \Theta_{10}^2(2z))= (\theta_3^2+\theta_4^2) (\Theta_1^2(2z)-\Theta_2^2(2z)) + (\theta_3^2-\theta_4^2) (\Theta_3^2(2z)-\Theta_4^2(2z)),\\
2 \theta_1 \theta_2  (\Theta_{13}^2(2z)+ \Theta_{16}^2(2z))= (-\theta_1^2+\theta_2^2) (\Theta_1^2(2z)+\Theta_2^2(2z)) + (\theta_1^2+\theta_2^2) (\Theta_3^2(2z)+\Theta_4^2(2z)),\\
2 \theta_3 \theta_4  (\Theta_{13}^2(2z)- \Theta_{16}^2(2z))= (-\theta_3^2+\theta_4^2) (\Theta_1^2(2z)-\Theta_2^2(2z)) - (\theta_3^2+\theta_4^2) (\Theta_3^2(2z)-\Theta_4^2(2z)).
\end{split}
\end{equation*}
\end{small}
These are precisely Equations~(\ref{Barth_a})~and~(\ref{Barth_b}) for moduli parameters given by
\begin{equation} 
 [p_0:q_0:r_0:s_0:t_0:u_0]=\left[\theta_1:\theta_2:\theta_3:\theta_4: \theta_8:\theta_{10}\right] \;.
\end{equation}
and variables given by
\begin{gather*} 
 [w:x:y:z:X_1:X_2:X_3:X_4]\\=[ \Theta_1(2  z): \Theta_2(2  z): \Theta_3(2  z) : \Theta_4(2  z): \Theta_8(2  z): \Theta_{10}(2  z): \Theta_{13}(2  z) :  \Theta_{16}(2  z) ] \;.
\end{gather*}
By eliminating $X_1, X_2$ one obtains the G\"opel-Hudson quartic in Equation~(\ref{Goepel-Quartic}) with moduli parameters matching those in Equation~(\ref{KummerParameter3}).
\end{proof}
In Proposition~\ref{prop:ThetaImageB}, we used a set of eight tropes consisting of the Rosenhain tetrahedra $\{ \mathsf{T}_{256}, \mathsf{T}_{126}, \mathsf{T}_{236},\mathsf{T}_{4}\}$ and $\{\mathsf{T}_{136}, \mathsf{T}_{356}, \mathsf{T}_{156},\mathsf{T}_{6}\}$. The corresponding Theta functions are $\{ \Theta_1(2z), \Theta_4(2z), \Theta_{10}(2z), \Theta_{16}(2z)\}$ and $\{ \Theta_2(2z), \Theta_3(2z), \Theta_{8}(2z), \Theta_{13}(2z)\}$. Only the Theta functions $\Theta_{13}(2z)$ and $\Theta_{16}(2z)$ are odd, the remaining ones are even which allowed us to identify the sub-spaces $\mathbb{P}^5_+$ and $\mathbb{P}^1_-$. This computation can be generalized.  We have the following:
\begin{proposition}
\label{prop:KumB_IsoClasses}
Every set of eight tropes that satisfies a quadratic relation and corresponds to an even eight, given in Proposition~\ref{prop:EvenEights}, determines an isomorphism between the complete intersection of the three quadrics in Equations~(\ref{Barth_a}) in $\mathbb{P}^5_+$ and a singular Kummer variety $\mathbf{B}/\langle -\mathbb{I} \rangle$ with $(1, 2)$-polarization. 
\end{proposition}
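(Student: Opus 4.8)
The plan is to reduce the statement to Theorem~\ref{prop:ThetaImageB}, which already proves exactly this assertion for one distinguished even eight, namely the set of eight tropes $\{\mathsf{T}_{256}, \mathsf{T}_{126}, \mathsf{T}_{236},\mathsf{T}_{4},\mathsf{T}_{136}, \mathsf{T}_{356}, \mathsf{T}_{156},\mathsf{T}_{6}\}$ that decomposes into the two Rosenhain tetrahedra used there. The essential point is that all even eights of the relevant type $\Delta_{ij}$ in Proposition~\ref{prop:EvenEights} are interchanged transitively by a group of symmetries of the $16_6$ configuration which simultaneously permutes the tropes and the Theta functions while preserving the even/odd dichotomy. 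Once this is in place, transporting the computation of Theorem~\ref{prop:ThetaImageB} along such a symmetry yields the desired isomorphism for an arbitrary $\Delta_{ij}$.

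Concretely, first I would invoke the action of $\Gamma_2/\Gamma_2(2) \cong S_6 \cong \operatorname{Sp}(4,\mathbb{F}_2)$ on $\mathbf{A}[2] \cong \mathbb{F}_2^4$, which preserves the Weil pairing and hence, by Lemma~\ref{lem:16_6}, realizes the automorphism group of the $16_6$ configuration fixing $\mathfrak{p}_0$. Under the bijection of Proposition~\ref{lem:bijection_tropes_thetas2} this action permutes the sixteen tropes $\mathsf{T}_\bullet$ and the sixteen squared Theta functions $\theta_i^2(z)$, and since $\operatorname{Sp}(4,\mathbb{F}_2)$ preserves the parity of theta characteristics it respects the partition into the six odd and ten even tropes. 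By the remark following Proposition~\ref{prop:EvenEights}, the fifteen even eights $\Delta_{ij}$ are indexed by the two-element subsets $\{i,j\} \subset \{1,\dots,6\}$ and each consists of exactly six even and two odd tropes; these are precisely the even eights compatible with the invariant/anti-invariant decomposition $\mathbb{P}^5_+ \oplus \mathbb{P}^1_-$. As $S_6$ acts transitively on two-element subsets, it acts transitively on the fifteen $\Delta_{ij}$, so there is $g \in S_6$ carrying any prescribed $\Delta_{ij}$ to the distinguished even eight of Theorem~\ref{prop:ThetaImageB}.

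I would then transport the construction along $g$. The two odd tropes of $\Delta_{ij}$ are sent to two odd Theta functions spanning the anti-invariant line $\mathbb{P}^1_-$, and the six even tropes to six even Theta functions spanning the invariant space $\mathbb{P}^5_+$; passing to the isogenous variety replaces $\theta_i(z)$ by $\Theta_i(2z)$ exactly as in Theorem~\ref{prop:ThetaImageB}. Because the Mumford relations of Propositions~\ref{prop:MF} and~\ref{prop:MMF} form the full, $\operatorname{Sp}(4,\mathbb{F}_2)$-invariant system of quadrics cutting out the image of $\mathbf{A}_\tau$ in $\mathbb{P}^{15}$ (Remark~\ref{rem:embedding}), applying $g$ to the relations identified in the proof of Theorem~\ref{prop:ThetaImageB} again yields, after the doubling substitutions~(\ref{Eq:degree2doubling}) and the Frobenius identities~(\ref{Eq:FrobeniusIdentities}), the six quadrics of Equations~(\ref{Barth_a}) and~(\ref{Barth_b}), now with moduli parameters $[p_0:\dots:u_0]$ obtained by applying $g$ to $[\theta_1:\theta_2:\theta_3:\theta_4:\theta_8:\theta_{10}]$. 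Restricting to the three invariant quadrics~(\ref{Barth_a}) and eliminating $X_1, X_2$ recovers a G\"opel-Hudson quartic, which by Corollary~\ref{cor:KumB} and Proposition~\ref{prop:BGH} is the singular Kummer variety $\mathbf{B}/\langle -\mathbb{I} \rangle$ with $(1,2)$-polarization.

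The hard part will be the bookkeeping that guarantees the transported quadrics land in the \emph{precise} Barth normal form rather than in a merely projectively equivalent system. One must check that under $g$ the six even and two odd Theta constants playing the roles of $p_0,\dots,u_0$ continue to satisfy the two defining relations $t_0^2 u_0^2 = p_0^2 q_0^2 - r_0^2 s_0^2$ and $t_0^4+u_0^4 = p_0^4+q_0^4-r_0^4-s_0^4$; this is where the Frobenius identities~(\ref{Eq:FrobeniusIdentities}) enter, since $S_6$ permutes these identities among themselves, and the correct assignment of $(p_0,q_0,r_0,s_0,t_0,u_0)$ must be read off from the image of the chosen Rosenhain-tetrahedra decomposition under Lemma~\ref{lem:quadraticsEEb}. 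Should tracking the $S_6$-action on the moduli prove delicate, a safe fallback is to bypass the symmetry argument and instead repeat the explicit computation of Theorem~\ref{prop:ThetaImageB} for each of the fifteen even eights $\Delta_{ij}$, using the tetrahedron decompositions of Lemma~\ref{lem:quadraticsEEb} together with the bijection of Proposition~\ref{lem:bijection_tropes_thetas2}; this is a finite, mechanical verification on a computer algebra system.
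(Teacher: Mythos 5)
Your proposal is correct, but it proves the statement by a different mechanism than the paper does. The paper's own proof is a short structural argument: it observes that Barth's construction of $\mathbf{B}/\langle -\mathbb{I}\rangle$ as a projection of the Barth surface requires only the eigenspace splitting $H^0(\mathbf{B},2\mathcal{N})=H^0(\mathbf{B},2\mathcal{N})_+\oplus H^0(\mathbf{B},2\mathcal{N})_-$ with dimensions $6$ and $2$, and then notes that among the $30$ sets of eight tropes in Proposition~\ref{prop:EvenEights} exactly the $15$ corresponding to even eights $\Delta_{ij}$ consist of $6$ even tropes $\mathsf{T}_{kl6}$ and $2$ odd tropes $\mathsf{T}_m$ (the other $15$ split $4+4$), so that for each $\Delta_{ij}$ the eight associated Theta functions furnish coordinates realizing the splitting $\mathbb{P}^5_+\oplus\mathbb{P}^1_-$, and eliminating the two odd coordinates reproduces the situation of Corollary~\ref{cor:KumB}. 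You instead fix the single case computed in Theorem~\ref{prop:ThetaImageB} and transport it along the transitive action of $S_6\cong\operatorname{Sp}(4,\mathbb{F}_2)$ on the fifteen $\Delta_{ij}$, using that this action preserves the parity of theta characteristics and permutes the Mumford relations. Both routes hinge on the same combinatorial fact (the $6+2$ parity count from Proposition~\ref{prop:EvenEights}), but your symmetry transport makes explicit and uniform what the paper leaves implicit (namely, why the identical construction goes through for every $\Delta_{ij}$), at the cost of the normal-form bookkeeping you correctly flag: one must verify that the transported Theta constants still satisfy the two defining relations among $p_0,\dots,u_0$, which the paper never has to confront because it argues directly from the eigenspace dimensions rather than from the explicit Barth equations. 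Your fallback of fifteen explicit verifications is closer in spirit to what the paper's one-case computation plus parity count actually certifies.
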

\begin{proof}
The construction of the Kummer variety $\mathbf{B}/\langle -\mathbb{I} \rangle$ with $(1,2)$-polarization as projection from the Barth surface $\mathbf{B}$ in Corollary~\ref{cor:KumB} requires a splitting  of the space $H^0(\mathbf{B},2\mathcal{N})$ with $h^0(\mathbf{B}, 2\mathcal{N}) = 8$ into the direct sum $H^0(\mathbf{B},2\mathcal{N}) = H^0(\mathbf{B},2\mathcal{N})_+ \oplus H^0(\mathbf{B},2\mathcal{N})_-$ of $\pm1$ eigenspaces of dimensions $h^0(\mathbf{B},2\mathcal{N})_+=6$ and $h^0(\mathbf{B},2\mathcal{N})_-= 2$.  Among the 30 sets of eight tropes given in Proposition~\ref{prop:EvenEights}, 15 sets correspond to even eights $\Delta_{ij}$ not containing the node $p_0$, the other 15 sets do contain the node $p_0$. The sets of eight tropes corresponding to even eights $\Delta_{ij}$ contain 6 even tropes of the form $\mathsf{T}_{kl6}$ and 2 odd tropes of the form $\mathsf{T}_{m}$, whereas the other 15 sets contain 4 even and 4 odd tropes. Therefore, for the sets of eight tropes corresponding to an even eight $\Delta_{ij}$  we obtain an abelian surface $\mathbf{B}_{ij}$ with $(2, 4)$-polarization and, by eliminating the odd coordinates $X_3$ and $X_4$, a singular Kummer variety $\mathbf{B}_{ij}/\langle -\mathbb{I} \rangle$ in Corollary~\ref{cor:KumB}.
\end{proof}
Mehran proved \cite[Cor.~4.7]{MR2804549} that there are exactly 15 distinct isomorphism classes of smooth Kummer surfaces $\operatorname{Kum}(\mathbf{B}_{ij})$ with $(1, 2)$-polarization for $1\le i < j < 6$ that are rational double covers of the smooth Kummer surface $\operatorname{Kum}(\mathbf{A})$ associated with a particular principally polarized abelian variety $\mathbf{A}$.  It was also proved that each such double cover is induced by a two-isogeny $\phi_{ij}: \mathbf{B}_{ij} \to \mathbf{A}$ of abelian surfaces \cite[Prop~2.3]{MR2804549}. We have the following:
\begin{theorem}
\label{thm:KumB_IsoClasses}
The 15 Barth surfaces obtained from the 15 sets of eight tropes that satisfy a quadratic relation and correspond to an even eight, given in Proposition~\ref{prop:EvenEights}, 
realize all distinct 15 isomorphism classes of singular Kummer varieties with $(1, 2)$-polarization covering a fixed smooth Kummer surface $\operatorname{Kum}(\mathbf{A})$ with a principal polarization.
\end{theorem}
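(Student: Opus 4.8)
The plan is to match the fifteen Barth surfaces produced in Proposition~\ref{prop:KumB_IsoClasses} against Mehran's fifteen isomorphism classes by showing that both families are indexed by the \emph{same} fifteen even eights $\Delta_{ij}$ (those not containing the fixed node $p_0$), and that the Barth construction realizes precisely the double cover attached to each. First I would record what is already in hand. By Proposition~\ref{prop:EvenEights} the thirty sets of eight tropes satisfying a quadratic relation are in bijection with the thirty even eights, and the fifteen sets corresponding to the $\Delta_{ij}$ consist of six even tropes $\mathsf{T}_{kl6}$ and two odd tropes $\mathsf{T}_m$. This $6+2$ split is exactly what Proposition~\ref{prop:KumB_IsoClasses} needs: it furnishes the eigenspace decomposition $H^0(\mathbf{B},2\mathcal{N})=H^0(\mathbf{B},2\mathcal{N})_+\oplus H^0(\mathbf{B},2\mathcal{N})_-$ with dimensions $6$ and $2$, hence a Barth surface $\mathbf{B}_{ij}\subset\mathbb{P}^7$ and, after projecting out the anti-invariant coordinates $X_3,X_4$, a singular Kummer variety $\mathbf{B}_{ij}/\langle-\mathbb{I}\rangle$ with $(1,2)$-polarization. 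Thus the assignment $\Delta_{ij}\mapsto\mathbf{B}_{ij}$ is well defined on exactly the fifteen even eights not containing $p_0$.

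Next I would pin down the branch locus of the induced double cover. The projection $\pi$ of Proposition~\ref{prop:BGH} identifies the image $\pi\bigl(\mathbf{B}_{ij}/\langle-\mathbb{I}\rangle\bigr)$ with the G\"opel--Hudson quartic of the fixed principally polarized surface $\mathbf{A}_\tau$, i.e.\ with $\mathbf{A}_\tau/\langle-\mathbb{I}\rangle$, and the node computation carried out just before Theorem~\ref{prop:ThetaImageB} shows that the sixteen nodes of $\mathbf{B}_{ij}/\langle-\mathbb{I}\rangle$ map two-to-one onto the eight nodes forming the complement of $\Delta_{ij}$. Consequently $\pi$ is a rational double cover of $\mathbf{A}_\tau/\langle-\mathbb{I}\rangle$ branched precisely along the even eight $\Delta_{ij}$, which is exactly Mehran's construction of a $(1,2)$-polarized Kummer surface as the double cover of a principally polarized Kummer surface branched along an even eight. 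This is the step that aligns the Barth indexing with the branch-locus indexing.

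Finally I would invoke Mehran's classification \cite[Cor.~4.7]{MR2804549}: there are exactly fifteen distinct isomorphism classes of smooth $(1,2)$-polarized Kummer surfaces $\operatorname{Kum}(\mathbf{B}_{ij})$ that are rational double covers of the fixed $\operatorname{Kum}(\mathbf{A})$, each induced by a two-isogeny $\phi_{ij}:\mathbf{B}_{ij}\to\mathbf{A}$ and enumerated by the fifteen even eights $\Delta_{ij}$. Passing to the minimal resolution identifies each of our fifteen $\mathbf{B}_{ij}/\langle-\mathbb{I}\rangle$ with one of these smooth classes, and since the fifteen Barth surfaces are in bijection with the fifteen even eights and cover $\operatorname{Kum}(\mathbf{A})$ branched along them, all fifteen isomorphism classes are realized.

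The main obstacle is the distinctness-and-matching step combining the last two paragraphs: one must verify that the even eight cut out as the branch locus of $\pi$ genuinely coincides with the even eight $\Delta_{ij}$ attached to the same set of eight tropes in Proposition~\ref{prop:EvenEights}, so that the two indexings by $\Delta_{ij}$ agree, and that distinct $\Delta_{ij}$ give non-isomorphic $(1,2)$-polarized Kummer surfaces rather than merely distinct branch data. The first point follows from the explicit node computation in the G\"opel--Hudson model, and the second is exactly the content of Mehran's count of fifteen classes; together they force the correspondence $\Delta_{ij}\leftrightarrow\mathbf{B}_{ij}$ to be a bijection onto Mehran's fifteen classes, completing the proof.
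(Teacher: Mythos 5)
Your proposal is correct and follows essentially the same route as the paper: build the fifteen Barth surfaces via Proposition~\ref{prop:KumB_IsoClasses}, identify each projection $\pi$ as the rational double cover of the fixed $\mathbf{A}/\langle -\mathbb{I}\rangle$ branched along the complement of the corresponding even eight via the explicit node computation in the G\"opel--Hudson model, and then invoke Mehran's count of fifteen classes to conclude. The only point you compress is the verification that all fifteen projections land on the \emph{same} principally polarized Kummer variety, which the paper handles by chaining isomorphisms through the shared G\"opel tetrahedra of Lemma~\ref{lem:quadraticsEEb}; your appeal to the node computation and the common moduli parameters~(\ref{param:Goepel-Quartic2}) amounts to the same thing.
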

\begin{proof}
Each sets of eight tropes that satisfies a quadratic relation and corresponds to an even eight, given in Proposition~\ref{prop:EvenEights}, determines a Barth surface using Proposition~\ref{prop:KumB_IsoClasses}. We first show that for any two different sets of eight tropes corresponding to even eights $\Delta_{ij}$ and $\Delta_{i'j'}$, the images of the singular Kummer varieties $\mathbf{B}_{ij}/\langle -\mathbb{I} \rangle$ and $\mathbf{B}_{i'j'}/\langle -\mathbb{I} \rangle$, respectively, under their respective projections $\pi$ and $\pi'$ are isomorphic to the same singular Kummer variety $\mathbf{A}/\langle -\mathbb{I} \rangle$ associated with a principally polarized abelian variety $\mathbf{A}$: each set of eight tropes corresponding to an even eight $\Delta_{ij}$ contains three different G\"opel tetrahedra that pairwise have two tropes in common. Each G\"opel tetrahedron defines a projection from $\mathbf{B}_{ij}/\langle -\mathbb{I} \rangle$ onto a G\"opel-Hudson quartic in Proposition~\ref{prop:BGH} by eliminating the complimentary pair of variables. However, all resulting quartics are isomorphic and related by the action of a projective automorphisms obtained by the composition of isomorphisms obtained in Proposition~\ref{prop:KummerNF}. Because of Lemma~\ref{lem:quadraticsEEb}, each of the three G\"opel tetrahedra also appear as projection of two other singular Kummer varieties $\mathbf{B}_{i'j'}/\langle -\mathbb{I} \rangle$. Therefore, the images obtained as projections of all the different singular Kummer varieties $\mathbf{B}_{ij}/\langle -\mathbb{I} \rangle$ are isomorphic. They realize the same singular Kummer variety $\mathbf{A}/\langle -\mathbb{I} \rangle$ associated with a principally polarized abelian variety $\mathbf{A}$ and moduli parameter given by Equations~(\ref{param:Goepel-Quartic2}).
\par  It was proved in \cite[Prop.~4.2]{MR2804549} that all distinct 15 isomorphism classes $\operatorname{Kum}(\mathbf{B}_{ij})$ are obtained by taking a double cover branched along the fifteen different even eights $\Delta_{ij}$ on the smooth Kummer surface $\operatorname{Kum}(\mathbf{A})$. One only has to consider the even eights not containing $p_0$ is because one obtains the exact same Kummer surface whether one takes the double cover branched along an even eight or its complement \cite[Prop.~2.3]{MR2804549}.  After blowing down the exceptional divisors, the double covers are equivalent to 15 morphisms $p_{ij}: \mathbf{B}_{ij}/\langle -\mathbb{I} \rangle \to \mathbf{A}/\langle -\mathbb{I} \rangle$ which map the 16 nodes of $\mathbf{B}_{ij}$ to the eight nodes of $\mathbf{A}/\langle -\mathbb{I} \rangle$ contained in the complement of $\Delta_{ij}$. Proposition~\ref{prop:KumB_IsoClasses} proves that the projection $p$ in Equation~(\ref{p35}) when applied to all cases in Proposition~\ref{prop:KumB_IsoClasses} realizes all such morphisms.
\end{proof}
\bibliography{CM16} 
\bibliographystyle{amsplain}
\newpage
\begin{appendix}
\section{Mumford relations I}
\label{app:MumfordRelations}
The three-term relations between bi-monomial combinations of Theta functions $\xi_{i,j} = \theta_i(z)\theta_j(z)$ for $1\le i < j \le 16$ in Proposition~\ref{prop:MMF} generate an ideal that is generated by the following 60 equations:
\begin{equation}
\label{Eqn:MumfordBimonomial}
\scalemath{0.8}{
\begin{array}{lcllcl}
\theta_1\theta_2\, \xi_{{3,4}}-\theta_3\theta_4\, \xi_{{1,2}}+\theta_8\theta_{10}\, \xi_{{13,16}} &=&0, 	& 
\theta_1\theta_2\, \xi_{{13,16}}-\theta_3\theta_4\, \xi_{{8,10}}+\theta_8\theta_{10}\, \xi_{{3,4}}&=&0,	\\
\theta_1\theta_3\, \xi_{{7,9}}+\theta_2\theta_4\, \xi_{{11,12}}-\theta_7\theta_9\, \xi_{{1,3}} &=&0, 	& 
\theta_1\theta_3\, \xi_{{11,12}}+\theta_2\theta_4\, \xi_{{7,9}}-\theta_7\theta_9\, \xi_{{2,4}}&=&0,	\\
\theta_1\theta_4\, \xi_{{5,6}}+\theta_2\theta_3\, \xi_{{14,15}}-\theta_5\theta_6\, \xi_{{1,4}}&=&0, 	& 
\theta_1\theta_4\, \xi_{{14,15}}+\theta_2\theta_3\, \xi_{{5,6}}-\theta_5\theta_6\, \xi_{{2,3}}&=&0,	\\
\theta_1\theta_5\, \xi_{{7,8}}+\theta_4\theta_6\, \xi_{{11,13}}-\theta_7\theta_8\, \xi_{{1,5}}&=&0, 	& 
\theta_1\theta_5\, \xi_{{11,13}}+\theta_4\theta_6\, \xi_{{7,8}}-\theta_7\theta_8\, \xi_{{4,6}}&=&0,	\\
\theta_1\theta_6\, \xi_{{4,5}}-\theta_4\theta_5\, \xi_{{1,6}}-\theta_9\theta_{10}\, \xi_{{12,16}}&=&0, 	& 
\theta_1\theta_6\, \xi_{{9,10}}-\theta_4\theta_5\, \xi_{{12,16}}-\theta_9\theta_{10}\, \xi_{{1,6}}&=&0,	\\
\theta_1\theta_7\, \xi_{{5,8}}+\theta_3\theta_9\, \xi_{{14,16}}-\theta_5\theta_8\, \xi_{{1,7}}&=&0, 	& 
\theta_1\theta_7\, \xi_{{14,16}}+\theta_3\theta_9\, \xi_{{5,8}}-\theta_5\theta_8\, \xi_{{3,9}}&=&0,	\\
\theta_1\theta_8\, \xi_{{5,7}}+\theta_2\theta_{10}\, \xi_{{12,15}}-\theta_5\theta_7\, \xi_{{1,8}}&=&0, 	& 
\theta_1\theta_8\, \xi_{{12,15}}+\theta_2\theta_{10}\, \xi_{{5,7}}-\theta_5\theta_7\, \xi_{{2,10}}&=&0,	\\
\theta_1\theta_9\, \xi_{{3,7}}-\theta_3\theta_7\, \xi_{{1,9}}+\theta_6\theta_{10}\, \xi_{{13,15}}&=&0, 	& 
\theta_1\theta_9\, \xi_{{6,10}}-\theta_3\theta_7\, \xi_{{13,15}}-\theta_6\theta_{10}\, \xi_{{1,9}}&=&0,	\\
\theta_1\theta_{10}\, \xi_{{6,9}}+\theta_2\theta_8\, \xi_{{11,14}}-\theta_6\theta_9\, \xi_{{1,10}}&=&0, 	& 
\theta_1\theta_{10}\, \xi_{{11,14}}+\theta_2\theta_8\, \xi_{{6,9}}-\theta_6\theta_9\, \xi_{{2,8}}&=&0,	\\
\theta_2\theta_9\, \xi_{{10,14}}+\theta_4\theta_7\, \xi_{{5,13}}-\theta_6\theta_8\, \xi_{{1,11}}&=&0, 	& 
\theta_2\theta_9\, \xi_{{5,13}}+\theta_4\theta_7\, \xi_{{10,14}}-\theta_6\theta_8\, \xi_{{3,12}}&=&0,	\\
\theta_2\theta_7\, \xi_{{8,15}}-\theta_4\theta_9\, \xi_{{6,16}}-\theta_5\theta_{10}\, \xi_{{1,12}}&=&0, 	& 
\theta_2\theta_7\, \xi_{{6,16}}-\theta_4\theta_9\, \xi_{{8,15}}+\theta_5\theta_{10}\, \xi_{{3,11}}&=&0,	\\
\theta_3\theta_{10}\, \xi_{{2,16}}+\theta_4\theta_8\, \xi_{{1,13}}-\theta_6\theta_7\, \xi_{{5,11}}&=&0, 	& 
\theta_3\theta_{10}\, \xi_{{1,13}}+\theta_4\theta_8\, \xi_{{2,16}}-\theta_6\theta_7\, \xi_{{9,15}}&=&0,	\\
\theta_2\theta_6\, \xi_{{4,15}}-\theta_3\theta_5\, \xi_{{1,14}}+\theta_8\theta_9\, \xi_{{7,16}}&=&0, 	& 
\theta_2\theta_6\, \xi_{{10,11}}+\theta_3\theta_5\, \xi_{{7,16}}-\theta_8\theta_9\, \xi_{{1,14}}&=&0,	\\
\theta_2\theta_5\, \xi_{{4,14}}-\theta_3\theta_6\, \xi_{{1,15}}+\theta_7\theta_{10}\, \xi_{{9,13}}&=&0, 	& 
\theta_2\theta_5\, \xi_{{9,13}}-\theta_3\theta_6\, \xi_{{8,12}}+\theta_7\theta_{10}\, \xi_{{4,14}}&=&0,	\\
\theta_3\theta_8\, \xi_{{2,13}}+\theta_4\theta_{10}\, \xi_{{1,16}}-\theta_5\theta_9\, \xi_{{6,12}}&=&0, 	& 
\theta_3\theta_8\, \xi_{{1,16}}+\theta_4\theta_{10}\, \xi_{{2,13}}-\theta_5\theta_9\, \xi_{{7,14}}&=&0,	\\
\theta_2\theta_5\, \xi_{{3,6}}-\theta_3\theta_6\, \xi_{{2,5}}-\theta_7\theta_{10}\, \xi_{{11,16}}&=&0, 	& 
\theta_2\theta_5\, \xi_{{11,16}}-\theta_3\theta_6\, \xi_{{7,10}}+\theta_7\theta_{10}\, \xi_{{3,6}}&=&0,	\\
\theta_2\theta_6\, \xi_{{8,9}}+\theta_3\theta_5\, \xi_{{12,13}}-\theta_8\theta_9\, \xi_{{2,6}}&=&0, 	& 
\theta_2\theta_6\, \xi_{{12,13}}+\theta_3\theta_5\, \xi_{{8,9}}-\theta_8\theta_9\, \xi_{{3,5}}&=&0,	\\
\theta_2\theta_7\, \xi_{{4,9}}-\theta_4\theta_9\, \xi_{{2,7}}-\theta_5\theta_{10}\, \xi_{{13,14}}&=&0, 	& 
\theta_2\theta_7\, \xi_{{13,14}}-\theta_4\theta_9\, \xi_{{5,10}}+\theta_5\theta_{10}\, \xi_{{4,9}}&=&0,	\\
\theta_2\theta_9\, \xi_{{4,7}}-\theta_4\theta_7\, \xi_{{2,9}}+\theta_6\theta_8\, \xi_{{15,16}}&=&0, 	& 
\theta_2\theta_9\, \xi_{{15,16}}+\theta_4\theta_7\, \xi_{{6,8}}-\theta_6\theta_8\, \xi_{{4,7}}&=&0,	\\
\theta_1\theta_9\, \xi_{{8,14}}-\theta_3\theta_7\, \xi_{{5,16}}-\theta_6\theta_{10}\, \xi_{{2,11}}&=&0, 	& 
\theta_1\theta_9\, \xi_{{5,16}}-\theta_3\theta_7\, \xi_{{8,14}}+\theta_6\theta_{10}\, \xi_{{4,12}}&=&0,	\\
\theta_1\theta_7\, \xi_{{10,15}}+\theta_3\theta_9\, \xi_{{6,13}}-\theta_5\theta_8\, \xi_{{2,12}}&=&0, 	& 
\theta_1\theta_7\, \xi_{{6,13}}+\theta_3\theta_9\, \xi_{{10,15}}-\theta_5\theta_8\, \xi_{{4,11}}&=&0,	\\
\theta_1\theta_6\, \xi_{{3,15}}-\theta_4\theta_5\, \xi_{{2,14}}-\theta_9\theta_{10}\, \xi_{{7,13}}&=&0, 	& 
\theta_1\theta_6\, \xi_{{8,11}}-\theta_4\theta_5\, \xi_{{7,13}}-\theta_9\theta_{10}\, \xi_{{2,14}}&=&0,	\\
\theta_1\theta_5\, \xi_{{3,14}}-\theta_4\theta_6\, \xi_{{2,15}}-\theta_7\theta_8\, \xi_{{9,16}}&=&0, 	& 
\theta_1\theta_5\, \xi_{{9,16}}+\theta_4\theta_6\, \xi_{{10,12}}-\theta_7\theta_8\, \xi_{{3,14}}&=&0,	\\
\theta_3\theta_8\, \xi_{{5,9}}+\theta_4\theta_{10}\, \xi_{{11,15}}-\theta_5\theta_9\, \xi_{{3,8}}&=&0, 	& 
\theta_3\theta_8\, \xi_{{11,15}}+\theta_4\theta_{10}\, \xi_{{5,9}}-\theta_5\theta_9\, \xi_{{4,10}}&=&0,	\\
\theta_3\theta_{10}\, \xi_{{6,7}}+\theta_4\theta_8\, \xi_{{12,14}}-\theta_6\theta_7\, \xi_{{3,10}}&=&0, 	& 
\theta_3\theta_{10}\, \xi_{{12,14}}+\theta_4\theta_8\, \xi_{{6,7}}-\theta_6\theta_7\, \xi_{{4,8}}&=&0,	\\
\theta_1\theta_{10}\, \xi_{{4,16}}+\theta_2\theta_8\, \xi_{{3,13}}-\theta_6\theta_9\, \xi_{{5,12}}&=&0, 	& 
\theta_1\theta_{10}\, \xi_{{3,13}}+\theta_2\theta_8\, \xi_{{4,16}}-\theta_6\theta_9\, \xi_{{7,15}}&=&0,	\\
\theta_1\theta_8\, \xi_{{4,13}}+\theta_2\theta_{10}\, \xi_{{3,16}}-\theta_5\theta_7\, \xi_{{6,11}}&=&0, 	& 
\theta_1\theta_8\, \xi_{{3,16}}+\theta_2\theta_{10}\, \xi_{{4,13}}-\theta_5\theta_7\, \xi_{{9,14}}&=&0,	\\
\theta_1\theta_3\, \xi_{{8,16}}+\theta_2\theta_4\, \xi_{{10,13}}-\theta_7\theta_9\, \xi_{{5,14}}&=&0, 	& 
\theta_1\theta_3\, \xi_{{10,13}}+\theta_2\theta_4\, \xi_{{8,16}}-\theta_7\theta_9\, \xi_{{6,15}}&=&0,	\\
\theta_1\theta_2\, \xi_{{5,15}}-\theta_3\theta_4\, \xi_{{6,14}}-\theta_8\theta_{10}\, \xi_{{7,12}}&=&0, 	& 
\theta_1\theta_2\, \xi_{{7,12}}-\theta_3\theta_4\, \xi_{{9,11}}-\theta_8\theta_{10}\, \xi_{{5,15}}&=&0,	\\
\theta_1\theta_4\, \xi_{{8,13}}+\theta_2\theta_3\, \xi_{{10,16}}-\theta_5\theta_6\, \xi_{{7,11}}&=&0, 	& 
\theta_1\theta_4\, \xi_{{10,16}}+\theta_2\theta_3\, \xi_{{8,13}}-\theta_5\theta_6\, \xi_{{9,12}}&=&0.
\end{array}}
\end{equation}
\newpage
\section{Mumford relations II}
\label{app:MumfordRelationsTropes}
In terms of the variables $\mathsf{t}_{a,b}$ used in Proposition~\ref{prop:MMFT}, the ideal generated by the Mumford relations in Equations~(\ref{Eqn:MumfordBimonomial}) coincides with the ideal generated by the following equations whose coefficients are determined by the Rosenhain parameters of the genus-two curve $\mathcal{C}$ in Equation~(\ref{Eq:Rosenhain}):
\begin{equation}
\label{Kummer:topesLRc}
\scalemath{0.7}{
\begin{array}{lcllcl}
\mathsf{t}_{1,2}-\mathsf{t}_{156,256}+\mathsf{t}_{146,246} 								&=&0, 	& 
\lambda_3\mathsf{t}_{1,2}-\mathsf{t}_{136,236}+\mathsf{t}_{146,246} 						&=&0,	\\
\mathsf{t}_{1,3}-\mathsf{t}_{156,356}+\mathsf{t}_{146,346}								&=&0, 	& 
\lambda_2\mathsf{t}_{1,3}-\mathsf{t}_{126,236}+\mathsf{t}_{146,346}						&=&0,	\\
\left( \lambda_2-1 \right) \mathsf{t}_{1,4}-\mathsf{t}_{126,246}+\mathsf{t}_{156,456}				&=&0, 	& 
\left( \lambda_2-\lambda_3 \right) \mathsf{t}_{1,4}-\mathsf{t}_{126,246}+\mathsf{t}_{136,346}		&=&0,	\\
\lambda_2\mathsf{t}_{1,5}-\mathsf{t}_{126,256}+\mathsf{t}_{146,456}						&=&0, 	& 
\lambda_3\mathsf{t}_{1,5}-\mathsf{t}_{136,356}+\mathsf{t}_{146,456}				&=&0,	\\
\left( \lambda_3-\lambda_2 \right) \mathsf{t}_{1,6}+\mathsf{t}_{256,346}-\mathsf{t}_{246,356}		&=&0, 	& 
\lambda_2 \left( \lambda_3-1 \right) \mathsf{t}_{1,6}+\mathsf{t}_{256,346}-\mathsf{t}_{236,456}		&=&0,	\\
\mathsf{t}_{2,3}-\mathsf{t}_{256,356}+\mathsf{t}_{246,346}								&=&0, 	& 
\lambda_1\mathsf{t}_{2,3}-\mathsf{t}_{126,136}+\mathsf{t}_{246,346}						&=&0,	\\
\left( \lambda_1-1 \right) \mathsf{t}_{2,4}-\mathsf{t}_{126,146}+\mathsf{t}_{256,456}				&=&0, 	& 
\left( \lambda_3-1 \right) \mathsf{t}_{2,4}+\mathsf{t}_{256,456}-\mathsf{t}_{236,346}				&=&0,	\\
\lambda_1\mathsf{t}_{2,5}-\mathsf{t}_{126,156}+\mathsf{t}_{246,456}						&=&0, 	& 
\lambda_3\mathsf{t}_{2,5}-\mathsf{t}_{236,356}+\mathsf{t}_{246,456}						&=&0,	\\
\left( \lambda_1-\lambda_3 \right) \mathsf{t}_{2,6}+\mathsf{t}_{146,356}-\mathsf{t}_{156,346}		&=&0, 	& 
\lambda_1 \left( \lambda_3-1 \right) \mathsf{t}_{2,6}+\mathsf{t}_{156,346}-\mathsf{t}_{136,456}		&=&0,	\\
\left( \lambda_1-1 \right) \mathsf{t}_{3,4}-\mathsf{t}_{136,146}+\mathsf{t}_{356,456}				&=&0, 	&
\left( \lambda_2-1 \right) \mathsf{t}_{3,4}+\mathsf{t}_{356,456}-\mathsf{t}_{236,246}				&=&0,	\\
\lambda_1\mathsf{t}_{3,5}-\mathsf{t}_{136,156}+\mathsf{t}_{346,456}						&=&0, 	&
\lambda_2\mathsf{t}_{3,5}-\mathsf{t}_{236,256}+\mathsf{t}_{346,456}						&=&0,	\\
\left( \lambda_1-\lambda_2 \right) \mathsf{t}_{3,6}+\mathsf{t}_{146,256}-\mathsf{t}_{156,246}		&=&0, 	&
\lambda_1 \left( \lambda_2-1 \right) \mathsf{t}_{3,6}+\mathsf{t}_{156,246}-\mathsf{t}_{126,456}		&=&0,	\\
\left( \lambda_1-\lambda_2 \right) \mathsf{t}_{4,5}-\mathsf{t}_{146,156}+\mathsf{t}_{246,256}		&=&0, 	&
\left( \lambda_1-\lambda_3 \right) \mathsf{t}_{4,5}-\mathsf{t}_{146,156}+\mathsf{t}_{346,356}		&=&0,	\\
\left( \lambda_2-1 \right)  \left( \lambda_1-\lambda_3 \right) \mathsf{t}_{4,6}+\mathsf{t}_{156,236}-\mathsf{t}_{126,356}&=&0, 	&
\left( \lambda_2-\lambda_3 \right)  \left( \lambda_1-1 \right) \mathsf{t}_{4,6}+\mathsf{t}_{136,256}-\mathsf{t}_{126,356} &=&0,	\\
\lambda_2 \left( \lambda_1-\lambda_3 \right) \mathsf{t}_{5,6}+\mathsf{t}_{146,236}-\mathsf{t}_{126,346}&=&0, 	&
\left( \lambda_2-\lambda_3 \right) \lambda_1\mathsf{t}_{5,6}+\mathsf{t}_{136,246}-\mathsf{t}_{126,346}&=&0,	\\
\mathsf{t}_{1,126}-\lambda_1\mathsf{t}_{5,256}+ \left( \lambda_1-1 \right) \mathsf{t}_{4,246}		&=&0, 	& 
\mathsf{t}_{3,236}-\lambda_3\mathsf{t}_{5,256}+ \left( \lambda_3-1 \right) \mathsf{t}_{4,246}		&=&0,	\\
\mathsf{t}_{1,136}-\lambda_1\mathsf{t}_{5,356}+ \left( \lambda_1-1 \right) \mathsf{t}_{4,346}		&=&0, 	& 
\mathsf{t}_{2,236}-\lambda_2\mathsf{t}_{5,356}+ \left( \lambda_2-1 \right) \mathsf{t}_{4,346}		&=&0,	\\
\left( \lambda_2-1 \right) \mathsf{t}_{1,146}- \left( \lambda_1-1 \right) \mathsf{t}_{2,246}+ \left( \lambda_1-\lambda_2 \right) \mathsf{t}_{5,456} &=&0, 	& 
\left( \lambda_2-\lambda_3 \right) \mathsf{t}_{1,146}+ \left( \lambda_3-\lambda_1 \right) \mathsf{t}_{2,246}+ \left( \lambda_1-\lambda_2 \right) \mathsf{t}_{3,346} &=&0,	\\
\mathsf{t}_{2,126}-\lambda_2\mathsf{t}_{5,156}+ \left( \lambda_2-1 \right) \mathsf{t}_{4,146}		&=&0, 	& 
\mathsf{t}_{3,136}-\lambda_3\mathsf{t}_{5,156}+ \left( \lambda_3-1 \right) \mathsf{t}_{4,146}		&=&0,	\\
\mathsf{t}_{6,126}+\mathsf{t}_{5,346}-\mathsf{t}_{4,356}									&=&0, 	& 
\lambda_3\mathsf{t}_{6,126}+\mathsf{t}_{3,456}-\mathsf{t}_{4,356}							&=&0,	\\
\mathsf{t}_{6,136}+\mathsf{t}_{5,246}-\mathsf{t}_{4,256}									&=&0, 	& 
\lambda_2\mathsf{t}_{6,136}+\mathsf{t}_{2,456}-\mathsf{t}_{4,256}							&=&0,	\\
\left( \lambda_2-1 \right) \mathsf{t}_{6,146}+\mathsf{t}_{2,356}-\mathsf{t}_{5,236}				&=&0, 	&
\left( \lambda_3-1 \right) \mathsf{t}_{6,146}+\mathsf{t}_{3,256}-\mathsf{t}_{5,236}				&=&0,	\\
\mathsf{t}_{6,236}+\mathsf{t}_{5,146}-\mathsf{t}_{4,156}									&=&0, 	&
\lambda_1\mathsf{t}_{6,236}+\mathsf{t}_{1,456}-\mathsf{t}_{4,156}							&=&0,	\\
\left( \lambda_1-1 \right) \mathsf{t}_{6,246}+\mathsf{t}_{1,356}-\mathsf{t}_{5,136}				&=&0, 	&
\left( \lambda_3-1 \right) \mathsf{t}_{6,246}+\mathsf{t}_{3,156}-\mathsf{t}_{5,136}				&=&0,	\\
\left( \lambda_1-1 \right) \mathsf{t}_{6,346}+\mathsf{t}_{1,256}-\mathsf{t}_{5,126}				&=&0, 	&
\left( \lambda_2-1 \right) \mathsf{t}_{6,346}+\mathsf{t}_{2,156}-\mathsf{t}_{5,126}				&=&0,	\\
\lambda_2\mathsf{t}_{6,156}+\mathsf{t}_{2,346}-\mathsf{t}_{4,236}							&=&0, 	&
\lambda_3\mathsf{t}_{6,156}+\mathsf{t}_{3,246}-\mathsf{t}_{4,236}							&=&0,	\\
\lambda_1\mathsf{t}_{6,256}+\mathsf{t}_{1,346}-\mathsf{t}_{4,136}							&=&0, 	&
\lambda_3\mathsf{t}_{6,256}+\mathsf{t}_{3,146}-\mathsf{t}_{4,136}							&=&0,	\\
\lambda_1\mathsf{t}_{6,356}+\mathsf{t}_{1,246}-\mathsf{t}_{4,126}							&=&0, 	&
\lambda_2\mathsf{t}_{6,356}+\mathsf{t}_{2,146}-\mathsf{t}_{4,126}							&=&0,	\\
\left( \lambda_1-\lambda_2 \right) \mathsf{t}_{6,456}+\mathsf{t}_{1,236}-\mathsf{t}_{2,136}		&=&0, 	&
\left( \lambda_1-\lambda_3 \right) \mathsf{t}_{6,456}+\mathsf{t}_{1,236}-\mathsf{t}_{3,126}		&=&0,	\\
\lambda_2\mathsf{t}_{1,156}-\lambda_1\mathsf{t}_{2,256}+ \left( \lambda_1-\lambda_2 \right) \mathsf{t}_{4,456} &=&0, 	&
\lambda_3\mathsf{t}_{1,156}-\lambda_1\mathsf{t}_{3,356}+ \left( \lambda_1-\lambda_3 \right) \mathsf{t}_{4,456} &=&0.	\\
\end{array}}
\end{equation}
\end{appendix}
\end{document}